\documentclass[11pt,letterpaper,reqno]{amsart}
\usepackage{amsmath,amssymb,amsthm,mathtools}
\usepackage{bm}
\usepackage{graphicx}
\usepackage{xcolor}
\usepackage{eso-pic}
\usepackage[
  colorlinks=true,
  linkcolor=blue,
  citecolor=red,
  urlcolor=blue,
  pdfborder={0 0 0}
]{hyperref}

\allowdisplaybreaks[2]

\newtheorem{theorem}{Theorem}[section]
\newtheorem{lemma}[theorem]{Lemma}
\newtheorem{proposition}[theorem]{Proposition}
\newtheorem{corollary}[theorem]{Corollary}
\theoremstyle{definition}

\theoremstyle{remark}
\newtheorem{remark}[theorem]{Remark}
\numberwithin{equation}{section}

\title[State-law dynamics of McKean-Vlasov stochastic reaction-diffusion equations]
{State-law dynamics of McKean-Vlasov stochastic reaction-diffusion equations on $\mathbb R^n$: pullback random attractors and zero-noise stability}

\author{Guifen Liu}
\address{Guifen Liu: School of Mathematics and Statistics, Southwest University, Chongqing, 400715,  China}
\email{ofenfen@sina.cn}

\author{Yangrong Li}
\address{Yangrong Li: School of Mathematics and Statistics, Southwest University, Chongqing, 400715,  China}
\email{liyr@swu.edu.cn}

\date{September 2, 2026}

\subjclass[2020]{37L55; 60H15; 37L30; 35B41}

\keywords{McKean-Vlasov equations; state-law random dynamical systems; pullback random attractors; unbounded domains; zero-noise stability}

\begin{document}

\begin{abstract}
Distribution dependence generally prevents McKean-Vlasov state solution maps from satisfying a cocycle identity, while the relevant Sobolev embedding is noncompact on unbounded domains. For a class of stochastic reaction-diffusion equations with a dissipative polynomial reaction, a one-sided monotone state-law coupling and finite-dimensional additive noise, we combine the deterministic law semiflow with the pathwise state evolution to construct a continuous random dynamical system on the product of the state space and its quadratic Wasserstein law space. An abstract product-space criterion, mixed-energy estimates, local regularity and uniform far-field estimates yield a unique pullback random attractor without global law contraction. Its law projection is the global attractor of the law semiflow, and its state fibers need not be singletons. Under an additional strict contraction condition, the law attractor reduces to the unique invariant law and the corresponding state fiber to a self-consistent random equilibrium. We also establish zero-noise upper semicontinuity and, in the contractive regime, quantitative convergence of the invariant laws and random equilibria with bounds linear in the noise amplitude.
\end{abstract}

\maketitle

\vspace{-0.12in}

\section{Introduction}\label{sec1}

We study the pathwise long-time dynamics of McKean-Vlasov stochastic reaction-diffusion equations on $\mathbb R^n$. Two structural obstacles arise: distribution dependence generally prevents the state solution map from satisfying a cocycle identity on the state space, while the embedding $H^1(\mathbb R^n)\hookrightarrow L^2(\mathbb R^n)$ is not compact. We address both through a state-law product formulation and study the resulting zero-noise stability. Fix integers $n,m\geq1$ and $\epsilon_0>0$. On a probability space $(\Omega,\mathcal F,\mathbb P)$, we consider
\begin{align}\label{Eq1}
	du(t,x)&-\Delta u(t,x)dt+\lambda u(t,x)dt
	+f(x,u(t,x))dt\nonumber\\
	&=G(x,u(t,x),\mathcal L u(t))dt+g(x)dt
	+\epsilon\sum_{j=1}^{m}h_j(x)dW_j(t),
	\qquad t>0,\ x\in\mathbb R^n,
\end{align}
with initial condition
\begin{align}\label{Eq2}
	u(0,x)=u_0(x),\qquad x\in\mathbb R^n.
\end{align}
Here $H=L^2(\mathbb R^n)$, $\lambda>0$, $\epsilon\in[0,\epsilon_0]$, and $u_0$ is a square-integrable $H$-valued initial random variable. The processes $W_1,\ldots,W_m$ are independent two-sided standard Brownian motions, and $h_1,\ldots,h_m$ are deterministic spatial profiles. The notation $\mathcal L u(t)$ denotes the unconditional Borel probability law of $u(t)$ on $H$. The function $f$ is a dissipative reaction of polynomial growth, $G$ couples the state to its law, and $g$ is a deterministic external force.

McKean-Vlasov equations arise in nonlinear diffusion and mean-field limits of weakly interacting particles; see \cite{McKean66pnas,Sznitman91lnm}. Infinite-dimensional formulations include semilinear stochastic evolution equations in Hilbert spaces \cite{Ahmed95spa} and macroscopic limits for McKean-Vlasov-type SPDEs \cite{Kotelenez10ptrf}. Decoupling methods and nonlinear PDEs on Wasserstein space were developed in \cite{Buckdahn17aop,Chaudru22jmpa}. Strong and weak well-posedness for distribution-dependent stochastic evolution equations under locally monotone hypotheses was established in \cite{Hong24aap}, extending the locally monotone SPDE framework of \cite{Liu10jfa}. Well-posedness and large deviations for fractional McKean-Vlasov stochastic reaction-diffusion equations on unbounded domains were obtained in \cite{Chen26amo}. For additive-noise McKean-Vlasov SDEs, \cite{Coghi20aap} develops a pathwise solution theory. Together, these works provide the finite-time foundation for the present analysis.

For pathwise long-time behavior, random dynamical systems and pullback random attractors provide the natural framework; see \cite{Brzez93ptrf,Crauel94,Crauel97jdde,Arnold98}. Variational random-attractor theories for broad classes of SPDEs with additive noise were developed in \cite{Gess11jde,Gess20jde}. On $\mathbb R^n$, however, the embedding $H^1(\mathbb R^n)\hookrightarrow L^2(\mathbb R^n)$ is not compact, so energy bounds alone do not imply asymptotic compactness. Related asymptotic-compactness mechanisms for stochastic flows on unbounded domains were established in \cite{Brzez06tams}; for reaction-diffusion equations, local compactness was combined with uniform far-field control in \cite{Wang99pd,Bates09jde}, and an abstract criterion for noncompact random dynamical systems was given in \cite{Wang12jde}. Variants of this method for Wong-Zakai approximations, nonlinear noise and fractional delay equations appear in \cite{Wang18jde,Wang19jde,Chen25spde}.

Much of the long-time theory for McKean-Vlasov equations is formulated at the level of probability laws. Quantitative examples include Harris-type contraction and phase-transition analysis \cite{Eberle19tams,Carrillo20arma}. For McKean-Vlasov reaction-diffusion equations on unbounded domains, pullback measure attractors, invariant measures and periodic measures on $\mathbb R^n$ were obtained in \cite{Shi24jde}, while uniform measure attractors on unbounded thin domains were studied in \cite{Zeng24}. On the torus, \cite{Chen26avg} proves an averaging principle and upper semicontinuity of pullback law attractors toward the global law attractor of the averaged equation. These results characterize asymptotic laws rather than compact invariant random sets for state trajectories along individual noise paths.

Passing from law asymptotics to pathwise invariant sets requires a cocycle. For \eqref{Eq1}, after a time shift the continuation is not determined by the current state and shifted noise alone; the evolved unconditional law must also be retained. A product-space approach to random attractors for McKean-Vlasov equations on a separable Hilbert state space and its law space was developed in \cite{Cheng25}, with applications to stochastic ordinary differential equations, reaction-diffusion equations and the two-dimensional Navier-Stokes equations. A complementary finite-dimensional construction in \cite{Gess25} couples a nonlinear Fokker-Planck equation with a rough-path state equation and permits merely measurable law solutions and degenerate diffusion coefficients. In the present variational setting on $\mathbb R^n$, the law coordinate is the deterministic semiflow generated by the self-consistent equation, while the state coordinate is the decoupled SPDE driven by the prescribed law trajectory. For the whole-space problem, one must additionally establish compactness in the product topology and control the zero-noise perturbation.

Accordingly, let $P=\mathcal P_2(H)$ be the space of Borel probability measures on $H$ with finite second moment, endowed with the $2$-Wasserstein metric $\mathcal W_2$. For $\mu\in P$, the self-consistent equation defines a deterministic nonlinear law semiflow $\varphi_P^\epsilon$. Along the prescribed law curve $t\mapsto\varphi_P^\epsilon(t)\mu$, the decoupled state equation defines a pathwise map $\varphi_H^\epsilon(t,\omega,x,\mu)$. On a fixed shift-invariant set $\Omega_0\subset\Omega$ of full $\mathbb P$-measure, set
\begin{align*}
	\Phi^\epsilon(t,\omega)(x,\mu)
	=
	\big(
	\varphi_H^\epsilon(t,\omega,x,\mu),
	\varphi_P^\epsilon(t)\mu
	\big)
\end{align*}
on $\mathbb X=H\times P$, equipped with the metric induced by the $H$-norm and $\mathcal W_2$. For a general pair $(x,\mu)$, the first coordinate is driven by the law trajectory generated by $\mu$, but its own law need not coincide with that trajectory. This triangular dependence yields the cocycle identity on the full product space.

The compactness argument treats the two coordinates separately. In the state coordinate, local $H^1$ bounds and compactness on bounded balls are combined with uniform far-field estimates. In the law coordinate, a radius-doubling recursion closes the self-referential tail estimate; the resulting local and tail bounds give tightness, while a uniform moment of order $q>2$ gives uniform integrability of second moments and hence relative compactness in $\mathcal W_2$. Together, these ingredients yield pullback asymptotic compactness in the product metric.

Zero-noise upper semicontinuity requires both finite-time convergence of the product cocycles, uniform on compact subsets of $\mathbb X$, and precompactness of the small-noise attractors. Synchronous coupling controls the law coordinate, while an Ornstein-Uhlenbeck transformation gives the pathwise state comparison. Under strict contraction, the contraction estimate further yields quantitative convergence of the invariant laws and the associated random equilibria.

The main results are as follows. First, the abstract product-space theory gives an existence and uniqueness criterion for pullback random attractors, identifies their law projections and state fibers, and provides an upper semicontinuity theorem based on finite-time convergence and precompactness.

Second, under the structural assumptions \textnormal{(A1)-(A4)} of Section~\ref{sec3}, we prove mixed-energy variational well-posedness, construct a continuous cocycle on $H\times\mathcal P_2(H)$, and obtain a unique pullback random attractor $\mathcal A^\epsilon$. Its law projection is the compact global attractor $\mathcal A_P^\epsilon$ of the law semiflow, and its state fibers satisfy the backward-history invariance and attraction relations. These conclusions require no global law contraction. Under the additional strict contraction assumption \textnormal{(A5)}, $\mathcal A_P^\epsilon=\{\mu_\epsilon^*\}$ for a unique invariant law, and the corresponding state fiber is a measurable random equilibrium $\xi_\epsilon$; hence
\begin{align*}
	\mathcal A^\epsilon(\omega)
	=\big\{(\xi_\epsilon(\omega),\mu_\epsilon^*)\big\},
	\qquad
	\mathbb P\circ\xi_\epsilon^{-1}=\mu_\epsilon^*.
\end{align*}

Third, let $\mathcal A^0$ denote the zero-noise attractor and let $\operatorname{dist}_{\mathbb X}$ be the one-sided Hausdorff semidistance induced by the product metric. Under \textnormal{(A1)-(A4)},
\begin{align*}
	\lim_{\epsilon\to0}
	\operatorname{dist}_{\mathbb X}
	\big(\mathcal A^\epsilon(\omega),\mathcal A^0(\omega)\big)=0,
	\qquad \omega\in\Omega_0.
\end{align*}
If \textnormal{(A5)} also holds, the zero-noise equation has a unique deterministic equilibrium $\bar u\in H$. Writing $\delta_{\bar u}$ for the Dirac measure at $\bar u$, we have $\mathcal A^0(\omega)=\{(\bar u,\delta_{\bar u})\}$ for every $\omega\in\Omega_0$. Moreover, Corollary~\ref{Cor4} gives $\mathcal W_2(\mu_\epsilon^*,\delta_{\bar u})=O(\epsilon)$ and $\big(\mathbb E\|\xi_\epsilon-\bar u\|^2\big)^{1/2}=O(\epsilon)$.

Section~\ref{sec2} develops the abstract product-space theory. Section~\ref{sec3} proves well-posedness and constructs the product cocycle. Section~\ref{sec4} establishes the pullback attractor and its projection-fiber structure. Section~\ref{sec5} treats the zero-noise limit.

\section{State-law product cocycles: attractors, fibers and perturbations}\label{sec2}

In this section, we study pullback random attractors for state-law product cocycles in an abstract setting. The product evolution is triangular: the law coordinate evolves autonomously and drives the pathwise state evolution, whereas the law evolution is independent of the state coordinate. We first give conditions under which this evolution defines a continuous product cocycle and establish a sufficient criterion for the existence of a unique pullback random attractor. We then identify the attractor's law projection, characterize its pathwise fibers, establish sufficient conditions for relative compactness in the Wasserstein and product topologies, and prove upper semicontinuity of the corresponding attractors under perturbations.

Let $(\Omega,\mathcal F,\mathbb P,\{\theta_t\}_{t\in\mathbb R})$ be a metric dynamical system, and let $X$ be a separable Banach space with norm $\|\cdot\|_X$. All pathwise statements are understood on a fixed measurable $\theta_t$-invariant set of full probability, denoted again by $\Omega$. Write $\mathbb R^+=[0,\infty)$, let $\mathcal P(X)$ be the set of Borel probability measures on $X$, and, for $r\geq1$, set
\begin{align*}
	\mathcal P_r(X)
	=
	\Big\{
	\mu\in\mathcal P(X):
	\int_X\|x\|_X^r\,\mu(dx)<\infty
	\Big\}.
\end{align*}
For $\mu,\nu\in\mathcal P_2(X)$, let
\begin{align*}
	\mathcal W_{2,X}^2(\mu,\nu)
	=
	\inf_{\pi\in\Pi(\mu,\nu)}
	\int_{X\times X}\|x-y\|_X^2\,\pi(dx,dy),
\end{align*}
where $\Pi(\mu,\nu)$ is the set of couplings of $\mu$ and $\nu$. Let $P$ be a complete separable metric subspace of $(\mathcal P_2(X),\mathcal W_{2,X})$ and set
\begin{align*}
	\mathbb X=X\times P,
	\qquad
	d_{\mathbb X}\big((x,\mu),(y,\nu)\big)
	=
	\|x-y\|_X+\mathcal W_{2,X}(\mu,\nu).
\end{align*}
Then $\mathbb X$ is a Polish space.

\medskip
\noindent\textbf{The product cocycle. }
Let
\begin{align*}
	\varphi_X:\mathbb R^+\times\Omega\times X\times P\to X,
	\qquad
	\varphi_P:\mathbb R^+\times P\to P.
\end{align*}
The first map is the pathwise state evolution along a prescribed law trajectory, and the second is the deterministic law semiflow. Assume the following conditions.
\begin{itemize}
	\item[(P1)] For every $\omega\in\Omega$, $x\in X$ and $\mu\in P$,
	\begin{align*}
		\varphi_X(0,\omega,x,\mu)=x,
		\qquad
		\varphi_P(0)\mu=\mu.
	\end{align*}
	
	\item[(P2)] For every $s,t\geq0$, $\omega\in\Omega$, $x\in X$ and $\mu\in P$,
	\begin{align*}
		\varphi_X(t+s,\omega,x,\mu)
		&=
		\varphi_X\big(
		t,\theta_s\omega,
		\varphi_X(s,\omega,x,\mu),
		\varphi_P(s)\mu
		\big),\\
		\varphi_P(t+s)\mu
		&=
		\varphi_P(t)\varphi_P(s)\mu.
	\end{align*}
	
	\item[(P3)] For every $t\geq0$ and $\omega\in\Omega$, the map
	\begin{align*}
		(x,\mu)
		\longmapsto
		\big(
		\varphi_X(t,\omega,x,\mu),
		\varphi_P(t)\mu
		\big)
	\end{align*}
	is continuous from $\mathbb X$ to $\mathbb X$.
	
	\item[(P4)] The map $\varphi_X$ is jointly measurable with respect to $\mathcal B(\mathbb R^+)\otimes\mathcal F\otimes\mathcal B(X)\otimes\mathcal B(P)$, and $\varphi_P$ is jointly Borel measurable.
\end{itemize}

\begin{lemma}\label{lemma1}
	Under \textnormal{(P1)-(P4)}, the map
	\begin{align}\label{lemma1-1}
		\Phi(t,\omega)(x,\mu)
		=
		\big(
		\varphi_X(t,\omega,x,\mu),
		\varphi_P(t)\mu
		\big),
		\qquad t\geq0,
	\end{align}
	defines a continuous random dynamical system on $\mathbb X$ over $\{\theta_t\}_{t\in\mathbb R}$.
\end{lemma}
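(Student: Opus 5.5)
We verify the defining properties of a continuous random dynamical system on the Polish space $(\mathbb X,d_{\mathbb X})$ in order: the identity at time zero, the cocycle identity, continuity in the phase variable, and joint measurability. The identity property is immediate from (P1), since $\Phi(0,\omega)(x,\mu)=(\varphi_X(0,\omega,x,\mu),\varphi_P(0)\mu)=(x,\mu)$.

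For the cocycle property we need $\Phi(t+s,\omega)=\Phi(t,\theta_s\omega)\circ\Phi(s,\omega)$ for all $s,t\ge0$. Fix $(x,\mu)$ and set $(y,\nu)=\Phi(s,\omega)(x,\mu)$, so that $y=\varphi_X(s,\omega,x,\mu)$ and $\nu=\varphi_P(s)\mu$. Then
\begin{align*}
\Phi(t,\theta_s\omega)(y,\nu)=\big(\varphi_X(t,\theta_s\omega,y,\nu),\varphi_P(t)\nu\big).
\end{align*}
By the first identity in (P2), the first component equals $\varphi_X(t+s,\omega,x,\mu)$. By the second identity in (P2), the second component equals $\varphi_P(t+s)\mu$. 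The triangular structure matters here: the state component after the time shift is fed the evolved law $\varphi_P(s)\mu$, and this is exactly what the product phase space records. Since $\Omega$ is taken $\theta_t$-invariant, $\theta_s\omega$ stays in the set on which (P1)–(P4) hold, so the identity holds for every $\omega$. Continuity of $\Phi(t,\omega)$ on $\mathbb X$ for each fixed $t,\omega$ is exactly (P3).

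The main step is measurability. We need $(t,\omega,(x,\mu))\mapsto\Phi(t,\omega)(x,\mu)$ to be $\mathcal B(\mathbb R^+)\otimes\mathcal F\otimes\mathcal B(\mathbb X)$ to $\mathcal B(\mathbb X)$ measurable. Two facts suffice.
\begin{enumerate}
\item Since $X$ and $P$ are separable metric spaces, $\mathcal B(\mathbb X)=\mathcal B(X)\otimes\mathcal B(P)$. The metric $d_{\mathbb X}$ induces the product topology, and for second-countable spaces the Borel $\sigma$-algebra of the product is the product $\sigma$-algebra.
\item A map into $X\times P$ is measurable into $\mathcal B(X)\otimes\mathcal B(P)$ if and only if each coordinate is measurable.
\end{enumerate}
The first coordinate $(t,\omega,x,\mu)\mapsto\varphi_X(t,\omega,x,\mu)$ is measurable by (P4). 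The second coordinate is the composition of the projection $(t,\omega,x,\mu)\mapsto(t,\mu)$, which is measurable, with the jointly Borel map $\varphi_P$ from (P4). Hence $\Phi$ is jointly measurable. Together with the identity, cocycle and continuity properties, this shows that $\Phi$ is a continuous random dynamical system over $\{\theta_t\}_{t\in\mathbb R}$.

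The only genuinely delicate point is the identification $\mathcal B(\mathbb X)=\mathcal B(X)\otimes\mathcal B(P)$. It relies on the separability of both $X$ and $P$, which holds by assumption.
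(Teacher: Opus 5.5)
Your proposal is correct and follows the paper's proof: (P1) and (P2) give the identity and cocycle properties, (P3) gives continuity in the phase variable, and (P4) gives joint measurability. You also make the measurability step explicit through $\mathcal B(X\times P)=\mathcal B(X)\otimes\mathcal B(P)$ for separable metric spaces, which the paper leaves implicit; the same identification for $\mathbb R^+\times P$ is what lets you compose with the jointly Borel map $\varphi_P$.
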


\begin{proof}
	Conditions \textnormal{(P1)} and \textnormal{(P2)} give $\Phi(0,\omega)=\operatorname{id}_{\mathbb X}$ and
	\begin{align*}
		\Phi(t+s,\omega)
		=
		\Phi(t,\theta_s\omega)\circ\Phi(s,\omega),
		\qquad s,t\geq0.
	\end{align*}
	Condition \textnormal{(P3)} gives continuity in the phase variable, and \textnormal{(P4)} gives the required joint measurability.
\end{proof}

\begin{remark}
	The law coordinate in \eqref{lemma1-1} is structural: after a restart, the current state and shifted noise do not determine the continuation unless the evolved law is retained.
\end{remark}

\medskip
\noindent\textbf{Pullback random attractors in the product space. }
Let $\mathfrak D_X$ be an inclusion-closed collection of families $B_X=\{B_X(\omega)\}_{\omega\in\Omega}$ of nonempty subsets of $X$, and let $\mathfrak D_P$ be an inclusion-closed collection of nonempty subsets of $P$. The induced product universe $\mathfrak D$ consists of all families $D=\{D(\omega)\}_{\omega\in\Omega}$ of nonempty subsets of $\mathbb X$ for which there exist $B_X\in\mathfrak D_X$ and $B_P\in\mathfrak D_P$ such that
\begin{align}\label{ProductUniverse}
	D(\omega)\subset B_X(\omega)\times B_P,
	\qquad \omega\in\Omega.
\end{align}
Then $\mathfrak D$ is inclusion closed.

For $z\in\mathbb X$ and nonempty $C\subset\mathbb X$, set
$d_{\mathbb X}(z,C)=\inf_{a\in C}d_{\mathbb X}(z,a)$.
For nonempty $C_1,C_2\subset\mathbb X$, write
$\operatorname{dist}_{\mathbb X}(C_1,C_2)
=\sup_{z\in C_1}d_{\mathbb X}(z,C_2)$.
The analogous point-to-set distances and semidistances in $X$ and $P$ are denoted by $d_X$, $d_P$, $\operatorname{dist}_X$ and $\operatorname{dist}_P$.

A family $K=\{K(\omega)\}_{\omega\in\Omega}$ of nonempty closed subsets of $\mathbb X$ is a random closed set if $\omega\mapsto d_{\mathbb X}(z,K(\omega))$ is measurable for every $z\in\mathbb X$. It is $\mathfrak D$-pullback absorbing for $\Phi$ if, for every $D\in\mathfrak D$ and $\omega\in\Omega$, there exists $T_D(\omega)\geq0$ such that
\begin{align*}
	\Phi(t,\theta_{-t}\omega)D(\theta_{-t}\omega)
	\subset K(\omega),
	\qquad t\geq T_D(\omega).
\end{align*}
The cocycle $\Phi$ is $\mathfrak D$-pullback asymptotically compact if, whenever $D\in\mathfrak D$, $\omega\in\Omega$, $t_n\to\infty$ and $z_n\in D(\theta_{-t_n}\omega)$, the sequence
$\Phi(t_n,\theta_{-t_n}\omega)z_n$
has a convergent subsequence in $\mathbb X$.

A random compact set $\mathcal A=\{\mathcal A(\omega)\}_{\omega\in\Omega}$ is a $\mathfrak D$-pullback random attractor for $\Phi$ if $\mathcal A\in\mathfrak D$ and
\begin{gather*}
	\Phi(t,\omega)\mathcal A(\omega)
	=
	\mathcal A(\theta_t\omega),
	\qquad t\geq0,\\
	\lim_{t\to\infty}
	\operatorname{dist}_{\mathbb X}\big(
	\Phi(t,\theta_{-t}\omega)D(\theta_{-t}\omega),
	\mathcal A(\omega)
	\big)
	=0,
	\qquad D\in\mathfrak D.
\end{gather*}
The condition $\mathcal A\in\mathfrak D$ fixes the universe in which uniqueness is asserted.

\begin{theorem}\label{theorem1}
	Let $\Phi$ be a continuous random dynamical system on $\mathbb X$. Suppose that $\Phi$ has a $\mathfrak D$-pullback absorbing random closed set $K\in\mathfrak D$ and is $\mathfrak D$-pullback asymptotically compact. Then $\Phi$ has a unique $\mathfrak D$-pullback random attractor given by
	\begin{align}\label{theorem1-1}
		\mathcal A(\omega)
		=
		\bigcap_{\tau\geq0}
		\overline{
			\bigcup_{t\geq\tau}
			\Phi(t,\theta_{-t}\omega)K(\theta_{-t}\omega)
		}^{\mathbb X},
		\qquad \omega\in\Omega.
	\end{align}
	Moreover, $\mathcal A(\omega)\subset K(\omega)$ for every $\omega\in\Omega$.
\end{theorem}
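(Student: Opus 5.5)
This is the standard existence theorem for pullback random attractors (Crauel–Flandoli, Bates–Lu–Wang, Wang's abstract criterion) applied on the Polish space $\mathbb X$. I will follow that route and only check that nothing uses more than what holds on $\mathbb X$ with the product universe $\mathfrak D$.

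\textbf{Step 1: the omega-limit set.} Fix $\omega$ and define $\mathcal A(\omega)$ by \eqref{theorem1-1}. Equivalently, $\mathcal A(\omega)$ is the set of limits of sequences $\Phi(t_n,\theta_{-t_n}\omega)z_n$ with $t_n\to\infty$ and $z_n\in K(\theta_{-t_n}\omega)$.

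\emph{Nonempty.} Since $K\in\mathfrak D$, asymptotic compactness applies with $D=K$, so such sequences have convergent subsequences.

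\emph{Compact.} Take $y_k\in\mathcal A(\omega)$ and approximate each by $\Phi(t_k,\theta_{-t_k}\omega)z_k$ within $1/k$, with $t_k\geq k$. Asymptotic compactness gives a convergent subsequence, so $\mathcal A(\omega)$ is sequentially compact; it is closed by construction.

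\emph{Inside $K(\omega)$.} Because $K$ is absorbing for itself, $\Phi(t,\theta_{-t}\omega)K(\theta_{-t}\omega)\subset K(\omega)$ for $t\geq T_K(\omega)$. Since $K(\omega)$ is closed, $\mathcal A(\omega)\subset K(\omega)$. This also gives $\mathcal A\in\mathfrak D$ by inclusion-closedness.

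\textbf{Step 2: attraction of every $D\in\mathfrak D$.} Argue by contradiction. Suppose there are $\delta>0$, $t_n\to\infty$ and $z_n\in D(\theta_{-t_n}\omega)$ with
\[
d_{\mathbb X}\big(\Phi(t_n,\theta_{-t_n}\omega)z_n,\mathcal A(\omega)\big)\geq\delta .
\]
Split $t_n=T+(t_n-T)$ with $T=T_D(\theta_{-T}\cdot)$ chosen suitably. More cleanly, write
\[
\Phi(t_n,\theta_{-t_n}\omega)z_n=\Phi(s_n,\theta_{-s_n}\omega)w_n,\qquad s_n=t_n-\tau,
\]
with $w_n=\Phi(\tau,\theta_{-t_n}\omega)z_n$. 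Absorption at the fiber $\theta_{-s_n}\omega$ needs a time that is uniform in $n$, which is not available directly. The standard fix is to use the cocycle the other way. For each fixed $\tau$, for large $n$ we have $t_n-\tau\geq T_D(\theta_{-\tau}\omega)$, so
\[
\Phi(t_n-\tau,\theta_{-t_n}\omega)z_n\in K(\theta_{-\tau}\omega).
\]
Hence $\Phi(t_n,\theta_{-t_n}\omega)z_n\in\Phi(\tau,\theta_{-\tau}\omega)K(\theta_{-\tau}\omega)$. A diagonal choice $\tau=\tau_n\to\infty$ then places the limit point, which exists by asymptotic compactness for $D$, inside $\mathcal A(\omega)$. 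This contradicts the assumption.

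\textbf{Step 3: invariance.}

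\emph{Forward inclusion.} Let $y=\lim\Phi(t_n,\theta_{-t_n}\omega)z_n$. Continuity (P3) and the cocycle identity of Lemma~\ref{lemma1} give
\[
\Phi(t,\omega)y=\lim\Phi(t+t_n,\theta_{-(t+t_n)}\theta_t\omega)z_n\in\mathcal A(\theta_t\omega).
\]

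\emph{Reverse inclusion.} Given $y\in\mathcal A(\theta_t\omega)$ as a limit along $t_n$, write
\[
\Phi(t_n,\theta_{-t_n}\theta_t\omega)z_n=\Phi(t,\omega)\,\Phi(t_n-t,\theta_{-(t_n-t)}\omega)z_n .
\]
Extract a convergent subsequence of the inner terms by asymptotic compactness; its limit lies in $\mathcal A(\omega)$. Continuity then gives $y\in\Phi(t,\omega)\mathcal A(\omega)$.

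\textbf{Step 4: measurability.} I expect this to be the main obstacle. Since $\mathbb X$ is Polish and $K$ is a random closed set, I will show that $\omega\mapsto d_{\mathbb X}(z,\mathcal A(\omega))$ is measurable. Replace the union over $t\geq\tau$ by rational $t$ and $K(\theta_{-t}\omega)$ by a Castaing representation of measurable selections. Continuity in $t$ of $\Phi$ is not assumed, so I would instead use attraction from Step 2 to show that the omega-limit over integer times $t\in\mathbb N$ already equals $\mathcal A(\omega)$. Then the images $\Phi(k,\theta_{-k}\omega)K(\theta_{-k}\omega)$ are random closed-set hulls, measurable by (P4) and the Castaing representation. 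Countable unions, closures and decreasing intersections of compact-valued sets preserve measurability on Polish spaces. An alternative is to cite the abstract result of \cite{Wang12jde}, whose hypotheses are exactly those stated here.

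\textbf{Step 5: uniqueness.} If $\mathcal A'\in\mathfrak D$ is another attractor, invariance gives
\[
\mathcal A'(\omega)=\Phi(t,\theta_{-t}\omega)\mathcal A'(\theta_{-t}\omega),
\]
and attraction by $\mathcal A$ forces $\operatorname{dist}_{\mathbb X}(\mathcal A'(\omega),\mathcal A(\omega))=0$. Closedness then yields $\mathcal A'(\omega)\subset\mathcal A(\omega)$, and by symmetry the two sets are equal.
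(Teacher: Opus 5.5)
Your proposal is correct and follows the same route as the paper: the attractor is the pullback omega-limit of $K$, attraction comes from absorbing into $K(\theta_{-r}\omega)$ and splitting $\Phi(t_n,\theta_{-t_n}\omega)=\Phi(r,\theta_{-r}\omega)\circ\Phi(t_n-r,\theta_{-t_n}\omega)$, and uniqueness comes from mutual attraction, while your Steps 3--4 spell out invariance and measurability where the paper simply cites Crauel--Flandoli, Arnold and Wang. Two small fixes in Step 4: first, the equality of $\mathcal A(\omega)$ with the integer-time omega-limit follows from strict invariance together with $\mathcal A\subset K$, not from attraction. Second, the closed tails are not compact, so passing $d_{\mathbb X}(z,\cdot)$ through their decreasing intersection requires asymptotic compactness applied to $K$.
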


\begin{proof}
	The set in \eqref{theorem1-1} is the pullback omega-limit of $K$. Since $K\in\mathfrak D$, pullback asymptotic compactness applied to $K$ gives the usual sequential compactness of its late pullback images. The standard omega-limit argument therefore shows that $\mathcal A(\omega)$ is nonempty and compact. Joint measurability of $\Phi$, continuity in the phase variable and measurability of $K$ imply that $\mathcal A$ is a random compact set; see \cite{Crauel94,Arnold98,Wang12jde}. The cocycle identity and continuity give strict invariance.
	
	It remains to prove pullback attraction. Suppose that attraction fails for some $D\in\mathfrak D$ and $\omega\in\Omega$. Then there exist $\eta>0$, $t_n\to\infty$ and $z_n\in D(\theta_{-t_n}\omega)$ such that, with
	$w_n=\Phi(t_n,\theta_{-t_n}\omega)z_n$,
	one has $d_{\mathbb X}(w_n,\mathcal A(\omega))\geq\eta$. Pullback asymptotic compactness gives, after passing to a subsequence, $w_n\to w$ in $\mathbb X$. Fix $r>0$. For all sufficiently large $n$,
	$t_n-r\geq T_D(\theta_{-r}\omega)$,
	and hence
	\begin{align*}
		\Phi(t_n-r,\theta_{-t_n}\omega)z_n
		\in K(\theta_{-r}\omega).
	\end{align*}
	The cocycle identity gives
	$w_n=\Phi(r,\theta_{-r}\omega)\Phi(t_n-r,\theta_{-t_n}\omega)z_n$,
	so
	\begin{align*}
		w\in
		\overline{\Phi(r,\theta_{-r}\omega)K(\theta_{-r}\omega)}^{\mathbb X}.
	\end{align*}
	For each $\tau\geq0$, choosing $r\geq\tau$ shows that $w$ belongs to the corresponding closed tail in \eqref{theorem1-1}; thus $w\in\mathcal A(\omega)$, a contradiction.
	
	Since $K\in\mathfrak D$, it absorbs itself. Therefore, for every $\omega$, all sufficiently late pullback images of $K$ lie in the closed set $K(\omega)$, and \eqref{theorem1-1} gives $\mathcal A(\omega)\subset K(\omega)$. Inclusion closure yields $\mathcal A\in\mathfrak D$. Finally, if $\mathcal A_1$ and $\mathcal A_2$ are two $\mathfrak D$-pullback random attractors, strict invariance and attraction of $\mathcal A_1$ by $\mathcal A_2$ give $\operatorname{dist}_{\mathbb X}(\mathcal A_1(\omega),\mathcal A_2(\omega))=0$; reversing the roles gives the opposite inclusion. Hence $\mathcal A_1=\mathcal A_2$.
\end{proof}

\medskip
\noindent\textbf{Law projection and pathwise fibers. }
A nonempty compact set $\mathcal A_P\subset P$ is a $\mathfrak D_P$-global attractor for $\varphi_P$ if
\begin{gather*}
	\varphi_P(t)\mathcal A_P=\mathcal A_P,
	\qquad t\geq0,\\
	\lim_{t\to\infty}
	\operatorname{dist}_P\big(
	\varphi_P(t)B_P,\mathcal A_P
	\big)=0,
	\qquad B_P\in\mathfrak D_P.
\end{gather*}
For $\mu\in\mathcal A_P$ and $t\geq0$, define the set of backward law histories by
\begin{align}\label{BackwardLawHistory}
	\Theta_{-t}\mu
	=
	\big\{
	\nu\in\mathcal A_P:
	\varphi_P(t)\nu=\mu
	\big\}.
\end{align}
It is nonempty and compact because $\mathcal A_P$ is compact and invariant and $\varphi_P(t)$ is continuous.

\begin{proposition}\label{PrLawProjection}
	Suppose that $\Phi$ has a $\mathfrak D$-pullback random attractor $\mathcal A$, that $\varphi_P$ has a $\mathfrak D_P$-global attractor $\mathcal A_P\in\mathfrak D_P$, and that $\mathfrak D_X$ is nonempty. Then
	\begin{align}\label{LawProjectionIdentity}
		\Pi_P\mathcal A(\omega)=\mathcal A_P,
		\qquad \omega\in\Omega,
	\end{align}
	where $\Pi_X(x,\mu)=x$ and $\Pi_P(x,\mu)=\mu$.
\end{proposition}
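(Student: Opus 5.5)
We prove the two inclusions $\Pi_P\mathcal A(\omega)\subset\mathcal A_P$ and $\mathcal A_P\subset\Pi_P\mathcal A(\omega)$ separately. In both directions we use that the law coordinate of $\Phi$ is the autonomous semiflow, $\Pi_P\Phi(t,\omega)(x,\mu)=\varphi_P(t)\mu$, so that projecting any pullback image of a product set onto $P$ gives a forward image under $\varphi_P$.

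\emph{Inclusion $\Pi_P\mathcal A(\omega)\subset\mathcal A_P$.} Since $\mathcal A\in\mathfrak D$, there are $B_X\in\mathfrak D_X$ and $B_P\in\mathfrak D_P$ with $\mathcal A(\omega)\subset B_X(\omega)\times B_P$. Fix $(x,\mu)\in\mathcal A(\omega)$ and $t\geq0$. Strict invariance gives $\mathcal A(\omega)=\Phi(t,\theta_{-t}\omega)\mathcal A(\theta_{-t}\omega)$, so $\mu=\varphi_P(t)\nu_t$ for some $\nu_t\in\Pi_P\mathcal A(\theta_{-t}\omega)\subset B_P$. Hence
\begin{align*}
	d_P(\mu,\mathcal A_P)\leq\operatorname{dist}_P\big(\varphi_P(t)B_P,\mathcal A_P\big).
\end{align*}
The right-hand side tends to $0$ as $t\to\infty$ because $\mathcal A_P$ attracts $B_P\in\mathfrak D_P$. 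Since $\mathcal A_P$ is compact, hence closed, we conclude $\mu\in\mathcal A_P$.

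\emph{Inclusion $\mathcal A_P\subset\Pi_P\mathcal A(\omega)$.} Here we use that $\mathfrak D_X$ is nonempty: pick $B_X\in\mathfrak D_X$ and, for each $\omega'$, a point $b(\omega')\in B_X(\omega')$. The family $D(\omega')=\{b(\omega')\}\times\mathcal A_P$ belongs to $\mathfrak D$, because $\mathcal A_P\in\mathfrak D_P$ and the universe is inclusion closed. No measurability of $D$ is needed, since the attraction property in the definition of the attractor is pathwise for every $D\in\mathfrak D$. Fix $\mu\in\mathcal A_P$ and $t_n\to\infty$. By invariance of $\mathcal A_P$ there is $\nu_n\in\Theta_{-t_n}\mu$, that is, $\nu_n\in\mathcal A_P$ with $\varphi_P(t_n)\nu_n=\mu$. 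Set
\begin{align*}
	w_n=\Phi(t_n,\theta_{-t_n}\omega)\big(b(\theta_{-t_n}\omega),\nu_n\big)=(x_n,\mu).
\end{align*}
Attraction gives $d_{\mathbb X}(w_n,\mathcal A(\omega))\to0$. Choose $a_n=(y_n,\rho_n)\in\mathcal A(\omega)$ with $d_{\mathbb X}(w_n,a_n)\to0$. Compactness of $\mathcal A(\omega)$ yields a subsequence with $a_n\to(y,\rho)\in\mathcal A(\omega)$. Since $\mathcal W_{2,X}(\mu,\rho_n)\leq d_{\mathbb X}(w_n,a_n)\to0$, we get $\rho=\mu$, so $\mu\in\Pi_P\mathcal A(\omega)$.

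The main obstacle is the second inclusion: attraction of a product family only controls distances, so the task is to build test families whose law component is held exactly fixed at $\mu$. The backward law histories $\Theta_{-t}\mu$ provide this, and the nonemptiness of $\mathfrak D_X$ provides the dummy state component. Beyond that, it only remains to check that the product metric dominates $\mathcal W_{2,X}$ on the second coordinate, which is immediate from the definition of $d_{\mathbb X}$.
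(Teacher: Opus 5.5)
Your proof is correct and follows the paper's argument essentially step by step. The first inclusion uses strict invariance together with attraction of $B_P$, and the second uses the backward law histories $\Theta_{-t_n}\mu$, attraction of a test family with law component $\mathcal A_P$, and compactness of $\mathcal A(\omega)$; your singleton family $\{b(\omega')\}\times\mathcal A_P$ in place of the paper's $B_X^0(\omega')\times\mathcal A_P$ is only an inessential variation.
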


\begin{proof}
	Since $\mathcal A\in\mathfrak D$, there exist $B_X\in\mathfrak D_X$ and $B_P\in\mathfrak D_P$ such that
	\begin{align*}
		\mathcal A(\omega)
		\subset B_X(\omega)\times B_P,
		\qquad \omega\in\Omega.
	\end{align*}
	Strict invariance gives, for every $t\geq0$,
	\begin{align*}
		\Pi_P\mathcal A(\omega)
		=
		\varphi_P(t)\Pi_P\mathcal A(\theta_{-t}\omega)
		\subset
		\varphi_P(t)B_P.
	\end{align*}
	Therefore
	\begin{align*}
		\operatorname{dist}_P\big(
		\Pi_P\mathcal A(\omega),\mathcal A_P
		\big)
		\leq
		\operatorname{dist}_P\big(
		\varphi_P(t)B_P,\mathcal A_P
		\big)
		\longrightarrow0,
	\end{align*}
	and the closedness of $\mathcal A_P$ yields
	$\Pi_P\mathcal A(\omega)\subset\mathcal A_P$.
	
	For the reverse inclusion, fix $\mu\in\mathcal A_P$, choose $B_X^0\in\mathfrak D_X$, and let $t_n\to\infty$. By invariance of $\mathcal A_P$, choose $\nu_n\in\Theta_{-t_n}\mu$ and $x_n\in B_X^0(\theta_{-t_n}\omega)$. Set
	\begin{align*}
		z_n=\Phi(t_n,\theta_{-t_n}\omega)(x_n,\nu_n)
		=
		\big(
		\varphi_X(t_n,\theta_{-t_n}\omega,x_n,\nu_n),
		\mu
		\big).
	\end{align*}
	The family $D^0(\omega')=B_X^0(\omega')\times\mathcal A_P$ belongs to $\mathfrak D$. Pullback attraction gives
	\begin{align*}
		d_{\mathbb X}\big(z_n,\mathcal A(\omega)\big)
		\longrightarrow0.
	\end{align*}
	Choose $a_n\in\mathcal A(\omega)$ such that
	\begin{align*}
		d_{\mathbb X}(z_n,a_n)
		\leq
		d_{\mathbb X}\big(z_n,\mathcal A(\omega)\big)+\frac1n.
	\end{align*}
	A subsequence of $a_n$ converges in the compact set $\mathcal A(\omega)$, and the corresponding $z_n$ converges to the same limit. Its law coordinate is $\mu$, so $\mu\in\Pi_P\mathcal A(\omega)$.
\end{proof}

For $\mu\in\mathcal A_P$, define the nonempty compact fiber
\begin{align}\label{FiberDefinition}
	A(\omega,\mu)
	=
	\big\{
	x\in X:(x,\mu)\in\mathcal A(\omega)
	\big\}.
\end{align}
For $C\subset X$, use the notation
\begin{align*}
	\varphi_X(t,\omega,C,\mu)
	=
	\big\{
	\varphi_X(t,\omega,x,\mu):x\in C
	\big\}.
\end{align*}

\begin{theorem}\label{theorem2}
	Under the assumptions of Proposition~\ref{PrLawProjection}, for every $t\geq0$, $\omega\in\Omega$ and $\mu\in\mathcal A_P$,
	\begin{align}\label{theorem2-1}
		A(\omega,\mu)
		=
		\bigcup_{\nu\in\Theta_{-t}\mu}
		\varphi_X\big(
		t,\theta_{-t}\omega,
		A(\theta_{-t}\omega,\nu),
		\nu
		\big).
	\end{align}
	Moreover, for every $B\in\mathfrak D_X$,
	\begin{align}\label{theorem2-2}
		\lim_{t\to\infty}
		\sup_{\nu\in\Theta_{-t}\mu}
		\operatorname{dist}_X\left(
		\varphi_X\big(
		t,\theta_{-t}\omega,
		B(\theta_{-t}\omega),\nu
		\big),
		A(\omega,\mu)
		\right)
		=0.
	\end{align}
\end{theorem}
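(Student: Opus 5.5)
We prove \eqref{theorem2-1} by two inclusions, both coming from strict invariance $\Phi(t,\theta_{-t}\omega)\mathcal A(\theta_{-t}\omega)=\mathcal A(\omega)$ combined with the triangular form of $\Phi$ and the identity $\Pi_P\mathcal A(\omega')=\mathcal A_P$ from Proposition~\ref{PrLawProjection}.

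\emph{Inclusion ``$\supset$''.} Take $\nu\in\Theta_{-t}\mu$ and $y\in A(\theta_{-t}\omega,\nu)$. Then $(y,\nu)\in\mathcal A(\theta_{-t}\omega)$. Invariance gives
\begin{align*}
\Phi(t,\theta_{-t}\omega)(y,\nu)=\big(\varphi_X(t,\theta_{-t}\omega,y,\nu),\mu\big)\in\mathcal A(\omega),
\end{align*}
so the first coordinate lies in $A(\omega,\mu)$.

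\emph{Inclusion ``$\subset$''.} Take $x\in A(\omega,\mu)$. By surjectivity in the invariance relation there is $(y,\nu)\in\mathcal A(\theta_{-t}\omega)$ with $\Phi(t,\theta_{-t}\omega)(y,\nu)=(x,\mu)$. Then $\varphi_P(t)\nu=\mu$, and $\nu\in\Pi_P\mathcal A(\theta_{-t}\omega)=\mathcal A_P$, so $\nu\in\Theta_{-t}\mu$. Also $y\in A(\theta_{-t}\omega,\nu)$ and $x=\varphi_X(t,\theta_{-t}\omega,y,\nu)$.

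\emph{Attraction \eqref{theorem2-2}.} The attraction statement is the only point needing care, since fibers over a fixed $\mu$ are not attracting sets in general: an arbitrary point of $\mathcal A(\omega)$ near the image need not have law coordinate $\mu$. Argue by contradiction, as in Theorem~\ref{theorem1} and Proposition~\ref{PrLawProjection}. Suppose there are $\eta>0$, $t_n\to\infty$, $\nu_n\in\Theta_{-t_n}\mu$ and $x_n\in B(\theta_{-t_n}\omega)$ such that
\begin{align*}
d_X\big(\varphi_X(t_n,\theta_{-t_n}\omega,x_n,\nu_n),A(\omega,\mu)\big)\geq\eta.
\end{align*}
The family $D(\omega')=B(\omega')\times\mathcal A_P$ belongs to $\mathfrak D$, because $\mathcal A_P\in\mathfrak D_P$. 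Set $z_n=\Phi(t_n,\theta_{-t_n}\omega)(x_n,\nu_n)=(\varphi_X(\cdots),\mu)$. Pullback attraction gives $a_n\in\mathcal A(\omega)$ with $d_{\mathbb X}(z_n,a_n)\to0$. By compactness of $\mathcal A(\omega)$, a subsequence satisfies $a_n\to a=(x^*,\mu^*)\in\mathcal A(\omega)$, hence $z_n\to a$ along it. Because the law coordinate of every $z_n$ equals $\mu$, we get $\mu^*=\mu$. The fact that the law coordinate of $z_n$ is exactly $\mu$ is what resolves the difficulty just noted. It gives $x^*\in A(\omega,\mu)$ and $\varphi_X(t_n,\theta_{-t_n}\omega,x_n,\nu_n)\to x^*$, which contradicts the assumed lower bound $\eta$.

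The supremum over $\nu\in\Theta_{-t}\mu$ is handled automatically, because $\nu_n$ was chosen as a near-maximizer for each $n$.
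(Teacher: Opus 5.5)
Your proposal is correct and follows the paper's proof essentially step for step. The section identity \eqref{theorem2-1} comes from strict invariance $\Phi(t,\theta_{-t}\omega)\mathcal A(\theta_{-t}\omega)=\mathcal A(\omega)$ together with $\Pi_P\mathcal A(\theta_{-t}\omega)=\mathcal A_P$, which the paper uses implicitly and you make explicit in the ``$\subset$'' inclusion. The attraction \eqref{theorem2-2} comes from the same contradiction argument: pullback attraction of $B\times\mathcal A_P\in\mathfrak D$, near-minimizers in the compact set $\mathcal A(\omega)$, and the fact that the image points have law coordinate exactly $\mu$.
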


\begin{proof}
	Taking the $\mu$-section of
	\begin{align*}
		\mathcal A(\omega)
		=
		\Phi(t,\theta_{-t}\omega)
		\mathcal A(\theta_{-t}\omega)
	\end{align*}
	gives \eqref{theorem2-1}, because an image point has law coordinate $\mu$ exactly when its initial law belongs to $\Theta_{-t}\mu$.
	
	To prove \eqref{theorem2-2}, fix $B\in\mathfrak D_X$. Then
	\begin{align*}
		D_B(\omega')=B(\omega')\times\mathcal A_P
	\end{align*}
	belongs to $\mathfrak D$. If \eqref{theorem2-2} failed, there would exist $\eta>0$, $t_n\to\infty$, $\nu_n\in\Theta_{-t_n}\mu$ and $x_n\in B(\theta_{-t_n}\omega)$ such that, with
	\begin{align*}
		y_n
		=
		\varphi_X(t_n,\theta_{-t_n}\omega,x_n,\nu_n),
	\end{align*}
	one has
	\begin{align*}
		d_X\big(y_n,A(\omega,\mu)\big)\geq\eta.
	\end{align*}
	Since $\varphi_P(t_n)\nu_n=\mu$,
	\begin{align*}
		(y_n,\mu)
		\in
		\Phi(t_n,\theta_{-t_n}\omega)
		D_B(\theta_{-t_n}\omega).
	\end{align*}
	Product pullback attraction gives
	\begin{align*}
		d_{\mathbb X}\big(
		(y_n,\mu),\mathcal A(\omega)
		\big)
		\longrightarrow0.
	\end{align*}
	Choose $a_n=(\widetilde y_n,\lambda_n)\in\mathcal A(\omega)$ such that
	\begin{align*}
		d_{\mathbb X}\big((y_n,\mu),a_n\big)
		\leq
		d_{\mathbb X}\big((y_n,\mu),\mathcal A(\omega)\big)+\frac1n.
	\end{align*}
	After passing to a subsequence, compactness gives
	$a_n\to(\widetilde y,\lambda)\in\mathcal A(\omega)$. The preceding estimate yields $\lambda=\mu$ and $y_n\to\widetilde y$ in $X$. Thus $\widetilde y\in A(\omega,\mu)$, contradicting the lower bound above.
\end{proof}

No measurable selection of backward law histories is required in \eqref{theorem2-1}-\eqref{theorem2-2}.

\begin{corollary}\label{corollary1}
	Under the assumptions of Proposition~\ref{PrLawProjection}, suppose that $\mathcal A_P=\{\mu^*\}$. Then $\varphi_P(t)\mu^*=\mu^*$ for every $t\geq0$, and
	\begin{align}\label{corollary1-1}
		\psi(t,\omega)x
		=
		\varphi_X(t,\omega,x,\mu^*)
	\end{align}
	defines a continuous random dynamical system on $X$. Moreover,
	\begin{align}\label{corollary1-2}
		\mathcal A(\omega)
		=
		A_X(\omega)\times\{\mu^*\},
		\qquad
		A_X(\omega)=\Pi_X\mathcal A(\omega),
	\end{align}
	and $A_X$ is the unique $\mathfrak D_X$-pullback random attractor for $\psi$.
\end{corollary}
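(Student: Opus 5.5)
First, invariance of $\mu^*$: by the global attractor property, $\varphi_P(t)\mathcal A_P=\mathcal A_P$, so $\varphi_P(t)\mu^*=\mu^*$ for every $t\ge0$. Substituting this into (P1)--(P2) gives $\psi(0,\omega)=\operatorname{id}_X$ and
\[
\psi(t+s,\omega)x=\varphi_X\big(t,\theta_s\omega,\varphi_X(s,\omega,x,\mu^*),\varphi_P(s)\mu^*\big)=\psi(t,\theta_s\omega)\psi(s,\omega)x .
\]
Continuity of $x\mapsto\psi(t,\omega)x$ follows from (P3) restricted to the slice $X\times\{\mu^*\}$. Joint measurability of $(t,\omega,x)\mapsto\psi(t,\omega)x$ follows from (P4) composed with the measurable map $(t,\omega,x)\mapsto(t,\omega,x,\mu^*)$. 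Hence $\psi$ is a continuous RDS on $X$.

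Next, \eqref{corollary1-2}. By Proposition~\ref{PrLawProjection}, $\Pi_P\mathcal A(\omega)=\{\mu^*\}$, so every point of $\mathcal A(\omega)$ has the form $(x,\mu^*)$. Therefore $\mathcal A(\omega)=A(\omega,\mu^*)\times\{\mu^*\}$ with $A(\omega,\mu^*)=\Pi_X\mathcal A(\omega)=A_X(\omega)$.

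Now the attractor properties of $A_X$.
\begin{itemize}
\item[(a)] \emph{Compactness and measurability.} $A_X(\omega)$ is compact as a continuous projection of a compact set. It is a random set because $d_X(x,A_X(\omega))=d_{\mathbb X}((x,\mu^*),\mathcal A(\omega))$; this identity holds since $\mathcal W_{2,X}(\mu^*,\mu^*)=0$ and every point of $\mathcal A(\omega)$ has law coordinate $\mu^*$.
\item[(b)] \emph{Membership in $\mathfrak D_X$.} Since $\mathcal A\in\mathfrak D$, there is $B_X\in\mathfrak D_X$ with $A_X(\omega)\subset B_X(\omega)$, and inclusion closedness gives $A_X\in\mathfrak D_X$.
\item[(c)] \emph{Invariance.} Here $\Theta_{-t}\mu^*=\{\mu^*\}$, so \eqref{theorem2-1} of Theorem~\ref{theorem2} reads $A_X(\omega)=\psi(t,\theta_{-t}\omega)A_X(\theta_{-t}\omega)$. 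Replacing $\omega$ by $\theta_t\omega$ gives $\psi(t,\omega)A_X(\omega)=A_X(\theta_t\omega)$.
\item[(d)] \emph{Attraction.} \eqref{theorem2-2} with $\Theta_{-t}\mu^*=\{\mu^*\}$ is exactly $\mathfrak D_X$-pullback attraction of $A_X$ under $\psi$.
\end{itemize}

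Uniqueness follows the end of the proof of Theorem~\ref{theorem1}. If $A'$ is another $\mathfrak D_X$-pullback random attractor of $\psi$, then $A'\in\mathfrak D_X$ is attracted by $A_X$, and invariance of $A'$ gives
\[
\operatorname{dist}_X(A'(\omega),A_X(\omega))=\operatorname{dist}_X\big(\psi(t,\theta_{-t}\omega)A'(\theta_{-t}\omega),A_X(\omega)\big)\to0 .
\]
Closedness gives $A'\subset A_X$; exchanging the roles gives equality.

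The only point needing care is the measurability and identification in step (a), which relies on the slice identity $d_X=d_{\mathbb X}$ on $X\times\{\mu^*\}$. Everything else is a direct specialization of Theorem~\ref{theorem2} to the singleton history set $\Theta_{-t}\mu^*=\{\mu^*\}$.
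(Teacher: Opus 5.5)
Your proposal is correct and follows the paper's proof step for step. It uses the cocycle identity from (P2) together with invariance of $\mu^*$, the factorization from Proposition~\ref{PrLawProjection}, and membership in $\mathfrak D_X$ by inclusion closure. It then obtains strict invariance and attraction from \eqref{theorem2-1}--\eqref{theorem2-2} with $\Theta_{-t}\mu^*=\{\mu^*\}$, and uniqueness by mutual attraction. Your slice identity $d_X(x,A_X(\omega))=d_{\mathbb X}((x,\mu^*),\mathcal A(\omega))$ is a more explicit justification of the measurability that the paper attributes to continuity of $\Pi_X$.
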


\begin{proof}
	The cocycle identity for $\psi$ follows from \textnormal{(P2)} and the invariance of $\mu^*$. Proposition~\ref{PrLawProjection} gives \eqref{corollary1-2}. Since $\Pi_X$ is continuous, $A_X$ is a random compact set; since $\mathcal A\in\mathfrak D$, inclusion closure gives $A_X\in\mathfrak D_X$. Equations \eqref{theorem2-1} and \eqref{theorem2-2}, with $\Theta_{-t}\mu^*=\{\mu^*\}$, give strict invariance and pullback attraction for $\psi$. Uniqueness follows by mutual attraction.
\end{proof}

\begin{corollary}\label{corollary2}
	Under the assumptions of Corollary~\ref{corollary1}, suppose that there exists $\alpha>0$ such that
	\begin{align}\label{corollary2-1}
		\|\psi(t,\omega)x-\psi(t,\omega)y\|_X^2
		\leq e^{-\alpha t}\|x-y\|_X^2,
		\qquad t\geq0,
	\end{align}
	for all $\omega\in\Omega$ and $x,y\in X$. Assume also that every $B_X\in\mathfrak D_X$ has bounded values and satisfies
	\begin{align}\label{corollary2-2}
		\lim_{t\to\infty}
		e^{-\alpha t}
		\operatorname{diam}_X^2 B_X(\theta_{-t}\omega)=0,
		\qquad \omega\in\Omega,
	\end{align}
	where $\operatorname{diam}_X C=\sup_{x,y\in C}\|x-y\|_X$. Then there exists a measurable map $\xi:\Omega\to X$ such that
	\begin{align}\label{corollary2-3}
		A_X(\omega)=\{\xi(\omega)\},
		\qquad
		\mathcal A(\omega)=\{(\xi(\omega),\mu^*)\},
	\end{align}
	and
	\begin{align*}
		\psi(t,\omega)\xi(\omega)
		=
		\xi(\theta_t\omega),
		\qquad t\geq0.
	\end{align*}
\end{corollary}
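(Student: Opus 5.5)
We work entirely with the reduced cocycle $\psi$ from Corollary~\ref{corollary1}. That corollary gives $A_X$ as the unique $\mathfrak D_X$-pullback random attractor for $\psi$, with $A_X\in\mathfrak D_X$ and $\mathcal A(\omega)=A_X(\omega)\times\{\mu^*\}$. It therefore suffices to show that each $A_X(\omega)$ is a single point and that the resulting map is measurable. The second identity in \eqref{corollary2-3} and the invariance $\psi(t,\omega)\xi(\omega)=\xi(\theta_t\omega)$ then follow at once from the product structure and from strict invariance of $A_X$ under $\psi$.

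\textbf{Singleton fibers.} Fix $\omega$ and $t\geq0$. Strict invariance of $A_X$ for $\psi$ (from \eqref{theorem2-1} with $\Theta_{-t}\mu^*=\{\mu^*\}$) gives
\begin{align*}
	A_X(\omega)=\psi(t,\theta_{-t}\omega)A_X(\theta_{-t}\omega).
\end{align*}
For any $x,y\in A_X(\omega)$, pick preimages $x',y'\in A_X(\theta_{-t}\omega)$. Applying \eqref{corollary2-1} at the base point $\theta_{-t}\omega$ yields
\begin{align*}
	\|x-y\|_X^2\leq e^{-\alpha t}\|x'-y'\|_X^2\leq e^{-\alpha t}\operatorname{diam}_X^2A_X(\theta_{-t}\omega).
\end{align*}
Since $A_X\in\mathfrak D_X$, its values are bounded and \eqref{corollary2-2} applies with $B_X=A_X$. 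Letting $t\to\infty$ gives $x=y$. Nonemptiness of the compact set $A_X(\omega)$ then yields $A_X(\omega)=\{\xi(\omega)\}$.

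\textbf{Measurability.} This step needs a little care. Because $A_X$ is a random compact set, the map $\omega\mapsto d_X(z,A_X(\omega))=\|z-\xi(\omega)\|_X$ is measurable for every $z\in X$. Let $\{z_k\}$ be a countable dense subset of the separable space $X$. For each open ball $B(z_k,r)$ with rational $r$, we have $\{\xi\in B(z_k,r)\}=\{\|z_k-\xi\|_X<r\}$, which is measurable. These balls generate $\mathcal B(X)$, so $\xi$ is measurable.

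Alternatively, one can argue that $\Pi_X$ is continuous and the random compact set $A_X$ admits a measurable selection, which here must coincide with $\xi$. I expect no real obstacle. The only point to check is that \eqref{corollary2-2} may legitimately be applied to $A_X$ itself, and this holds because Corollary~\ref{corollary1} places $A_X$ in $\mathfrak D_X$.
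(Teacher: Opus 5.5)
Your proposal is correct and follows essentially the paper's own proof. Strict invariance of $A_X$ under $\psi$, combined with the contraction \eqref{corollary2-1} and condition \eqref{corollary2-2} applied to $A_X\in\mathfrak D_X$, forces the diameter of $A_X(\omega)$ to vanish, and measurability of $\xi$ follows because $A_X$ is a random compact set; your check via the measurable maps $\omega\mapsto\|z_k-\xi(\omega)\|_X$ only spells out the step the paper states in one line.
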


\begin{proof}
	Since $A_X\in\mathfrak D_X$, strict invariance and \eqref{corollary2-1} give
	\begin{align*}
		\operatorname{diam}_X^2 A_X(\omega)
		=
		\operatorname{diam}_X^2\big(
		\psi(t,\theta_{-t}\omega)A_X(\theta_{-t}\omega)
		\big)
		\leq
		e^{-\alpha t}
		\operatorname{diam}_X^2 A_X(\theta_{-t}\omega)
		\longrightarrow0.
	\end{align*}
	Thus $A_X(\omega)$ is a singleton. Its unique point defines a measurable map $\xi$ because $A_X$ is a random compact set. Formula \eqref{corollary2-3} follows from \eqref{corollary1-2}, and strict invariance gives the random-equilibrium identity.
\end{proof}

\medskip
\noindent\textbf{Compactness of the law coordinate. }
Product asymptotic compactness requires precompactness in both coordinates. For laws, tightness yields only relative compactness in the weak topology; a uniform moment of order $q>2$ gives uniform integrability of the second moments and upgrades it to relative compactness in $\mathcal W_{2,X}$.

\begin{lemma}\label{lemma2}
	Let $Y$ be a separable Banach space compactly embedded into $X$, let $q>2$, and identify $Y$ with its Borel image in $X$. For $R>0$, set
	\begin{align*}
		\mathcal C_R
		=
		\Big\{
		\mu\in\mathcal P_2(X):
		\mu(Y)=1,
		\int_Y\|y\|_Y^q\,\mu(dy)\leq R
		\Big\}.
	\end{align*}
	Then $\mathcal C_R$ is relatively compact in $(\mathcal P_2(X),\mathcal W_{2,X})$. Consequently, every set $\mathcal B\subset Y\times\mathcal P_2(X)$ satisfying
	\begin{align*}
		\sup_{(x,\mu)\in\mathcal B}
		\Big(
		\|x\|_Y^q+
		\int_Y\|y\|_Y^q\,\mu(dy)
		\Big)<\infty,
		\qquad
		\mu(Y)=1
		\quad\text{for every }(x,\mu)\in\mathcal B,
	\end{align*}
	is relatively compact in $X\times\mathcal P_2(X)$.
\end{lemma}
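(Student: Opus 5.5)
The plan is to use the standard characterization of relative compactness in $(\mathcal P_2(X),\mathcal W_{2,X})$: a set is relatively compact if and only if it is tight and its second moments are uniformly integrable, that is,
\begin{align*}
\lim_{M\to\infty}\sup_{\mu}\int_{\{\|x\|_X>M\}}\|x\|_X^2\,\mu(dx)=0 .
\end{align*}
Here ``relatively compact'' is understood within the complete space $\mathcal P_2(X)$, with $X$ Polish. Since $Y\hookrightarrow X$ is compact, there is a constant $c$ with $\|y\|_X\le c\|y\|_Y$. Moreover, closed $Y$-balls $B_Y(r)$ have compact closure in $X$. Because the embedding is continuous and injective, $Y$ is a Borel subset of $X$ (Lusin--Souslin), so $\mu(Y)=1$ and the integral over $Y$ both make sense.

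\emph{Tightness.} Take $\mu\in\mathcal C_R$ and let $K_r$ be the $X$-closure of $B_Y(r)$, which is compact in $X$. By Chebyshev,
\begin{align*}
\mu(X\setminus K_r)\le \mu\big(\{y\in Y:\|y\|_Y>r\}\big)\le R\,r^{-q},
\end{align*}
and this is uniform over $\mathcal C_R$. Prokhorov's theorem then gives relative weak compactness of $\mathcal C_R$.

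\emph{Uniform integrability.} For $\mu\in\mathcal C_R$,
\begin{align*}
\int_{\{\|x\|_X>M\}}\|x\|_X^2\,d\mu\le M^{2-q}\int\|x\|_X^q\,d\mu\le M^{2-q}c^qR\to0 .
\end{align*}
The same bound shows $\mathcal C_R\subset\mathcal P_2(X)$.

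\emph{Upgrade to $\mathcal W_2$.} Given a sequence $\mu_k$ in $\mathcal C_R$, extract a weakly convergent subsequence $\mu_k\to\mu$. By Fatou (lower semicontinuity of $\int\|x\|_X^q$ under weak convergence), $\mu\in\mathcal P_2(X)$. Weak convergence together with uniformly integrable second moments is equivalent to $\mathcal W_{2,X}$ convergence; this is the classical characterization (Villani, Ambrosio--Gigli--Savar\'e), which holds in any Polish space. If I needed to avoid citing it, I would truncate: write $\|x\|^2=\min(\|x\|^2,M^2)+(\|x\|^2-M^2)^+$, which gives convergence of second moments, and then invoke the equivalence. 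This gives relative compactness of $\mathcal C_R$. The limit need not lie in $\mathcal C_R$ or have $\mu(Y)=1$, but only relative compactness is claimed.

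\emph{Product statement.} Let $S$ be the supremum in the hypothesis. The $X$-coordinates of $\mathcal B$ lie in $B_Y(S^{1/q})$, which is relatively compact in $X$. The law coordinates lie in $\mathcal C_S$, which is relatively compact by the first part. A product of relatively compact sets is relatively compact in the product metric $d_{\mathbb X}$.

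The only nontrivial input is the equivalence between $\mathcal W_2$ convergence and weak convergence plus uniform integrability of second moments. I would cite it rather than reprove it; everything else is Chebyshev's inequality and Prokhorov's theorem.
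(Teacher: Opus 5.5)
Your proposal is correct and follows the paper's proof essentially step for step: Markov's inequality on the $X$-closures of $Y$-balls gives tightness, the bound $c^qM^{2-q}R$ gives uniform integrability of the second moments, and Prokhorov's theorem together with the weak-convergence-plus-second-moment characterization of $\mathcal W_{2,X}$ convergence gives compactness, with the product statement then read off coordinatewise. Your additional remarks (that $Y$ is Borel by Lusin--Souslin, and that the limit has a finite $q$-moment by lower semicontinuity) are harmless refinements that the paper leaves implicit.
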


\begin{proof}
	For $M>0$, set
	\begin{align*}
		K_M=\overline{\{y\in Y:\|y\|_Y\leq M\}}^{X}.
	\end{align*}
	The compact embedding makes $K_M$ compact in $X$, and Markov's inequality gives
	\begin{align*}
		\sup_{\mu\in\mathcal C_R}\mu(K_M^c)
		\leq RM^{-q}.
	\end{align*}
	Hence $\mathcal C_R$ is tight. If $\|y\|_X\leq c\|y\|_Y$, then
	\begin{align*}
		\sup_{\mu\in\mathcal C_R}
		\int_{\{\|y\|_X>M\}}\|y\|_X^2\,\mu(dy)
		\leq
		c^qM^{2-q}R
		\longrightarrow0.
	\end{align*}
	Thus the second moments are uniformly integrable. Prokhorov's theorem and the characterization of $\mathcal W_2$ convergence by weak convergence together with convergence of second moments give the first assertion; see \cite{Villani09}. The second follows from the compact embedding for the state coordinate and the first assertion for the law coordinate.
\end{proof}

The preceding criterion does not apply on $\mathbb R^n$, where $H^1(\mathbb R^n)$ is not compactly embedded into $L^2(\mathbb R^n)$. Local regularity must instead be combined with spatial tail control.

\begin{lemma}\label{lemma3}
	Let $X=L^2(\mathbb R^n)$ and $q>2$. For $R>0$, set
	$B_R=\{x\in\mathbb R^n:|x|<R\}$, and interpret
	$\|w\|_{H^1(B_R)}$ as the extended Borel functional on $X$
	obtained by assigning the value $+\infty$ whenever
	$w|_{B_R}\notin H^1(B_R)$. Suppose that
	$\mathcal B\subset X\times\mathcal P_2(X)$ satisfies the following conditions:
	\begin{itemize}
		\item[(i)] for every $R>0$,
		\begin{align*}
			\sup_{(u,\mu)\in\mathcal B}
			\Big[
			\|u\|_{H^1(B_R)}^2
			+
			\int_X\|v\|_{H^1(B_R)}^2\,\mu(dv)
			\Big]<\infty;
		\end{align*}
		
		\item[(ii)]
		\begin{align*}
			\lim_{R\to\infty}
			\sup_{(u,\mu)\in\mathcal B}
			\int_{|x|>R}|u(x)|^2\,dx=0;
		\end{align*}
		
		\item[(iii)]
		\begin{align*}
			\lim_{R\to\infty}
			\sup_{(u,\mu)\in\mathcal B}
			\int_X\int_{|x|>R}|v(x)|^2\,dx\,\mu(dv)=0;
		\end{align*}
		
		\item[(iv)]
		\begin{align*}
			\sup_{(u,\mu)\in\mathcal B}
			\int_X\|v\|_X^q\,\mu(dv)<\infty.
		\end{align*}
	\end{itemize}
	Then $\mathcal B$ is relatively compact in $X\times\mathcal P_2(X)$.
\end{lemma}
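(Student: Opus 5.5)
Since $d_{\mathbb X}$ is the sum of the $X$-norm and $\mathcal W_{2,X}$, relative compactness of $\mathcal B$ in $X\times\mathcal P_2(X)$ reduces to relative compactness of the two projections $\Pi_X\mathcal B\subset X$ and $\Pi_P\mathcal B\subset\mathcal P_2(X)$. Both are completely metrizable, so we show each projection is totally bounded in $X$, resp. relatively compact in $\mathcal W_{2,X}$.

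\emph{State coordinate.} We use (i) and (ii) in the usual Rellich-plus-tail way. Fix $\eta>0$. By (ii), choose $R$ with $\sup_{\mathcal B}\int_{|x|>R}|u|^2\,dx<\eta^2$. By (i) with this $R$, the restrictions $\{u|_{B_R}\}$ are bounded in $H^1(B_R)$. Since the ball is Lipschitz, $H^1(B_R)\hookrightarrow L^2(B_R)$ is compact, so these restrictions admit a finite $\eta$-net in $L^2(B_R)$. Extending the net elements by zero gives a finite $2\eta$-net for $\Pi_X\mathcal B$ in $X$. Hence $\Pi_X\mathcal B$ is totally bounded.

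\emph{Law coordinate.} We aim for tightness and then uniform integrability of second moments, and conclude via Prokhorov's theorem and the characterization of $\mathcal W_2$-convergence, as in Lemma~\ref{lemma2}.

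For tightness, fix $\delta>0$ and choose radii $R_k\uparrow\infty$ by (iii) such that
\[
\sup_{\mathcal B}\int_X\int_{|x|>R_k}|v|^2\,dx\,\mu(dv)\le \delta\,4^{-k}.
\]
Let $M_k=\sup_{\mathcal B}\int_X\|v\|_{H^1(B_{R_k})}^2\,\mu(dv)$, which is finite by (i). Define
\[
K=\bigcap_k\Big\{v\in X:\ \|v\|_{H^1(B_{R_k})}^2\le 2^k M_k/\delta,\ \int_{|x|>R_k}|v|^2\,dx\le 2^{-k}\Big\}.
\]
By the Borel-functional convention and lower semicontinuity of these functionals on $X$, $K$ is closed. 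The state-coordinate argument shows $K$ is totally bounded, hence compact. Markov's inequality gives
\[
\mu(K^c)\le \sum_k\big(\delta 2^{-k}+\delta 2^{-k}\big)\le 2\delta
\]
uniformly over $\Pi_P\mathcal B$, so the family is tight.

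The step I expect to be the main technical point is this closedness and measurability of $K$. It needs care: one must check that $v\mapsto\|v\|_{H^1(B_R)}$ is lower semicontinuous on $L^2$, e.g.\ via weak compactness in $H^1(B_R)$, so that sublevel sets are closed.

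For uniform integrability, (iv) gives
\[
\int_{\{\|v\|_X>M\}}\|v\|_X^2\,d\mu\le M^{2-q}\sup_{\mathcal B}\int_X\|v\|_X^q\,d\mu\to0
\]
as $M\to\infty$. The closure in weak topology of a tight family with uniformly integrable second moments lies in $\mathcal P_2(X)$, and weak convergence together with convergence of second moments is $\mathcal W_2$-convergence (Villani). Hence $\Pi_P\mathcal B$ is relatively compact in $\mathcal W_{2,X}$. Combining the two coordinates, any sequence in $\mathcal B$ has a subsequence converging in both coordinates, i.e.\ converging in $d_{\mathbb X}$, which proves the lemma.
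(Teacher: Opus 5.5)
Your proof is correct and follows the paper's argument essentially step for step. For the state coordinate you use Rellich compactness plus the uniform tails. For tightness of the laws you build a compact set as an intersection of local $H^1$ and tail sublevel sets, closed by the same weak-compactness lower semicontinuity argument, and apply Markov's inequality. The $q$-moment bound then gives uniform integrability of second moments before Prokhorov and the $\mathcal W_2$ criterion.

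The differences are cosmetic. You use finite nets where the paper uses diagonal extraction, and your thresholds are $2^kM_k/\delta$ rather than the paper's $2^{j+2}(1+C_j)/\eta$. The paper's $1+C_j$ merely avoids the degenerate case $M_k=0$; there your Markov step should be replaced by the observation that the local $H^1$ functional vanishes $\mu$-a.s.
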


\begin{proof}
	This extended functional is lower semicontinuous on $X$, hence Borel. Indeed, if $w_k\to w$ in $X$ and the lower limit is finite, a subsequence realizing it is bounded in $H^1(B_R)$. Weak compactness in $H^1(B_R)$ and uniqueness of the $L^2(B_R)$-limit give $w|_{B_R}\in H^1(B_R)$ and
	\begin{align*}
		\|w\|_{H^1(B_R)}
		\leq
		\liminf_{k\to\infty}
		\|w_k\|_{H^1(B_R)}.
	\end{align*}
	For any sequence of state coordinates, (i) and the Rellich theorem give, after a diagonal extraction, convergence in $L^2(B_j)$ for every integer $j\geq1$; condition (ii) then makes this subsequence Cauchy in $X$.
	
	It remains to prove relative compactness of the law coordinates. Fix $\eta\in(0,1)$. By (iii), choose an increasing sequence $R_j\to\infty$ such that
	\begin{align}\label{lemma3-1}
		\sup_{(u,\mu)\in\mathcal B}
		\int_X\int_{|x|>R_j}|v(x)|^2\,dx\,\mu(dv)
		\leq
		\eta\,2^{-2j-2},
		\qquad j\geq1.
	\end{align}
	By (i),
	\begin{align*}
		C_j
		=
		\sup_{(u,\mu)\in\mathcal B}
		\int_X\|v\|_{H^1(B_{R_j})}^2\,\mu(dv)
		<\infty.
	\end{align*}
	Set
	\begin{align*}
		K_\eta
		=
		\bigcap_{j\geq1}
		\Big\{
		v\in X:
		\begin{aligned}
			\|v\|_{H^1(B_{R_j})}^2
			\leq\frac{2^{j+2}(1+C_j)}{\eta}\quad\text{and}\quad
			\int_{|x|>R_j}|v(x)|^2\,dx
			\leq2^{-j}
		\end{aligned}
		\Big\}.
	\end{align*}
	The set $K_\eta$ is compact in $X$. Indeed, every sequence in $K_\eta$ has, by the local $H^1$ bounds and a diagonal use of the Rellich theorem, a subsequence converging in $L^2(B_{R_j})$ for every $j$. For two elements of this subsequence,
	\begin{align*}
		\|v_k-v_\ell\|_X^2
		&\leq
		\|v_k-v_\ell\|_{L^2(B_{R_j})}^2
		+2\int_{|x|>R_j}|v_k(x)|^2\,dx
		+2\int_{|x|>R_j}|v_\ell(x)|^2\,dx.
	\end{align*}
	Choosing $j$ large and then $k,\ell$ large shows that the subsequence is Cauchy in $X$. The limit belongs to $K_\eta$ by lower semicontinuity of the local $H^1$-functional and continuity of the tail functional. Hence $K_\eta$ is compact.
	
	Markov's inequality and \eqref{lemma3-1} give, uniformly over $(u,\mu)\in\mathcal B$,
	\begin{align*}
		\mu(K_\eta^c)
		&\leq
		\sum_{j\geq1}
		\frac{\eta C_j}{2^{j+2}(1+C_j)}
		+
		\sum_{j\geq1}
		\eta\,2^{-j-2}
		\leq\frac{\eta}{2}.
	\end{align*}
	Thus the law coordinates are tight in $X$. Finally, (iv) gives
	\begin{align*}
		\sup_{(u,\mu)\in\mathcal B}
		\int_{\{\|v\|_X>M\}}\|v\|_X^2\,\mu(dv)
		&\leq
		M^{2-q}
		\sup_{(u,\mu)\in\mathcal B}
		\int_X\|v\|_X^q\,\mu(dv)
		\longrightarrow0.
	\end{align*}
	Prokhorov's theorem and the $\mathcal W_2$ convergence criterion yield relative compactness of the laws in $\mathcal P_2(X)$, and hence of $\mathcal B$ in the product space.
\end{proof}

\medskip
\noindent\textbf{Upper semicontinuity. }
Let $\{\Phi^\epsilon\}_{\epsilon\in[0,\epsilon_0]}$ be product cocycles on $\mathbb X$.

\begin{theorem}\label{theorem3}
	Assume that there exists $\epsilon_1\in(0,\epsilon_0]$ such that the following conditions hold.
	\begin{itemize}
		\item[(U1)] For every $\epsilon\in[0,\epsilon_1]$, $\Phi^\epsilon$ has a $\mathfrak D$-pullback random attractor $\mathcal A^\epsilon$.
		
		\item[(U2)] The family
		\begin{align*}
			\mathcal C(\omega)
			=
			\overline{
				\bigcup_{\epsilon\in[0,\epsilon_1]}
				\mathcal A^\epsilon(\omega)
			}^{\mathbb X}
		\end{align*}
		belongs to $\mathfrak D$, and $\mathcal C(\omega)$ is compact for every $\omega\in\Omega$.
		
		\item[(U3)] For every $t\geq0$, $\omega\in\Omega$ and compact set $C\subset\mathbb X$,
		\begin{align*}
			\sup_{z\in C}
			d_{\mathbb X}\big(
			\Phi^\epsilon(t,\omega)z,
			\Phi^0(t,\omega)z
			\big)
			\longrightarrow0
			\qquad\text{as }\epsilon\to0.
		\end{align*}
	\end{itemize}
	Then, for every $\omega\in\Omega$,
	\begin{align}\label{theorem3-1}
		\operatorname{dist}_{\mathbb X}\big(
		\mathcal A^\epsilon(\omega),
		\mathcal A^0(\omega)
		\big)
		\longrightarrow0
		\qquad\text{as }\epsilon\to0.
	\end{align}
\end{theorem}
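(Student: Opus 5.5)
The argument is by contradiction, in the standard way for upper semicontinuity, using the strict invariance of each $\mathcal A^\epsilon$ to pull points back to time $-t$. Suppose \eqref{theorem3-1} fails for some $\omega\in\Omega$. Then there are $\eta>0$, $\epsilon_k\to0$ with $\epsilon_k\in(0,\epsilon_1]$, and points $z_k\in\mathcal A^{\epsilon_k}(\omega)$ with
\begin{align*}
d_{\mathbb X}\big(z_k,\mathcal A^0(\omega)\big)\geq\eta .
\end{align*}
The goal is to show that a subsequence of $z_k$ converges to a point of $\mathcal A^0(\omega)$.

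\textbf{Step 1 (pull back along invariance).} Fix $t>0$. Strict invariance gives $\mathcal A^{\epsilon_k}(\omega)=\Phi^{\epsilon_k}(t,\theta_{-t}\omega)\mathcal A^{\epsilon_k}(\theta_{-t}\omega)$. So we can write $z_k=\Phi^{\epsilon_k}(t,\theta_{-t}\omega)y_k$ with
\begin{align*}
y_k\in\mathcal A^{\epsilon_k}(\theta_{-t}\omega)\subset\mathcal C(\theta_{-t}\omega).
\end{align*}
By (U2), $C:=\mathcal C(\theta_{-t}\omega)$ is compact.

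\textbf{Step 2 (compare with $\Phi^0$).} Apply (U3) with this compact $C$ and the sample point $\theta_{-t}\omega$. This gives
\begin{align*}
d_{\mathbb X}\big(z_k,\Phi^0(t,\theta_{-t}\omega)y_k\big)\leq\sup_{y\in C}d_{\mathbb X}\big(\Phi^{\epsilon_k}(t,\theta_{-t}\omega)y,\Phi^0(t,\theta_{-t}\omega)y\big)\to0 .
\end{align*}
Next, (U2) says $\mathcal C\in\mathfrak D$, and by (U1) $\mathcal A^0$ attracts $\mathcal C$. Hence
\begin{align*}
\delta(t):=\operatorname{dist}_{\mathbb X}\big(\Phi^0(t,\theta_{-t}\omega)\mathcal C(\theta_{-t}\omega),\mathcal A^0(\omega)\big)\to0\quad\text{as }t\to\infty .
\end{align*}
Combining the two displays gives
\begin{align*}
\limsup_k d_{\mathbb X}\big(z_k,\mathcal A^0(\omega)\big)\leq\delta(t).
\end{align*}
Letting $t\to\infty$ makes the right-hand side zero, which contradicts the lower bound $\eta$.

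We do not even need to extract a convergent subsequence of $z_k$. Compactness of $\mathcal C(\omega)$ would give one, and closedness of $\mathcal A^0(\omega)$ would identify the limit, but this is not required.

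\textbf{Main obstacle.} The delicate point is applying (U3) uniformly. We cannot use pointwise convergence along the moving points $y_k$; we need the supremum over a fixed compact set. This is exactly why $\mathcal C$ is required to be compact at the shifted fiber $\theta_{-t}\omega$, and why it is required to lie in $\mathfrak D$, so that the zero-noise attractor attracts it. Both conditions in (U2) are used, each at the point indicated above.
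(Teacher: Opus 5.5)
Your proof is correct and follows the paper's argument. Like the paper, you choose the time from the attraction of $\mathcal C$ by $\mathcal A^0$, apply (U3) on the compact set $\mathcal C(\theta_{-T}\omega)$, and combine the two via strict invariance and $\mathcal A^\epsilon(\theta_{-T}\omega)\subset\mathcal C(\theta_{-T}\omega)$; the only difference is that the paper runs this as a direct $\eta/2+\eta/2$ estimate rather than by contradiction along a sequence $\epsilon_k\to0$.
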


\begin{proof}
	Fix $\omega\in\Omega$ and $\eta>0$. Since $\mathcal C\in\mathfrak D$ and $\mathcal A^0$ pullback attracts $\mathcal C$, there exists $T>0$ such that
	\begin{align*}
		\operatorname{dist}_{\mathbb X}\big(
		\Phi^0(T,\theta_{-T}\omega)
		\mathcal C(\theta_{-T}\omega),
		\mathcal A^0(\omega)
		\big)<\frac{\eta}{2}.
	\end{align*}
	The set $\mathcal C(\theta_{-T}\omega)$ is compact, so \textnormal{(U3)} gives, for all sufficiently small $\epsilon>0$,
	\begin{align*}
		\sup_{z\in\mathcal C(\theta_{-T}\omega)}
		d_{\mathbb X}\big(
		\Phi^\epsilon(T,\theta_{-T}\omega)z,
		\Phi^0(T,\theta_{-T}\omega)z
		\big)<\frac{\eta}{2}.
	\end{align*}
	Strict invariance and
	$\mathcal A^\epsilon(\theta_{-T}\omega)\subset\mathcal C(\theta_{-T}\omega)$ now yield
	\begin{align*}
		\operatorname{dist}_{\mathbb X}\big(
		\mathcal A^\epsilon(\omega),
		\mathcal A^0(\omega)
		\big)
		&\leq
		\sup_{z\in\mathcal C(\theta_{-T}\omega)}
		d_{\mathbb X}\big(
		\Phi^\epsilon(T,\theta_{-T}\omega)z,
		\Phi^0(T,\theta_{-T}\omega)z
		\big)\\
		&\qquad+
		\operatorname{dist}_{\mathbb X}\big(
		\Phi^0(T,\theta_{-T}\omega)
		\mathcal C(\theta_{-T}\omega),
		\mathcal A^0(\omega)
		\big)<\eta.
	\end{align*}
	This proves \eqref{theorem3-1}.
\end{proof}

\begin{corollary}\label{corollary3}
	Under \textnormal{(U1)-(U3)},
	\begin{align*}
		\operatorname{dist}_X\big(
		\Pi_X\mathcal A^\epsilon(\omega),
		\Pi_X\mathcal A^0(\omega)
		\big)\longrightarrow0\qquad\text{and}\quad
		\operatorname{dist}_P\big(
		\Pi_P\mathcal A^\epsilon(\omega),
		\Pi_P\mathcal A^0(\omega)
		\big)\longrightarrow0.
	\end{align*}
	If, in addition,
	\begin{align*}
		\mathcal A^\epsilon(\omega)
		=A_X^\epsilon(\omega)\times\{\mu_\epsilon^*\},
		\qquad \epsilon\in[0,\epsilon_1],
	\end{align*}
	then
	\begin{align*}
		\operatorname{dist}_X\big(
		A_X^\epsilon(\omega),A_X^0(\omega)
		\big)\longrightarrow0
		\qquad\text{and}\quad
		\mathcal W_{2,X}(\mu_\epsilon^*,\mu_0^*)\longrightarrow0.
	\end{align*}
\end{corollary}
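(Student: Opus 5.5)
The plan is to derive both pairs of statements from Theorem~\ref{theorem3} by projecting onto the factors. The only ingredient is that $\Pi_X$ and $\Pi_P$ are $1$-Lipschitz from $(\mathbb X,d_{\mathbb X})$ onto $X$ and $P$. Indeed, $\|x-y\|_X\leq d_{\mathbb X}((x,\mu),(y,\nu))$ and $\mathcal W_{2,X}(\mu,\nu)\leq d_{\mathbb X}((x,\mu),(y,\nu))$ by definition of the product metric.

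\emph{Step 1.} For nonempty $C_1,C_2\subset\mathbb X$ we will show
\begin{align*}
\operatorname{dist}_X(\Pi_XC_1,\Pi_XC_2)\leq\operatorname{dist}_{\mathbb X}(C_1,C_2),
\qquad
\operatorname{dist}_P(\Pi_PC_1,\Pi_PC_2)\leq\operatorname{dist}_{\mathbb X}(C_1,C_2).
\end{align*}
Take $x\in\Pi_XC_1$, so $x=\Pi_Xz$ for some $z\in C_1$. For every $a\in C_2$,
\begin{align*}
d_X(x,\Pi_XC_2)\leq\|\Pi_Xz-\Pi_Xa\|_X\leq d_{\mathbb X}(z,a).
\end{align*}
Taking the infimum over $a\in C_2$ and then the supremum over $z\in C_1$ gives the first inequality. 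The $P$-coordinate is handled in the same way.

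\emph{Step 2.} Apply Step 1 with $C_1=\mathcal A^\epsilon(\omega)$ and $C_2=\mathcal A^0(\omega)$. Then \eqref{theorem3-1} gives both convergences in the first display.

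\emph{Step 3.} Under the product structure $\mathcal A^\epsilon(\omega)=A_X^\epsilon(\omega)\times\{\mu_\epsilon^*\}$, we have
\begin{align*}
\Pi_X\mathcal A^\epsilon(\omega)=A_X^\epsilon(\omega),
\qquad
\Pi_P\mathcal A^\epsilon(\omega)=\{\mu_\epsilon^*\}.
\end{align*}
The first statement then follows directly from Step 2. For the law statement, note that the semidistance between two singletons is just the metric, so $\operatorname{dist}_P(\{\mu_\epsilon^*\},\{\mu_0^*\})=\mathcal W_{2,X}(\mu_\epsilon^*,\mu_0^*)$. Step 2 then gives $\mathcal W_{2,X}(\mu_\epsilon^*,\mu_0^*)\to0$.

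There is no real obstacle. The only point to check is Step 1, namely that the one-sided semidistance does not increase under a $1$-Lipschitz surjective projection of both sets. This is exactly the infimum–supremum computation given above.
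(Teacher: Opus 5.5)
Your proposal is correct and follows the paper's proof: both use that $\Pi_X$ and $\Pi_P$ are $1$-Lipschitz for $d_{\mathbb X}$, so \eqref{theorem3-1} passes to the projections, and in the factorized case the projections are $A_X^\epsilon(\omega)$ and $\{\mu_\epsilon^*\}$. Your Step 1 simply writes out the infimum--supremum inequality for semidistances that the paper leaves implicit.
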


\begin{proof}
	Both coordinate projections are $1$-Lipschitz with respect to $d_{\mathbb X}$, so the first two assertions follow from \eqref{theorem3-1}. In the factorized case, the projections are $A_X^\epsilon(\omega)$ and $\{\mu_\epsilon^*\}$, respectively, which gives the remaining conclusions.
\end{proof}

\section{Well-posedness and construction of the product cocycle}\label{sec3}

This section establishes finite-time variational well-posedness in the mixed energy space, constructs the deterministic law semiflow and the decoupled pathwise state evolution, and combines them into a product cocycle.

\subsection{Preliminaries and assumptions}

Throughout this section, let $n,m\geq1$ be integers, let $p\geq2$, fix $q>2$, and set $p'=p/(p-1)$. Define
\begin{align*}
	H=L^2(\mathbb R^n),\qquad
	V=H^1(\mathbb R^n),\qquad
	\mathbb V=V\cap L^p(\mathbb R^n),
\end{align*}
with $\|u\|_{\mathbb V}=\|u\|_V+\|u\|_p$. The inner product and norm of $H$ are denoted by $(\cdot,\cdot)$ and $\|\cdot\|$, respectively, and $\|\cdot\|_r$ denotes the norm in $L^r(\mathbb R^n)$. The space $\mathbb V$ is separable and reflexive, and
\begin{align*}
	\mathbb V\subset H\subset\mathbb V^*
\end{align*}
is a dense and continuous Gelfand triple under the usual identification of $H$ with its dual. The dual $\mathbb V^*$ is, up to equivalent norms, the Banach sum
$\mathbb V^*=V^*+L^{p'}(\mathbb R^n)$,
where each summand is identified with its restriction to $\mathbb V$.

For $r\geq2$, set
\begin{align*}
	\mathcal P_r(H)
	=\Big\{\mu\in\mathcal P(H):
	M_r(\mu):=\int_H\|v\|^r\,\mu(dv)<\infty\Big\}.
\end{align*}
We use $P:=\mathcal P_2(H)$ as the law phase space and write $\mathcal W_2:=\mathcal W_{2,H}$; thus $(P,\mathcal W_2)$ is Polish. A set $B\subset\mathcal P_q(H)$ is called $M_q$-bounded if $\sup_{\mu\in B}M_q(\mu)<\infty$.

Let $(\Omega,\mathcal F,\mathbb P,\{\theta_t\}_{t\in\mathbb R})$ be the canonical two-sided Wiener metric dynamical system generated by the $m$-dimensional Brownian motion $W=(W_1,\ldots,W_m)$:
\begin{align*}
	\theta_t\omega(\cdot)=\omega(\cdot+t)-\omega(t),\qquad t\in\mathbb R.
\end{align*}
Let $\{\mathcal F_t\}_{t\in\mathbb R}$ be the completed natural filtration of $W$. For $\epsilon\in[0,\epsilon_0]$, consider
\begin{align}\label{Eq3}
	du^\epsilon(t)-\Delta u^\epsilon(t)dt+\lambda u^\epsilon(t)dt
	+f(\cdot,u^\epsilon(t))dt
	=\mathcal G(u^\epsilon(t),\mathcal L u^\epsilon(t))dt+gdt
	+\epsilon\sum_{j=1}^{m}h_jdW_j(t),
\end{align}
with $u^\epsilon(0)=u_0$, where $\lambda>0$, $u_0\in L^2(\Omega,\mathcal F_0;H)$ and $\mathcal L u^\epsilon(t)$ is the unconditional law of $u^\epsilon(t)$. The Nemytskii operator associated with $G:\mathbb R^n\times\mathbb R\times P\to\mathbb R$ is
\begin{align*}
	\mathcal G(u,\mu)(x)=G(x,u(x),\mu).
\end{align*}
\medskip
\noindent\textbf{(A1)} The function $f:\mathbb R^n\times\mathbb R\to\mathbb R$ is measurable in $x$, and $s\mapsto f(x,s)$ is continuously differentiable for a.e.\ $x\in\mathbb R^n$. There exist constants $\alpha_1>0$, $\alpha_2,\alpha_3,\alpha_4\geq0$ and $c_f>0$, and functions $\psi_1,\psi_2\in L^1(\mathbb R^n)$ and $\psi_3\in L^\infty_{\rm loc}(\mathbb R^n)$, such that
\begin{gather}
	f(x,s)s
	\geq
	\alpha_1|s|^p-\alpha_2|s|^2-\psi_1(x),
	\label{f-1}\\
	|f(x,s)|^{p'}
	\leq
	\alpha_3|s|^p+\psi_2(x),
	\label{f-2}\\
	\big(f(x,s)-f(x,r)\big)(s-r)
	\geq
	-\alpha_4|s-r|^2
	\label{f-3}
\end{gather}
for a.e.\ $x\in\mathbb R^n$ and all $s,r\in\mathbb R$.

For every $s\in\mathbb R$, the map $x\mapsto f(x,s)$ is weakly differentiable. For each coordinate direction $e_k$, choose its absolutely continuous representatives on a common full-measure family of coordinate lines, independently of $s$, so that
\begin{align}\label{f-acl}
	f(x+he_k,s)-f(x,s)
	=\int_0^h\partial_kf(x+re_k,s)\,dr
\end{align}
for every $s\in\mathbb R$ and every segment contained in such a line. The $C^1$-regularity in $s$ makes \eqref{f-3} equivalent to
\begin{align}
	-\alpha_4\leq\partial_sf(x,s)
	\label{f-5}
\end{align}
for every $s\in\mathbb R$ and a.e.\ $x\in\mathbb R^n$. Moreover,
\begin{gather}
	\partial_sf(x,s)
	\leq c_f\big(1+|s|^{p-2}\big),
	\label{f-6}\\
	|\nabla_xf(x,s)|
	\leq\psi_3(x)\big(1+|s|^{p/2}\big)
	\label{f-7}
\end{gather}
for every $s\in\mathbb R$ and a.e.\ $x\in\mathbb R^n$.

\medskip
\noindent\textbf{(A2)} The map $\mathcal G:\mathbb V\times P\to L^{p'}(\mathbb R^n)$ is Borel measurable. For every $\mu\in P$, the map $u\mapsto\mathcal G(u,\mu)$ is hemicontinuous with respect to the $L^{p'}$-$L^p$ duality. For every $u\in\mathbb V$, the map $\mu\mapsto\mathcal G(u,\mu)$ is $\mathcal W_2$-to-$L^{p'}$ continuous on each $M_q$-bounded subset of $\mathcal P_q(H)$.
There exist constants $\beta_G\in[0,2\alpha_1)$ and $\beta_1,\beta_2,\beta_3\geq0$ such that
\begin{align}
	2\langle\mathcal G(u,\mu),u\rangle_{\mathbb V^*,\mathbb V}
	&\leq
	\beta_G\|u\|_p^p+\beta_1\|u\|^2+\beta_2M_2(\mu)+\beta_3,
	\label{G-1}\\
	\|\mathcal G(u,\mu)\|_{p'}^{p'}
	&\leq
	\beta_3\big(1+\|u\|_p^{p}+M_2(\mu)^{p/2}\big)
	\label{G-2}
\end{align}
for all $u\in\mathbb V$ and $\mu\in P$. Moreover, there exist $\beta_4,L_G\geq0$ such that
\begin{align}
	2\langle\mathcal G(u,\mu)-\mathcal G(v,\nu),u-v\rangle_{\mathbb V^*,\mathbb V}
	\leq
	\beta_4\|u-v\|^2+L_G\mathcal W_2^2(\mu,\nu)
	\label{G-3}
\end{align}
for all $u,v\in\mathbb V$ and $\mu,\nu\in P$.

To state the far-field condition, choose $\widehat\rho\in C^1([0,\infty))$ such that $0\leq\widehat\rho\leq1$, $\widehat\rho=0$ on $[0,1]$, $\widehat\rho=1$ on $[2,\infty)$ and $|\widehat\rho'|\leq c$. Set $\rho=\widehat\rho^2$ and $\rho_R(x)=\rho(|x|^2/R^2)$. There exist $c_T\geq0$ and $r_G:[1,\infty)\to[0,\infty)$ with $r_G(R)\to0$ as $R\to\infty$ such that, for every $R\geq1$, $u\in\mathbb V$ and $\mu\in P$,
\begin{align}\label{G-4}
	2\langle\mathcal G(u,\mu),\rho_Ru\rangle_{\mathbb V^*,\mathbb V}
	\leq
	\beta_G\int_{\mathbb R^n}\rho_R|u|^p\,dx
	+\beta_1\int_{\mathbb R^n}\rho_R|u|^2\,dx
	+c_TT_R(\mu)+r_G(R)(1+M_2(\mu)),
\end{align}
where
\begin{align*}
	T_R(\mu)
	=\int_H\int_{|x|>R}|v(x)|^2\,dx\,\mu(dv).
\end{align*}

For every $\mu\in P$ and a.e.\ $x\in\mathbb R^n$, the map $s\mapsto G(x,s,\mu)$ is continuously differentiable, and, for every $(s,\mu)\in\mathbb R\times P$, the map $x\mapsto G(x,s,\mu)$ is weakly differentiable. For each coordinate direction $e_k$, choose its absolutely continuous representatives on a common full-measure family of coordinate lines, independently of $(s,\mu)$, so that
\begin{align}\label{G-acl}
	G(x+he_k,s,\mu)-G(x,s,\mu)
	=\int_0^h\partial_kG(x+re_k,s,\mu)\,dr
\end{align}
for every $(s,\mu)\in\mathbb R\times P$ and every segment contained in such a line. There exist $c_G>0$ and $\psi_4\in L^\infty_{\rm loc}(\mathbb R^n)$ such that, with the same $\beta_4$ as in \eqref{G-3},
\begin{gather}
	2\partial_sG(x,s,\mu)
	\leq\beta_4,
	\qquad
	|\partial_sG(x,s,\mu)|
	\leq c_G\big(1+|s|^{p-2}\big),
	\label{G-6}\\
	|\nabla_xG(x,s,\mu)|
	\leq\psi_4(x)
	\big(1+|s|^{p/2}+M_2(\mu)^{1/2}\big)
	\label{G-7}
\end{gather}
for every $s\in\mathbb R$, $\mu\in P$ and a.e.\ $x\in\mathbb R^n$.

\medskip
\noindent\textbf{(A3)} The deterministic noise profiles and external force satisfy
\begin{align}
	h_j\in H^2(\mathbb R^n)\cap W^{1,\infty}(\mathbb R^n),\qquad
	\int_{\mathbb R^n}(1+|x|^2)
	\big(|h_j|^2+|\nabla h_j|^2+|\Delta h_j|^2\big)\,dx<\infty
	\label{h-1}
\end{align}
for $1\leq j\leq m$, and
\begin{align}
	g\in V\cap L^{p'}(\mathbb R^n),\qquad
	\int_{\mathbb R^n}(1+|x|^2)
	\big(|g|^2+|g|^{p'}+|\nabla g|^2\big)\,dx<\infty.
	\label{g-1}
\end{align}

\medskip
\noindent\textbf{(A4)} The dissipativity gaps used in the global energy estimate, fixed-law stability and far-field localization are positive:
\begin{gather}
	\lambda_0:=2\lambda-2\alpha_2-\beta_1-\beta_2>0,\qquad
	\bar\gamma:=2\lambda-2\alpha_4-\beta_4>0,
	\label{gap}\\
	\lambda_{\rm tail}:=2\lambda-2\alpha_2-\beta_1-c_T>0.
	\label{tail-gap}
\end{gather}

\medskip
\noindent\textbf{(A5)} In addition to \textnormal{(A1)-(A4)}, the net state-law contraction gap is strictly positive:
\begin{align}\label{kappa-def}
	\kappa_0:=\bar\gamma-L_G
	=2\lambda-2\alpha_4-\beta_4-L_G>0.
\end{align}
Condition \textnormal{(A5)} is required only for strict contraction of the law semiflow and its consequences; the product attractor and its zero-noise upper semicontinuity are proved under \textnormal{(A1)-(A4)}.

\begin{remark}[A verifiable class of coefficients]\label{RemCoeff}
	The following class satisfies \textnormal{(A1)-(A4)} but need not satisfy \textnormal{(A5)}. Let $a>0$,
	\begin{align*}
		c\in L^{p'}(\mathbb R^n)\cap W^{1,\infty}_{\rm loc}(\mathbb R^n),
		\qquad
		a_0,\phi\in C_c^\infty(\mathbb R^n)\setminus\{0\},
	\end{align*}
	let $\sigma\in\mathbb R\setminus\{0\}$ and $\Theta\in C_b^1(\mathbb R)$, and define
	\begin{align*}
		f(x,s)=a|s|^{p-2}s+c(x),\qquad
		G(x,s,\mu)=\sigma a_0(x)\Theta\big(m_\phi(\mu)\big),
		\qquad
		m_\phi(\mu)=\int_H(v,\phi)\,\mu(dv).
	\end{align*}
	Since
	\begin{align*}
		m_\phi(\mu)-m_\phi(\nu)
		=\int_{H\times H}(u-v,\phi)\,\pi(du,dv)
	\end{align*}
	for every $\pi\in\Pi(\mu,\nu)$, the Cauchy-Schwarz inequality and minimization over couplings give
	\begin{align}\label{ExampleLip}
		|m_\phi(\mu)-m_\phi(\nu)|
		&\leq
		\|\phi\|
		\inf_{\pi\in\Pi(\mu,\nu)}
		\Big(\int_{H\times H}\|u-v\|^2\,\pi(du,dv)\Big)^{1/2}
		=\|\phi\|\mathcal W_2(\mu,\nu).
	\end{align}
	Since $a_0\in L^{p'}(\mathbb R^n)$ and $\Theta$ is globally Lipschitz, \eqref{ExampleLip} gives global $\mathcal W_2$-to-$L^{p'}$ Lipschitz continuity in the law variable; measurability and hemicontinuity are immediate. The reaction assumptions hold with $\alpha_2=\alpha_4=0$, any $\alpha_1<a$, suitable $\alpha_3,c_f>0$, and
	\begin{align*}
		\psi_1=C|c|^{p'},
		\qquad
		\psi_2=C|c|^{p'},
		\qquad
		\psi_3=|\nabla c|.
	\end{align*}
	For arbitrary $\eta_1,\eta_2>0$, Young's inequality and \eqref{ExampleLip} verify \textnormal{(A2)} with
	\begin{align*}
		\beta_G=\beta_2=c_T=0,
		\qquad
		\beta_1=\eta_1,
		\qquad
		\beta_4=\eta_2,
		\qquad
		L_G=\frac{\sigma^2\|a_0\|^2\|\Theta'\|_\infty^2\|\phi\|^2}{\eta_2},
	\end{align*}
	with $\beta_3$ chosen sufficiently large and
	\begin{align*}
		r_G(R)
		=\frac{\sigma^2\|\Theta\|_\infty^2}{\eta_1}
		\int_{|x|>R}|a_0(x)|^2dx.
	\end{align*}
	Here $r_G(R)\to0$ because $a_0$ is compactly supported. Estimate \eqref{G-4} follows by applying Young's inequality to $2\sigma\Theta(m_\phi(\mu))(a_0,\rho_Ru)$, while \eqref{G-acl} and \eqref{G-6}-\eqref{G-7} hold with $\partial_sG=0$, $c_G=1$ and $\psi_4(x)=|\sigma|\|\Theta\|_\infty|\nabla a_0(x)|$. Choosing $0<\eta_1,\eta_2<2\lambda$ and any $g$ and $h_j$ satisfying \textnormal{(A3)} therefore verifies \textnormal{(A1)-(A4)}.
	
	To show that \textnormal{(A5)} can fail for every admissible pair $(\beta_4,L_G)$, let $\Theta_K(r)=\tanh(Kr)$. The preceding construction verifies \textnormal{(A1)-(A4)} for every $K>0$. Let $\beta_4,L_G\geq0$ be any constants for which \eqref{G-3} holds. For $r,t>0$, set
	\begin{align*}
		\nu=\delta_{0_H},\qquad
		\mu_r=\delta_{r\phi},\qquad
		u=t\,\operatorname{sgn}(\sigma)a_0,\qquad v=0.
	\end{align*}
	Then $m_\phi(\mu_r)=r\|\phi\|^2$ and $\mathcal W_2^2(\mu_r,\nu)=r^2\|\phi\|^2$, so \eqref{G-3} gives
	\begin{align*}
		2|\sigma|t\|a_0\|^2\tanh\big(Kr\|\phi\|^2\big)
		\leq
		\beta_4t^2\|a_0\|^2+L_Gr^2\|\phi\|^2.
	\end{align*}
	Putting $t=\alpha r$, dividing by $r^2$ and letting $r\downarrow0$ yields, for every $\alpha>0$,
	\begin{align*}
		2|\sigma|\alpha K\|a_0\|^2\|\phi\|^2
		\leq
		\beta_4\alpha^2\|a_0\|^2+L_G\|\phi\|^2.
	\end{align*}
	This inequality forces $\beta_4>0$. Minimizing over $\alpha$ and applying the arithmetic-geometric mean inequality yields
	\begin{align*}
		\beta_4L_G
		\geq\sigma^2K^2\|a_0\|^2\|\phi\|^2\quad\text{and}\quad
		\beta_4+L_G
		\geq2|\sigma|K\|a_0\|\|\phi\|.
	\end{align*}
	Since $\alpha_4=0$,
	\begin{align*}
		\kappa_0=2\lambda-\beta_4-L_G
		\leq2\lambda-2|\sigma|K\|a_0\|\|\phi\|.
	\end{align*}
	Consequently, \textnormal{(A5)} is impossible whenever
	\begin{align*}
		K\geq\frac{\lambda}{|\sigma|\|a_0\|\|\phi\|}.
	\end{align*}
	Thus \textnormal{(A1)-(A4)} do not imply \textnormal{(A5)} for any $p\geq2$.
\end{remark}

We first record the probabilistic solution concept. A progressively measurable $H$-valued process $u$ is called a variational solution of \eqref{Eq1}-\eqref{Eq2} on $[0,T]$ if
\begin{align*}
	u\in
	L^2(\Omega;C([0,T];H))
	\cap L^2(\Omega;L^2(0,T;V))
	\cap L^p(\Omega;L^p(0,T;L^p(\mathbb R^n))),
\end{align*}
and, for every $\xi\in\mathbb V$ and $t\in[0,T]$, $\mathbb P$-a.s.,
\begin{align*}
	(u(t),\xi)
	&+\int_0^t
	\big[
	(\nabla u(s),\nabla\xi)
	+\lambda(u(s),\xi)
	+(f(\cdot,u(s)),\xi)
	\big]ds \\
	&=(u_0,\xi)
	+\int_0^t
	\big[
	\langle\mathcal G(u(s),\mathcal L u(s)),\xi\rangle_{\mathbb V^*,\mathbb V}
	+(g,\xi)
	\big]ds
	+\epsilon\sum_{j=1}^{m}(h_j,\xi)W_j(t).
\end{align*}

For the pathwise formulation, let $z_j(\theta_t\omega)$ be the stationary Ornstein-Uhlenbeck process satisfying
\begin{align*}
	dz_j+\lambda z_jdt=dW_j(t),
	\qquad 1\leq j\leq m,
\end{align*}
and set
\begin{align*}
	z(\theta_t\omega)=\sum_{j=1}^{m}h_jz_j(\theta_t\omega).
\end{align*}
Choose a $\{\theta_t\}_{t\in\mathbb R}$-invariant set
$\Omega_0\in\mathcal F$ of full probability such that, for every
$\omega\in\Omega_0$, the paths $t\mapsto z_j(\theta_t\omega)$ are
continuous and tempered. Since
$h_j\in L^2(\mathbb R^n)\cap L^\infty(\mathbb R^n)$, we also have
$h_j\in L^p(\mathbb R^n)$. Hence \eqref{h-1} gives
\begin{align*}
	z(\theta_t\omega)
	\in H^2(\mathbb R^n)\cap W^{1,\infty}(\mathbb R^n)
	\cap L^p(\mathbb R^n)\subset\mathbb V
\end{align*}
for all $t\in\mathbb R$ and $\omega\in\Omega_0$. We henceforth work on $\Omega_0$ and use the same symbols
$\mathcal F$, $\mathbb P$, $\{\mathcal F_t\}_{t\in\mathbb R}$ and
$\theta_t$ for their restrictions.

To separate the state and law evolutions, fix a deterministic curve
$\eta\in C([0,T];P)$. For $x\in H$, the corresponding decoupled equation is
\begin{align}\label{Eq4}
	dU(t)-\Delta U(t)dt+\lambda U(t)dt
	+f(\cdot,U(t))dt
	=\mathcal G(U(t),\eta(t))dt+gdt
	+\epsilon\sum_{j=1}^{m}h_jdW_j(t),
\end{align}
with $U(0)=x$. The Ornstein-Uhlenbeck transformation
\begin{align*}
	U(t)=v(t)+\epsilon z(\theta_t\omega)
\end{align*}
reduces \eqref{Eq4} to the pathwise random equation
\begin{align}\label{Eq4-trans}
	\frac{dv}{dt}
	-\Delta v+\lambda v
	+f\big(\cdot,v+\epsilon z(\theta_t\omega)\big)
	=
	\mathcal G\big(v+\epsilon z(\theta_t\omega),\eta(t)\big)
	+g+\epsilon\Delta z(\theta_t\omega),
\end{align}
with
\begin{align*}
	v(0)=x-\epsilon z(\omega).
\end{align*}

\subsection{The law semiflow and the pathwise state evolution}

Assumptions \textnormal{(A1)-(A4)} remain in force. We use Galerkin approximation and Minty's method \cite{Lions69}, the intersection-space It\^o formula \cite{Gyongy17spde}, and the infinite-dimensional Yamada-Watanabe theorem \cite{Roeckner08cmp}.

Let $\delta_{0_H}$ denote the Dirac measure at $0\in H$, and set
\begin{align*}
	C_{\rm en}
	=2\|\psi_1\|_1+\beta_3
	+\frac{2}{\lambda_0}\|g\|^2
	+\epsilon_0^2\sum_{j=1}^{m}\|h_j\|^2.
\end{align*}
By \eqref{gap}, $\bar\gamma>0$. We begin with equation~\eqref{Eq4} for a prescribed curve $\ell\in C([0,T];P)$.

\begin{lemma}\label{Le1}
	Let $T>0$, $\epsilon\in[0,\epsilon_0]$, $\ell\in C([0,T];P)$ and $X_0\in L^2(\Omega,\mathcal F_0;H)$. Then \eqref{Eq4} with $\eta=\ell$ and $U(0)=X_0$ has a unique variational solution
	\begin{align*}
		U_\ell
		\in
		L^2(\Omega;C([0,T];H))
		\cap L^2(\Omega;L^2(0,T;V))
		\cap L^p(\Omega;L^p(0,T;L^p(\mathbb R^n))).
	\end{align*}
	Moreover, if $U_{\ell_1}$ and $U_{\ell_2}$ correspond to $\ell_1,\ell_2\in C([0,T];P)$ and initial data $X_{0,1},X_{0,2}\in L^2(\Omega,\mathcal F_0;H)$, and are driven by the same Brownian motion, then
	\begin{align}\label{Le1-1}
		\mathbb E\|U_{\ell_1}(t)-U_{\ell_2}(t)\|^2
		\leq
		e^{-\bar\gamma t}\mathbb E\|X_{0,1}-X_{0,2}\|^2
		+L_G\int_0^t e^{-\bar\gamma(t-r)}
		\mathcal W_2^2(\ell_1(r),\ell_2(r))dr
	\end{align}
	for all $t\in[0,T]$.
\end{lemma}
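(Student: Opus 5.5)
Since the law curve $\ell$ is prescribed and deterministic, \eqref{Eq4} is an ordinary (distribution-independent) SPDE with additive noise and time-dependent drift
\begin{align*}
A_\ell(t,u)=\Delta u-\lambda u-f(\cdot,u)+\mathcal G(u,\ell(t))+g,
\end{align*}
so we plan to verify the hypotheses of the variational framework of \cite{Liu10jfa} on the Gelfand triple $\mathbb V\subset H\subset\mathbb V^*$. We check four properties.

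\textbf{Measurability and hemicontinuity.} Measurability in $t$ comes from (A2), since $\ell$ is continuous into $P$; hemicontinuity follows from (A2) and continuity of $f$ in $s$.

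\textbf{Weak monotonicity.} By \eqref{f-3} and \eqref{G-3} with $\mu=\nu=\ell(t)$,
\begin{align*}
2\langle A_\ell(t,u)-A_\ell(t,v),u-v\rangle\leq-2\|\nabla(u-v)\|^2-\bar\gamma\|u-v\|^2 .
\end{align*}

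\textbf{Coercivity.} By \eqref{f-1} and \eqref{G-1},
\begin{align*}
2\langle A_\ell(t,u),u\rangle\leq -2\|\nabla u\|^2-(2\alpha_1-\beta_G)\|u\|_p^p+C\|u\|^2+C\big(1+\sup_t M_2(\ell(t))\big),
\end{align*}
where $\beta_G<2\alpha_1$, $\psi_1\in L^1$, and $g$ is absorbed by Young's inequality. The supremum is finite by continuity of $\ell$ on $[0,T]$.

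\textbf{Growth.} By \eqref{f-2} and \eqref{G-2}, $\|A_\ell(t,u)\|_{\mathbb V^*}^{p'}\leq C(1+\|u\|_{\mathbb V}^p)$, split between the $V^*$ part (from $\Delta u$, $\lambda u$, $g$) and the $L^{p'}$ part (from $f$ and $\mathcal G$).

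The noise term $\epsilon\sum_j h_j\,dW_j$ is additive with $h_j\in H$. The standard theorem (Galerkin approximation plus Minty's method \cite{Lions69}) then yields existence and pathwise uniqueness of a variational solution in the stated spaces, with the energy bound coming from the Itô formula \cite{Gyongy17spde}. The one technical point is that the growth exponents of $V$ and $L^p$ differ, so the generalized variational setting with $\mathbb V=V\cap L^p$ and the mixed integrability class must be used; I expect this adaptation to be routine rather than the main difficulty.

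For the stability estimate \eqref{Le1-1}, let $D=U_{\ell_1}-U_{\ell_2}$. The noise cancels because both solutions are driven by the same Brownian motion, so $D$ solves a deterministic-in-form equation, and the Itô (here chain-rule) formula in the triple gives, pathwise,
\begin{align*}
\frac{d}{dt}\|D\|^2
=-2\|\nabla D\|^2-2\lambda\|D\|^2-2\big(f(\cdot,U_{\ell_1})-f(\cdot,U_{\ell_2}),D\big)
+2\langle\mathcal G(U_{\ell_1},\ell_1)-\mathcal G(U_{\ell_2},\ell_2),D\rangle .
\end{align*}
Using \eqref{f-3} and \eqref{G-3} with $\mu=\ell_1(t)$, $\nu=\ell_2(t)$ yields
\begin{align*}
\frac{d}{dt}\|D\|^2\leq-\bar\gamma\|D\|^2+L_G\mathcal W_2^2(\ell_1(t),\ell_2(t)).
\end{align*}
Multiplying by $e^{\bar\gamma t}$, integrating, and taking expectations gives \eqref{Le1-1}. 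Since the $\mathcal W_2$ term is deterministic, no coupling argument is needed here.

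The main obstacle is justifying the chain rule for $\|D\|^2$ when $D$ only lies in $L^2(0,T;V)\cap L^p(0,T;L^p)$ and $\partial_tD\in L^2(V^*)+L^{p'}(L^{p'})$. I would first invoke the intersection-space Itô formula of \cite{Gyongy17spde}, which handles exactly this pairing.
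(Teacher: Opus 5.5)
Your overall route is the paper's. You treat \eqref{Eq4} as a distribution-free SPDE on $\mathbb V\subset H\subset\mathbb V^*$ and use the monotonicity and coercivity bounds you display, which coincide with \eqref{Le1-7} and \eqref{Le1-6}. You then build the solution by Galerkin approximation and Minty's method with the intersection-space It\^o formula of \cite{Gyongy17spde}. Finally, you obtain \eqref{Le1-1} from the pathwise energy identity for $U_{\ell_1}-U_{\ell_2}$, where the noise cancels and the $\mathcal W_2$ term is deterministic. The stability part is exactly the paper's argument.

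The step that fails as written is the growth condition, and with it the claim that the hypotheses of \cite{Liu10jfa} are verified.

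\begin{itemize}
\item Your bound $\|A_\ell(t,u)\|_{\mathbb V^*}^{p'}\le C(1+\|u\|_{\mathbb V}^p)$ is the single-exponent hypothesis with $\alpha=p$. The matching coercivity $-\theta\|u\|_{\mathbb V}^p$ fails for $p>2$, because the Laplacian controls $\|\nabla u\|$ only quadratically.
\item The bound does not help in the mixed setting you then invoke either. The Galerkin a priori estimate \eqref{Le1-8} controls only $\mathbb E\int_0^T(\|U_N\|_V^2+\|U_N\|_p^p)\,dt$. So $\|U_N\|_{\mathbb V}^p$, which contains $\|U_N\|_V^p$, is not integrable, and your bound gives no control of the drifts $A_\ell(\cdot,U_N)$.
\end{itemize}

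What is needed, and what the paper proves, is the split estimate \eqref{Le1-4}:
$\|A_{\ell,2}(u)\|_{V^*}^{2}+\|A_{\ell,p}(t,u)\|_{p'}^{p'}\le C(1+\|u\|_V^2+\|u\|_p^p)$, with $A_{\ell,2}(u)=\Delta u-\lambda u+g$ and $A_{\ell,p}(t,u)=-f(\cdot,u)+\mathcal G(u,\ell(t))$.
This bounds the Galerkin drifts in $L^2(\Omega\times(0,T);V^*)+L^{p'}(\Omega\times(0,T);L^{p'}(\mathbb R^n))$, which is exactly the structure the intersection-space It\^o formula requires.

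With this bound, the adaptation you call routine is the body of the paper's proof. It consists of:
\begin{itemize}
\item weak limits, in which the affine continuous part $A_{\ell,2}(U_N)$ converges to $A_{\ell,2}(U)$ directly and only the weak limit $\chi$ of $A_{\ell,p}(\cdot,U_N)$ has to be identified;
\item the It\^o formula, giving $H$-continuity and the energy identity;
\item the lower-limit inequality \eqref{Le1-10};
\item Minty's argument with $w=U\pm\delta\phi$.
\end{itemize}
This is standard, but it should be carried out with the split bound rather than delegated to a single-exponent theorem.
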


\begin{proof}
	For fixed $\ell\in C([0,T];P)$, write
	\begin{align*}
		A_\ell(t,u)
		=A_{\ell,2}(u)+A_{\ell,p}(t,u),
	\end{align*}
	where
	\begin{align*}
		A_{\ell,2}(u)=\Delta u-\lambda u+g\in V^*\quad\text{and}\quad
		A_{\ell,p}(t,u)=-f(\cdot,u)+\mathcal G(u,\ell(t))
		\in L^{p'}(\mathbb R^n).
	\end{align*}
	Then $A_\ell(t,u)\in\mathbb V^*=V^*+L^{p'}(\mathbb R^n)$ and
	\begin{align*}
		\langle A_\ell(t,u),\xi\rangle_{\mathbb V^*,\mathbb V}
		&=-(\nabla u,\nabla\xi)-\lambda(u,\xi)
		-(f(\cdot,u),\xi)
		+\langle\mathcal G(u,\ell(t)),\xi\rangle_{\mathbb V^*,\mathbb V}
		+(g,\xi).
	\end{align*}
	Since
	\begin{align}\label{Le1-2}
		M_2(\nu)=\mathcal W_2^2(\nu,\delta_{0_H}),
		\qquad \nu\in P,
	\end{align}
	the continuity of $\ell$ gives
	\begin{align}\label{Le1-3}
		M_{\ell,T}
		:=\max_{0\leq t\leq T}M_2(\ell(t))
		=\max_{0\leq t\leq T}
		\mathcal W_2^2(\ell(t),\delta_{0_H})<\infty.
	\end{align}
	The drift is measurable in $t$ and hemicontinuous in $u$.
	Moreover, \eqref{f-2}, \eqref{G-2} and the linear bounds yield
	\begin{align}\label{Le1-4}
		\|A_{\ell,2}(u)\|_{V^*}^{2}
		+\|A_{\ell,p}(t,u)\|_{p'}^{p'}
		\leq
		C_{\ell,T}^{\rm gr}
		\big(1+\|u\|_V^2+\|u\|_p^p\big),
		\qquad 0\leq t\leq T,
	\end{align}
	where $C_{\ell,T}^{\rm gr}>0$ depends only on the structural data,
	$\|g\|$, $\|\psi_2\|_1$ and $M_{\ell,T}$.
	
	For $u\in\mathbb V$, \eqref{f-1} and \eqref{G-1} give
	\begin{align}\label{Le1-5}
		2\langle A_\ell(t,u),u\rangle_{\mathbb V^*,\mathbb V}
		&+\epsilon^2\sum_{j=1}^{m}\|h_j\|^2
		\leq
		-2\|\nabla u\|^2
		-(2\alpha_1-\beta_G)\|u\|_p^p
		-(2\lambda-2\alpha_2-\beta_1)\|u\|^2
		\notag\\
		&\quad+\beta_2M_2(\ell(t))
		+2\|\psi_1\|_1+\beta_3
		+2(g,u)
		+\epsilon^2\sum_{j=1}^{m}\|h_j\|^2.
	\end{align}
	By \eqref{gap},
	$2\lambda-2\alpha_2-\beta_1=\lambda_0+\beta_2$, and
	\begin{align*}
		2(g,u)
		\leq
		\frac{\lambda_0}{2}\|u\|^2
		+\frac{2}{\lambda_0}\|g\|^2.
	\end{align*}
	Hence, using \eqref{Le1-3}, $\epsilon\leq\epsilon_0$ and the definition
	of $C_{\rm en}$,
	\begin{align}\label{Le1-6}
		2\langle A_\ell(t,u),u\rangle_{\mathbb V^*,\mathbb V}
		+\epsilon^2\sum_{j=1}^{m}\|h_j\|^2
		\leq
		-2\|\nabla u\|^2
		-(2\alpha_1-\beta_G)\|u\|_p^p
		-\big(\frac{\lambda_0}{2}+\beta_2\big)\|u\|^2
		+C_{\ell,T},
	\end{align}
	where $C_{\ell,T}=\beta_2M_{\ell,T}+C_{\rm en}$. For
	$u,v\in\mathbb V$, \eqref{f-3} and \eqref{G-3}
	with the same law $\ell(t)$ yield
	\begin{align}\label{Le1-7}
		2\langle A_\ell(t,u)-A_\ell(t,v),u-v\rangle_{\mathbb V^*,\mathbb V}
		&\leq
		-2\|\nabla(u-v)\|^2
		-(2\lambda-2\alpha_4-\beta_4)\|u-v\|^2
		\notag\\
		&=-2\|\nabla(u-v)\|^2-\bar\gamma\|u-v\|^2.
	\end{align}
	
	Choose an $H$-orthonormal basis
	$\{e_k\}_{k\geq1}\subset C_c^\infty(\mathbb R^n)$ whose span is dense
	in $\mathbb V$, set $H_N=\operatorname{span}\{e_1,\ldots,e_N\}$, and let
	$P_N$ be the $H$-orthogonal projection onto $H_N$. Define the
	$H_N$-valued Galerkin approximation by
	\begin{align*}
		d(U_N(t),e_i)
		=\langle A_\ell(t,U_N(t)),e_i\rangle_{\mathbb V^*,\mathbb V}dt
		+\epsilon\sum_{j=1}^{m}(h_j,e_i)dW_j(t),\qquad 1\leq i\leq N,
	\end{align*}
	with $U_N(0)=P_NX_0$. By hemicontinuity, the local bound
	\eqref{Le1-4}, the one-sided monotonicity \eqref{Le1-7} and finite
	dimensionality, the projected drift is a Carath\'eodory field. Estimate
	\eqref{Le1-7} also gives pathwise uniqueness. Standard finite-dimensional
	existence yields a maximal adapted solution; applying It\^o's formula up
	to its exit times and using \eqref{Le1-6} excludes finite-time explosion.
	Thus the Galerkin equation has a unique global adapted solution. It\^o's formula,
	\eqref{Le1-6} and the BDG inequality give
	\begin{align}\label{Le1-8}
		\sup_{N\geq1}\Big[
		\mathbb E\sup_{0\leq t\leq T}\|U_N(t)\|^2
		+\mathbb E\int_0^T
		\big(\|U_N(t)\|_V^2+\|U_N(t)\|_p^p\big)dt
		\Big]
		\leq C_{T,\ell}\big(1+\mathbb E\|X_0\|^2\big),
	\end{align}
	where $C_{T,\ell}>0$ is independent of $N$, $\epsilon\in[0,\epsilon_0]$
	and $X_0$. By \eqref{Le1-4}, after passing to a subsequence,
	\begin{gather*}
		U_N\rightharpoonup U
		\quad\text{in }L^2(\Omega\times(0,T);V)
		\text{ and }L^p(\Omega\times(0,T);L^p),\\
		A_{\ell,p}(\cdot,U_N)\rightharpoonup\chi
		\quad\text{in }L^{p'}(\Omega\times(0,T);L^{p'}).
	\end{gather*}
	The progressively measurable subspaces of these Bochner spaces are weakly
	closed, so the limits may be chosen progressively measurable. Since
	$A_{\ell,2}:V\to V^*$ is affine and continuous,
	$A_{\ell,2}(U_N)\rightharpoonup A_{\ell,2}(U)$ in
	$L^2(\Omega\times(0,T);V^*)$. Testing the coordinate equations against
	$C_c^\infty(0,T)$, passing to the limit and using density gives
	\begin{align*}
		U(t)
		={}&X_0+\int_0^t\big(A_{\ell,2}(U(r))+\chi(r)\big)dr
		+\epsilon\sum_{j=1}^{m}h_jW_j(t)
	\end{align*}
	in $\mathbb V^*$ for $dt\times\mathbb P$-a.e.\ $(t,\omega)$. The
	intersection-space It\^o formula \cite{Gyongy17spde} yields an
	$H$-c\`adl\`ag version and the energy identity. The right-hand side is
	continuous in $\mathbb V^*$ and $H\hookrightarrow\mathbb V^*$ is
	injective, so this version is $H$-continuous. Thus, on a common event of
	probability one,
	\begin{align}\label{Le1-9}
		U(t)
		=X_0+\int_0^t\big(A_{\ell,2}(U(r))+\chi(r)\big)dr
		+\epsilon\sum_{j=1}^{m}h_jW_j(t)
	\end{align}
	holds in $\mathbb V^*$ for every $t\in[0,T]$.
	
	Choose a subsequence that realizes the lower limit of the integrated
	drift pairings. Since $\{U_N(T)\}$ is bounded in $L^2(\Omega;H)$,
	a further subsequence converges weakly there, say
	$U_N(T)\rightharpoonup\widetilde U_T$. The coordinate equations at
	$T$, together with \eqref{Le1-9}, identify $\widetilde U_T=U(T)$.
	Comparing the Galerkin
	energy identities with that of
	\eqref{Le1-9}, and using $P_NX_0\to X_0$ in $L^2(\Omega;H)$,
	$P_Nh_j\to h_j$ in $H$ and weak lower semicontinuity, gives
	\begin{align}\label{Le1-10}
		\liminf_{N\to\infty}
		\mathbb E\int_0^T
		\langle A_\ell(t,U_N(t)),U_N(t)\rangle_{\mathbb V^*,\mathbb V}dt
		\geq
		\mathbb E\int_0^T
		\langle A_{\ell,2}(U(t))+\chi(t),U(t)\rangle_{\mathbb V^*,\mathbb V}dt.
	\end{align}
	For every progressively measurable
	\begin{align*}
		w\in L^2(\Omega\times(0,T);V)
		\cap L^p(\Omega\times(0,T);L^p(\mathbb R^n)),
	\end{align*}
	monotonicity \eqref{Le1-7} gives
	\begin{align*}
		\mathbb E\int_0^T
		\langle A_\ell(t,U_N)-A_\ell(t,w),U_N-w\rangle_{\mathbb V^*,\mathbb V}dt
		\leq0.
	\end{align*}
	Using \eqref{Le1-4}, the weak convergences and \eqref{Le1-10}, we obtain
	\begin{align*}
		\mathbb E\int_0^T
		\langle B-A_\ell(t,w),U-w\rangle_{\mathbb V^*,\mathbb V}dt
		\leq0,
		\qquad
		B=A_{\ell,2}(U)+\chi.
	\end{align*}
	Let $\phi$ be a bounded progressively measurable $\mathbb V$-valued
	simple process. Since $U\pm\delta\phi$ remain in the mixed energy
	space, we may take successively $w=U-\delta\phi$ and
	$w=U+\delta\phi$. Dividing by $\delta>0$ and letting
	$\delta\downarrow0$, hemicontinuity and \eqref{Le1-4} give, respectively,
	\begin{align*}
		\mathbb E\int_0^T
		\langle B-A_\ell(t,U),\phi\rangle_{\mathbb V^*,\mathbb V}dt
		\leq0
		\quad\text{and}\quad
		\mathbb E\int_0^T
		\langle B-A_\ell(t,U),\phi\rangle_{\mathbb V^*,\mathbb V}dt
		\geq0.
	\end{align*}
	Density of such simple processes and separability of $\mathbb V$ imply
	$B=A_\ell(\cdot,U)$ a.e.\ on $\Omega\times(0,T)$. Hence
	$\chi=A_{\ell,p}(\cdot,U)$, and \eqref{Le1-9} is the
	variational form of \eqref{Eq4}. Its energy identity, \eqref{Le1-6} and
	the BDG and Young inequalities give the asserted
	$L^2(\Omega;C([0,T];H))$ integrability. Estimate \eqref{Le1-7} and the
	energy identity give pathwise uniqueness, so the limit is independent of
	the extracted subsequence.
	
	For two prescribed law curves, the stochastic terms cancel. The
	difference $U_{\ell_1}-U_{\ell_2}$ belongs to
	$L^2(0,T;V)\cap L^p(0,T;L^p)$ and has derivative in
	$L^2(0,T;V^*)+L^{p'}(0,T;L^{p'})$ a.s. Its energy identity,
	\eqref{f-3} and \eqref{G-3} yield, for a.e.\ $t\in(0,T)$,
	\begin{align*}
		\frac{d}{dt}\|U_{\ell_1}(t)-U_{\ell_2}(t)\|^2
		+2\|\nabla (U_{\ell_1}(t)-U_{\ell_2}(t))\|^2
		+\bar\gamma\|U_{\ell_1}(t)-U_{\ell_2}(t)\|^2
		\leq
		L_G\mathcal W_2^2(\ell_1(t),\ell_2(t)).
	\end{align*}
	Multiplying by $e^{\bar\gamma t}$, integrating over $[0,t]$ and taking
	expectation proves \eqref{Le1-1}.
\end{proof}

We determine the prescribed law curve by a fixed point.

\begin{lemma}\label{Le2}
	For every $\epsilon\in[0,\epsilon_0]$, $T>0$ and $u_0\in L^2(\Omega,\mathcal F_0;H)$, equation~\eqref{Eq1} has a unique variational solution $u$ on $[0,T]$. The solutions obtained on different finite intervals are consistent and therefore define a global solution on $[0,\infty)$. Moreover, $t\mapsto\mathcal L u(t)$ belongs to $C([0,\infty);P)$ and
	\begin{align}\label{Le2-1}
		&\mathbb E\sup_{0\leq r\leq T}\|u(r)\|^2
		+\mathbb E\int_0^T
		\big(\|u(r)\|_V^2+\|u(r)\|_p^p\big)dr
		\leq
		C_T\big(1+\mathbb E\|u_0\|^2\big),
	\end{align}
	where $C_T>0$ depends only on $T$ and the structural data and is independent of $\epsilon\in[0,\epsilon_0]$ and $u_0$. If $u_1,u_2$ are two solutions driven by the same Brownian motion with initial data $u_{0,1},u_{0,2}\in L^2(\Omega,\mathcal F_0;H)$, then
	\begin{align}\label{Le2-2}
		\mathbb E\|u_1(t)-u_2(t)\|^2
		\leq
		e^{(L_G-\bar\gamma)t}\mathbb E\|u_{0,1}-u_{0,2}\|^2,
		\qquad t\geq0.
	\end{align}
	Consequently, the law of the solution is determined by the initial law $\mathcal L u_0$.
\end{lemma}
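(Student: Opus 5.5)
The plan is a Picard-type fixed point on law curves, built on Lemma~\ref{Le1}. Fix $T>0$ and $u_0$, and set
\begin{align*}
	\mathcal E_T=\{\ell\in C([0,T];P):\ell(0)=\mathcal L u_0\}.
\end{align*}
For $\ell\in\mathcal E_T$, let $U_\ell$ be the solution of \eqref{Eq4} with $\eta=\ell$ and $U(0)=u_0$, given by Lemma~\ref{Le1}, and define $\Gamma(\ell)(t)=\mathcal L U_\ell(t)$. Since $U_\ell\in L^2(\Omega;C([0,T];H))$, dominated convergence gives $\mathbb E\|U_\ell(t)-U_\ell(s)\|^2\to0$ as $s\to t$. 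Because $\mathcal W_2^2(\mathcal L X,\mathcal L Y)\le\mathbb E\|X-Y\|^2$, it follows that $\Gamma(\ell)\in\mathcal E_T$. Using the same coupling bound and \eqref{Le1-1} with identical initial data, we get
\begin{align*}
	\mathcal W_2^2\big(\Gamma(\ell_1)(t),\Gamma(\ell_2)(t)\big)\le L_G\int_0^t\mathcal W_2^2(\ell_1(r),\ell_2(r))\,dr .
\end{align*}
Iterating this Volterra inequality shows that $\Gamma^k$ is a contraction for large $k$, with bound $(L_GT)^k/k!$. Alternatively, one can use the weighted metric $\sup_t e^{-2L_Gt}\mathcal W_2^2$, under which $\Gamma$ itself is a contraction. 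The space $\mathcal E_T$ is complete since $P$ is Polish, so there is a unique fixed point $\ell^*$. Then $u=U_{\ell^*}$ satisfies $\mathcal L u(t)=\ell^*(t)$, so it solves \eqref{Eq1}.

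Uniqueness runs as follows. Any variational solution $u$ has a continuous law curve $\ell=\mathcal L u$, and $u$ then solves \eqref{Eq4} with $\eta=\ell$. By uniqueness in Lemma~\ref{Le1}, $u=U_\ell$, so $\ell$ is a fixed point of $\Gamma$ and hence equals $\ell^*$. Consistency across intervals comes from applying the same argument to restrictions: the restriction of the $[0,T']$ solution to $[0,T]$ is a solution on $[0,T]$, so it coincides with the $[0,T]$ solution. This gives a global solution with $t\mapsto\mathcal L u(t)\in C([0,\infty);P)$.

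For \eqref{Le2-1}, the constant $C_{T,\ell}$ in \eqref{Le1-8} depends on $M_{\ell,T}$, so the dependence on $\ell^*$ must be removed. I would apply It\^o's formula to $\|u\|^2$ directly, using the estimate \eqref{Le1-5}. The key step is to replace $M_2(\mathcal L u(t))$ by $\mathbb E\|u(t)\|^2$, which is legitimate exactly at the fixed point. Taking expectations gives the linear inequality
\begin{align*}
	\frac{d}{dt}\mathbb E\|u\|^2\le C\,\mathbb E\|u\|^2+C_{\rm en},
\end{align*}
and Gronwall's lemma bounds $\sup_{t\le T}\mathbb E\|u(t)\|^2$ by $C_T(1+\mathbb E\|u_0\|^2)$ with $C_T$ independent of $\epsilon$. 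Feeding this bound back into the integrated energy inequality controls the $V$ and $L^p$ integrals. The supremum inside the expectation is then handled by the BDG inequality applied to $2\epsilon\sum_j\int_0^t(h_j,u)\,dW_j$, absorbing half of $\mathbb E\sup\|u\|^2$ on the left. I expect this to be the main bookkeeping point, because a localization by stopping times may be needed to ensure the absorbed quantity is finite, although $u\in L^2(\Omega;C([0,T];H))$ already provides that.

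For \eqref{Le2-2}, I would take two solutions with laws $\ell_1=\mathcal L u_1$ and $\ell_2=\mathcal L u_2$ and apply \eqref{Le1-1} in its differential form, derived pathwise in the proof of Lemma~\ref{Le1}, with general initial data. Combined with $\mathcal W_2^2(\ell_1(t),\ell_2(t))\le\mathbb E\|u_1(t)-u_2(t)\|^2$, this gives
\begin{align*}
	\frac{d}{dt}\mathbb E\|u_1-u_2\|^2\le(L_G-\bar\gamma)\,\mathbb E\|u_1-u_2\|^2,
\end{align*}
where the expectation is absolutely continuous in $t$ because it is an integral of the pathwise energy identity. This yields the exponential bound. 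Finally, suppose $u_{0,1}$ and $u_{0,2}$ have the same law. The laws of the solutions depend only on $\mathcal L u_0$ together with the law of $W$, by the Yamada--Watanabe theorem \cite{Roeckner08cmp} applied to the decoupled equation with the fixed curve $\ell^*$. The curve $\ell^*$ itself is determined as the fixed point of a map $\Gamma$ that depends only on $\mathcal L u_0$, so $\mathcal L u(t)$ is determined by $\mathcal L u_0$.
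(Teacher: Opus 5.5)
Your proposal is correct and follows the paper's proof essentially step by step. The steps are the same: a contraction for $\Gamma$ on law curves built from \eqref{Le1-1}, then identification of any solution's law curve with the fixed point, then the energy estimate obtained by substituting $M_2(\mathcal L u(t))=\mathbb E\|u(t)\|^2$, then synchronous coupling for \eqref{Le2-2}, and finally Yamada--Watanabe for uniqueness in law. The differences are cosmetic:
\begin{itemize}
\item For the contraction, the paper uses the Bielecki weight $e^{-at/2}$ with $a>\max\{0,L_G-\bar\gamma\}$ instead of your $e^{-L_Gt}$ weight or iteration.
\item The paper observes that the $\beta_2$-terms cancel exactly, giving the dissipative bound $\frac{d}{dt}\mathbb E\|u\|^2+\frac{\lambda_0}{2}\mathbb E\|u\|^2\le C_{\rm en}$, which it reuses later. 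Your generic Gronwall argument is enough for \eqref{Le2-1}.
\item The paper bounds the martingale term by BDG and Jensen rather than by absorption; either works.
\end{itemize}
One small point: to make $\Gamma$ depend only on $\mathcal L u_0$, you should invoke Yamada--Watanabe for every prescribed curve $\ell$, as the paper does, not only for $\ell^*$.
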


\begin{proof}
	Set $\mu=\mathcal L u_0$ and
	\begin{align*}
		\mathcal E_T(\mu)
		=\{\ell\in C([0,T];P):\ell(0)=\mu\}.
	\end{align*}
	Choose $a>\max\{0,L_G-\bar\gamma\}$ and endow this space with the Bielecki metric
	\begin{align*}
		d_{T,a}(\ell_1,\ell_2)
		=\sup_{0\leq t\leq T}
		e^{-at/2}\mathcal W_2(\ell_1(t),\ell_2(t)).
	\end{align*}
	This metric is equivalent to the uniform metric on $[0,T]$; hence $(\mathcal E_T(\mu),d_{T,a})$ is complete.
	
	For $\ell\in\mathcal E_T(\mu)$, let $U_\ell$ be the solution from Lemma~\ref{Le1} with initial datum $u_0$, and define
	\begin{align}\label{Le2-3}
		(\Gamma_\mu\ell)(t)
		=\mathcal L U_\ell(t),
		\qquad 0\leq t\leq T.
	\end{align}
	Given $\mu\in P$, realize an $\mathcal F_0$-measurable initial variable with law $\mu$ on a product extension, independently of the Brownian motion. Lemma~\ref{Le1} gives a variational solution and pathwise uniqueness on this extension. The variational Yamada-Watanabe theorem \cite{Roeckner08cmp} then yields uniqueness in law, so the law of the decoupled solution depends only on $\mu$ and $\ell$.
	The path regularity in Lemma~\ref{Le1} also gives
	\begin{align*}
		\mathcal W_2^2\big((\Gamma_\mu\ell)(t),
		(\Gamma_\mu\ell)(s)\big)
		\leq
		\mathbb E\|U_\ell(t)-U_\ell(s)\|^2
		\longrightarrow0
	\end{align*}
	as $t\to s$ by dominated convergence. Thus $\Gamma_\mu$
	maps $\mathcal E_T(\mu)$ into itself.
	
	For $\ell_1,\ell_2\in\mathcal E_T(\mu)$, couple the corresponding solutions by using the same initial variable and Brownian motion. Estimate \eqref{Le1-1} gives
	\begin{align*}
		e^{-at}\mathcal W_2^2\big((\Gamma_\mu\ell_1)(t),
		(\Gamma_\mu\ell_2)(t)\big)
		\leq
		L_Gd_{T,a}^2(\ell_1,\ell_2)
		\int_0^t e^{-(a+\bar\gamma)(t-r)}dr
		\leq
		\frac{L_G}{a+\bar\gamma}
		d_{T,a}^2(\ell_1,\ell_2).
	\end{align*}
	Consequently,
	\begin{align}\label{Le2-4}
		d_{T,a}^2(\Gamma_\mu\ell_1,
		\Gamma_\mu\ell_2)
		\leq
		\frac{L_G}{a+\bar\gamma}
		d_{T,a}^2(\ell_1,\ell_2).
	\end{align}
	By the choice of $a$, the coefficient in \eqref{Le2-4} is strictly smaller than one. Banach's fixed-point theorem yields a unique
	$\ell^{\epsilon,T}_\mu\in\mathcal E_T(\mu)$ such that
	\begin{align*}
		\Gamma_\mu\ell^{\epsilon,T}_\mu
		=\ell^{\epsilon,T}_\mu.
	\end{align*}
	The process $u=U_{\ell^{\epsilon,T}_\mu}$ then solves \eqref{Eq1} on $[0,T]$, and
	\begin{align*}
		\mathcal L u(t)=\ell^{\epsilon,T}_\mu(t),
		\qquad 0\leq t\leq T.
	\end{align*}
	If $0<T_1<T_2$, the restriction of $\ell^{\epsilon,T_2}_\mu$ to $[0,T_1]$ is the unique fixed point of $\Gamma_\mu$ on $\mathcal E_{T_1}(\mu)$; hence
	\begin{align*}
		\ell^{\epsilon,T_2}_\mu|_{[0,T_1]}
		=\ell^{\epsilon,T_1}_\mu.
	\end{align*}
	Pathwise uniqueness gives the same consistency for the decoupled solutions. Thus the finite-interval solutions define a global solution $u$ and law curve $\ell^\epsilon_\mu$.
	
	Conversely, the law curve of any variational solution of \eqref{Eq1} with initial law $\mu$ is a fixed point of \eqref{Le2-3}: after that curve is prescribed, the process solves the decoupled equation from Lemma~\ref{Le1}. The fixed point is unique, and the decoupled equation is unique in law. Thus the McKean-Vlasov equation has uniqueness in law.
	
	For the energy estimate, It\^o's formula, \eqref{f-1} and \eqref{G-1} give, for $0\leq t\leq T$,
	\begin{align}\label{Le2-5}
		&\|u(t)\|^2
		+2\int_0^t\|\nabla u(r)\|^2dr
		+(2\alpha_1-\beta_G)
		\int_0^t\|u(r)\|_p^pdr
		+(2\lambda-2\alpha_2-\beta_1)
		\int_0^t\|u(r)\|^2dr
		\notag\\
		&\leq
		\|u_0\|^2
		+\big(2\|\psi_1\|_1+\beta_3
		+\epsilon_0^2\sum_{j=1}^{m}\|h_j\|^2\big)t
		+\beta_2\int_0^tM_2(\ell^\epsilon_\mu(r))dr
		\notag\\
		&\qquad
		+2\int_0^t(g,u(r))dr
		+2\epsilon\sum_{j=1}^{m}
		\int_0^t(u(r),h_j)dW_j(r).
	\end{align}
	Using
	\begin{align*}
		2(g,u)
		\leq
		\frac{\lambda_0}{2}\|u\|^2
		+\frac{2}{\lambda_0}\|g\|^2
	\end{align*}
	and $2\lambda-2\alpha_2-\beta_1=\lambda_0+\beta_2$, inequality \eqref{Le2-5} becomes
	\begin{align}\label{Le2-6}
		&\|u(t)\|^2
		+2\int_0^t\|\nabla u(r)\|^2dr
		+(2\alpha_1-\beta_G)
		\int_0^t\|u(r)\|_p^pdr
		+\big(\frac{\lambda_0}{2}+\beta_2\big)
		\int_0^t\|u(r)\|^2dr
		\notag\\
		&\leq
		\|u_0\|^2+C_{\rm en}t
		+\beta_2\int_0^tM_2(\ell^\epsilon_\mu(r))dr
		+2\epsilon\sum_{j=1}^{m}
		\int_0^t(u(r),h_j)dW_j(r).
	\end{align}
	Since $M_2(\ell^\epsilon_\mu(r))=\mathbb E\|u(r)\|^2$, taking expectation in \eqref{Le2-6} cancels the two terms with coefficient $\beta_2$ and gives
	\begin{align}
		\mathbb E\|u(t)\|^2
		+2\int_0^t\!\mathbb E\|\nabla u(r)\|^2dr
		+(2\alpha_1-\beta_G)
		\int_0^t\!\mathbb E\|u(r)\|_p^pdr
		+\frac{\lambda_0}{2}
		\int_0^t\!\mathbb E\|u(r)\|^2dr
		\leq
		\mathbb E\|u_0\|^2+C_{\rm en}t.
		\label{Le2-7}
	\end{align}
	The same calculation before time integration gives, for a.e.\ $t>0$,
	\begin{align*}
		\frac{d}{dt}\mathbb E\|u(t)\|^2
		+\frac{\lambda_0}{2}\mathbb E\|u(t)\|^2
		\leq C_{\rm en}.
	\end{align*}
	Gronwall's inequality therefore gives
	\begin{align}\label{Le2-8}
		\mathbb E\|u(t)\|^2
		\leq
		e^{-\lambda_0t/2}\mathbb E\|u_0\|^2
		+\frac{2C_{\rm en}}{\lambda_0},
		\qquad t\geq0.
	\end{align}
	
	To control the time supremum, take the supremum in \eqref{Le2-6}. Estimate \eqref{Le2-8} first gives
	\begin{align*}
		\beta_2\int_0^T M_2(\ell^\epsilon_\mu(r))dr
		\leq C_T\big(1+\mathbb E\|u_0\|^2\big).
	\end{align*}
	The BDG inequality gives
	\begin{align*}
		&\mathbb E\sup_{0\leq s\leq T}
		\Big|
		2\epsilon\sum_{j=1}^{m}
		\int_0^s(u(r),h_j)dW_j(r)
		\Big|
		\leq
		C\epsilon_0
		\mathbb E\Big(
		\int_0^T\sum_{j=1}^{m}
		|(u(r),h_j)|^2dr
		\Big)^{1/2}
		\\
		&\leq
		C\epsilon_0
		\Big(\sum_{j=1}^{m}\|h_j\|^2\Big)^{1/2}
		\Big(\int_0^T\mathbb E\|u(r)\|^2dr\Big)^{1/2}
		\leq
		C_T\big(1+\mathbb E\|u_0\|^2\big).
	\end{align*}
	Taking the supremum in \eqref{Le2-6}, using this BDG estimate together
	with \eqref{Le2-7}, proves \eqref{Le2-1}.
	
	Finally, let $u_1,u_2$ be synchronously driven solutions. The noise cancels. Using \eqref{f-3}, \eqref{G-3} and the coupling inequality
	\begin{align*}
		\mathcal W_2^2(\mathcal L u_1(t),
		\mathcal L u_2(t))
		\leq\mathbb E\|u_1(t)-u_2(t)\|^2,
	\end{align*}
	we obtain
	\begin{align*}
		\frac{d}{dt}\mathbb E\|u_1(t)-u_2(t)\|^2
		+2\mathbb E\|\nabla \big(u_1(t)-u_2(t)\big)\|^2
		+\bar\gamma\mathbb E\|u_1(t)-u_2(t)\|^2
		\leq L_G\mathbb E\|u_1(t)-u_2(t)\|^2.
	\end{align*}
	Gronwall's inequality proves \eqref{Le2-2}. Together with the preceding uniqueness-in-law argument, this shows that the solution law depends only on $\mathcal L u_0$.
\end{proof}

To realize an arbitrary $\mu\in P$, we allow a product extension carrying an $\mathcal F_0$-measurable $H$-valued initial variable independent of the Brownian motion. For any such variable $u_0$ with law $\mu$, define
\begin{align}
	\varphi^\epsilon_P(t)\mu
	=\mathcal L u(t),
	\qquad t\geq0.
\end{align}
Lemma~\ref{Le2} makes this definition independent of the chosen realization and extension.

\begin{lemma}\label{Le3}
	For every $\epsilon\in[0,\epsilon_0]$, $\varphi^\epsilon_P$ is a continuous semiflow on $P$. More precisely,
	\begin{align}\label{Le3-1}
		\varphi^\epsilon_P(0)\mu=\mu,
		\qquad
		\varphi^\epsilon_P(t+s)\mu
		=\varphi^\epsilon_P(t)\varphi^\epsilon_P(s)\mu,
		\qquad s,t\geq0,
	\end{align}
	and
	\begin{align}\label{Le3-2}
		\mathcal W_2^2\big(\varphi^\epsilon_P(t)\mu,
		\varphi^\epsilon_P(t)\nu\big)
		\leq
		e^{(L_G-\bar\gamma)t}\mathcal W_2^2(\mu,\nu),
		\qquad t\geq0,
	\end{align}
	for all $\mu,\nu\in P$. In addition, $(t,\mu)\mapsto\varphi^\epsilon_P(t)\mu$ is continuous from $[0,\infty)\times P$ to $P$.
\end{lemma}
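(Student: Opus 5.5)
The identity $\varphi^\epsilon_P(0)\mu=\mu$ is immediate from the definition, since $u(0)=u_0$ has law $\mu$. The semigroup property will come from uniqueness in law, Lemma~\ref{Le2}. Fix $s\geq0$ and a realization $u_0$ with law $\mu$, and let $u$ be the global solution. The shifted process $\tilde u(t)=u(t+s)$ is adapted to the shifted filtration $\mathcal F_{s+t}$. It is driven by the Brownian motion $\tilde W(t)=W(t+s)-W(s)$, which is independent of $\mathcal F_s$, and its initial datum $\tilde u(0)=u(s)$ lies in $L^2(\Omega,\mathcal F_s;H)$. Moreover, its law coordinate is $\mathcal L\tilde u(t)=\mathcal L u(t+s)$, so $\tilde u$ is a variational solution of \eqref{Eq1} with initial law $\varphi^\epsilon_P(s)\mu$. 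The definition of $\varphi^\epsilon_P$ does not depend on the realization or extension, so $\mathcal L\tilde u(t)=\varphi^\epsilon_P(t)\varphi^\epsilon_P(s)\mu$. This is also $\varphi^\epsilon_P(t+s)\mu$. One should check that the variational identity survives the time shift. This follows by subtracting the identities at times $t+s$ and $s$.

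For \eqref{Le3-2}, take an optimal coupling $\pi\in\Pi(\mu,\nu)$, which exists in the Polish space $P$. On a product extension, realize a pair $(u_{0,1},u_{0,2})\sim\pi$ that is $\mathcal F_0$-measurable and independent of $W$. Solve \eqref{Eq1} with both initial data using the same Brownian motion. Then \eqref{Le2-2} and the coupling inequality give
\begin{align*}
\mathcal W_2^2\big(\varphi^\epsilon_P(t)\mu,\varphi^\epsilon_P(t)\nu\big)\leq\mathbb E\|u_1(t)-u_2(t)\|^2\leq e^{(L_G-\bar\gamma)t}\mathbb E\|u_{0,1}-u_{0,2}\|^2=e^{(L_G-\bar\gamma)t}\mathcal W_2^2(\mu,\nu).
\end{align*}

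For joint continuity, let $t_k\to t$ and $\mu_k\to\mu$ in $P$. Split the distance as
\[
\mathcal W_2(\varphi^\epsilon_P(t_k)\mu_k,\varphi^\epsilon_P(t)\mu)\leq\mathcal W_2(\varphi^\epsilon_P(t_k)\mu_k,\varphi^\epsilon_P(t_k)\mu)+\mathcal W_2(\varphi^\epsilon_P(t_k)\mu,\varphi^\epsilon_P(t)\mu).
\]
The first term is bounded by $e^{|L_G-\bar\gamma|T/2}\mathcal W_2(\mu_k,\mu)$ for $t_k\leq T$, by \eqref{Le3-2}. The second term tends to zero because Lemma~\ref{Le2} gives $t\mapsto\mathcal L u(t)$ in $C([0,\infty);P)$.

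The only delicate step is the restart argument. There I must confirm that the solution concept and the uniqueness in law of Lemma~\ref{Le2} apply to an initial time $s$ with $\mathcal F_s$-measurable data. I would handle this by reapplying the fixed-point and Yamada--Watanabe argument of Lemma~\ref{Le2}, with $\mathcal F_0$ replaced by $\mathcal F_s$ and $W$ replaced by $\tilde W$. This works because only independence of the initial datum from the driving increments was used there.
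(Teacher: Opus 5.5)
Your proposal is correct and follows essentially the same route as the paper: the restart via $u(s+\cdot)$ and uniqueness in law for the semiflow identity, synchronous coupling with \eqref{Le2-2} for the Wasserstein bound, and the triangle-inequality split for joint continuity. The only cosmetic difference is that you use an optimal coupling where the paper takes an arbitrary coupling and then the infimum; existence of the optimal coupling rests on $H$ being Polish, not on $P$.
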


\begin{proof}
	Let $\mu,\nu\in P$ and realize an arbitrary coupling $(u_{0,1},u_{0,2})$ of $(\mu,\nu)$ independently of a common Brownian motion. The corresponding synchronously driven solutions satisfy \eqref{Le2-2}; therefore
	\begin{align*}
		\mathcal W_2^2\big(\varphi^\epsilon_P(t)\mu,
		\varphi^\epsilon_P(t)\nu\big)
		\leq
		e^{(L_G-\bar\gamma)t}\mathbb E\|u_{0,1}-u_{0,2}\|^2.
	\end{align*}
	Taking the infimum over all couplings proves \eqref{Le3-2}.
	
	To verify the semiflow identity, let $u$ start with law $\mu$, fix $s\geq0$, and set
	\begin{align*}
		\nu=\varphi^\epsilon_P(s)\mu,
		\qquad
		\widehat u(r)=u(s+r),
		\qquad
		\widehat W(r)=W(s+r)-W(s).
	\end{align*}
	The process $\widehat W$ is a Brownian motion independent of $\mathcal F_s$, and $\widehat u$ solves \eqref{Eq1} with initial law $\nu$. Its law curve is $r\mapsto\varphi^\epsilon_P(s+r)\mu$. Uniqueness in law from Lemma~\ref{Le2} therefore gives
	\begin{align*}
		\varphi^\epsilon_P(s+r)\mu
		=\varphi^\epsilon_P(r)\nu
		=\varphi^\epsilon_P(r)\varphi^\epsilon_P(s)\mu,
		\qquad r\geq0,
	\end{align*}
	which proves \eqref{Le3-1}.
	
	For fixed $\mu$, the continuity of $t\mapsto\varphi^\epsilon_P(t)\mu$ follows from the $L^2(\Omega;C([0,T];H))$ regularity in Lemma~\ref{Le2}. If $(t_n,\mu_n)\to(t,\mu)$, then
	\begin{align*}
		\mathcal W_2\big(\varphi^\epsilon_P(t_n)\mu_n,
		\varphi^\epsilon_P(t)\mu\big)
		&\leq
		\mathcal W_2\big(\varphi^\epsilon_P(t_n)\mu_n,
		\varphi^\epsilon_P(t_n)\mu\big)
		+\mathcal W_2\big(\varphi^\epsilon_P(t_n)\mu,
		\varphi^\epsilon_P(t)\mu\big)
		\\
		&\leq
		e^{(L_G-\bar\gamma)t_n/2}\mathcal W_2(\mu_n,\mu)
		+\mathcal W_2\big(\varphi^\epsilon_P(t_n)\mu,
		\varphi^\epsilon_P(t)\mu\big).
	\end{align*}
	The exponential factor remains bounded, and both terms tend to zero. Hence the law semiflow is jointly continuous.
\end{proof}

\begin{corollary}\label{Cor1}
	Assume in addition \textnormal{(A5)}. For every $\epsilon\in[0,\epsilon_0]$, the law semiflow $\varphi_P^\epsilon$ has a unique invariant law $\mu_\epsilon^*\in P$. Moreover,
	\begin{align}\label{Cor1-1}
		\mathcal W_2\big(
		\varphi_P^\epsilon(t)\mu,
		\mu_\epsilon^*
		\big)
		\leq
		e^{-\kappa_0t/2}
		\mathcal W_2(\mu,\mu_\epsilon^*),
		\qquad t\geq0,
	\end{align}
	for every $\mu\in P$. In particular, $\{\mu_\epsilon^*\}$ attracts every $\mathcal W_2$-bounded subset of $P$.
\end{corollary}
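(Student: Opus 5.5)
The argument is a Banach fixed-point argument for the time-$t$ maps of $\varphi_P^\epsilon$, followed by the usual passage from a contractive semigroup to a common fixed point. By Lemma~\ref{Le3}, for every $t\geq0$ and $\mu,\nu\in P$,
\begin{align*}
	\mathcal W_2\big(\varphi_P^\epsilon(t)\mu,\varphi_P^\epsilon(t)\nu\big)
	\leq e^{-\kappa_0 t/2}\mathcal W_2(\mu,\nu),
\end{align*}
since $L_G-\bar\gamma=-\kappa_0<0$ under \textnormal{(A5)}. We fix $t_0=1$. The map $\varphi_P^\epsilon(1)$ is then a strict contraction of the complete metric space $(P,\mathcal W_2)$. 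Banach's theorem yields a unique $\mu_\epsilon^*\in P$ with $\varphi_P^\epsilon(1)\mu_\epsilon^*=\mu_\epsilon^*$.

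Next we show that $\mu_\epsilon^*$ is fixed by every $\varphi_P^\epsilon(t)$. By the semigroup property \eqref{Le3-1},
\begin{align*}
	\varphi_P^\epsilon(1)\big(\varphi_P^\epsilon(t)\mu_\epsilon^*\big)
	=\varphi_P^\epsilon(t)\varphi_P^\epsilon(1)\mu_\epsilon^*
	=\varphi_P^\epsilon(t)\mu_\epsilon^*,
\end{align*}
so $\varphi_P^\epsilon(t)\mu_\epsilon^*$ is also a fixed point of $\varphi_P^\epsilon(1)$. Uniqueness forces $\varphi_P^\epsilon(t)\mu_\epsilon^*=\mu_\epsilon^*$ for all $t\geq0$. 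Hence $\mu_\epsilon^*$ is invariant. Any other invariant law is in particular a fixed point of $\varphi_P^\epsilon(1)$, so it equals $\mu_\epsilon^*$, which proves uniqueness.

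The estimate \eqref{Cor1-1} follows by taking $\nu=\mu_\epsilon^*$ in the contraction bound and using invariance. For attraction of a $\mathcal W_2$-bounded set $B$, we note that
\begin{align*}
	\sup_{\mu\in B}\mathcal W_2(\mu,\mu_\epsilon^*)
	\leq \sup_{\mu\in B}\mathcal W_2(\mu,\delta_{0_H})+\mathcal W_2(\delta_{0_H},\mu_\epsilon^*)<\infty.
\end{align*}
Then $\operatorname{dist}_P(\varphi_P^\epsilon(t)B,\{\mu_\epsilon^*\})\leq e^{-\kappa_0t/2}\sup_{\mu\in B}\mathcal W_2(\mu,\mu_\epsilon^*)\to0$.

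The only point needing care is the passage from a fixed point of the single map $\varphi_P^\epsilon(1)$ to invariance under the whole semiflow, which is settled by the commutation argument above. Completeness of $(P,\mathcal W_2)$, needed for Banach's theorem, is standard since $H$ is Polish. Continuity of $t\mapsto\varphi_P^\epsilon(t)$ is not even needed.
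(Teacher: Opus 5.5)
Your proposal is correct and follows essentially the same route as the paper: you apply Banach's theorem to the time-one map via \eqref{Le3-2} with $L_G-\bar\gamma=-\kappa_0$, use the commutation $\varphi_P^\epsilon(1)\varphi_P^\epsilon(t)=\varphi_P^\epsilon(t)\varphi_P^\epsilon(1)$ to get invariance under the whole semiflow, and then take $\nu=\mu_\epsilon^*$ in the contraction estimate. Your triangle-inequality bound through $\delta_{0_H}$ simply makes explicit the paper's remark that \eqref{Cor1-1} holds uniformly on $\mathcal W_2$-bounded sets.
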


\begin{proof}
	By \eqref{Le3-2} and \eqref{kappa-def}, the time-one map $\varphi_P^\epsilon(1)$ is a strict contraction on the complete metric space $(P,\mathcal W_2)$. The Banach fixed-point theorem gives a unique $\mu_\epsilon^*\in P$ satisfying
	$\varphi_P^\epsilon(1)\mu_\epsilon^*=\mu_\epsilon^*$. For $s\geq0$, the semiflow identity gives
	\begin{align*}
		\varphi_P^\epsilon(1)
		\varphi_P^\epsilon(s)\mu_\epsilon^*
		=
		\varphi_P^\epsilon(s)
		\varphi_P^\epsilon(1)\mu_\epsilon^*
		=
		\varphi_P^\epsilon(s)\mu_\epsilon^*.
	\end{align*}
	The uniqueness of the fixed point of $\varphi_P^\epsilon(1)$ therefore implies
	$\varphi_P^\epsilon(s)\mu_\epsilon^*=\mu_\epsilon^*$ for every $s\geq0$. Estimate \eqref{Cor1-1} follows from \eqref{Le3-2} with $\nu=\mu_\epsilon^*$, and its uniform version on $\mathcal W_2$-bounded sets gives the last assertion.
\end{proof}

We next construct the pathwise state dynamics over the law semiflow. On the invariant full-measure set $\Omega_0$ fixed above, define
\begin{align*}
	\ell^\epsilon_\mu(t)
	=\varphi^\epsilon_P(t)\mu,
	\qquad t\geq0.
\end{align*}
For a deterministic law curve $\ell$ and $\omega\in\Omega_0$, let
$v_\ell(t,\omega;v_0)$ denote the solution of \eqref{Eq4-trans} with $\eta=\ell$ and initial value $v_0$.

\begin{lemma}\label{Le4}
	Let $\epsilon\in[0,\epsilon_0]$, $\omega\in\Omega_0$, $x\in H$ and $\mu\in P$. Then equation~\eqref{Eq4-trans} with $\eta=\ell^\epsilon_\mu$ and initial datum $x-\epsilon z(\omega)$ has a unique pathwise variational solution
	\begin{align*}
		v_{\ell^\epsilon_\mu}(\cdot,\omega;
		x-\epsilon z(\omega))
		\in
		C([0,T];H)\cap L^2(0,T;V)
		\cap L^p(0,T;L^p(\mathbb R^n))
	\end{align*}
	for every $T>0$. Consequently,
	\begin{align}\label{Le4-1}
		\varphi^\epsilon_H(t,\omega,x,\mu)
		=
		v_{\ell^\epsilon_\mu}
		(t,\omega;x-\epsilon z(\omega))
		+\epsilon z(\theta_t\omega)
	\end{align}
	is well defined. The map
	$(t,\omega,x,\mu)\mapsto\varphi^\epsilon_H(t,\omega,x,\mu)$ is jointly measurable, and, for all $x,y\in H$ and $\mu,\nu\in P$,
	\begin{align}\label{Le4-2}
		\|\varphi^\epsilon_H(t,\omega,x,\mu)
		-\varphi^\epsilon_H(t,\omega,y,\nu)\|^2
		&\leq
		e^{-\bar\gamma t}\|x-y\|^2
		+L_G\int_0^t e^{-\bar\gamma(t-r)}
		\mathcal W_2^2\big(
		\varphi^\epsilon_P(r)\mu,
		\varphi^\epsilon_P(r)\nu
		\big)dr
		\notag\\
		&\leq
		e^{-\bar\gamma t}\|x-y\|^2
		+\big(e^{(L_G-\bar\gamma)t}-e^{-\bar\gamma t}\big)
		\mathcal W_2^2(\mu,\nu).
	\end{align}
	In particular, $(x,\mu)\mapsto\varphi^\epsilon_H(t,\omega,x,\mu)$ is continuous from $H\times P$ to $H$ for every $t\geq0$ and $\omega\in\Omega_0$.
\end{lemma}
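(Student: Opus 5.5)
For fixed $\omega\in\Omega_0$, $\mu\in P$ and $T>0$, I would treat \eqref{Eq4-trans} with $\eta=\ell^\epsilon_\mu$ as a deterministic evolution equation in the Gelfand triple $\mathbb V\subset H\subset\mathbb V^*$. The time-dependent operator is
\begin{align*}
	\widetilde A(t,v)=\Delta v-\lambda v-f\big(\cdot,v+\epsilon z(\theta_t\omega)\big)+\mathcal G\big(v+\epsilon z(\theta_t\omega),\ell^\epsilon_\mu(t)\big)+g+\epsilon\Delta z(\theta_t\omega).
\end{align*}
The curve $t\mapsto\ell^\epsilon_\mu(t)$ is continuous in $P$ by Lemma~\ref{Le3}, so $M_2(\ell^\epsilon_\mu(t))$ is bounded on $[0,T]$. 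The map $t\mapsto z(\theta_t\omega)$ is continuous with values in $H^2\cap W^{1,\infty}\cap L^p$. These two facts should give the same structure as in Lemma~\ref{Le1}, now with time-dependent but bounded forcing.
\begin{itemize}
	\item[(a)] Growth: by \eqref{f-2} and \eqref{G-2}, $\widetilde A(t,\cdot)$ satisfies a bound of the form \eqref{Le1-4}.
	\item[(b)] Coercivity: by \eqref{f-1}, \eqref{G-1} and Young's inequality, with the translates by $\epsilon z$ absorbed into constants depending on $\sup_{[0,T]}\|z(\theta_t\omega)\|_{\mathbb V}$, it satisfies a bound of the form \eqref{Le1-6}.
	\item[(c)] Monotonicity: the translation by $\epsilon z$ cancels in differences, so \eqref{f-3} and \eqref{G-3} at the common law $\ell^\epsilon_\mu(t)$ give exactly \eqref{Le1-7}.
\end{itemize}
Measurability in $t$ and hemicontinuity hold as before. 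The deterministic Galerkin--Minty argument, which is the noise-free case of Lemma~\ref{Le1} via \cite{Lions69}, then gives existence of $v\in C([0,T];H)\cap L^2(0,T;V)\cap L^p(0,T;L^p)$. Uniqueness follows from (c) and the energy equality. Consistency across $T$ gives a global solution, so \eqref{Le4-1} is well defined.

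For \eqref{Le4-2}, I would take two solutions with data $(x,\mu)$ and $(y,\nu)$. Their difference $w$ satisfies $\varphi^\epsilon_H(t,\omega,x,\mu)-\varphi^\epsilon_H(t,\omega,y,\nu)=w(t)$, because the $\epsilon z(\theta_t\omega)$ terms cancel, and $w(0)=x-y$. The energy identity for $w$, together with \eqref{f-3} applied at the shifted arguments and \eqref{G-3} with laws $\varphi^\epsilon_P(t)\mu$ and $\varphi^\epsilon_P(t)\nu$, gives
\begin{align*}
	\frac{d}{dt}\|w\|^2+\bar\gamma\|w\|^2\leq L_G\mathcal W_2^2\big(\varphi^\epsilon_P(t)\mu,\varphi^\epsilon_P(t)\nu\big).
\end{align*}
Multiplying by $e^{\bar\gamma t}$ and integrating gives the first line of \eqref{Le4-2}. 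Inserting \eqref{Le3-2} gives
\begin{align*}
	L_G\int_0^te^{-\bar\gamma(t-r)}e^{(L_G-\bar\gamma)r}\,dr=e^{-\bar\gamma t}\big(e^{L_Gt}-1\big),
\end{align*}
which is the second line. Continuity in $(x,\mu)$ follows directly.

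The main obstacle is joint measurability in $(t,\omega,x,\mu)$. Continuity in $t$ holds because the solution lies in $C([0,T];H)$, and continuity in $(x,\mu)$ is given by \eqref{Le4-2}. It therefore suffices to prove measurability in $\omega$ for fixed $(t,x,\mu)$; then $\varphi^\epsilon_H$ is a Carathéodory map on the separable metric space $[0,\infty)\times H\times P$, hence jointly measurable. For $\omega$-measurability I would use the Galerkin approximations. For fixed $N$, the approximation $v_N(t,\omega)$ solves a finite-dimensional ODE whose coefficients depend measurably on $\omega$ through the continuous path $t\mapsto z(\theta_t\omega)$. By uniqueness it is the limit of measurable approximations, for instance Picard or Euler schemes, and so $v_N(t,\cdot)$ is measurable. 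Uniqueness of the limit problem forces the whole sequence to converge. Weak convergence in $H$ combined with energy-identity norm convergence, or more simply a weak limit followed by Pettis' theorem, yields measurability of $v(t,\cdot)$ in $H$, and hence of $\varphi^\epsilon_H(t,\cdot,x,\mu)$.
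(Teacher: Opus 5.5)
Your proposal is correct and follows essentially the paper's route: Galerkin--Minty for the shifted drift using growth, coercivity and the $z$-cancelling monotonicity \eqref{Le4-4}, then the energy identity for the difference combined with \eqref{Le3-2}, and finally measurability via measurable Galerkin approximations, uniqueness of cluster points and a Carath\'eodory argument. The deviations are minor. For coercivity, the paper combines \eqref{f-1}--\eqref{f-2} convexly with \eqref{f-3} and uses \eqref{G-3} at the reference point $(\epsilon z(\theta_t\omega),\delta_{0_H})$ rather than \eqref{G-1}; your version also works, provided the cross terms $\epsilon(f(\cdot,u),z(\theta_t\omega))$ and $\epsilon\langle\mathcal G(u,\ell),z(\theta_t\omega)\rangle$ are absorbed into the $L^p$ dissipation via \eqref{f-2}, \eqref{G-2} and $\beta_G<2\alpha_1$, since they cannot go into constants. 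For measurability, the paper proves $(t,\omega)$-measurability on a countable dense parameter set and extends by \eqref{Le4-2}, whereas you fix $(t,x,\mu)$ and apply Carath\'eodory directly.
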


\begin{proof}
	Fix $T>0$, $\omega\in\Omega_0$ and $\mu\in P$, and abbreviate
	\begin{align*}
		\ell(t)=\ell^\epsilon_\mu(t),
		\qquad
		z_t=z(\theta_t\omega),
		\qquad 0\leq t\leq T.
	\end{align*}
	The continuity of the law semiflow and the Ornstein-Uhlenbeck paths
	give
	\begin{align*}
		M_{\ell,T}
		=\max_{0\leq t\leq T}M_2(\ell(t))<\infty,
		\qquad
		Z_{\omega,T}
		:=\max_{0\leq t\leq T}
		\big(1+\|z_t\|_{H^2}^p+\|z_t\|_p^p\big)<\infty.
	\end{align*}
	Write the shifted drift as
	\begin{align*}
		\widetilde A_{\ell,\omega}(t,v)
		=\widetilde A_{\ell,\omega,2}(t,v)
		+\widetilde A_{\ell,\omega,p}(t,v),
	\end{align*}
	where
	\begin{align*}
		\widetilde A_{\ell,\omega,2}(t,v)
		&=\Delta v-\lambda v+g+\epsilon\Delta z_t\in V^*,\\
		\widetilde A_{\ell,\omega,p}(t,v)
		&=-f(\cdot,v+\epsilon z_t)
		+\mathcal G(v+\epsilon z_t,\ell(t))
		\in L^{p'}(\mathbb R^n).
	\end{align*}
	By \eqref{f-2},
	\eqref{G-2} and the definitions of $M_{\ell,T}$ and $Z_{\omega,T}$,
	\begin{align}\label{Le4-3}
		\|\widetilde A_{\ell,\omega,2}(t,v)\|_{V^*}^{2}
		+\|\widetilde A_{\ell,\omega,p}(t,v)\|_{p'}^{p'}
		\leq
		C_{\ell,\omega,T}^{\rm gr}
		\big(1+\|v\|_V^2+\|v\|_p^p\big),
		\qquad 0\leq t\leq T,
	\end{align}
	uniformly for $\epsilon\in[0,\epsilon_0]$. The shifted drift is
	measurable in $t$ and hemicontinuous in $v$. Since the shift cancels in
	differences, \eqref{f-3} and \eqref{G-3} give, for
	$v_1,v_2\in\mathbb V$,
	\begin{align}\label{Le4-4}
		2\langle
		\widetilde A_{\ell,\omega}(t,v_1)
		-\widetilde A_{\ell,\omega}(t,v_2),
		v_1-v_2\rangle_{\mathbb V^*,\mathbb V}
		\leq
		-2\|\nabla(v_1-v_2)\|^2
		-\bar\gamma\|v_1-v_2\|^2.
	\end{align}
	
	We next verify coercivity. Put $u=v+\epsilon z_t$ and
	$\delta_f=\frac{\alpha_1}{2(\alpha_3+1)}$. H\"older's and Young's
	inequalities, followed by \eqref{f-2}, give
	\begin{align*}
		\epsilon|(f(\cdot,u),z_t)|
		\leq
		\delta_f\|f(\cdot,u)\|_{p'}^{p'}
		+C\epsilon_0^p\|z_t\|_p^p
		\leq
		\frac{\alpha_1}{2}\|u\|_p^p
		+\delta_f\|\psi_2\|_1
		+C\epsilon_0^p\|z_t\|_p^p.
	\end{align*}
	Combining this with \eqref{f-1} and using
	\begin{align*}
		\|v+\epsilon z_t\|_p^p
		\geq2^{1-p}\|v\|_p^p-\epsilon_0^p\|z_t\|_p^p,
		\qquad
		\|v+\epsilon z_t\|^2
		\leq2\|v\|^2+2\epsilon_0^2\|z_t\|^2,
	\end{align*}
	we obtain
	\begin{align*}
		(f(\cdot,v+\epsilon z_t),v)
		\geq
		\frac{\alpha_1}{2^p}\|v\|_p^p
		-2\alpha_2\|v\|^2
		-C\big(1+\|z_t\|_{\mathbb V}^p\big).
	\end{align*}
	A second lower bound follows from \eqref{f-3}:
	\begin{align*}
		(f(\cdot,v+\epsilon z_t),v)
		\geq
		-\alpha_4\|v\|^2+(f(\cdot,\epsilon z_t),v),
	\end{align*}
	and \eqref{f-2} gives
	\begin{align*}
		\|f(\cdot,\epsilon z_t)\|_{p'}^{p'}
		\leq
		\alpha_3\epsilon_0^p\|z_t\|_p^p+\|\psi_2\|_1.
	\end{align*}
	Choose $\vartheta\in(0,1)$ so small that
	\begin{align*}
		\gamma_\vartheta
		:=2\lambda-\beta_4-4\vartheta\alpha_2
		-2(1-\vartheta)\alpha_4>0.
	\end{align*}
	This is possible because the expression at $\vartheta=0$ equals
	$\bar\gamma>0$. Young's inequality gives
	\begin{align*}
		(1-\vartheta)|(f(\cdot,\epsilon z_t),v)|
		\leq
		\frac{\vartheta\alpha_1}{2^{p+1}}\|v\|_p^p
		+C_\vartheta\big(1+\|z_t\|_p^p\big).
	\end{align*}
	Taking the convex combination of the two preceding lower bounds yields
	\begin{align}\label{Le4-5}
		2(f(\cdot,v+\epsilon z_t),v)
		\geq
		\frac{\vartheta\alpha_1}{2^p}\|v\|_p^p
		-\big(4\vartheta\alpha_2
		+2(1-\vartheta)\alpha_4\big)\|v\|^2
		-C_\vartheta\big(1+\|z_t\|_{\mathbb V}^p\big).
	\end{align}
	
	For the McKean-Vlasov term, apply \eqref{G-3} to
	$(v+\epsilon z_t,\ell(t))$ and $(\epsilon z_t,\delta_{0_H})$. By
	\eqref{Le1-2},
	\begin{align}\label{Le4-6}
		2\langle\mathcal G(v+\epsilon z_t,\ell(t)),v\rangle_{\mathbb V^*,\mathbb V}
		\leq
		\beta_4\|v\|^2+L_GM_2(\ell(t))
		+2\langle\mathcal G(\epsilon z_t,\delta_{0_H}),v\rangle_{\mathbb V^*,\mathbb V}.
	\end{align}
	Assumption \eqref{G-2} gives
	\begin{align*}
		\|\mathcal G(\epsilon z_t,\delta_{0_H})\|_{p'}^{p'}
		\leq C\big(1+\|z_t\|_p^p\big).
	\end{align*}
	Consequently, for every $\delta>0$,
	\begin{align*}
		2\left|
		\langle\mathcal G(\epsilon z_t,\delta_{0_H}),v\rangle_{\mathbb V^*,\mathbb V}
		\right|
		\leq
		\delta\|v\|_p^p
		+C_\delta\big(1+\|z_t\|_p^p\big),
	\end{align*}
	and hence
	\begin{align}\label{Le4-7}
		2\langle\mathcal G(v+\epsilon z_t,\ell(t)),v\rangle_{\mathbb V^*,\mathbb V}
		\leq
		\beta_4\|v\|^2+L_GM_{\ell,T}
		+\delta\|v\|_p^p
		+C_\delta\big(1+\|z_t\|_p^p\big).
	\end{align}
	The remaining forcing satisfies
	\begin{align}\label{Le4-8}
		2|(g+\epsilon\Delta z_t,v)|
		\leq
		\delta\|v\|^2
		+C_\delta\big(\|g\|^2
		+\epsilon_0^2\|\Delta z_t\|^2\big).
	\end{align}
	Combining \eqref{Le4-5}, \eqref{Le4-7} and \eqref{Le4-8} with the
	definition of $\widetilde A_{\ell,\omega}$, and choosing
	$\delta>0$ sufficiently small, gives
	\begin{align}\label{Le4-9}
		2\langle\widetilde A_{\ell,\omega}(t,v),v\rangle_{\mathbb V^*,\mathbb V}
		\leq
		-\|\nabla v\|^2
		-\frac{\gamma_\vartheta}{2}\|v\|^2
		-\frac{\vartheta\alpha_1}{2^{p+1}}\|v\|_p^p
		+C_{\ell,\omega,T}^{\rm co},
	\end{align}
	where
	$C_{\ell,\omega,T}^{\rm co}
	=C_\vartheta
	\big(1+\|g\|^2+L_GM_{\ell,T}+Z_{\omega,T}\big)$.
	This constant is finite and uniform for
	$\epsilon\in[0,\epsilon_0]$. For fixed $(\ell,\omega)$, the
	Galerkin-Minty construction of Lemma~\ref{Le1} now applies:
	\eqref{Le4-3} and \eqref{Le4-9} provide the primal and dual bounds, the
	deterministic intersection-space integration-by-parts formula yields the
	$H$-continuous limit, and \eqref{Le4-4} identifies the drift. Hence
	\eqref{Eq4-trans} has a unique pathwise variational solution with the
	asserted regularity, consistent on nested time intervals.
	
	For stability, set
	\begin{align*}
		U_x(t)
		=\varphi^\epsilon_H(t,\omega,x,\mu),
		\qquad
		U_y(t)
		=\varphi^\epsilon_H(t,\omega,y,\nu).
	\end{align*}
	Their difference equals the difference of the transformed solutions,
	because the Ornstein-Uhlenbeck shifts are identical. The mixed-space
	energy identity, \eqref{f-3} and \eqref{G-3} give
	\begin{align*}
		\frac{d}{dt}\|U_x(t)-U_y(t)\|^2
		+2\|\nabla(U_x(t)-U_y(t))\|^2
		+\bar\gamma\|U_x(t)-U_y(t)\|^2
		\leq
		L_G\mathcal W_2^2(\ell^\epsilon_\mu(t),
		\ell^\epsilon_\nu(t)).
	\end{align*}
	Integration proves the first inequality in \eqref{Le4-2}. By
	\eqref{Le3-2},
	\begin{align*}
		L_G\int_0^t e^{-\bar\gamma(t-r)}
		\mathcal W_2^2(\ell^\epsilon_\mu(r),
		\ell^\epsilon_\nu(r))dr
		\leq
		L_G\mathcal W_2^2(\mu,\nu)
		\int_0^t e^{-\bar\gamma(t-r)}e^{(L_G-\bar\gamma)r}dr
		=
		\big(e^{(L_G-\bar\gamma)t}-e^{-\bar\gamma t}\big)
		\mathcal W_2^2(\mu,\nu),
	\end{align*}
	where the last expression also holds for $L_G=0$. This proves
	\eqref{Le4-2} and continuity in $(x,\mu)$.
	
	For joint measurability, retain the fixed Galerkin basis and choose a
	countable dense set $D\subset H\times P$. For each point of $D$ and each
	integer $T\geq1$, let $v_N$ be the corresponding Galerkin solution. Its
	finite-dimensional equation has Carath\'{e}odory coefficients, so $v_N$ is
	jointly measurable in $(t,\omega)$. On
	$\Omega_{T,M}:=\{Z_{\omega,T}\leq M\}$, estimates \eqref{Le4-3} and
	\eqref{Le4-9}, together with the equation, give uniform bounds in the mixed
	energy spaces and the corresponding dual sum spaces, and make each scalar
	path $t\mapsto(v_N(t),e_k)$ equicontinuous. A countable family of tests
	therefore metrizes the weak path topology on the localized bounded set.
	Every subsequence has a cluster point in this topology; passage to the
	limit in the variational equation and pathwise uniqueness show that all
	cluster points coincide. Hence the full localized sequence converges, and
	its limit is measurable as a pointwise limit into a metrizable space.
	
	Since $H$ is separable, its weak and strong Borel $\sigma$-fields coincide.
	The scalar evaluations and the $H$-continuous representative give joint
	measurability in $(t,\omega)$ for parameters in $D$, simultaneously on all
	integer time intervals. Estimate \eqref{Le4-2} extends the solution map
	continuously from $D$ to $H\times P$, uniformly on compact time intervals.
	Approximation by points of $D$, followed by the Carath\'{e}odory
	measurability theorem, yields joint measurability in
	$(t,\omega,x,\mu)$.
	
\end{proof}

The law semiflow and the decoupled state map now define the product evolution.

\begin{theorem}\label{The1}
	For every $\epsilon\in[0,\epsilon_0]$, the map
	\begin{align}
		\Phi^\epsilon(t,\omega)(x,\mu)
		=
		\big(
		\varphi^\epsilon_H(t,\omega,x,\mu),
		\varphi^\epsilon_P(t)\mu
		\big),
		\qquad (x,\mu)\in H\times P,
	\end{align}
	defines a continuous random dynamical system on $H\times P$ over
	$(\Omega_0,\mathcal F,\mathbb P,\{\theta_t\}_{t\in\mathbb R})$. In particular,
	\begin{align*}
		\Phi^\epsilon(0,\omega)
		=\operatorname{id}_{H\times P},
	\end{align*}
	and, for all $s,t\geq0$ and $\omega\in\Omega_0$,
	\begin{align}\label{The1-1}
		\Phi^\epsilon(t+s,\omega)
		=\Phi^\epsilon(t,\theta_s\omega)
		\circ\Phi^\epsilon(s,\omega).
	\end{align}
	Moreover, $(x,\mu)\mapsto\Phi^\epsilon(t,\omega)(x,\mu)$ is continuous on $H\times P$ for every $t\geq0$ and $\omega\in\Omega_0$.
\end{theorem}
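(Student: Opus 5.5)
The plan is to verify conditions \textnormal{(P1)}--\textnormal{(P4)} of Section~\ref{sec2} for $X=H$, $P=\mathcal P_2(H)$, $\varphi_X=\varphi_H^\epsilon$ and $\varphi_P=\varphi_P^\epsilon$, and then invoke Lemma~\ref{lemma1}. Condition \textnormal{(P1)} is immediate. For the law coordinate, $\varphi_P^\epsilon(0)\mu=\mu$ by Lemma~\ref{Le3}. For the state coordinate, \eqref{Le4-1} at $t=0$ gives $(x-\epsilon z(\omega))+\epsilon z(\omega)=x$. For \textnormal{(P3)}, continuity of $(x,\mu)\mapsto\varphi_P^\epsilon(t)\mu$ follows from \eqref{Le3-2}, and continuity of the state coordinate follows from \eqref{Le4-2}, so the product map is continuous, and even locally Lipschitz, in $d_{\mathbb X}$. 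For \textnormal{(P4)}, joint measurability of $\varphi_H^\epsilon$ is part of Lemma~\ref{Le4}. Joint continuity of $(t,\mu)\mapsto\varphi_P^\epsilon(t)\mu$ from Lemma~\ref{Le3} gives Borel measurability of the law semiflow.

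The substantive step is the cocycle identity \textnormal{(P2)}. Its law part is exactly \eqref{Le3-1}. For the state part, fix $s\geq0$, $\omega\in\Omega_0$, $(x,\mu)$, and set $y=\varphi_H^\epsilon(s,\omega,x,\mu)$ and $\nu=\varphi_P^\epsilon(s)\mu$. I would work with the transformed equation \eqref{Eq4-trans}. Let $v(r)=v_{\ell^\epsilon_\mu}(r,\omega;x-\epsilon z(\omega))$ and define $\widehat v(t)=v(t+s)$. Then three facts combine:
\begin{itemize}
\item the OU stationarity relation $z(\theta_t\theta_s\omega)=z(\theta_{t+s}\omega)$;
\item the law-curve shift $\ell^\epsilon_\nu(t)=\ell^\epsilon_\mu(t+s)$, which is the semiflow property \eqref{Le3-1};
\item the time-translation invariance of the deterministic terms $\Delta$, $\lambda$, $f$, $g$.
\end{itemize}
Together they show that $\widehat v$ is a pathwise variational solution on every $[0,T]$ of \eqref{Eq4-trans} with $\omega$ replaced by $\theta_s\omega$ and $\eta=\ell^\epsilon_\nu$. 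Its initial value is $\widehat v(0)=v(s)=y-\epsilon z(\theta_s\omega)$.

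The regularity class $C([0,T];H)\cap L^2(0,T;V)\cap L^p(0,T;L^p)$ and the variational identity are preserved under the time shift, since the weak formulation is an integral identity in $t$ that can be restarted at time $s$. Uniqueness in Lemma~\ref{Le4} then gives
\[
\widehat v(t)=v_{\ell^\epsilon_\nu}\big(t,\theta_s\omega;y-\epsilon z(\theta_s\omega)\big).
\]
Adding $\epsilon z(\theta_t\theta_s\omega)=\epsilon z(\theta_{t+s}\omega)$ to both sides yields
\[
\varphi_H^\epsilon(t,\theta_s\omega,y,\nu)=\varphi_H^\epsilon(t+s,\omega,x,\mu),
\]
which is \textnormal{(P2)}. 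This uses that $\Omega_0$ is $\theta$-invariant, so $\theta_s\omega\in\Omega_0$ and Lemma~\ref{Le4} applies there.

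The main obstacle is making this restart rigorous. One must check that the restarted function is a solution in the precise pathwise sense of Lemma~\ref{Le4}, so that uniqueness applies. This rests on two points. First, the law curve entering the shifted problem must be exactly $\ell^\epsilon_\nu$, which is where \eqref{Le3-1}, and hence the uniqueness in law of Lemma~\ref{Le2}, is essential; the state's own law plays no role. Second, the OU identity $z_j(\theta_t\theta_s\omega)=z_j(\theta_{t+s}\omega)$ must hold for every $\omega\in\Omega_0$ and not merely almost surely. I would guarantee the second point by choosing the stationary OU process in its standard pathwise form $z_j(\omega)=-\lambda\int_{-\infty}^0e^{\lambda r}\omega_j(r)\,dr$ on the tempered set $\Omega_0$. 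With these checked, Lemma~\ref{lemma1} gives the continuous random dynamical system and \eqref{The1-1}.
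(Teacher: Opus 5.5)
Your proposal is correct and takes essentially the same route as the paper. The paper reads off the $t=0$ identity, phase-space continuity and measurability from Lemmas~\ref{Le3}--\ref{Le4}. It then proves the state part of the cocycle identity exactly as you describe: it restarts the transformed solution $r\mapsto v(s+r)$ over the base point $\theta_s\omega$ with law curve $\ell^\epsilon_\nu=\ell^\epsilon_\mu(s+\cdot)$ from \eqref{Le3-1} and invokes pathwise uniqueness in Lemma~\ref{Le4}. Your worry about the Ornstein-Uhlenbeck identity holding for every $\omega$ is harmless: since $z(\theta_t\omega)$ is defined by composition with the shift, $z(\theta_t\theta_s\omega)=z(\theta_{t+s}\omega)$ follows directly from the group property $\theta_t\circ\theta_s=\theta_{t+s}$.
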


\begin{proof}
	The identity at $t=0$, joint measurability and continuity follow from Lemmas~\ref{Le3}-\ref{Le4}. It remains to verify the cocycle identity. Fix $s,t\geq0$, $\omega\in\Omega_0$, $x\in H$ and $\mu\in P$, and set
	\begin{align*}
		\nu=\varphi_P^\epsilon(s)\mu,
		\qquad
		y=\varphi_H^\epsilon(s,\omega,x,\mu).
	\end{align*}
	By \eqref{Le3-1},
	\begin{align}\label{The1-2}
		\ell^\epsilon_\mu(s+r)
		=\varphi_P^\epsilon(s+r)\mu
		=\varphi_P^\epsilon(r)\nu
		=\ell^\epsilon_\nu(r),
		\qquad 0\leq r\leq t.
	\end{align}
	Let
	\begin{align*}
		v(r)
		=v_{\ell^\epsilon_\mu}
		(r,\omega;x-\epsilon z(\omega)),
		\qquad 0\leq r\leq s+t.
	\end{align*}
	By \eqref{Le4-1},
	\begin{align*}
		v(s)=y-\epsilon z(\theta_s\omega).
	\end{align*}
	Using \eqref{The1-2} and
	$z(\theta_r(\theta_s\omega))=z(\theta_{s+r}\omega)$, the function
	$r\mapsto v(s+r)$ solves \eqref{Eq4-trans} on $[0,t]$ over the shifted base point $\theta_s\omega$, with law curve $\ell^\epsilon_\nu$ and initial value $y-\epsilon z(\theta_s\omega)$. Pathwise uniqueness in Lemma~\ref{Le4} gives
	\begin{align*}
		v(s+t)
		=v_{\ell^\epsilon_\nu}
		(t,\theta_s\omega;y-\epsilon z(\theta_s\omega)).
	\end{align*}
	Adding $\epsilon z(\theta_{s+t}\omega)$ to both sides yields
	\begin{align*}
		\varphi^\epsilon_H(t+s,\omega,x,\mu)
		=
		\varphi^\epsilon_H\big(
		t,\theta_s\omega,
		\varphi^\epsilon_H(s,\omega,x,\mu),
		\varphi^\epsilon_P(s)\mu
		\big).
	\end{align*}
	Together with \eqref{Le3-1}, this proves \eqref{The1-1}. Because all objects are defined on the fixed shift-invariant set $\Omega_0$, the cocycle identity holds there for all $s,t\geq0$ and $(x,\mu)\in H\times P$.
\end{proof}

\section{Pullback random attractors in the product space}\label{sec4}

To apply Theorem~\ref{theorem1}, we construct a closed pullback absorbing family and prove pullback asymptotic compactness. All estimates are uniform in $\epsilon\in[0,\epsilon_0]$. Throughout this section, $C>0$ denotes a deterministic constant depending only on the structural data; subscripts indicate additional dependencies, and the value may change from line to line. A symbol of the form $C(\omega)$ denotes a finite random constant.

Retain the exponent $q>2$ fixed in Section~\ref{sec3}. For a nonempty bounded set $B\subset H$, set
$\|B\|_H=\sup_{x\in B}\|x\|$. A state family is denoted by
$B_H=\{B_H(\omega)\}_{\omega\in\Omega_0}$, where every $B_H(\omega)$ is a nonempty bounded subset of $H$. We define $\mathfrak D_H$ as the collection of all such families satisfying, for every $\delta>0$ and $\omega\in\Omega_0$,
\begin{align*}
	\lim_{t\to\infty}
	e^{-\delta t}\|B_H(\theta_{-t}\omega)\|_H^2=0.
\end{align*}
A law set is denoted by $B_P$, and we define
\begin{align*}
	\mathfrak D_P
	=
	\Big\{
	B_P\subset\mathcal P_q(H):
	B_P\neq\emptyset,\quad
	\sup_{\mu\in B_P}M_q(\mu)<\infty
	\Big\}.
\end{align*}
Both $\mathfrak D_H$ and $\mathfrak D_P$ are inclusion closed. Moreover, Lyapunov's inequality gives
\begin{align*}
	\sup_{\mu\in B_P}M_2(\mu)
	\leq
	\Big(\sup_{\mu\in B_P}M_q(\mu)\Big)^{2/q}<\infty,
	\qquad B_P\in\mathfrak D_P.
\end{align*}
The product universe induced by these two collections is
\begin{align*}
	\mathfrak D
	=
	\Big\{
	D=\{D(\omega)\}_{\omega\in\Omega_0}:{}
	&D(\omega)\neq\emptyset
	\text{ for every }\omega\in\Omega_0,
	\text{ and there exist }B_H\in\mathfrak D_H,
	B_P\in\mathfrak D_P
	\\[-1mm]
	&\text{such that }
	D(\omega)\subset B_H(\omega)\times B_P
	\text{ for every }\omega\in\Omega_0
	\Big\},
\end{align*}
in agreement with Section~\ref{sec2}.

\subsection{A uniform pullback absorbing set in \texorpdfstring{$H\times P$}{H x P}}

Recall that $\delta_{0_H}$ is the Dirac measure at $0\in H$. Set
\begin{align*}
	C_P=C_{\rm en}=2\|\psi_1\|_1+\beta_3
	+\frac{2}{\lambda_0}\|g\|^2
	+\epsilon_0^2\sum_{j=1}^{m}\|h_j\|^2,
	\qquad
	R_P^2=\frac{2C_P}{\lambda_0},
\end{align*}
where $C_{\rm en}$ and $\lambda_0$ are defined in Section~\ref{sec3}. We further define
\begin{align}\label{s4.1-1}
	\mathfrak z(\omega)
	=
	1+\|z(\omega)\|_{H^2}^2
	+\|z(\omega)\|_p^p,
	\qquad \omega\in\Omega_0.
\end{align}
By \eqref{h-1}, $z(\omega)\in H^2(\mathbb R^n)\cap\mathbb V$. Since the scalar Ornstein-Uhlenbeck processes are tempered, so are all polynomial functions of their finite-dimensional linear combinations. Consequently, $\mathfrak z$ is finite, measurable and tempered on $\Omega_0$.

\begin{lemma}\label{Le5}
	Assume \textnormal{(A1)-(A4)}. There exist constants
	$\gamma\in(0,\lambda_0/2)$ and $C_H>0$, independent of
	$\epsilon\in[0,\epsilon_0]$, such that, for every $t\geq0$,
	$\omega\in\Omega_0$, $x\in H$ and $\mu\in P$,
	\begin{align}\label{Le5-1}
		\left\|
		\varphi_H^\epsilon
		(t,\theta_{-t}\omega,x,\mu)
		\right\|^2
		\leq
		4e^{-\gamma t}\|x\|^2
		+4\epsilon_0^2e^{-\gamma t}
		\|z(\theta_{-t}\omega)\|^2
		+\frac{2L_G}{\lambda_0/2-\gamma}
		e^{-\gamma t}M_2(\mu)
		+R_H^2(\omega)-1,
	\end{align}
	where
	\begin{align*}
		R_H^2(\omega)
		=1+\frac{2L_GR_P^2}{\gamma}
		+2C_H\int_{-\infty}^{0}
		e^{\gamma s}\mathfrak z(\theta_s\omega)\,ds
		+2\epsilon_0^2\|z(\omega)\|^2.
	\end{align*}
	The mapping $\omega\mapsto R_H(\omega)$ is measurable and tempered.
	Hence
	\begin{align*}
		K_H(\omega)
		=
		\big\{u\in H:\|u\|^2\leq R_H^2(\omega)\big\},
		\qquad \omega\in\Omega_0,
	\end{align*}
	is a closed measurable random set belonging to $\mathfrak D_H$.
	
	Let $B_H\in\mathfrak D_H$, $B_P\in\mathfrak D_P$ and
	$\omega\in\Omega_0$. Then there exists $T=T(B_H,B_P,\omega)>0$
	such that, for all $t\geq T$ and $\epsilon\in[0,\epsilon_0]$,
	\begin{align*}
		\left\{
		\varphi_H^\epsilon
		(t,\theta_{-t}\omega,x,\mu):
		x\in B_H(\theta_{-t}\omega),\ \mu\in B_P
		\right\}
		\subset K_H(\omega).
	\end{align*}
	Thus $K_H$ is a pullback absorbing random set for the state component in
	$H$, uniformly in $\epsilon\in[0,\epsilon_0]$ and in the
	initial laws in $M_2$-bounded subsets of $P$.
\end{lemma}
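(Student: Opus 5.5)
Our plan is to work with the Ornstein--Uhlenbeck transformed variable $v(t)=\varphi_H^\epsilon(t,\omega,x,\mu)-\epsilon z(\theta_t\omega)$, which solves \eqref{Eq4-trans} with $\eta=\ell^\epsilon_\mu$. We derive a differential inequality for $\|v\|^2$ with coefficients that are uniform in the law and in $\epsilon$, and then evaluate it in pullback form at base point $\theta_{-t}\omega$. The starting point is the coercivity computation \eqref{Le4-5}--\eqref{Le4-8} from the proof of Lemma~\ref{Le4}. We redo it keeping the law term $L_GM_2(\ell(t))$ time dependent instead of bounding it by $M_{\ell,T}$, and with all $z$-dependent terms collected into $C_H\,\mathfrak z(\theta_t\omega)$. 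Here the bound $\|z\|_{\mathbb V}^p+\|\Delta z\|^2\leq C\mathfrak z$ follows from the $H^2$ and $L^p$ norms in \eqref{s4.1-1}, after absorbing $\|\nabla z\|$ via interpolation or simply noting that $\|z\|_{H^2}^2$ dominates it. The result is
\begin{align*}
\frac{d}{dt}\|v\|^2+\frac{\gamma_\vartheta}{2}\|v\|^2\leq L_G M_2(\varphi_P^\epsilon(t)\mu)+C_H\,\mathfrak z(\theta_t\omega).
\end{align*}
We then choose $\gamma$ with $0<\gamma<\min\{\gamma_\vartheta/2,\lambda_0/2\}$. If necessary we first pick $\vartheta$ so that $\gamma_\vartheta/2$ is positive and simply take $\gamma$ below both quantities.

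The law input comes from \eqref{Le2-8}. Since $M_2(\varphi_P^\epsilon(r)\mu)=\mathbb E\|u(r)\|^2$, it gives $M_2(\varphi_P^\epsilon(r)\mu)\leq e^{-\lambda_0 r/2}M_2(\mu)+R_P^2$, uniformly in $\epsilon$. Multiplying the differential inequality by $e^{\gamma r}$ and integrating on $[0,t]$ gives
\begin{align*}
\|v(t)\|^2\leq e^{-\gamma t}\|v(0)\|^2+L_G\int_0^t e^{-\gamma(t-r)}\big(e^{-\lambda_0 r/2}M_2(\mu)+R_P^2\big)dr+C_H\int_0^te^{-\gamma(t-r)}\mathfrak z(\theta_r\omega)dr.
\end{align*}
The first law integral is at most $\frac{L_G}{\lambda_0/2-\gamma}e^{-\gamma t}M_2(\mu)$, and the second is at most $L_GR_P^2/\gamma$. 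Next we replace $\omega$ by $\theta_{-t}\omega$ and substitute $s=r-t$. The noise integral becomes $\int_{-t}^0e^{\gamma s}\mathfrak z(\theta_s\omega)ds$, which is bounded by the integral over $(-\infty,0]$. This integral is finite because $\mathfrak z$ is tempered and $\gamma>0$.

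Finally we return to $\varphi_H^\epsilon=v+\epsilon z(\omega)$ using $\|a+b\|^2\leq2\|a\|^2+2\|b\|^2$. For the initial datum we use $\|v(0)\|^2\leq2\|x\|^2+2\epsilon_0^2\|z(\theta_{-t}\omega)\|^2$. This doubling accounts for the constants $4$, $2L_G/(\lambda_0/2-\gamma)$, $2C_H$ and $2\epsilon_0^2\|z(\omega)\|^2$ in \eqref{Le5-1} and $R_H^2$. The bound we obtain actually carries $2L_GR_P^2/\gamma$; this is the term appearing in $R_H^2$.

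For measurability of $R_H$, we note that $s\mapsto\mathfrak z(\theta_s\omega)$ is jointly measurable and apply Fubini. Temperedness follows from the standard fact that $\omega\mapsto\int_{-\infty}^0e^{\gamma s}\mathfrak z(\theta_s\omega)ds$ is tempered when $\mathfrak z$ is tempered. Concretely, for every $\delta<\gamma$ we have $\mathfrak z(\theta_s\omega)\leq C_\delta(\omega)e^{-\delta s/2}$ for $s\leq0$, and this gives subexponential growth of the shifted integral. The set $K_H$ is then a closed ball of tempered radius, so it is measurable and belongs to $\mathfrak D_H$. For absorption, we take $x\in B_H(\theta_{-t}\omega)$ and $\mu\in B_P$. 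The three decaying terms in \eqref{Le5-1} tend to zero uniformly: the first by the definition of $\mathfrak D_H$ with $\delta=\gamma$, the second by temperedness of $z$, and the third because $\sup_{B_P}M_2<\infty$ by Lyapunov's inequality. Their sum is therefore at most $1$ for $t\geq T$, which yields inclusion in $K_H(\omega)$ uniformly in $\epsilon$.

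The main obstacle is the coercivity step. The constants there must be independent of both $\epsilon$ and $\mu$, and the only law dependence allowed is the linear term $L_GM_2$. This is why we use the monotonicity route \eqref{Le4-6}, through \eqref{G-3} against $(\epsilon z,\delta_{0_H})$, instead of \eqref{G-1}. The route through \eqref{G-1} would bring in $\beta_2$ together with $\lambda_0$, and that matches the $L_G$ appearing in the stated formula less naturally.
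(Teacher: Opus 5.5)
Your proposal is correct and is essentially the paper's proof. You use the same Ornstein--Uhlenbeck energy inequality, with the convex-combination bound for $f$, \eqref{G-3} against $(\epsilon z,\delta_{0_H})$ for $\mathcal G$, and all $z$-terms absorbed into $C_H\mathfrak z$. You also use the same law bound $M_2(\varphi_P^\epsilon(r)\mu)\leq e^{-\lambda_0r/2}M_2(\mu)+R_P^2$, exponential weighting with $\gamma$ below both the decay rate and $\lambda_0/2$, and the same doubling, temperedness and absorption arguments. Integrating forward from base point $\theta_{-t}\omega$ and substituting $s=r-t$ is the same as the paper's integration over $[-t,0]$.

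One minor correction: $\|z\|_{H^2}^2$ does not by itself dominate $\|\nabla z\|^p$ for $p>2$. Your bound $\|z\|_{\mathbb V}^p\leq C\mathfrak z$ therefore really rests on the finite-dimensionality of $\operatorname{span}\{h_1,\ldots,h_m\}$. More simply, it follows because the $z$-terms actually produced are only $\|z\|^2$, $\|z\|_p^p$ and $\|\Delta z\|^2$, all of which are bounded by $\mathfrak z$.
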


\begin{proof}
	For the law component, let $u$ solve equation~\eqref{Eq1} with
	initial law $\mu\in P$ and write
	\begin{align*}
		M_2\big(\varphi_P^\epsilon(r)\mu\big)
		=\mathbb E\|u(r)\|^2.
	\end{align*}
	Applying It\^o's formula to $\|u(r)\|^2$, taking expectation,
	and using \eqref{f-1}, \eqref{G-1} and \eqref{gap}, we obtain
	\begin{align*}
		\frac{d}{dr}\mathbb E\|u(r)\|^2
		&+2\mathbb E\|\nabla u(r)\|^2
		+(2\alpha_1-\beta_G)
		\mathbb E\|u(r)\|_p^p
		+\lambda_0\mathbb E\|u(r)\|^2
		\\
		&\leq
		2\|\psi_1\|_1+\beta_3
		+2\mathbb E(g,u(r))
		+\epsilon^2\sum_{j=1}^{m}\|h_j\|^2.
	\end{align*}
	Young's inequality gives
	\begin{align*}
		2\mathbb E(g,u(r))
		\leq
		\frac{\lambda_0}{2}\mathbb E\|u(r)\|^2
		+\frac{2}{\lambda_0}\|g\|^2.
	\end{align*}
	Since $\beta_G<2\alpha_1$ and $\epsilon\leq\epsilon_0$, substitution yields
	\begin{align}\label{Le5-2a}
		&\frac{d}{dr}M_2\big(\varphi_P^\epsilon(r)\mu\big)
		+2\mathbb E\|\nabla u(r)\|^2
		+(2\alpha_1-\beta_G)\mathbb E\|u(r)\|_p^p
		+\frac{\lambda_0}{2}M_2\big(\varphi_P^\epsilon(r)\mu\big)
		\leq C_P.
	\end{align}
	Dropping the nonnegative gradient and $L^p$ terms and applying Gronwall's inequality,
	\begin{align}\label{Le5-2}
		M_2\big(\varphi_P^\epsilon(r)\mu\big)
		\leq
		e^{-\lambda_0r/2}M_2(\mu)+\frac{2C_P}{\lambda_0}=:e^{-\lambda_0r/2}M_2(\mu)+R_P^2,
		\qquad r\geq0,
	\end{align}
	uniformly for $\epsilon\in[0,\epsilon_0]$.
	
	For the pathwise estimate, fix $t>0$, $\omega\in\Omega_0$,
	$x\in H$ and $\mu\in P$. For $s\in[-t,0]$, define the transformed state variable
	\begin{align*}
		v(s)
		=
		\varphi_H^\epsilon
		(s+t,\theta_{-t}\omega,x,\mu)
		-\epsilon z(\theta_s\omega).
	\end{align*}
	Then $v(-t)=x-\epsilon z(\theta_{-t}\omega)$ and, on
	$[-t,0]$,
	\begin{align}\label{Le5-3}
		\frac{dv}{ds}
		-\Delta v+\lambda v
		+f\big(\cdot,v+\epsilon z(\theta_s\omega)\big)
		={}&
		\mathcal G\big(
		v+\epsilon z(\theta_s\omega),
		\varphi_P^\epsilon(s+t)\mu
		\big)
		+g+\epsilon\Delta z(\theta_s\omega).
	\end{align}
	
	We first estimate the term involving $f$. By \eqref{f-1}-\eqref{f-2}, H\"older's inequality and Young's inequality, choosing
	the coefficient of
	$\|f(\cdot,v+\epsilon z(\theta_s\omega))\|_{p'}^{p'}$
	to be $\alpha_1/[2(\alpha_3+1)]$, we have
	\begin{align*}
		\big(
		f(\cdot,v+\epsilon z(\theta_s\omega)),v
		\big)
		&=
		\big(
		f(\cdot,v+\epsilon z(\theta_s\omega)),
		v+\epsilon z(\theta_s\omega)
		\big)
		-\big(
		f(\cdot,v+\epsilon z(\theta_s\omega)),
		\epsilon z(\theta_s\omega)
		\big)
		\\
		&\geq
		\frac{\alpha_1}{2}
		\|v+\epsilon z(\theta_s\omega)\|_p^p
		-\alpha_2
		\|v+\epsilon z(\theta_s\omega)\|^2
		-C\big(1+\|z(\theta_s\omega)\|_p^p\big).
	\end{align*}
	Using
	\begin{align*}
		\|v+\epsilon z(\theta_s\omega)\|_p^p
		\geq
		2^{1-p}\|v\|_p^p
		-\epsilon_0^p\|z(\theta_s\omega)\|_p^p\quad\text{and}\quad
		\|v+\epsilon z(\theta_s\omega)\|^2
		\leq
		2\|v\|^2
		+2\epsilon_0^2\|z(\theta_s\omega)\|^2,
	\end{align*}
	we infer that
	\begin{align*}
		\big(
		f(\cdot,v+\epsilon z(\theta_s\omega)),v
		\big)
		\geq
		\frac{\alpha_1}{2^p}\|v\|_p^p
		-2\alpha_2\|v\|^2
		-C\mathfrak z(\theta_s\omega).
	\end{align*}
	On the other hand, applying \eqref{f-3} pointwise to
	$v+\epsilon z(\theta_s\omega)$ and
	$\epsilon z(\theta_s\omega)$, and then integrating over
	$\mathbb R^n$, we obtain
	\begin{align*}
		\big(
		f(\cdot,v+\epsilon z(\theta_s\omega)),
		v
		\big)
		\geq
		-\alpha_4\|v\|^2
		+\big(
		f(\cdot,\epsilon z(\theta_s\omega)),
		v
		\big).
	\end{align*}
	Moreover, \eqref{f-2} yields
	\begin{align*}
		\left\|
		f(\cdot,\epsilon z(\theta_s\omega))
		\right\|_{p'}^{p'}
		\leq
		\alpha_3\epsilon_0^p
		\|z(\theta_s\omega)\|_p^p
		+\|\psi_2\|_1.
	\end{align*}
	Choose $\vartheta\in(0,1)$ sufficiently small that
	\begin{align*}
		2\lambda-\beta_4-4\vartheta\alpha_2
		-2(1-\vartheta)\alpha_4>0,
	\end{align*}
	which is possible by \eqref{gap}. H\"older's and Young's inequalities
	then give
	\begin{align*}
		\big(
		f(\cdot,\epsilon z(\theta_s\omega)),
		v
		\big)
		&\geq
		-\left\|
		f(\cdot,\epsilon z(\theta_s\omega))
		\right\|_{p'}\|v\|_p\\
		&\geq
		-\frac{\vartheta\alpha_1}
		{2^{p+1}(1-\vartheta)}
		\|v\|_p^p
		-C_\vartheta
		\left\|
		f(\cdot,\epsilon z(\theta_s\omega))
		\right\|_{p'}^{p'}\\
		&\geq
		-\frac{\vartheta\alpha_1}
		{2^{p+1}(1-\vartheta)}
		\|v\|_p^p
		-C_\vartheta\mathfrak z(\theta_s\omega).
	\end{align*}
	Taking the convex combination of the preceding two lower
	bounds for
	$\big(f(\cdot,v+\epsilon z(\theta_s\omega)),
	v\big)$
	with weights $\vartheta$ and $1-\vartheta$, respectively,
	we obtain
	\begin{align}\label{Le5-4}
		2\big(
		f(\cdot,v+\epsilon z(\theta_s\omega)),
		v
		\big)
		\geq
		\frac{\vartheta\alpha_1}{2^p}
		\|v\|_p^p
		-\big(
		4\vartheta\alpha_2
		+2(1-\vartheta)\alpha_4
		\big)\|v\|^2
		-C_\vartheta\mathfrak z(\theta_s\omega).
	\end{align}
	
	Applying \eqref{G-3} to
	\begin{align*}
		\big(
		v+\epsilon z(\theta_s\omega),
		\varphi_P^\epsilon(s+t)\mu
		\big)
		\quad\text{and}\quad
		\big(\epsilon z(\theta_s\omega),\delta_{0_H}\big),
	\end{align*}
	and using
	$\mathcal W_2^2(\varphi_P^\epsilon(s+t)\mu,\delta_{0_H})
	=M_2(\varphi_P^\epsilon(s+t)\mu)$, we obtain
	\begin{align*}
		2\left\langle
		\mathcal G\big(
		v+\epsilon z(\theta_s\omega),
		\varphi_P^\epsilon(s+t)\mu
		\big)-\mathcal G\big(
		\epsilon z(\theta_s\omega), \delta_{0_H}\big),v
		\right\rangle_{\mathbb V^*,\mathbb V}
		\leq \beta_4\|v\|^2
		+L_GM_2\big(\varphi_P^\epsilon(s+t)\mu\big).
	\end{align*}
	It follows that
	\begin{align*}
		2\left\langle
		\mathcal G\big(
		v+\epsilon z(\theta_s\omega),
		\varphi_P^\epsilon(s+t)\mu
		\big),v
		\right\rangle_{\mathbb V^*,\mathbb V}
		\leq
		\beta_4\|v\|^2
		+L_GM_2\big(\varphi_P^\epsilon(s+t)\mu\big)
		+2\left\langle
		\mathcal G(\epsilon z(\theta_s\omega),\delta_{0_H}),
		v
		\right\rangle_{\mathbb V^*,\mathbb V}.
	\end{align*}
	For every $\delta>0$, \eqref{G-2} and Young's inequality imply
	\begin{align*}
		2\Big|
		\left\langle
		\mathcal G(\epsilon z(\theta_s\omega),\delta_{0_H}),
		v
		\right\rangle_{\mathbb V^*,\mathbb V}
		\Big|
		\leq
		\delta\|v\|_p^p
		+C_\delta\mathfrak z(\theta_s\omega).
	\end{align*}
	Substituting this estimate into the preceding inequality gives
	\begin{align}\label{Le5-5}
		2\left\langle
		\mathcal G\big(
		v+\epsilon z(\theta_s\omega),
		\varphi_P^\epsilon(s+t)\mu
		\big),v
		\right\rangle_{\mathbb V^*,\mathbb V}
		\leq
		\beta_4\|v\|^2
		+L_GM_2\big(\varphi_P^\epsilon(s+t)\mu\big)
		+\delta\|v\|_p^p
		+C_\delta\mathfrak z(\theta_s\omega).
	\end{align}
	
	Taking the $H$-inner product of \eqref{Le5-3} with $v$ and
	using \eqref{Le5-4}-\eqref{Le5-5} together with
	\begin{align*}
		2|(g,v)|
		&\leq
		\delta\|v\|^2+C_\delta\|g\|^2,\\
		2\epsilon|\big(\Delta z(\theta_s\omega),v\big)|
		&\leq
		\delta\|v\|^2
		+C_\delta\epsilon_0^2\|\Delta z(\theta_s\omega)\|^2,
	\end{align*}
	we obtain, for sufficiently small $\delta>0$, constants
	$c_1,c_2,c_3>0$, independent of $\epsilon$, such that
	\begin{align}\label{Le5-6}
		\frac{d}{ds}\|v(s)\|^2
		+c_1\|\nabla v(s)\|^2
		+c_2\|v(s)\|_p^p
		+c_3\|v(s)\|^2
		\leq{}&
		L_GM_2\big(\varphi_P^\epsilon(s+t)\mu\big)
		+C_H\mathfrak z(\theta_s\omega).
	\end{align}
	
	Fix $\gamma\in(0,\min\{c_3,\lambda_0/2\})$. Multiplying
	\eqref{Le5-6} by $e^{\gamma s}$ and integrating over $[-t,0]$, we obtain
	\begin{align*}
		\|v(0)\|^2
		\leq
		e^{-\gamma t}
		\|x-\epsilon z(\theta_{-t}\omega)\|^2
		+L_G\int_{-t}^{0}e^{\gamma s}
		M_2\big(\varphi_P^\epsilon(s+t)\mu\big)\,ds
		+C_H\int_{-t}^{0}e^{\gamma s}
		\mathfrak z(\theta_s\omega)\,ds.
	\end{align*}
	By \eqref{Le5-2} and $\gamma<\lambda_0/2$,
	\begin{align*}
		\int_{-t}^{0}e^{\gamma s}
		M_2\big(\varphi_P^\epsilon(s+t)\mu\big)\,ds
		\leq
		\frac{e^{-\gamma t}}{\lambda_0/2-\gamma}M_2(\mu)
		+\frac{R_P^2}{\gamma}.
	\end{align*}
	Combining these estimates, using
	\begin{align*}
		\|x-\epsilon z(\theta_{-t}\omega)\|^2
		\leq
		2\|x\|^2
		+2\epsilon_0^2\|z(\theta_{-t}\omega)\|^2,
	\end{align*}
	and extending the last integral to $(-\infty,0]$ gives
	\begin{align}\label{Le5-7}
		\|v(0)\|^2
		\leq
		2e^{-\gamma t}\|x\|^2
		+2\epsilon_0^2e^{-\gamma t}
		\|z(\theta_{-t}\omega)\|^2
		+\frac{L_G e^{-\gamma t}M_2(\mu)}{\lambda_0/2-\gamma}
		+\frac{L_GR_P^2}{\gamma}
		+C_H\int_{-\infty}^{0}
		e^{\gamma s}\mathfrak z(\theta_s\omega)\,ds.
	\end{align}
	Since
	\begin{align*}
		\varphi_H^\epsilon(t,\theta_{-t}\omega,x,\mu)
		=v(0)+\epsilon z(\omega),
	\end{align*}
	combining \eqref{Le5-7} with
	$\|a+b\|^2\leq2\|a\|^2+2\|b\|^2$ proves \eqref{Le5-1}.
	
	To prove measurability and temperedness of $R_H$, fix
	$\rho\in(0,\gamma)$. The temperedness and continuity of
	$s\mapsto\mathfrak z(\theta_s\omega)$ imply
	\begin{align*}
		C_\rho(\omega)
		=
		\sup_{r\leq0}e^{\rho r}\mathfrak z(\theta_r\omega)<\infty.
	\end{align*}
	Hence
	\begin{align*}
		\int_{-\infty}^{0}e^{\gamma s}
		\mathfrak z(\theta_s\omega)\,ds
		\leq
		\frac{C_\rho(\omega)}{\gamma-\rho}<\infty.
	\end{align*}
	Moreover, for $t\geq0$,
	\begin{align*}
		\int_{-\infty}^{0}e^{\gamma s}
		\mathfrak z\big(\theta_s(\theta_{-t}\omega)\big)\,ds
		=
		e^{\gamma t}\int_{-\infty}^{-t}
		e^{\gamma r}\mathfrak z(\theta_r\omega)\,dr
		\leq
		\frac{C_\rho(\omega)}{\gamma-\rho}e^{\rho t}.
	\end{align*}
	Given any $a>0$, choosing
	$\rho\in(0,\min\{a,\gamma\})$ and using the temperedness of $z$
	therefore yields
	\begin{align*}
		\lim_{t\to\infty}
		e^{-at}R_H^2(\theta_{-t}\omega)=0.
	\end{align*}
	The measurability of $R_H$ follows from the joint measurability of
	$(s,\omega)\mapsto z(\theta_s\omega)$. Since $H$ is separable, a closed
	ball with measurable random radius is a measurable random set. Thus
	$K_H\in\mathfrak D_H$.
	
	Let $B_H\in\mathfrak D_H$ and $B_P\in\mathfrak D_P$. By the
	definitions of $\mathfrak D_H$ and $\mathfrak D_P$, and by the
	temperedness of $z$,
	\begin{align*}
		4e^{-\gamma t}\|B_H(\theta_{-t}\omega)\|_H^2
		+4\epsilon_0^2e^{-\gamma t}
		\|z(\theta_{-t}\omega)\|^2
		+\frac{2L_G}{\lambda_0/2-\gamma}e^{-\gamma t}
		\sup_{\mu\in B_P}M_2(\mu)
		\longrightarrow0
	\end{align*}
	as $t\to\infty$. Hence the sum on the left is at most $1$ for all
	sufficiently large $t$, uniformly in $\epsilon\in[0,\epsilon_0]$.
	Estimate \eqref{Le5-1} then gives the asserted pullback absorption.
\end{proof}

The state estimate requires only a uniform second moment. The following $q$-moment bound completes the product-space absorption argument.

\begin{lemma}\label{Le6}
	Assume \textnormal{(A1)-(A4)}. There exists a constant $C_q>0$, independent of $\epsilon\in[0,\epsilon_0]$, $t$ and $\mu$, such that, with
	\begin{align}\label{Le6-1}
		R_{P,q}^q=\frac{4C_q}{q\lambda_0},
	\end{align}
	one has, for every $t\geq0$ and $\mu\in\mathcal P_q(H)$,
	\begin{align}\label{Le6-2}
		M_q\big(\varphi_P^\epsilon(t)\mu\big)
		\leq
		e^{-q\lambda_0t/4}M_q(\mu)+R_{P,q}^q.
	\end{align}
	In particular,
	$\varphi_P^\epsilon(t)\mathcal P_q(H)\subset\mathcal P_q(H)$
	for all $t\geq0$.
	
	Define
	\begin{align}\label{Le6-3}
		K_P
		&=
		\Big\{
		\nu\in\mathcal P_q(H):
		M_q(\nu)\leq R_{P,q}^q+1
		\Big\},\\
		K(\omega)
		&=K_H(\omega)\times K_P,
		\qquad \omega\in\Omega_0,
	\end{align}
	where $K_H$ is given by Lemma~\ref{Le5}. Then $K_P$ is a nonempty closed subset of $(P,\mathcal W_2)$ and belongs to $\mathfrak D_P$. The family $K=\{K(\omega)\}_{\omega\in\Omega_0}$ is a closed measurable random set in $H\times P$ and belongs to $\mathfrak D$.
	
	Moreover, for every $D\in\mathfrak D$ and $\omega\in\Omega_0$, there exists $T=T(D,\omega)>0$ such that, for all $t\geq T$ and $\epsilon\in[0,\epsilon_0]$,
	\begin{align}\label{Le6-4}
		\Phi^\epsilon(t,\theta_{-t}\omega)
		D(\theta_{-t}\omega)
		\subset K(\omega).
	\end{align}
	Consequently, $K$ is a closed $\mathfrak D$-pullback absorbing random set for $\Phi^\epsilon$ in $H\times P$, uniformly in $\epsilon\in[0,\epsilon_0]$.
\end{lemma}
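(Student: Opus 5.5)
The plan is to prove the $q$-moment estimate \eqref{Le6-2} by applying It\^o's formula to $\|u(t)\|^q=(\|u(t)\|^2)^{q/2}$, where $u$ solves \eqref{Eq1} with initial law $\mu\in\mathcal P_q(H)$, realized independently of the Brownian motion. Writing $F(y)=y^{q/2}$, It\^o's formula applied to $\|u\|^2$ gives the drift
\begin{align*}
	\tfrac q2\|u\|^{q-2}\Big(2\langle A(u),u\rangle+\epsilon^2\sum_j\|h_j\|^2\Big)
	+\tfrac q2\big(\tfrac q2-1\big)\|u\|^{q-4}\,4\epsilon^2\sum_j(u,h_j)^2 .
\end{align*}
By the coercivity computation in \eqref{Le5-2a}, the bracket is bounded by $-\frac{\lambda_0}{2}\|u\|^2-\beta_2\|u\|^2+\beta_2M_2(\mathcal Lu(t))+C_P$, after dropping the gradient and $L^p$ terms. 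The Itô correction is at most $C\|u\|^{q-2}$, so Young's inequality absorbs it and the constant term into $\frac{q\lambda_0}{8}\|u\|^q+C$.

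The only genuinely nonlocal term is $\frac q2\beta_2\|u\|^{q-2}M_2(\mathcal L u(t))$. After taking expectations it becomes $\frac q2\beta_2\,\mathbb E\|u\|^{q-2}\,\mathbb E\|u\|^2$. Jensen's inequality gives $\mathbb E\|u\|^{q-2}\le(\mathbb E\|u\|^q)^{(q-2)/q}$ and $\mathbb E\|u\|^2\le(\mathbb E\|u\|^q)^{2/q}$, so this term is at most $\frac q2\beta_2\,\mathbb E\|u\|^q$. It therefore cancels exactly against $-\frac q2\beta_2\mathbb E\|u\|^q$, just as the $\beta_2$ terms cancelled in \eqref{Le2-7}. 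This cancellation is the main point I expect to need care with. To justify taking expectations, localize with stopping times $\tau_N=\inf\{t:\|u(t)\|>N\}$ and use Fatou's lemma; finiteness of $\mathbb E\sup_{[0,T]}\|u\|^q$ should follow from a BDG argument seeded by $\mathbb E\|u_0\|^q<\infty$ and the $L^2$ bounds of Lemma~\ref{Le2}.

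Carrying this out yields
\begin{align*}
	\frac{d}{dt}\mathbb E\|u\|^q+\frac{q\lambda_0}{4}\mathbb E\|u\|^q\le C_q ,
\end{align*}
and Gronwall's inequality gives \eqref{Le6-2} with $R_{P,q}^q=4C_q/(q\lambda_0)$, uniformly in $\epsilon\le\epsilon_0$. Invariance of $\mathcal P_q(H)$ is immediate from this bound.

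For the set $K_P$, nonemptiness follows because $\delta_{0_H}\in K_P$, and $K_P\in\mathfrak D_P$ by definition. Closedness in $\mathcal W_2$ follows since $\nu\mapsto M_q(\nu)$ is lower semicontinuous under weak convergence, as a supremum of integrals of the bounded continuous functions $\min(\|v\|^q,k)$. Hence $K=K_H\times K_P$ is closed. It is measurable because $d((x,\mu),K(\omega))=d_H(x,K_H(\omega))+d_P(\mu,K_P)$, and it belongs to $\mathfrak D$ via Lemma~\ref{Le5}.

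For absorption, take $D\subset B_H\times B_P$. By \eqref{Le6-2}, $\sup_{\mu\in B_P}M_q(\varphi_P^\epsilon(t)\mu)\le R_{P,q}^q+1$ once $e^{-q\lambda_0t/4}\sup_{B_P}M_q\le1$. Combining this with the state absorption time from Lemma~\ref{Le5}, and taking the maximum of the two times, gives \eqref{Le6-4} uniformly in $\epsilon$.
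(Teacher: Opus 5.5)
Your route is the paper's: It\^o's formula for $\|u\|^q$, Lyapunov's inequality $M_2(\mathcal Lu)\,\mathbb E\|u\|^{q-2}\le\mathbb E\|u\|^q$ to cancel the $\beta_2$ terms, Young and Gronwall. The arguments for $K_P$, $K$ and absorption (Portmanteau, the distance formula, maximum of two times) are also the same. There is, however, a bookkeeping error that keeps you from reaching \eqref{Le6-2} as stated. You bound the bracket with $2(g,u)\le\frac{\lambda_0}{2}\|u\|^2+\frac{2}{\lambda_0}\|g\|^2$. After multiplying by $\frac q2\|u\|^{q-2}$ and cancelling the $\beta_2$ terms, only $-\frac{q\lambda_0}{4}\mathbb E\|u\|^q$ is left. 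Absorbing $C\|u\|^{q-2}$ into $\frac{q\lambda_0}{8}\|u\|^q+C$ then gives
\[
\frac{d}{dt}\mathbb E\|u\|^q+\frac{q\lambda_0}{8}\mathbb E\|u\|^q\le C,
\]
not the rate $\frac{q\lambda_0}{4}$ you claim. This yields only $M_q(\varphi_P^\epsilon(t)\mu)\le e^{-q\lambda_0t/8}M_q(\mu)+\frac{8C}{q\lambda_0}$. That bound does not imply \eqref{Le6-2} for any $C_q$, because $(e^{-q\lambda_0t/8}-e^{-q\lambda_0t/4})M_q(\mu)$ is unbounded over $\mu$. It would suffice for absorption, but not for the lemma as stated. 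The fix, as in the paper, is not to spend half of $\lambda_0$ on the forcing. Keep $\frac q2\|u\|^{q-2}\cdot2(g,u)\le q\|g\|\|u\|^{q-1}$. Then after the $\beta_2$ cancellation the coefficient of $\mathbb E\|u\|^q$ is $q\lambda-q(\alpha_2+\frac{\beta_1}{2})-\frac{q\beta_2}{2}=\frac{q\lambda_0}{2}$. Finally absorb $q\|g\|s^{q-1}+Cs^{q-2}$ into $\frac{q\lambda_0}{4}s^q+C_q$.

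There are three smaller points.
\begin{enumerate}
\item The It\^o correction is $\frac{q(q-2)}{2}\epsilon^2\|u\|^{q-4}\sum_j(u,h_j)^2$; you dropped the factor $\frac12$ in $\frac12F''\,d\langle\|u\|^2\rangle$. This is harmless.
\item For $2<q<4$ the map $y\mapsto y^{q/2}$ is not $C^2$ at $0$. You should apply It\^o's formula to $(\|u\|^2+\delta)^{q/2}$ and let $\delta\downarrow0$.
\item Do the $\beta_2$ cancellation only after the stopping times have been removed. Lyapunov's inequality fails for truncated expectations: $M_2(\mathcal Lu(s))\,\mathbb E[\|u(s)\|^{q-2}\mathbf 1_{\{s\le\tau_N\}}]$ need not be bounded by $\mathbb E[\|u(s)\|^q\mathbf 1_{\{s\le\tau_N\}}]$. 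So first get $\mathbb E\sup_{[0,T]}\|u\|^q<\infty$ from the fixed-law pathwise bound, where $M_2(\mathcal Lu(s))$ is a bounded deterministic coefficient on $[0,T]$, together with BDG. This makes the stochastic integral a true martingale, and you can then take expectations directly.
\end{enumerate}
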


\begin{proof}
	Let $\mu\in\mathcal P_q(H)$ and realize it by an $H$-valued
	random variable $u_0\in L^q$, independent of an $m$-dimensional Brownian
	motion. Let $u$ solve \eqref{Eq1}. By uniqueness in law, the
	following estimate depends only on $\mu$. Applying It\^o's formula to
	the Galerkin approximations of $\|u(r)\|^q$, localizing and
	passing to the limit with the mixed-energy bounds, gives, for a.e.\ $r>0$,
	\begin{align}\label{Le6-5}
		&\frac{d}{dr}\mathbb E\|u\|^q
		+q\mathbb E\big(
		\|u\|^{q-2}\|\nabla u\|^2
		\big)
		+q\lambda\mathbb E\|u\|^q
		+q\mathbb E\big[
		\|u\|^{q-2}
		(f(\cdot,u),u)
		\big]
		\notag\\
		&\quad=
		q\mathbb E\Big[
		\|u\|^{q-2}
		\left\langle
		\mathcal G\big(u,\varphi_P^\epsilon(r)\mu\big),
		u
		\right\rangle_{\mathbb V^*,\mathbb V}
		\Big]
		+q\mathbb E\big[
		\|u\|^{q-2}(g,u)
		\big]
		\notag\\
		&\qquad
		+\frac{q\epsilon^2}{2}
		\sum_{j=1}^{m}\|h_j\|^2
		\mathbb E\|u\|^{q-2}
		+\frac{q(q-2)\epsilon^2}{2}
		\sum_{j=1}^{m}
		\mathbb E\left[
		\|u\|^{q-4}(u,h_j)^2
		\right].
	\end{align}
	At $u=0$, the last integrand is set to zero. For arbitrary
	$q>2$, the formula follows by applying It\^o's formula to
	$(\|u\|^2+\delta)^{q/2}$ and letting $\delta\downarrow0$.
	
	By \eqref{f-1} and \eqref{G-1},
	\begin{align*}
		&q\mathbb E\big[
		\|u\|^{q-2}
		(f(\cdot,u),u)
		\big]
		-q\mathbb E\Big[
		\|u\|^{q-2}
		\left\langle
		\mathcal G\big(u,\varphi_P^\epsilon(r)\mu\big),
		u
		\right\rangle_{\mathbb V^*,\mathbb V}
		\Big]
		\notag\\
		&\quad\geq
		q\Big(\alpha_1-\frac{\beta_G}{2}\Big)
		\mathbb E\big(
		\|u\|^{q-2}\|u\|_p^p
		\big)
		-q\Big(\alpha_2+\frac{\beta_1}{2}\Big)
		\mathbb E\|u\|^q
		\notag\\
		&\qquad
		-\frac{q\beta_2}{2}
		M_2\big(\varphi_P^\epsilon(r)\mu\big)
		\mathbb E\|u\|^{q-2}
		-q\Big(\|\psi_1\|_1+\frac{\beta_3}{2}\Big)
		\mathbb E\|u\|^{q-2}.
	\end{align*}
	Since $\mathcal L u(r)=\varphi_P^\epsilon(r)\mu$, Lyapunov's inequality gives
	\begin{align*}
		M_2\big(\varphi_P^\epsilon(r)\mu\big)
		\mathbb E\|u(r)\|^{q-2}
		\leq
		M_q\big(\varphi_P^\epsilon(r)\mu\big).
	\end{align*}
	Moreover,
	\begin{align*}
		\sum_{j=1}^{m}
		\mathbb E\left[
		\|u\|^{q-4}(u,h_j)^2
		\right]
		\leq
		\sum_{j=1}^{m}\|h_j\|^2
		\mathbb E\|u\|^{q-2}.
	\end{align*}
	Substituting these estimates into \eqref{Le6-5} and using
	$q\lambda
	-q\big(\alpha_2+\frac{\beta_1}{2}\big)
	-\frac{q\beta_2}{2}
	=\frac{q\lambda_0}{2}$,
	we obtain
	\begin{align}\label{Le6-6}
		&\frac{d}{dr}
		M_q\big(\varphi_P^\epsilon(r)\mu\big)
		+q\mathbb E\big(
		\|u\|^{q-2}\|\nabla u\|^2
		\big)
		+q\Big(\alpha_1-\frac{\beta_G}{2}\Big)
		\mathbb E\big(
		\|u\|^{q-2}\|u\|_p^p
		\big)
		+\frac{q\lambda_0}{2}
		M_q\big(\varphi_P^\epsilon(r)\mu\big)
		\notag\\
		&\quad\leq
		q\|g\|\mathbb E\|u\|^{q-1}
		+q\Big(
		\|\psi_1\|_1+\frac{\beta_3}{2}
		+\frac{(q-1)\epsilon_0^2}{2}
		\sum_{j=1}^{m}\|h_j\|^2
		\Big)
		\mathbb E\|u\|^{q-2}.
	\end{align}
	For every $s\geq0$, Young's inequality gives
	\begin{align*}
		q\|g\|s^{q-1}
		+q\Big(
		\|\psi_1\|_1+\frac{\beta_3}{2}
		+\frac{(q-1)\epsilon_0^2}{2}
		\sum_{j=1}^{m}\|h_j\|^2
		\Big)s^{q-2}
		\leq
		\frac{q\lambda_0}{4}s^q+C_q,
	\end{align*}
	where $C_q>0$ depends only on $q$ and the structural data. Applying this inequality pointwise with $s=\|u(r)\|$ in \eqref{Le6-6} yields
	\begin{align}\label{Le6-7}
		&\frac{d}{dr}
		M_q\big(\varphi_P^\epsilon(r)\mu\big)
		+q\mathbb E\big(
		\|u(r)\|^{q-2}
		\|\nabla u(r)\|^2
		\big)
		\notag\\
		&\quad
		+q\Big(\alpha_1-\frac{\beta_G}{2}\Big)
		\mathbb E\big(
		\|u(r)\|^{q-2}
		\|u(r)\|_p^p
		\big)
		+\frac{q\lambda_0}{4}
		M_q\big(\varphi_P^\epsilon(r)\mu\big)
		\leq C_q.
	\end{align}
	Gronwall's inequality and \eqref{Le6-7} give \eqref{Le6-2}.
	
	The set $K_P$ is nonempty because $\delta_{0_H}\in K_P$, and
	\eqref{Le6-3} gives $K_P\in\mathfrak D_P$. Let $\nu_n\in K_P$ and $\mathcal W_2(\nu_n,\nu)\to0$. Then $\nu_n$ converges weakly to $\nu$, and the Portmanteau theorem gives
	\begin{align*}
		M_q(\nu)
		\leq
		\liminf_{n\to\infty}M_q(\nu_n)
		\leq R_{P,q}^q+1.
	\end{align*}
	Hence $K_P$ is closed in $P$.
	
	By Lemma~\ref{Le5}, $K_H$ is a closed measurable random set and $K_H\in\mathfrak D_H$. Thus \eqref{Le6-3} implies that $K$ is closed and belongs to $\mathfrak D$. For every $(x,\mu)\in H\times P$,
	\begin{align*}
		\inf_{(y,\nu)\in K(\omega)}
		d_{\mathbb X}\big((x,\mu),(y,\nu)\big)
		=
		\inf_{y\in K_H(\omega)}\|x-y\|
		+\inf_{\nu\in K_P}\mathcal W_2(\mu,\nu),
	\end{align*}
	so the measurability of $K$ follows from that of $K_H$.
	
	Let $D\in\mathfrak D$. There exist $B_H\in\mathfrak D_H$ and $B_P\in\mathfrak D_P$ such that
	\begin{align*}
		D(\omega')\subset B_H(\omega')\times B_P,
		\qquad \omega'\in\Omega_0.
	\end{align*}
	Choose $T_P=T_P(B_P)>0$ so that
	\begin{align*}
		e^{-q\lambda_0T_P/4}
		\sup_{\mu\in B_P}M_q(\mu)\leq1.
	\end{align*}
	Then \eqref{Le6-2} and the definition of $K_P$ give
	\begin{align*}
		\varphi_P^\epsilon(t)B_P\subset K_P,
		\qquad t\geq T_P,
	\end{align*}
	uniformly for $\epsilon\in[0,\epsilon_0]$. Since $B_P$ is $M_2$-bounded by Lyapunov's inequality, Lemma~\ref{Le5} gives $T_H=T_H(B_H,B_P,\omega)>0$ such that
	\begin{align*}
		\left\{
		\varphi_H^\epsilon
		(t,\theta_{-t}\omega,x,\mu):
		x\in B_H(\theta_{-t}\omega),\ \mu\in B_P
		\right\}
		\subset K_H(\omega),
		\qquad t\geq T_H,
	\end{align*}
	uniformly for $\epsilon\in[0,\epsilon_0]$. Taking $T=\max\{T_H,T_P\}$ proves \eqref{Le6-4}.
\end{proof}

\begin{remark}\label{Re1}
	Fix $D\in\mathfrak D$ and $\omega\in\Omega_0$. Lemmas~\ref{Le5}-\ref{Le6} give $T_0=T_0(D,\omega)\geq2$ such that, for $t\geq T_0$, $\epsilon\in[0,\epsilon_0]$ and $(x,\mu)\in D(\theta_{-t}\omega)$,
	\begin{align*}
		\varphi_H^\epsilon(t-1,\theta_{-t}\omega,x,\mu)\in K_H(\theta_{-1}\omega),\qquad
		\varphi_H^\epsilon(t,\theta_{-t}\omega,x,\mu)\in K_H(\omega),\qquad
		\varphi_P^\epsilon(t-1)\mu\in K_P.
	\end{align*}
	For $s\in[-t,0]$, let $v(s)$ be the transformed solution in \eqref{Le5-3}. Integrating \eqref{Le5-6} over $[-1,0]$ and \eqref{Le5-2a} over $[t-1,t]$, we obtain constants $C_0(\omega)>0$ and $C_0^P>0$, independent of $t$, $\epsilon$, $x$ and $\mu$, such that
	\begin{align}\label{Re1-1}
		&\int_{-1}^{0}
		\Big[
		\|v(s)\|_V^2
		+\|v(s)\|_p^p
		+M_2\big(\varphi_P^\epsilon(s+t)\mu\big)
		\Big]ds
		\leq C_0(\omega),
		\notag\\[-1mm]
		&\int_0^1\int_H
		\big(\|y\|_V^2+\|y\|_p^p\big)
		\big[\varphi_P^\epsilon(s+t-1)\mu\big](dy)ds
		\leq C_0^P.
	\end{align}
	These bounds depend only on the absorbed state and law at the start of the final unit interval.
\end{remark}

\begin{lemma}[Localized mixed-energy identities]\label{LeLocalized}
	Let $I=[a,b]$ and let $\zeta\in W^{1,\infty}(\mathbb R^n)$ be real valued. Suppose that
	\begin{align*}
		w\in C(I;H)\cap L^2(I;V)\cap L^p(I;L^p(\mathbb R^n))
	\end{align*}
	and, in $\mathcal D'(a,b;\mathbb V^*)$,
	\begin{align*}
		\frac{dw}{dt}=F_2+F_p,
		\qquad
		F_2\in L^2(I;V^*),
		\quad
		F_p\in L^{p'}(I;L^{p'}(\mathbb R^n)).
	\end{align*}
	Then, for $a\leq r\leq s\leq b$,
	\begin{align}\label{LeLocalized-1}
		\|\zeta w(s)\|^2-\|\zeta w(r)\|^2
		=2\int_r^s\Big[
		\langle F_2(\tau),\zeta^2w(\tau)\rangle_{V^*,V}
		+\langle F_p(\tau),\zeta^2w(\tau)\rangle_{L^{p'},L^p}
		\Big]d\tau.
	\end{align}
	If $U$ is progressively measurable,
	\begin{align*}
		U\in L^2(\Omega;C(I;H))
		\cap L^2(\Omega\times I;V)
		\cap L^p(\Omega\times I;L^p(\mathbb R^n)),
	\end{align*}
	and, in $\mathbb V^*$, $\mathbb P$-a.s.,
	\begin{align*}
		U(t)=U(a)+\int_a^t(F_2(s)+F_p(s))ds
		+\sum_{j=1}^{m}\int_a^t\sigma_j(s)dW_j(s),
	\end{align*}
	where $F_2\in L^2(\Omega\times I;V^*)$, $F_p\in L^{p'}(\Omega\times I;L^{p'}(\mathbb R^n))$ and $\sigma_j\in L^2(\Omega\times I;H)$ are progressively measurable, then, up to indistinguishability,
	\begin{align}\label{LeLocalized-2}
		\|\zeta U(t)\|^2
		&=\|\zeta U(a)\|^2
		+2\int_a^t\Big[
		\langle F_2(s),\zeta^2U(s)\rangle_{V^*,V}
		+\langle F_p(s),\zeta^2U(s)\rangle_{L^{p'},L^p}
		\Big]ds\notag\\
		&\quad
		+\sum_{j=1}^{m}\int_a^t\|\zeta\sigma_j(s)\|^2ds
		+2\sum_{j=1}^{m}\int_a^t
		(\zeta U(s),\zeta\sigma_j(s))dW_j(s).
	\end{align}
	The same formula is valid under the corresponding local integrability assumptions after stopping-time localization.
\end{lemma}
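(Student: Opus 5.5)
The idea is to reduce both identities to the unweighted ones already used in Section~\ref{sec3}: the deterministic intersection-space integration-by-parts formula, and the intersection-space It\^o formula of \cite{Gyongy17spde}. We apply these to the weighted process $\zeta w$ (respectively $\zeta U$) in the same Gelfand triple $\mathbb V\subset H\subset\mathbb V^*$. The first observation is that multiplication by $\zeta\in W^{1,\infty}$ is a bounded operator on $H$, $V$ and $L^p$. Its adjoint action $F\mapsto\zeta F$, defined by $\langle\zeta F,\xi\rangle=\langle F,\zeta\xi\rangle$, is therefore bounded on $V^*$ and on $L^{p'}$, and hence on $\mathbb V^*=V^*+L^{p'}$. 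It follows that $\zeta w\in C(I;H)\cap L^2(I;V)\cap L^p(I;L^p)$. To identify its derivative, we test against $\xi\in\mathbb V$ and $\phi\in C_c^\infty(a,b)$, using that $\zeta\xi\in\mathbb V$. This gives $\frac{d}{dt}(\zeta w)=\zeta F_2+\zeta F_p$ in $\mathcal D'(a,b;\mathbb V^*)$, with $\zeta F_2\in L^2(I;V^*)$ and $\zeta F_p\in L^{p'}(I;L^{p'})$.

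Next we apply the standard energy equality for $L^2(V)\cap L^p(L^p)$ functions whose derivative lies in $L^2(V^*)+L^{p'}(L^{p'})$. This is the same tool that yields the $H$-continuous representative and energy identity in Lemmas~\ref{Le1} and~\ref{Le4}. It gives
\[
\|\zeta w(s)\|^2-\|\zeta w(r)\|^2=2\int_r^s\big[\langle\zeta F_2,\zeta w\rangle_{V^*,V}+\langle\zeta F_p,\zeta w\rangle_{L^{p'},L^p}\big]d\tau .
\]
Unwinding the definition of the adjoint multiplication converts the pairings into $\langle F_2,\zeta^2w\rangle$ and $\langle F_p,\zeta^2w\rangle$, which is \eqref{LeLocalized-1}. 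Here $\zeta^2w\in V\cap L^p$ because $\zeta^2\in W^{1,\infty}$. Since both sides are continuous in $(r,s)$ by $w\in C(I;H)$, the identity holds for all $r\leq s$, not only almost everywhere.

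For the stochastic identity we multiply the integral equation for $U$ by $\zeta$. The step to check is that $\zeta$ commutes with the stochastic integral. This is clear because multiplication by $\zeta$ is a bounded linear operator on $H$, and for $\sigma_j\in L^2(\Omega\times I;H)$ the integral $\int\sigma_j\,dW_j$ is an $H$-valued It\^o integral. We obtain
\[
\zeta U(t)=\zeta U(a)+\int_a^t(\zeta F_2+\zeta F_p)\,ds+\sum_j\int_a^t\zeta\sigma_j\,dW_j
\]
in $\mathbb V^*$, with progressively measurable coefficients in the correct spaces. The It\^o formula of \cite{Gyongy17spde} for $\|\cdot\|^2$ in the mixed triple then gives an $H$-continuous version together with the formula involving $\langle\zeta F_i,\zeta U\rangle$, the correction term $\|\zeta\sigma_j\|^2$, and the martingale term $(\zeta U,\zeta\sigma_j)\,dW_j$. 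Rewriting the drift pairings as before yields \eqref{LeLocalized-2}. Because $\zeta U$ already has the $H$-continuous version $\zeta\cdot U$, the two versions are indistinguishable.

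The last assertion, concerning local integrability, follows by applying the result to the stopped processes $U(\cdot\wedge\tau_k)$ with coefficients multiplied by $1_{[a,\tau_k]}$, and letting $\tau_k\uparrow b$. The main obstacle I expect is verifying the hypotheses of the cited It\^o formula in the sum-space setting $\mathbb V^*=V^*+L^{p'}$ with two integrability exponents. In particular, one must confirm that multiplication by $\zeta$ preserves the separate $L^2(V^*)$ and $L^{p'}(L^{p'})$ parts rather than only the sum norm. This is exactly why the duality argument above treats $F_2$ and $F_p$ separately.
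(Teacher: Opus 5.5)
Your proposal is correct and matches the paper's proof. Both arguments use that multiplication by $\zeta$ is bounded on $H$, $V$ and $L^p$, with its transpose bounded separately on $V^*$ and $L^{p'}$. Both then use $\langle \zeta F,\zeta w\rangle=\langle F,\zeta^2 w\rangle$, apply the intersection-space It\^o formula of \cite{Gyongy17spde} to $\zeta U$, and localize with stopping times. The only cosmetic difference is that the paper obtains \eqref{LeLocalized-1} as the zero-martingale case of \eqref{LeLocalized-2}, whereas you invoke the deterministic energy equality directly.
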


\begin{proof}
	Multiplication by $\zeta$ is bounded on $H$, $V$ and $L^p(\mathbb R^n)$. Its transpose maps $V^*$ and $L^{p'}(\mathbb R^n)$ continuously into themselves, and
	\begin{align*}
		\langle M_\zeta^*F,\zeta w\rangle
		=\langle F,\zeta^2w\rangle.
	\end{align*}
	Applying the intersection-space It\^o formula \cite{Gyongy17spde} to $M_\zeta U=\zeta U$ gives \eqref{LeLocalized-2}; \eqref{LeLocalized-1} is its zero-martingale case. Standard energy stopping times yield the localized form.
\end{proof}

\subsection{Pullback asymptotic compactness}

On $\mathbb R^n$, compactness is recovered by combining local spatial regularity with uniform far-field control. The Rellich theorem gives local compactness, while the tail estimate prevents loss of mass at infinity; see \cite{Wang99pd,Bates09jde,Tang16sd,Shi24jde}.

Assumptions \textnormal{(A1)-(A2)} contain the spatial increment identities and local derivative bounds used in the local gradient estimate. For $R>0$, recall that $B_R=\{x\in\mathbb R^n:|x|<R\}$, and choose $\chi_R\in C_c^\infty(\mathbb R^n)$ such that $0\leq\chi_R\leq1$, $\chi_R=1$ on $B_R$, $\chi_R=0$ on $\mathbb R^n\setminus B_{R+1}$, and
$|\nabla\chi_R|+|\Delta\chi_R|\leq c_R$ for a deterministic constant $c_R>0$. Set
\begin{align}\label{s4.2-1}
	\mathfrak z_1(\omega)
	=1+\|z(\omega)\|_{H^2}^2
	+\|z(\omega)\|_p^p+\|\nabla z(\omega)\|_p^p.
\end{align}
By \eqref{h-1}, $\nabla h_j\in L^2(\mathbb R^n)\cap L^\infty(\mathbb R^n)\subset L^p(\mathbb R^n)$ for $1\leq j\leq m$. Hence the finite-dimensional form of $z$ makes $\mathfrak z_1$ finite, measurable and tempered.

\begin{lemma}[Local spatial regularity]\label{Le7}
	Assume \textnormal{(A1)-(A4)}. Let $D\in\mathfrak D$, $\omega\in\Omega_0$ and $R>0$. Then there exist a time $T=T(D,\omega)\geq2$, independent of $R$, a finite random constant $C_R(\omega)>0$ and a deterministic constant $C_R^P>0$ such that, for all $t\geq T$, $\epsilon\in[0,\epsilon_0]$ and $(x,\mu)\in D(\theta_{-t}\omega)$,
	\begin{align}\label{Le7-1}
		\left\|
		\varphi_H^\epsilon
		(t,\theta_{-t}\omega,x,\mu)
		\right\|_{H^1(B_R)}^2
		\leq C_R(\omega),
	\end{align}
	and
	\begin{align}\label{Le7-2}
		\int_H\|y\|_{H^1(B_R)}^2
		\big[\varphi_P^\epsilon(t)\mu\big](dy)
		\leq C_R^P.
	\end{align}
	In \eqref{Le7-2}, the local $H^1$-norm is understood as an extended Borel functional on $H$, equal to $+\infty$ outside $H^1(B_R)$. In particular, the terminal state belongs to $H^1_{\rm loc}(\mathbb R^n)$ and the terminal law is concentrated on $H^1_{\rm loc}(\mathbb R^n)$.
\end{lemma}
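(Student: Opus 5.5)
The plan is a localized uniform-Gronwall argument on the final unit interval, applied separately to the pathwise state and to the law. Fix $T=T_0(D,\omega)$ from Remark~\ref{Re1}. For $t\geq T$, $x$ and $\mu$ as stated, work with the transformed variable $v(s)$ on $[-1,0]$ from \eqref{Le5-3}. The terminal state is $v(0)+\epsilon z(\omega)$. Since $\|z(\omega)\|_{H^2}^2\leq\mathfrak z(\omega)$, it suffices to bound $\|\chi_Rv(0)\|^2+\|\nabla(\chi_Rv(0))\|^2$. The $L^2$ part is already controlled by $K_H(\omega)$.

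\emph{Step 1 (state gradient).} Apply \eqref{LeLocalized-1} with $\zeta=\chi_R$ to the difference quotients $D_h^kv=(v(\cdot+he_k)-v)/h$. Equivalently, formally test \eqref{Le5-3} with $-\nabla\cdot(\chi_R^2\nabla v)$ and justify this by difference quotients. The Laplacian yields $-2\|\chi_R\Delta v\|^2$ plus commutator terms bounded by $c_R\|\nabla v\|\,\|v\|_V$. For the reaction term, write
\begin{align*}
\nabla f(x,u)=\nabla_xf+\partial_sf\,\nabla u,\qquad u=v+\epsilon z.
\end{align*}
The part $\partial_sf\,|\nabla v|^2\chi_R^2$ has a favourable sign up to $\alpha_4$ by \eqref{f-5}. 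The part $\partial_sf\,\nabla v\cdot\epsilon\nabla z$ is handled with \eqref{f-6} and Young's inequality:
\begin{align*}
c(1+|u|^{p-2})|\nabla v||\nabla z|\leq\eta|\nabla v|^2(1+|u|^{p-2})+C|\nabla z|^2(1+|u|^{p-2}).
\end{align*}
Here $\eta|u|^{p-2}|\nabla v|^2$ is absorbed by the good sign only when $\partial_sf$ is large. Instead I would use the upper bound in \eqref{f-6} together with $\nabla z\in L^\infty\cap L^p$, so that $|u|^{p-2}|\nabla z|^2$ is bounded by $\|u\|_p^p+\|\nabla z\|_p^p$; this is where $\mathfrak z_1$ enters. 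The remaining $\nabla v\cdot\partial_sf\,\nabla v$-type cross term is treated by \eqref{f-5} on the negative side. The spatial derivative $\nabla_xf$ is bounded through \eqref{f-7}, using $\psi_3\in L^\infty(B_{R+1})$, by $C_R(1+|u|^p)$ after Young. The $G$ term is treated identically via \eqref{G-6}--\eqref{G-7}, whose law contribution is $M_2(\varphi_P^\epsilon(s+t)\mu)$. The forcing terms $g$ and $\epsilon\Delta z$ are handled with \eqref{g-1}, \eqref{h-1} and $\mathfrak z_1$. The outcome is
\begin{align*}
\frac{d}{ds}\|\chi_R\nabla v\|^2\leq C\|\chi_R\nabla v\|^2+C_R\big(\|v\|_V^2+\|v\|_p^p+M_2(\varphi_P^\epsilon(s+t)\mu)+\mathfrak z_1(\theta_s\omega)\big).
\end{align*}
Multiply by $(s+1)$ and integrate over $[-1,0]$, which is the uniform Gronwall lemma. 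Then \eqref{Re1-1}, together with continuity of $s\mapsto\mathfrak z_1(\theta_s\omega)$ on $[-1,0]$, gives \eqref{Le7-1} with $C_R(\omega)$ independent of $t$, $\epsilon$, $x$ and $\mu$. The constant $T$ is the one from Remark~\ref{Re1}, so it does not depend on $R$.

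\emph{Step 2 (law).} Realize $\varphi_P^\epsilon(t-1)\mu\in K_P$ by an initial variable and let $u$ solve \eqref{Eq1} on $[0,1]$. Apply \eqref{LeLocalized-2} with $\zeta=\chi_R$ to the difference quotients of $u$ and take expectations. The martingale term vanishes, and the Itô correction $\epsilon^2\sum_j\|\chi_R\nabla h_j\|^2$ is a constant. The same estimates as in Step 1, now without the Ornstein--Uhlenbeck shift, give a differential inequality for $\mathbb E\|\chi_R\nabla u\|^2$. Its right-hand side is controlled by $\mathbb E(\|u\|_V^2+\|u\|_p^p)$ and by $M_2$, which is bounded on $K_P$ through \eqref{Le5-2}. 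The uniform Gronwall lemma on $[0,1]$ with the second bound of \eqref{Re1-1} then yields \eqref{Le7-2}. The extended-functional interpretation is consistent by Lemma~\ref{lemma3}, since the bound forces $y|_{B_R}\in H^1(B_R)$ almost surely.

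\emph{Main obstacle.} The hard step is the rigorous justification of the gradient estimate, given that $v$ is only a variational solution in $L^2(V)\cap L^p(L^p)$ and that $f$ and $G$ depend on $x$. I would carry out the estimate on spatial difference quotients $D_h^kv$, using the absolutely continuous representatives \eqref{f-acl} and \eqref{G-acl} to write
\begin{align*}
f(x+he_k,u(x+he_k))-f(x,u(x))=\int_0^1\partial_sf\cdot\big(u(x+he_k)-u(x)\big)\,d\theta+\int_0^h\partial_kf(x+re_k,u(x+he_k))\,dr.
\end{align*}
Each term is then estimated uniformly in $h$ before letting $h\to0$. The delicate point is bounding the polynomial weight $|u|^{p-2}$ times difference quotients of $z$ by quantities controlled in \eqref{Re1-1}. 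This uses $h_j\in W^{1,\infty}$, so that $D_h^kz$ is bounded uniformly in $L^\infty\cap L^p$.
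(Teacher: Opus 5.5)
Your route is the paper's: the localized identity \eqref{LeLocalized-1} applied to $\chi_R\delta_h^kv$ on the final unit interval, the splitting of the Nemytskii increments through \eqref{f-acl} and \eqref{G-acl}, a uniform Gronwall step fed by \eqref{Re1-1}, and the same computation in expectation for the law. One step does not close as written.

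\textbf{The gap.} The problem is the cross term between $\partial_sf$ and the Ornstein--Uhlenbeck shift, $2\epsilon\,\partial_sf\,\chi_R^2\nabla z\cdot\nabla v$; in difference-quotient form it is $2\epsilon\,a_h\chi_R^2r_hq_h$. Weighting Young's inequality by the majorant $c(1+|u|^{p-2})$ produces $\eta|u|^{p-2}|\nabla v|^2$. As you observe, this cannot be absorbed, because (A1) gives no coercive lower bound on $\partial_sf$. Your replacement, however, does not remove $|\nabla v|$ from the product. Using only \eqref{f-6} and $\nabla z\in L^\infty\cap L^p$ leaves $\int\chi_R^2|u|^{p-2}|\nabla z|\,|\nabla v|\,d\xi$. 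By my own check of the exponents, a Hölder/interpolation bound of this by $\|u\|_p^p$, $\|v\|_V^2$ and the dissipative term, integrable on $[-1,0]$ via \eqref{Re1-1}, is only available for $p\le4$.

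\textbf{The fix.} Use the sign of $\partial_sf$ rather than its size. Split $a_h=a_h^+-a_h^-$ and use the polarization identity
\[
2a_h^+(q_h+\epsilon r_h)q_h=a_h^+\big(|q_h+\epsilon r_h|^2+|q_h|^2-\epsilon^2|r_h|^2\big).
\]
This is Young's inequality weighted by $a_h^+$ itself, so the cross term is absorbed by the positive part's own good term. The only remainder is $\epsilon^2a_h^+|r_h|^2\leq C(1+|u|^{p-2}+|\tau_h^ku|^{p-2})|r_h|^2$. This is at most $\delta(|u|^p+|\tau_h^ku|^p)+C_\delta(|r_h|^2+|r_h|^p)$, which is exactly where $\|\nabla z\|_p^p$, and hence $\mathfrak z_1$, enters. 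The part $a_h^-\leq\alpha_4$ is bounded, so its cross term is harmless.

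For $G$ the roles are reversed. Here $2b_h^+\leq\beta_4$ is bounded, and the possibly large part $b_h^-$ is favourable through $-2b_h^-(q_h+\epsilon r_h)q_h=b_h^-(\epsilon^2|r_h|^2-|q_h+\epsilon r_h|^2-|q_h|^2)$. With this correction, your differential inequality and the averaging over the initial time go through.

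\textbf{The law step.} It has no shift, so it is unaffected. Passing from $\sup_h\mathbb E\|\chi_R\delta_h^ku(1)\|^2\leq C$ to \eqref{Le7-2} still needs an argument; Lemma~\ref{lemma3} alone only gives Borel measurability of the extended functional. A Fatou argument along a sequence $h_j\to0$, with pathwise weak compactness of the difference quotients, suffices. This is simpler than the vector-valued Sobolev criterion the paper invokes.
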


\begin{proof}
	For $1\leq k\leq n$ and $0<|h|<1$, write
	\begin{align*}
		\tau_h^kw(\xi)=w(\xi+he_k),
		\qquad
		\delta_h^kw=\frac{\tau_h^kw-w}{h}.
	\end{align*}
	We use the standard difference-quotient estimates
	\begin{align*}
		\|\delta_h^kw\|_r\leq\|\partial_kw\|_r,
		\qquad w\in W^{1,r}(\mathbb R^n),\quad 1\leq r\leq\infty,
	\end{align*}
	and the local difference-quotient characterization of $H^1$; see
	\cite[Section~5.8.2]{Evans10}.
	
	We first prove the pathwise estimate. Let
	$T_0=T_0(D,\omega)\geq2$ be supplied by Remark~\ref{Re1}. Fix
	$t\geq T_0$, $\epsilon\in[0,\epsilon_0]$ and
	$(x,\mu)\in D(\theta_{-t}\omega)$. On $[-1,0]$, set
	\begin{align*}
		u(s)
		=\varphi_H^\epsilon
		(s+t,\theta_{-t}\omega,x,\mu)
		=v(s)+\epsilon z(\theta_s\omega)\quad\text{and}\quad
		\eta(s)=\varphi_P^\epsilon(s+t)\mu.
	\end{align*}
	Fix $k$ and put
	$q_h=\delta_h^kv$ and
	$r_h=\delta_h^kz(\theta_s\omega)$. For
	$\vartheta\in[0,1]$, define the state interpolation
	\begin{align*}
		u_{h,\vartheta}(s,\xi)
		=(1-\vartheta)u(s,\xi)
		+\vartheta\tau_h^ku(s,\xi)
		=u(s,\xi)
		+\vartheta\big(
		\tau_h^ku(s,\xi)-u(s,\xi)\big).
	\end{align*}
	Define, for a.e.\ $(s,\xi)$,
	\begin{align*}
		a_h(s,\xi)
		&=\int_0^1\partial_sf\big(
		\xi+he_k,u_{h,\vartheta}(s,\xi)
		\big)d\vartheta,\\
		F_h(s,\xi)
		&=\int_0^1\partial_kf\big(
		\xi+\vartheta he_k,u(s,\xi)
		\big)d\vartheta,\\
		b_h(s,\xi)
		&=\int_0^1\partial_sG\big(
		\xi+he_k,u_{h,\vartheta}(s,\xi),\eta(s)
		\big)d\vartheta,\\
		H_h(s,\xi)
		&=\int_0^1\partial_kG\big(
		\xi+\vartheta he_k,u(s,\xi),\eta(s)
		\big)d\vartheta.
	\end{align*}
	Splitting each Nemytskii increment into its state and fixed-state
	spatial parts and using \eqref{f-acl} and \eqref{G-acl}, we obtain
	\begin{align*}
		\delta_h^k\big[f(\cdot,u(s))\big]
		&=a_h(s)\big(q_h(s)+\epsilon r_h(s)\big)+F_h(s),\\
		\delta_h^k\big[G(\cdot,u(s),\eta(s))\big]
		&=b_h(s)\big(q_h(s)+\epsilon r_h(s)\big)+H_h(s).
	\end{align*}
	Taking the spatial difference quotient of \eqref{Le5-3} therefore gives,
	in the mixed variational space,
	\begin{align*}
		\frac{dq_h}{ds}-\Delta q_h+\lambda q_h
		+a_h(q_h+\epsilon r_h)+F_h
		=b_h(q_h+\epsilon r_h)+H_h
		+\delta_h^kg+\epsilon\delta_h^k\Delta z(\theta_s\omega).
	\end{align*}
	For fixed $h$, translations preserve $V$ and $L^p$, so $q_h$ has the
	same mixed energy regularity as $v$. Before decomposition,
	translation invariance of $L^{p'}$ gives
	$\delta_h^k[f(\cdot,u)],
	\delta_h^k[G(\cdot,u,\eta)]\in
	L^{p'}(-1,0;L^{p'}(\mathbb R^n))$, while the remaining terms belong to
	$L^2(-1,0;V^*)$. Thus the drift lies in
	\begin{align*}
		L^2(-1,0;V^*)+L^{p'}(-1,0;L^{p'}(\mathbb R^n)),
	\end{align*}
	and Lemma~\ref{LeLocalized} applies. The decomposition into
	$a_h,F_h,b_h,H_h$ is used only after multiplication by $\chi_R$, where
	the local bounds on $\psi_3$ and $\psi_4$ suffice.
	
	By \eqref{f-5}-\eqref{f-7} and \eqref{G-6}-\eqref{G-7},
	\begin{gather*}
		-\alpha_4\leq a_h
		\leq C\big(1+|u|^{p-2}
		+|\tau_h^ku|^{p-2}\big),\\
		2b_h\leq\beta_4,
		\qquad
		|b_h|\leq C\big(1+|u|^{p-2}
		+|\tau_h^ku|^{p-2}\big).
	\end{gather*}
	Moreover,
	$\operatorname{supp}\chi_R+[-1,1]e_k\subset B_{R+2}$, and
	$\psi_3,\psi_4$ are bounded on $B_{R+2}$. Consequently,
	\begin{align}\label{Le7-3}
		\|\chi_RF_h(s)\|^2+\|\chi_RH_h(s)\|^2
		\leq C_R\big(1+\|u(s)\|_p^p
		+M_2(\eta(s))\big),
	\end{align}
	uniformly for $0<|h|<1$.
	
	Apply \eqref{LeLocalized-1} to $q_h$ with multiplier $\chi_R$. For
	$-1\leq r\leq s\leq0$, this gives
	\begin{align*}
		\|\chi_Rq_h(s)\|^2
		&+2\int_r^s(\nabla q_h,\nabla(\chi_R^2q_h))d\tau
		+2\lambda\int_r^s\|\chi_Rq_h\|^2d\tau+2\int_r^s
		\big(a_h(q_h+\epsilon r_h)+F_h,\chi_R^2q_h\big)d\tau\\
		&\quad=\|\chi_Rq_h(r)\|^2
		+2\int_r^s
		\big(b_h(q_h+\epsilon r_h)+H_h,\chi_R^2q_h\big)d\tau\\
		&\qquad+2\int_r^s(\delta_h^kg,\chi_R^2q_h)d\tau
		+2\epsilon\int_r^s
		(\delta_h^k\Delta z(\theta_\tau\omega),\chi_R^2q_h)d\tau,
	\end{align*}
	where all unmarked quantities in the integrands are evaluated at time
	$\tau$. In particular, the localized energy is absolutely continuous.
	The diffusion term satisfies
	\begin{align*}
		2(\nabla q_h,\nabla(\chi_R^2q_h))
		=2\|\chi_R\nabla q_h\|^2
		+4\int_{\mathbb R^n}
		\chi_Rq_h\nabla\chi_R\cdot\nabla q_h\,d\xi
		\geq\|\chi_R\nabla q_h\|^2-C_R\|q_h\|^2.
	\end{align*}
	
	To estimate the state-derivative terms, decompose
	$a_h=a_h^+-a_h^-$ and $b_h=b_h^+-b_h^-$. The identities
	\begin{align*}
		2a_h^+(q_h+\epsilon r_h)q_h
		&=a_h^+\big(
		|q_h+\epsilon r_h|^2+|q_h|^2-\epsilon^2|r_h|^2
		\big),\\
		-2b_h^-(q_h+\epsilon r_h)q_h
		&=b_h^-\big(
		\epsilon^2|r_h|^2-|q_h+\epsilon r_h|^2-|q_h|^2
		\big)
	\end{align*}
	show the favorable signs of $a_h^+$ and $b_h^-$. For
	$p>2$,
	\begin{align*}
		|u|^{p-2}|r_h|^2
		\leq\delta|u|^p+C_\delta|r_h|^p,
	\end{align*}
	with the case $p=2$ immediate, while translation invariance gives
	\begin{align*}
		\|r_h\|_2+\|r_h\|_p
		\leq
		\|\partial_kz(\theta_s\omega)\|_2
		+\|\partial_kz(\theta_s\omega)\|_p.
	\end{align*}
	Since $a_h^-\leq\alpha_4$ and $2b_h^+\leq\beta_4$, the only
	unfavorable quadratic terms have coefficients $2\alpha_4$ and
	$\beta_4$; the $r_h$-terms are controlled by the preceding inequality,
	also after translating $u$. Together with \eqref{Le7-3} and
	Young's inequality, this gives, for every $\delta>0$,
	\begin{align}\label{Le7-4}
		2\big(a_h(q_h+\epsilon r_h)+F_h,\chi_R^2q_h\big)
		&\geq
		-(2\alpha_4+\delta)\|\chi_Rq_h\|^2
		-C_{\delta,R}\big(
		1+\|u\|_p^p+\mathfrak z_1(\theta_s\omega)
		\big),\notag\\
		2\big(b_h(q_h+\epsilon r_h)+H_h,\chi_R^2q_h\big)
		&\leq
		(\beta_4+\delta)\|\chi_Rq_h\|^2
		+C_{\delta,R}\big(
		1+\|u\|_p^p+M_2(\eta)
		+\mathfrak z_1(\theta_s\omega)
		\big).
	\end{align}
	After integrating the $z$-term by parts in space, the forcing terms satisfy
	\begin{gather*}
		2|(\delta_h^kg,\chi_R^2q_h)|
		\leq\delta\|\chi_Rq_h\|^2+C_\delta\|\partial_kg\|^2,\\
		2\epsilon\left|
		(\delta_h^k\Delta z,\chi_R^2q_h)
		\right|
		=2\epsilon\left|
		(\delta_h^k\nabla z,\nabla(\chi_R^2q_h))
		\right|
		\leq\delta\|\chi_R\nabla q_h\|^2
		+C_{\delta,R}\big(
		\|q_h\|^2+\|z(\theta_s\omega)\|_{H^2}^2
		\big).
	\end{gather*}
	Since
	$\|q_h\|\leq\|\partial_kv\|$ and
	\begin{align*}
		\|u(s)\|_p^p
		\leq C_{\epsilon_0}\big(
		\|v(s)\|_p^p+\|z(\theta_s\omega)\|_p^p
		\big),
	\end{align*}
	the positivity of
	$\bar\gamma=2\lambda-2\alpha_4-\beta_4$ allows $\delta$ to be
	chosen so that, for a.e.\ $s\in(-1,0)$,
	\begin{align}\label{Le7-5}
		&\frac{d}{ds}\|\chi_Rq_h(s)\|^2
		+c_4\|\chi_R\nabla q_h(s)\|^2
		+c_5\|\chi_Rq_h(s)\|^2\nonumber\\
		&\qquad\leq C_R\Big[
		1+\|v(s)\|_V^2+\|v(s)\|_p^p
		+M_2(\eta(s))+\mathfrak z_1(\theta_s\omega)
		\Big]=:\mathcal R_R(s),
	\end{align}
	where $c_4,c_5>0$ are independent of
	$R,h,t,\epsilon,x$ and $\mu$.
	
	Integrating from a.e.\ $r\in(-1,0)$ to $0$, dropping the nonnegative terms and then averaging over $r$ gives
	\begin{align*}
		\|\chi_Rq_h(0)\|^2
		&\leq\int_{-1}^0\|\chi_Rq_h(r)\|^2dr
		+\int_{-1}^0\int_r^0\mathcal R_R(s)dsdr\\
		&=\int_{-1}^0\|\chi_Rq_h(r)\|^2dr
		+\int_{-1}^0(s+1)\mathcal R_R(s)ds\\
		&\leq\int_{-1}^0\|\partial_kv(r)\|^2dr
		+\int_{-1}^0\mathcal R_R(s)ds.
	\end{align*}
	Using \eqref{Re1-1} and
	$\int_{-1}^0\mathfrak z_1(\theta_s\omega)ds<\infty$, and recalling
	that $\chi_R=1$ on $B_R$, we obtain
	\begin{align}\label{Le7-6}
		\sup_{0<|h|<1}
		\|\delta_h^kv(0)\|_{L^2(B_R)}^2
		\leq C_{1,R}(\omega),
	\end{align}
	where the right-hand side is independent of
	$t,\epsilon,x$ and $\mu$. The difference-quotient characterization of
	$H^1(B_R)$ yields
	\begin{align*}
		\|\nabla v(0)\|_{L^2(B_R)}^2
		\leq nC_{1,R}(\omega).
	\end{align*}
	Since
	$u(0)=\varphi_H^\epsilon(t,\theta_{-t}\omega,x,\mu)$
	belongs to $K_H(\omega)$ and
	$u(0)=v(0)+\epsilon z(\omega)$,
	\begin{align*}
		\|u(0)\|_{H^1(B_R)}^2
		&\leq R_H^2(\omega)
		+2nC_{1,R}(\omega)
		+2\epsilon_0^2\|\nabla z(\omega)\|_{L^2(B_R)}^2.
	\end{align*}
	This proves \eqref{Le7-1} after enlarging $C_R(\omega)$.
	
	For the law component, put
	$\nu=\varphi_P^\epsilon(t-1)\mu\in K_P$ and let $u$ solve
	\eqref{Eq3} on $[0,1]$ with initial law $\nu$. Then
	\begin{align*}
		\mathcal Lu(s)
		=\varphi_P^\epsilon(s)\nu
		=\varphi_P^\epsilon(s+t-1)\mu,
		\qquad 0\leq s\leq1.
	\end{align*}
	Fix $k$ and set $q_h=\delta_h^ku$. For each fixed $h$, its
	difference-quotient equation has the same mixed variational regularity
	and noise
	$\epsilon\sum_{j=1}^{m}\delta_h^kh_jdW_j$. The spatial increments decompose as
	\begin{align*}
		\delta_h^k[f(\cdot,u)]=a_hq_h+F_h,
		\qquad
		\delta_h^k[G(\cdot,u,\mathcal Lu)]
		=b_hq_h+H_h.
	\end{align*}
	Applying \eqref{LeLocalized-2} with multiplier $\chi_R$, up to a
	standard energy stopping time, yields, in differential notation,
	\begin{align*}
		&d\|\chi_Rq_h\|^2
		+2(\nabla q_h,\nabla(\chi_R^2q_h))ds
		+2\lambda\|\chi_Rq_h\|^2ds
		+2(a_hq_h+F_h,\chi_R^2q_h)ds\\
		&\quad=2(b_hq_h+H_h,\chi_R^2q_h)ds
		+2(\delta_h^kg,\chi_R^2q_h)ds
		+\epsilon^2\sum_{j=1}^{m}
		\|\chi_R\delta_h^kh_j\|^2ds
		+2\epsilon\sum_{j=1}^{m}
		(\chi_Rq_h,\chi_R\delta_h^kh_j)dW_j(s).
	\end{align*}
	The stopped stochastic integral has zero expectation. Using the same
	estimates as above, together with
	$\|\delta_h^kh_j\|\leq\|\partial_kh_j\|$, and then removing the
	stopping by Fatou's lemma, we obtain, uniformly for $0<|h|<1$,
	\begin{align}\label{Le7-7}
		&\frac{d}{ds}\mathbb E\|\chi_Rq_h(s)\|^2
		+c_4\mathbb E\|\chi_R\nabla q_h(s)\|^2
		+c_5\mathbb E\|\chi_Rq_h(s)\|^2\nonumber\\
		&\qquad\leq C_R\Big[
		1+\mathbb E\|u(s)\|_V^2
		+\mathbb E\|u(s)\|_p^p
		+M_2(\varphi_P^\epsilon(s)\nu)
		+\epsilon_0^2\sum_{j=1}^{m}\|\partial_kh_j\|^2
		\Big].
	\end{align}
	Denote the right-hand side by $\mathcal R_R^P(s)$. Integrating from
	a.e.\ $r\in(0,1)$ to $1$ and averaging in $r$ gives
	\begin{align*}
		\mathbb E\|\chi_Rq_h(1)\|^2
		&\leq\int_0^1\mathbb E\|\chi_Rq_h(r)\|^2dr
		+\int_0^1\int_r^1\mathcal R_R^P(s)dsdr\\
		&\leq\int_0^1\mathbb E\|\partial_ku(r)\|^2dr
		+\int_0^1\mathcal R_R^P(s)ds.
	\end{align*}
	The second estimate in \eqref{Re1-1} therefore yields
	\begin{align*}
		\sup_{0<|h|<1}
		\mathbb E\|\delta_h^ku(1)\|_{L^2(B_R)}^2
		\leq C_{2,R}^P.
	\end{align*}
	Regard $u(1)$ as the Bochner map
	$B_{R+1}\to L^2(\Omega)$, $\xi\mapsto u(1,\cdot,\xi)$.
	The uniform difference-quotient and $L^2$ bounds permit the
	vector-valued criterion \cite[Theorem~2.2]{Arendt18sm} on $B_R$.
	Because $L^2(\Omega)$ has the Radon-Nikod\'{y}m property, Fubini's
	theorem identifies the resulting Bochner-Sobolev space and gives
	\begin{align*}
		u(1)\in L^2(\Omega;H^1(B_R)),
		\qquad
		\mathbb E\|\nabla u(1)\|_{L^2(B_R)}^2
		\leq nC_{2,R}^P.
	\end{align*}
	The $L^2(B_R)$ part is uniformly bounded by \eqref{Le6-2} and
	Lyapunov's inequality. Hence, after enlarging $C_R^P$,
	\begin{align*}
		\mathbb E\|u(1)\|_{H^1(B_R)}^2
		\leq C_R^P.
	\end{align*}
	Because $\mathcal Lu(1)=\varphi_P^\epsilon(t)\mu$,
	\begin{align*}
		\int_H\|y\|_{H^1(B_R)}^2
		\big[\varphi_P^\epsilon(t)\mu\big](dy)
		=\mathbb E\|u(1)\|_{H^1(B_R)}^2
		\leq C_R^P,
	\end{align*}
	which is \eqref{Le7-2}.
	
	Because $T_0$ is independent of $R$, applying the estimates at all
	integer radii and taking the countable intersection proves the stated
	$H^1_{\rm loc}$ conclusions.
\end{proof}

For the far-field estimates, the cut-off in \eqref{G-4} satisfies
\begin{align*}
	\mathbf 1_{\{|\xi|>\sqrt{2}R\}}
	\leq\rho_R(\xi)
	\leq\mathbf 1_{\{|\xi|>R\}},
	\qquad
	|\nabla\rho_R(\xi)|\leq\frac{c}{R},
	\qquad R\geq1.
\end{align*}
The term $T_R(\mu)$ in \eqref{G-4} is handled by radius doubling and is not replaced by $\int\rho_R|u|^2$.

\begin{lemma}[Uniform far-field tails]\label{Le8}
	Assume \textnormal{(A1)-(A4)}. Let $D\in\mathfrak D$, $\omega\in\Omega_0$ and $\delta>0$. Then there exist
	$T=T(D,\omega,\delta)\geq2$ and
	$R=R(D,\omega,\delta)\geq1$ such that, for all
	$t\geq T$, $\epsilon\in[0,\epsilon_0]$ and
	$(x,\mu)\in D(\theta_{-t}\omega)$,
	\begin{align}\label{Le8-1}
		\int_{|\xi|>R}
		\left|
		\varphi_H^\epsilon
		(t,\theta_{-t}\omega,x,\mu)(\xi)
		\right|^2d\xi
		<\delta,
	\end{align}
	and
	\begin{align}\label{Le8-2}
		\int_H\int_{|\xi|>R}|y(\xi)|^2d\xi
		\big[\varphi_P^\epsilon(t)\mu\big](dy)
		<\delta.
	\end{align}
\end{lemma}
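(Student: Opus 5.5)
I would prove the law tail \eqref{Le8-2} first, since the pathwise tail \eqref{Le8-1} is driven by the law trajectory through $c_T T_R$ and $r_G(R)M_2$ in \eqref{G-4}.

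\textbf{Law tail.} For an initial law $\nu$, let $u$ solve \eqref{Eq3}. I would apply the localized It\^o formula \eqref{LeLocalized-2} with $\zeta=\widehat\rho(|\xi|^2/R^2)$, so that $\zeta^2=\rho_R$, and take expectations. The diffusion term yields $2\mathbb E\int\rho_R|\nabla u|^2$ plus a commutator bounded by $\frac{C}{R}\mathbb E\|u\|_V^2$. The reaction term is bounded below using \eqref{f-1} localized with $\rho_R$. The coupling term is bounded above by \eqref{G-4}. Because $\rho_R\le\mathbf 1_{\{|\xi|>R\}}$, the forcing contributes at most
\begin{align*}
\int_{|\xi|>R}\big(|\psi_1|+|g|^2+\epsilon_0^2\textstyle\sum_j|h_j|^2\big)\,d\xi,
\end{align*}
and this tends to $0$ by \eqref{h-1}--\eqref{g-1} and $\psi_1\in L^1$.

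Write $y(r)=\mathbb E\int\rho_R|u(r)|^2$. Using $\lambda_{\rm tail}>0$ and $y\ge T_{\sqrt2R}$, this gives
\begin{align*}
\tfrac{d}{dr}y+\lambda_{\rm tail}\,y\le c_T\big(T_R-T_{\sqrt2 R}\big)+\tfrac CR\,\mathbb E\|u\|_V^2+\eta(R)\big(1+M_2\big),
\end{align*}
with $\eta(R)\to0$. The term $c_TT_R$ sits at the smaller radius and cannot be absorbed directly. Closing this self-referential inequality is the main obstacle.

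I would handle it by radius doubling. Set $R_k=2^{k/2}R$ and $S(R)=\limsup_{t\to\infty}\sup_{\mu\in B_P}T_R(\varphi_P^\epsilon(t)\mu)$. Integrating in time with Gronwall, using \eqref{Le5-2}, and using the time-integrated $V$-bound from \eqref{Le5-2a} on unit intervals (as in Remark~\ref{Re1}), one obtains
\begin{align*}
S(R_{k+1})\le\tfrac{c_T}{\lambda_{\rm tail}+c_T}\,S(R_k)+\eta'(R_k).
\end{align*}
I expect this to need the absorbing form $y\ge T_{\sqrt2R}$ on the left so that the coefficient $c_T/(\lambda_{\rm tail}+c_T)<1$ appears. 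Iterating from $S(R)\le R_P^2+1$ gives $S(R_k)\to0$ as $k\to\infty$, uniformly in $\epsilon$, and absorption into $K_P$ then gives a uniform time $T$. If the $\limsup$ formulation is awkward, I would instead work with finite times $t\ge T_k$, choosing each $T_k$ after $R_k$.

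\textbf{State tail.} For the pathwise estimate I would apply \eqref{LeLocalized-1} with the same $\zeta$ to the transformed variable $v$ on $[-t,0]$ over $\theta_{-t}\omega$. I would use the convex-combination lower bound of Lemma~\ref{Le5}, localized with $\rho_R$, together with \eqref{G-4} for $\mathcal G(v+\epsilon z,\eta)$. The $z$-terms are localized, and $\int_{|\xi|>R}$ of $|z|^2$, $|\Delta z|^2$ and $|z|^p$ is at most $\mathfrak z(\omega)$ times a deterministic tail of the $h_j$ that vanishes, by the weight $(1+|x|^2)$ in \eqref{h-1}.

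Multiplying by $e^{\gamma s}$ and integrating, three contributions remain. The initial datum contributes $e^{-\gamma t}$ times data, which is negligible. The commutator is $\frac CR\int e^{\gamma s}\|v\|_V^2$, which Lemma~\ref{Le5}'s integrated estimate bounds by $C(\omega)/R$. The law terms are $c_T\int e^{\gamma s}T_R(\varphi_P^\epsilon(s+t)\mu)\,ds$ plus $r_G(R)$ times the $M_2$-bound. I would split the law integral at $s=-t+T_P$: on the late part the law tail is below $\delta$ by step one, and the early part is $e^{-\gamma(t-T_P)}$ times a bounded quantity. Finally, $\varphi_H^\epsilon=v(0)+\epsilon z(\omega)$, and the tail of $z(\omega)$ vanishes as $R\to\infty$, which gives \eqref{Le8-1}.
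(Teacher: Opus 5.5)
\textbf{The gap: your state-tail energy inequality does not close under (A1)--(A4).} The convex-combination lower bound of Lemma~\ref{Le5} was designed to be paired with the global estimate \eqref{G-3}, that is, with the gap $\bar\gamma$. Its \eqref{f-3}-part contributes $-2(1-\vartheta)\alpha_4\int\rho_R|v|^2$. It also keeps only $\vartheta\alpha_1\,2^{-p}\int\rho_R|v|^p$ of the $L^p$-coercivity. Paired with \eqref{G-4}, which adds $\beta_1\int\rho_R|u|^2+\beta_G\int\rho_R|u|^p$, two things go wrong:
\begin{itemize}
\item For the small $\vartheta$ of Lemma~\ref{Le5}, the quadratic coefficient is essentially $2\lambda-2\alpha_4-\beta_1$. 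Its sign is not controlled by \eqref{gap}--\eqref{tail-gap}, because nothing in (A1) prevents $\alpha_4\gg\alpha_2$.
\item The term $\beta_G\int\rho_R|u|^p$ cannot be absorbed unless $\beta_G$ is very small, whereas (A2) allows any $\beta_G<2\alpha_1$.
\end{itemize}
You cannot localize \eqref{G-3} instead: it is a global pairing, and \eqref{G-4} is the only far-field hypothesis on $\mathcal G$. So the reaction term must be handled through the full \eqref{f-1}, as in the paper's Step~2. Concretely:
\begin{itemize}
\item Write $2(f(\cdot,u),\rho_Rv)=2(f(\cdot,u),\rho_Ru)-2\epsilon(f(\cdot,u),\rho_Rz)$, and split $\langle\mathcal G(u,\eta),\rho_Rv\rangle$ the same way.
\item Absorb the $\epsilon z$ cross terms at a cost $\zeta$, using \eqref{f-2}, \eqref{G-6} and \eqref{G-2} at $u=0$.
\item Keep $\int\rho_R|u|^p$ on both sides, with net coefficient $2\alpha_1-\beta_G-O(\zeta)>0$.
\item Use $\int\rho_R|u|^2\le(1+\zeta)\int\rho_R|v|^2+C_\zeta\int\rho_R|z|^2$.
\end{itemize}
The resulting coefficient $2\lambda-(2\alpha_2+\beta_1)(1+\zeta)-O(\zeta)$ tends to $c_T+\lambda_{\rm tail}$ as $\zeta\to0$, so it is positive by \eqref{tail-gap}. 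With this replacement, the rest of your state-tail argument works and matches the paper's Step~3. That includes the commutator bound via the weighted integral of \eqref{Le5-6}, splitting the law-tail convolution at $s=-t+T_P$, and the factorization $z=\sum_jh_jz_j$ for the $z$-tails, which is a clean alternative to the paper's dominated-convergence argument.

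\textbf{The law tail is essentially the paper's Step~1.} You use the same localized It\^o formula with $\zeta_R$, the bound $T_{\sqrt2R}\le Y_R$, and the recursion $\mathcal T(\sqrt2R)\le\frac{c_T}{c_T+\lambda_{\rm tail}/2}\mathcal T(R)+C_D\mathcal E_R$. Closing it by iteration rather than by monotonicity of $\mathcal T$ is equivalent. So is bounding the $V$-convolution by unit-interval energy bounds rather than by \eqref{Le8-3a}. Two small fixes are needed:
\begin{itemize}
\item Your displayed inequality with $c_T(T_R-T_{\sqrt2R})$ discards exactly the term that produces the ratio below one. Keep $(c_T+\lambda_{\rm tail}/2)Y_R$ on the left, integrate in time, and only then use $T_{\sqrt2R}(r)\le Y_R(r)$ at the terminal time.
\item Put $\sup_{\epsilon\in[0,\epsilon_0]}$ inside the $\limsup$ defining $S(R)$. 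Otherwise the time $T$ you obtain is not uniform in $\epsilon$.
\end{itemize}
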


\begin{proof}
	By the definition of the product universe, there exist
	$B_H\in\mathfrak D_H$ and $B_P\in\mathfrak D_P$ such that
	\begin{align*}
		D(\omega')\subset B_H(\omega')\times B_P,
		\qquad \omega'\in\Omega_0.
	\end{align*}
	Set
	$M_D=\sup_{\nu\in B_P}M_2(\nu)<\infty$ and, for $R\geq1$,
	\begin{align*}
		\mathcal E_R
		=\frac1R+|r_G(R)|
		+\int_{|\xi|>R}
		\Big(|\psi_1(\xi)|+|\psi_2(\xi)|+|g(\xi)|^2
		+\sum_{j=1}^{m}|h_j(\xi)|^2\Big)d\xi.
	\end{align*}
	By \eqref{h-1}-\eqref{g-1} and $r_G(R)\to0$,
	$\mathcal E_R\to0$ as $R\to\infty$.
	
	\smallskip
	\noindent\emph{Step 1. Uniform tails of the law component.}
	Let $u$ solve \eqref{Eq3} with initial law $\nu\in B_P$, and set
	\begin{align*}
		Y_R(r)
		=\mathbb E\int_{\mathbb R^n}
		\rho_R(\xi)|u(r,\xi)|^2d\xi.
	\end{align*}
	Set $\zeta_R(\xi)=\widehat\rho(|\xi|^2/R^2)$, so that $\rho_R=\zeta_R^2$. Applying \eqref{LeLocalized-2} first up to the standard energy stopping times and then taking expectation gives
	\begin{align*}
		&\frac{d}{dr}Y_R(r)
		+2\mathbb E\int\rho_R|\nabla u|^2d\xi
		+2\mathbb E\int u\nabla\rho_R\cdot\nabla u d\xi
		+2\lambda Y_R(r)
		+2\mathbb E\big(f(\cdot,u),\rho_Ru\big)
		\notag\\
		&\quad=
		2\mathbb E\left\langle
		\mathcal G\big(u,\varphi_P^\epsilon(r)\nu\big),
		\rho_Ru
		\right\rangle_{\mathbb V^*,\mathbb V}
		+2\mathbb E\big(g,\rho_Ru\big)
		+\epsilon^2\sum_{j=1}^{m}\int\rho_R|h_j|^2d\xi.
	\end{align*}
	The cut-off derivative, \eqref{f-1}, \eqref{G-4} and Young's inequality yield, respectively,
	\begin{gather*}
		2\mathbb E\int\rho_R|\nabla u|^2d\xi
		+2\mathbb E\int u\nabla\rho_R\cdot\nabla u d\xi
		\geq
		-\frac{C}{R}\mathbb E\|u(r)\|_V^2,\\
		2\mathbb E\big(f(\cdot,u),\rho_Ru\big)
		\geq
		2\alpha_1\mathbb E\int\rho_R|u|^pd\xi
		-2\alpha_2\mathbb E\int\rho_R|u|^2d\xi
		-2\int_{|\xi|>R}|\psi_1(\xi)|d\xi,\\
		2\mathbb E\left\langle
		\mathcal G\big(u,\varphi_P^\epsilon(r)\nu\big),
		\rho_Ru
		\right\rangle_{\mathbb V^*,\mathbb V}
		\leq
		\beta_G\mathbb E\int\rho_R|u|^pd\xi
		+\beta_1\mathbb E\int\rho_R|u|^2d\xi
		+c_TT_R\big(\varphi_P^\epsilon(r)\nu\big)
		\\
		\qquad+|r_G(R)|
		\left(1+M_2\big(\varphi_P^\epsilon(r)\nu\big)\right),\\
		2\mathbb E\big(g,\rho_Ru\big)
		\leq
		\frac{\lambda_{\rm tail}}{2}Y_R(r)
		+\frac{2}{\lambda_{\rm tail}}
		\int_{|\xi|>R}|g(\xi)|^2d\xi.
	\end{gather*}
	Since
	$2\lambda-2\alpha_2-\beta_1-\lambda_{\rm tail}/2
	=c_T+\lambda_{\rm tail}/2$ by \eqref{tail-gap}, these estimates imply
	\begin{align}\label{Le8-3}
		\frac{d}{dr}Y_R(r)
		&+(2\alpha_1-\beta_G)
		\mathbb E\int\rho_R|u(r)|^pd\xi
		+(c_T+\lambda_{\rm tail}/2)Y_R(r)
		\notag\\
		&\leq
		c_TT_R\big(\varphi_P^\epsilon(r)\nu\big)
		+\frac{C}{R}\mathbb E\|u(r)\|_V^2
		+C\mathcal E_R
		\left(1+M_2\big(\varphi_P^\epsilon(r)\nu\big)\right).
	\end{align}
	
	Set
	$d_0=\min\big\{2,\frac{\lambda_0}{2}\big\}>0$.
	Dropping the $L^p$-term in the global law-energy inequality \eqref{Le5-2a} and using the definition of $d_0$, we obtain
	\begin{align*}
		\frac{d}{dr}M_2\big(\varphi_P^\epsilon(r)\nu\big)
		+d_0\mathbb E\|u(r)\|_V^2
		\leq C_P.
	\end{align*}
	Choose $\vartheta\in\big(0,\min\{c_T+\frac{\lambda_{\rm tail}}{2},d_0\}\big)$. Multiplying the preceding inequality at time $s$ by
	$e^{-\vartheta(r-s)}$ and integrating over $s\in(0,r)$ give, after
	integration by parts,
	\begin{align*}
		M_2\big(\varphi_P^\epsilon(r)\nu\big)
		-e^{-\vartheta r}M_2(\nu)
		+d_0\int_0^r e^{-\vartheta(r-s)}
		\mathbb E\|u(s)\|_V^2ds
		-\vartheta\int_0^r e^{-\vartheta(r-s)}
		M_2\big(\varphi_P^\epsilon(s)\nu\big)ds
		\leq\frac{C_P}{\vartheta}.
	\end{align*}
	Since $M_2(\varphi_P^\epsilon(s)\nu)\leq\mathbb E\|u(s)\|_V^2$, dropping the nonnegative terminal moment yields
	\begin{align}\label{Le8-3a}
		(d_0-\vartheta)
		\int_0^r e^{-\vartheta(r-s)}
		\mathbb E\|u(s)\|_V^2ds
		\leq
		e^{-\vartheta r}M_2(\nu)+\frac{C_P}{\vartheta}.
	\end{align}
	Integrating \eqref{Le8-3} with the factor $e^{-(c_T+\lambda_{\rm tail}/2)(r-s)}$ gives
	\begin{align}\label{Le8-4a}
		&Y_R(r)
		\leq
		e^{-(c_T+\lambda_{\rm tail}/2)r}Y_R(0)
		+c_T\int_0^r e^{-(c_T+\lambda_{\rm tail}/2)(r-s)}
		T_R\big(\varphi_P^\epsilon(s)\nu\big)ds
		\notag\\
		&+\frac{C}{R}\int_0^r e^{-(c_T+\lambda_{\rm tail}/2)(r-s)}
		\mathbb E\|u(s)\|_V^2ds
		+C\mathcal E_R\int_0^r e^{-(c_T+\lambda_{\rm tail}/2)(r-s)}
		\left(1+M_2\big(\varphi_P^\epsilon(s)\nu\big)\right)ds.
	\end{align}
	Here $Y_R(0)\leq M_D$. Moreover, \eqref{Le8-3a}, $(c_T+\lambda_{\rm tail}/2)>\vartheta$ and \eqref{Le5-2} uniformly bound the last two convolutions. Since $1/R\leq\mathcal E_R$, there exists $C_D>0$, independent of $R$, $r$, $\epsilon$ and $\nu\in B_P$, such that
	\begin{align}\label{Le8-4}
		Y_R(r)
		\leq
		e^{-(c_T+\lambda_{\rm tail}/2)r}M_D
		+c_T\int_0^r e^{-(c_T+\lambda_{\rm tail}/2)(r-s)}
		T_R\big(\varphi_P^\epsilon(s)\nu\big)ds
		+C_D\mathcal E_R.
	\end{align}
	
	Define
	\begin{align*}
		\mathcal T(R)
		=\limsup_{r\to\infty}
		\sup_{\epsilon\in[0,\epsilon_0]}
		\sup_{\nu\in B_P}
		T_R\big(\varphi_P^\epsilon(r)\nu\big).
	\end{align*}
	By \eqref{Le5-2}, $\mathcal T(R)$ is finite and nonincreasing in $R$. For fixed $S>0$ and $r>S$, splitting the convolution in \eqref{Le8-4} at $S$ gives
	\begin{align*}
		\int_0^r e^{-(c_T+\lambda_{\rm tail}/2)(r-s)}
		T_R\big(\varphi_P^\epsilon(s)\nu\big)ds
		&\leq
		\frac{M_D+R_P^2}{(c_T+\lambda_{\rm tail}/2)}e^{-(c_T+\lambda_{\rm tail}/2)(r-S)}
		\notag\\
		&\quad+\frac1{(c_T+\lambda_{\rm tail}/2)}
		\sup_{s\geq S}
		\sup_{\epsilon'\in[0,\epsilon_0]}
		\sup_{\nu'\in B_P}
		T_R\big(\varphi_P^{\epsilon'}(s)\nu'\big).
	\end{align*}
	Since $T_{\sqrt2R}(\varphi_P^\epsilon(r)\nu)\leq Y_R(r)$, first letting $r\to\infty$ and then $S\to\infty$ yields
	\begin{align}\label{Le8-5}
		\mathcal T(\sqrt2R)
		\leq
		\frac{c_T}{c_T+\lambda_{\rm tail}/2}\mathcal T(R)
		+C_D\mathcal E_R,
		\qquad R\geq1.
	\end{align}
	Since $\mathcal T$ is nonincreasing,
	$\mathcal T_\infty=\lim_{R\to\infty}\mathcal T(R)$ exists. Letting
	$R\to\infty$ in \eqref{Le8-5} and using $\mathcal E_R\to0$ gives
	\begin{align*}
		\mathcal T_\infty
		\leq
		\frac{c_T}{c_T+\lambda_{\rm tail}/2}\mathcal T_\infty.
	\end{align*}
	The coefficient is strictly smaller than one, so $\mathcal T_\infty=0$, namely,
	\begin{align}\label{Le8-5a}
		\lim_{R\to\infty}
		\limsup_{r\to\infty}
		\sup_{\epsilon\in[0,\epsilon_0]}
		\sup_{\nu\in B_P}
		T_R\big(\varphi_P^\epsilon(r)\nu\big)=0.
	\end{align}
	
	\smallskip
	\noindent\emph{Step 2. Localized pathwise energy estimate.}
	For $R\geq1$ and $\omega'\in\Omega_0$, set
	\begin{align*}
		\mathfrak r_R(\omega')
		=\int_{\mathbb R^n}\rho_R(\xi)
		\left(
		|z(\omega')|^2+|z(\omega')|^p
		+|\Delta z(\omega')|^2
		\right)d\xi
		+\|\rho_Rz(\omega')\|_p.
	\end{align*}
	For every $\omega'\in\Omega_0$, dominated convergence gives
	\begin{align*}
		\mathfrak r_R(\omega')\longrightarrow0
		\quad\text{as }R\to\infty,
		\qquad
		\mathfrak r_R(\omega')\leq C\mathfrak z(\omega'),
	\end{align*}
	where $\mathfrak z$ is defined in \eqref{s4.1-1}. Consequently, for every $\gamma>0$, the temperedness of $\mathfrak z$ and dominated convergence imply
	\begin{align}\label{Le8-5b}
		\lim_{R\to\infty}
		\int_{-\infty}^{0}e^{\gamma s}
		\mathfrak r_R(\theta_s\omega)ds=0.
	\end{align}
	
	Fix $t>0$, $\epsilon\in[0,\epsilon_0]$ and
	$(x,\mu)\in D(\theta_{-t}\omega)$. On $[-t,0]$, write
	\begin{align*}
		u(s)=v(s)+\epsilon z(\theta_s\omega),
		\qquad
		\eta(s)=\varphi_P^\epsilon(s+t)\mu.
	\end{align*}
	Applying the deterministic identity \eqref{LeLocalized-1} to \eqref{Le5-3} with multiplier $\zeta_R$ gives the localized identity
	\begin{align}\label{Le8-6a}
		&\frac{d}{ds}\int_{\mathbb R^n}\rho_R|v|^2d\xi
		+2\int_{\mathbb R^n}\rho_R|\nabla v|^2d\xi
		+2\int_{\mathbb R^n}v\nabla\rho_R\cdot\nabla v d\xi
		+2\lambda\int_{\mathbb R^n}\rho_R|v|^2d\xi
		\notag\\
		&\quad
		+2\big(f(\cdot,u),\rho_Rv\big)
		=2\left\langle
		\mathcal G(u,\eta),
		\rho_Rv
		\right\rangle_{\mathbb V^*,\mathbb V}
		+2\big(g,\rho_Rv\big)
		+2\epsilon\big(\Delta z(\theta_s\omega),\rho_Rv\big).
	\end{align}
	The diffusion terms satisfy
	\begin{align*}
		2\int\rho_R|\nabla v|^2d\xi
		+2\int v\nabla\rho_R\cdot\nabla v d\xi
		\geq
		-\frac{C}{R}\|v(s)\|_V^2.
	\end{align*}
	For every sufficiently small $\zeta>0$, the decomposition
	\begin{align*}
		2\big(f(\cdot,u),\rho_Rv\big)
		=2\big(f(\cdot,u),\rho_Ru\big)
		-2\epsilon\big(f(\cdot,u),\rho_Rz(\theta_s\omega)\big),
	\end{align*}
	together with \eqref{f-1}-\eqref{f-2} and Young's inequality, gives
	\begin{align*}
		2\big(f(\cdot,u),\rho_Rv\big)
		&\geq
		(2\alpha_1-\zeta)\int\rho_R|u|^pd\xi
		-2\alpha_2\int\rho_R|u|^2d\xi
		\notag\\
		&\quad-C_\zeta\int_{|\xi|>R}
		\big(|\psi_1(\xi)|+|\psi_2(\xi)|\big)d\xi
		-C_\zeta\int\rho_R|z(\theta_s\omega)|^pd\xi.
	\end{align*}
	
	For the McKean-Vlasov term, first write
	\begin{align*}
		2\left\langle
		\mathcal G(u,\eta),\rho_Rv
		\right\rangle
		=2\left\langle
		\mathcal G(u,\eta),\rho_Ru
		\right\rangle
		-2\epsilon\left\langle
		\mathcal G(u,\eta),\rho_Rz(\theta_s\omega)
		\right\rangle.
	\end{align*}
	By \eqref{G-6} and the fundamental theorem of calculus,
	\begin{align*}
		|G(\xi,r,\nu)-G(\xi,0,\nu)|
		\leq
		c_G\int_0^{|r|}\big(1+s^{p-2}\big)ds
		\leq
		c_G\big(|r|+|r|^{p-1}\big),
	\end{align*}
	while \eqref{G-2} at $u=0$ gives
	\begin{align*}
		\|\mathcal G(0,\nu)\|_{p'}
		\leq C\left(1+M_2(\nu)^{(p-1)/2}\right).
	\end{align*}
	Using these bounds and Young's inequality in the second term, and \eqref{G-4} in the first, we obtain
	\begin{align*}
		&2\left\langle
		\mathcal G(u,\eta),
		\rho_Rv
		\right\rangle_{\mathbb V^*,\mathbb V}
		\notag\\
		&\quad\leq
		(\beta_G+\zeta)\int\rho_R|u|^pd\xi
		+(\beta_1+\zeta)\int\rho_R|u|^2d\xi
		+c_TT_R(\eta)
		+|r_G(R)|(1+M_2(\eta))
		\notag\\
		&\qquad
		+C_\zeta\int\rho_R
		\big(|z(\theta_s\omega)|^2+|z(\theta_s\omega)|^p\big)d\xi
		+C\left(1+M_2(\eta)^{(p-1)/2}\right)
		\|\rho_Rz(\theta_s\omega)\|_p.
	\end{align*}
	The remaining terms satisfy
	\begin{align*}
		2\left|\big(g,\rho_Rv\big)\right|
		+2\epsilon\left|\big(\Delta z(\theta_s\omega),
		\rho_Rv\big)\right|
		\leq
		\zeta\int\rho_R|v|^2d\xi
		+C_\zeta\int\rho_R
		\big(|g|^2+|\Delta z(\theta_s\omega)|^2\big)d\xi,
	\end{align*}
	and
	\begin{align*}
		\int\rho_R|u|^2d\xi
		\leq
		(1+\zeta)\int\rho_R|v|^2d\xi
		+C_\zeta\int\rho_R|z(\theta_s\omega)|^2d\xi.
	\end{align*}
	By \eqref{Le5-2},
	$M_2(\eta(s))\leq M_D+R_P^2$ for $-t\leq s\leq0$. Since $2\alpha_1>\beta_G$ and
	$2\lambda-2\alpha_2-\beta_1=c_T+\lambda_{\rm tail}$, we may choose $\zeta>0$ sufficiently small in the preceding estimates. Substitution into \eqref{Le8-6a} then yields constants $a_H>c_T$ and $c_H>0$, independent of $R$, $t$, $\epsilon$, $x$ and $\mu$, such that
	\begin{align}\label{Le8-6}
		&\frac{d}{ds}\int_{\mathbb R^n}
		\rho_R(\xi)|v(s,\xi)|^2d\xi
		+a_H\int\rho_R|v|^2d\xi
		+c_H\int_{\mathbb R^n}\rho_R|u(s)|^pd\xi
		\notag\\
		&\quad\leq
		c_TT_R(\eta(s))
		+\frac{C}{R}\|v(s)\|_V^2
		+C_D\mathcal E_R
		+C_D\mathfrak r_R(\theta_s\omega).
	\end{align}
	
	\smallskip
	\noindent\emph{Step 3. Passage to the pullback tail limit.}
	Choose
	$\gamma_*\in(0,\min\{a_H,c_3,\lambda_0/2\})$ and set
	$c_E=\min\{c_1,c_3-\gamma_*\}>0$, where $c_1,c_3$ are from \eqref{Le5-6}. Multiplying \eqref{Le5-6} by $e^{\gamma_*s}$ and integrating over $[-t,0]$, we obtain
	\begin{align*}
		c_E\int_{-t}^{0}e^{\gamma_*s}\|v(s)\|_V^2ds
		\leq
		e^{-\gamma_*t}\|x-\epsilon z(\theta_{-t}\omega)\|^2
		+L_G\int_{-t}^{0}e^{\gamma_*s}
		M_2\big(\varphi_P^\epsilon(s+t)\mu\big)ds
		+C_H\int_{-t}^{0}e^{\gamma_*s}
		\mathfrak z(\theta_s\omega)ds.
	\end{align*}
	The first term is uniformly bounded for all sufficiently large $t$ by the temperedness of $B_H$ and $z$. Moreover, \eqref{Le5-2} and $\gamma_*<\lambda_0/2$ give
	\begin{align*}
		\int_{-t}^{0}e^{\gamma_*s}
		M_2\big(\varphi_P^\epsilon(s+t)\mu\big)ds
		\leq
		\frac{e^{-\gamma_*t}}{\lambda_0/2-\gamma_*}M_D
		+\frac{R_P^2}{\gamma_*}.
	\end{align*}
	Since the last weighted random integral is finite, there exist
	$C_E(D,\omega)>0$ and $T_E=T_E(D,\omega)$ such that
	\begin{align}\label{Le8-6b}
		\int_{-t}^{0}e^{\gamma_*s}\|v(s)\|_V^2ds
		\leq C_E(D,\omega),
		\qquad t\geq T_E,
	\end{align}
	uniformly for $\epsilon\in[0,\epsilon_0]$ and
	$(x,\mu)\in D(\theta_{-t}\omega)$.
	
	Multiplying \eqref{Le8-6} by $e^{\gamma_*s}$, integrating from $-t$ to $0$ and using $\gamma_*<a_H$, we find
	\begin{align}\label{Le8-7}
		\int_{\mathbb R^n}
		\rho_R(\xi)|v(0,\xi)|^2d\xi
		&\leq
		e^{-\gamma_*t}
		\|x-\epsilon z(\theta_{-t}\omega)\|^2
		+c_T\int_{-t}^{0}e^{\gamma_*s}
		T_R\big(\varphi_P^\epsilon(s+t)\mu\big)ds
		\notag\\
		&\quad+\frac{C}{R}\int_{-t}^{0}e^{\gamma_*s}
		\|v(s)\|_V^2ds
		+\frac{C_D}{\gamma_*}\mathcal E_R
		+C_D\int_{-t}^{0}e^{\gamma_*s}
		\mathfrak r_R(\theta_s\omega)ds.
	\end{align}
	For the law-tail convolution, the change of variables $r=s+t$ gives
	\begin{align*}
		\int_{-t}^{0}e^{\gamma_*s}
		T_R\big(\varphi_P^\epsilon(s+t)\mu\big)ds
		=
		\int_0^t e^{-\gamma_*(t-r)}
		T_R\big(\varphi_P^\epsilon(r)\mu\big)dr.
	\end{align*}
	For fixed $S>0$ and $t>S$, splitting the last integral at $S$ yields
	\begin{align*}
		\int_0^t e^{-\gamma_*(t-r)}
		T_R\big(\varphi_P^\epsilon(r)\mu\big)dr
		\leq
		\frac{M_D+R_P^2}{\gamma_*}e^{-\gamma_*(t-S)}
		+\frac1{\gamma_*}
		\sup_{r\geq S}
		\sup_{\epsilon'\in[0,\epsilon_0]}
		\sup_{\nu'\in B_P}
		T_R\big(\varphi_P^{\epsilon'}(r)\nu'\big).
	\end{align*}
	Therefore
	\begin{align}\label{Le8-7a}
		\limsup_{t\to\infty}
		\sup_{\epsilon\in[0,\epsilon_0]}
		\sup_{\mu\in B_P}
		\int_{-t}^{0}e^{\gamma_*s}
		T_R\big(\varphi_P^\epsilon(s+t)\mu\big)ds
		\leq
		\frac{\mathcal T(R)}{\gamma_*}.
	\end{align}
	The first term on the right-hand side of \eqref{Le8-7} tends to zero uniformly over the pullback initial data by temperedness. Taking the corresponding upper limits, using \eqref{Le8-5a}, \eqref{Le8-5b}, \eqref{Le8-6b}, \eqref{Le8-7a} and $\mathcal E_R\to0$, and then letting $R\to\infty$, we obtain
	\begin{align*}
		\lim_{R\to\infty}
		\limsup_{t\to\infty}
		\sup_{\epsilon\in[0,\epsilon_0]}
		\sup_{(x,\mu)\in D(\theta_{-t}\omega)}
		\int_{\mathbb R^n}
		\rho_R(\xi)|v(0,\xi)|^2d\xi=0.
	\end{align*}
	Finally, since $\rho_R=1$ on $\{|\xi|>\sqrt2R\}$ and
	$\varphi_H^\epsilon(t,\theta_{-t}\omega,x,\mu)
	=v(0)+\epsilon z(\omega)$,
	\begin{align*}
		\int_{|\xi|>\sqrt2R}
		\left|
		\varphi_H^\epsilon
		(t,\theta_{-t}\omega,x,\mu)(\xi)
		\right|^2d\xi
		\leq
		2\int_{\mathbb R^n}\rho_R|v(0)|^2d\xi
		+2\epsilon_0^2
		\int_{|\xi|>\sqrt2R}|z(\omega)(\xi)|^2d\xi.
	\end{align*}
	This proves the uniform state-tail estimate. Together with \eqref{Le8-5a}, it
	yields \eqref{Le8-1}-\eqref{Le8-2} after enlarging the radius and
	pullback time. Lemma~\ref{LeLocalized} justifies the localized
	identities at the variational level, and Fatou's lemma removes the
	stopping-time localization.
\end{proof}

We combine the $q$-moment, local regularity and far-field estimates of Lemmas~\ref{Le6}-\ref{Le8} with Prokhorov compactness and the characterization of $\mathcal W_2$ convergence; see \cite{Villani09,Ambrosio08lmzb}.

\begin{lemma}[Pullback asymptotic compactness]\label{Le9}
	Assume \textnormal{(A1)-(A4)}, with $q>2$ fixed above. Then the family
	$\{\Phi^\epsilon\}_{\epsilon\in[0,\epsilon_0]}$ is uniformly
	$\mathfrak D$-pullback asymptotically compact in $H\times P$ in the
	following sequential sense. Let $D\in\mathfrak D$, $\omega\in\Omega_0$,
	$t_n\to\infty$, $\epsilon_n\in[0,\epsilon_0]$ and
	$(x_n,\mu_n)\in D(\theta_{-t_n}\omega)$. Then there exist a subsequence,
	still indexed by $n$, and $(u,\nu)\in H\times P$ such that
	\begin{align}\label{Le9-1}
		d_{\mathbb X}\left(
		\Phi^{\epsilon_n}(t_n,\theta_{-t_n}\omega)(x_n,\mu_n),
		(u,\nu)
		\right)
		\longrightarrow0.
	\end{align}
	Consequently, for every fixed $\epsilon\in[0,\epsilon_0]$, the cocycle
	$\Phi^\epsilon$ is $\mathfrak D$-pullback asymptotically compact.
\end{lemma}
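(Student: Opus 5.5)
The plan is to apply the compactness criterion of Lemma~\ref{lemma3} to the set of terminal points
\begin{align*}
	\mathcal B=\Big\{\Phi^{\epsilon_n}(t_n,\theta_{-t_n}\omega)(x_n,\mu_n):n\geq N\Big\}
	=\Big\{\big(u_n,\nu_n\big):n\geq N\Big\},
\end{align*}
where $u_n=\varphi_H^{\epsilon_n}(t_n,\theta_{-t_n}\omega,x_n,\mu_n)$ and $\nu_n=\varphi_P^{\epsilon_n}(t_n)\mu_n$. Here $N$ is chosen so that $t_n$ exceeds all of the finitely many pullback times used below. I will verify conditions (i)-(iv) of Lemma~\ref{lemma3} in turn, and then extract the convergent subsequence.

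\emph{Conditions (i) and (iv).} Since $D\in\mathfrak D$, there exist $B_H\in\mathfrak D_H$ and $B_P\in\mathfrak D_P$ with $(x_n,\mu_n)\in B_H(\theta_{-t_n}\omega)\times B_P$. For (iv), Lemma~\ref{Le6} gives $M_q(\nu_n)\leq e^{-q\lambda_0t_n/4}\sup_{B_P}M_q+R_{P,q}^q$, which is bounded uniformly in $n$ and in $\epsilon_n$. For (i), fix $R>0$. Lemma~\ref{Le7} provides a time $T(D,\omega)$ independent of $R$, together with the bounds $\|u_n\|_{H^1(B_R)}^2\leq C_R(\omega)$ and $\int_H\|y\|_{H^1(B_R)}^2\,\nu_n(dy)\leq C_R^P$ for $t_n\geq T$. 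Both bounds are uniform in $\epsilon_n$, because all the constants in Lemma~\ref{Le7} are uniform over $\epsilon\in[0,\epsilon_0]$. Since this time does not depend on $R$, a single threshold $N$ serves every radius. The finitely many indices below $N$ do not affect relative compactness, since a finite set of points can simply be added to $\mathcal B$.

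\emph{Conditions (ii) and (iii).} For each $\delta>0$, Lemma~\ref{Le8} yields $T(D,\omega,\delta)$ and $R(D,\omega,\delta)$ such that both the state tail and the law tail beyond $R$ are smaller than $\delta$ for $t_n\geq T$, uniformly in $\epsilon_n$. Only finitely many indices violate $t_n\geq T$. Each of these remaining points is a single element of $H\times P$: a single $u\in H$ has vanishing tail, and a single $\nu\in P$ has vanishing tail by dominated convergence. So enlarging $R$ handles those finitely many indices. This gives the limits required in (ii) and (iii).

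\emph{Conclusion and main obstacle.} Lemma~\ref{lemma3} then gives relative compactness of $\mathcal B$ in $H\times\mathcal P_2(H)$, so a subsequence converges to some $(u,\nu)$. Because $P=\mathcal P_2(H)$ is complete, $\nu\in P$, which is \eqref{Le9-1}. The second assertion is the special case of a constant sequence $\epsilon_n\equiv\epsilon$.

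The main obstacle is the interaction between the index-dependent thresholds and the uniformity. Lemma~\ref{lemma3} requires its bounds over the whole set $\mathcal B$, while Lemmas~\ref{Le7}-\ref{Le8} only deliver them for $t_n$ beyond thresholds that depend on $R$ or $\delta$ (in Lemma~\ref{Le8}). I would handle this as above: apply Lemma~\ref{lemma3} to tails of the sequence, absorb the finitely many exceptional terms, and stress that the $\epsilon$-uniformity of every constant is what lets the varying $\epsilon_n$ be treated exactly like a fixed $\epsilon$. A secondary point is that (i) in Lemma~\ref{lemma3} uses the extended functional $\|\cdot\|_{H^1(B_R)}$. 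This is consistent with \eqref{Le7-2}, since that estimate is stated for the same extended Borel functional.
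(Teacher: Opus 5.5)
Your proposal is correct and follows the paper's proof. The paper likewise discards finitely many terms via Lemma~\ref{Le6}, uses the $R$-independent threshold of Lemma~\ref{Le7}, and makes the tails from Lemma~\ref{Le8} uniform by treating the finitely many exceptional indices individually; the only difference is that it re-runs the argument of Lemma~\ref{lemma3} inline (diagonal Rellich extraction for the states; the compact set $\mathcal K_\eta$, Markov's inequality, Prokhorov's theorem and $q$-moment uniform integrability for the laws) instead of citing it.
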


\begin{proof}
	Set
	\begin{align}\label{Le9-2}
		u_n
		=\varphi_H^{\epsilon_n}
		(t_n,\theta_{-t_n}\omega,x_n,\mu_n)\quad\text{and}\quad
		\nu_n
		=\varphi_P^{\epsilon_n}(t_n)\mu_n.
	\end{align}
	By Lemma~\ref{Le6}, after discarding finitely many terms there exists
	$N_0\in\mathbb N$ such that
	\begin{align}\label{Le9-3}
		(u_n,\nu_n)
		\in K_H(\omega)\times K_P,
		\quad
		M_q(\nu_n)
		\leq R_{P,q}^q+1,
		\qquad n\geq N_0.
	\end{align}
	Increasing $N_0$ if necessary, Lemma~\ref{Le7} gives, for every $R>0$,
	\begin{align}\label{Le9-4}
		\sup_{n\geq N_0}\|u_n\|_{H^1(B_R)}^2
		\leq C_R(\omega)
		\quad\text{and}\quad
		\sup_{n\geq N_0}
		\int_H\|y\|_{H^1(B_R)}^2\,\nu_n(dy)
		\leq C_R^P.
	\end{align}
	
	We next make the tail estimate uniform over the sequence
	$\{(u_n,\nu_n):n\geq N_0\}$. Let $\delta>0$. Lemma~\ref{Le8} yields
	$N_\delta\geq N_0$ and $R_\delta>0$ such that both tails are less than
	$\delta$ for $n\geq N_\delta$. For each of the finitely many indices
	$N_0\leq n<N_\delta$, the state tail tends to zero because $u_n\in H$,
	and the law tail tends to zero by dominated convergence because
	$\nu_n\in P$. Enlarging $R_\delta$ therefore gives
	\begin{align}\label{Le9-5}
		\lim_{R\to\infty}
		\sup_{n\geq N_0}
		\Big[
		\int_{|\xi|>R}|u_n(\xi)|^2d\xi
		+
		\int_H\int_{|\xi|>R}|y(\xi)|^2d\xi\,\nu_n(dy)
		\Big]
		=0.
	\end{align}
	
	We extract a convergent subsequence of the state component. For
	every integer $m\geq1$, the first estimate in \eqref{Le9-4} and the
	compact embedding $H^1(B_m)\hookrightarrow L^2(B_m)$ yield a subsequence
	converging in $L^2(B_m)$. A diagonal selection gives a subsequence, still
	denoted by $u_n$, which is Cauchy in $L^2(B_m)$ for every $m$. For $R>0$,
	\begin{align*}
		\|u_k-u_\ell\|^2
		\leq
		\|u_k-u_\ell\|_{L^2(B_R)}^2
		+2\int_{|\xi|>R}|u_k(\xi)|^2d\xi
		+2\int_{|\xi|>R}|u_\ell(\xi)|^2d\xi.
	\end{align*}
	Using \eqref{Le9-5}, first choose $R$ and then $k,\ell$. Hence the
	selected subsequence is Cauchy in $H$, and there exists $u\in H$ such
	that $u_n\to u$ in $H$.
	
	It remains to prove tightness of the corresponding laws. Fix $\eta\in(0,1)$.
	By \eqref{Le9-5}, choose an increasing sequence $R_j\geq j$ such that
	\begin{align*}
		\sup_{n\geq N_0}
		\int_H\int_{|\xi|>R_j}|y(\xi)|^2d\xi\,\nu_n(dy)
		\leq\eta\,2^{-2j-2},
		\qquad j\geq1.
	\end{align*}
	Define
	\begin{align*}
		\mathcal K_\eta
		=\bigcap_{j=1}^{\infty}
		\Big\{
		y\in H:
		\|y\|_{H^1(B_{R_j})}^2
		\leq\frac{2^{j+2}(1+C_{R_j}^P)}{\eta},
		\quad
		\int_{|\xi|>R_j}|y(\xi)|^2d\xi\leq2^{-j}
		\Big\}.
	\end{align*}
	The set $\mathcal K_\eta$ is compact in $H$. To see this, every sequence in
	$\mathcal K_\eta$ has, by the local $H^1$ bounds and a diagonal use of
	the Rellich theorem, a subsequence converging in $L^2(B_{R_j})$ for every
	$j$. For two elements of this subsequence,
	\begin{align*}
		\|y_k-y_\ell\|^2
		\leq
		\|y_k-y_\ell\|_{L^2(B_{R_j})}^2
		+2\int_{|\xi|>R_j}|y_k(\xi)|^2d\xi
		+2\int_{|\xi|>R_j}|y_\ell(\xi)|^2d\xi.
	\end{align*}
	Choosing $j$ large and then $k,\ell$ large shows that the subsequence
	is Cauchy in $H$. Its limit belongs to $\mathcal K_\eta$ by
	weak lower semicontinuity of the local $H^1$-norm and continuity of the
	$H$-tail functional. Thus $\mathcal K_\eta$ is sequentially compact and
	hence compact in $H$.
	
	By Markov's inequality and \eqref{Le9-4}, for $n\geq N_0$,
	\begin{align*}
		\nu_n(H\setminus\mathcal K_\eta)
		\leq
		\sum_{j=1}^{\infty}
		\Big[
		\frac{\eta}{2^{j+2}(1+C_{R_j}^P)}
		\int_H\|y\|_{H^1(B_{R_j})}^2\,\nu_n(dy)
		+2^j
		\int_H\int_{|\xi|>R_j}|y(\xi)|^2d\xi\,\nu_n(dy)
		\Big]
		\leq\eta.
	\end{align*}
	Hence $\{\nu_n:n\geq N_0\}$ is tight in $H$. Since $H$ is Polish,
	Prokhorov's theorem yields a further subsequence and a probability
	measure $\nu$ such that $\nu_n\rightharpoonup\nu$.
	
	The map $y\mapsto\|y\|^q$ is nonnegative and lower semicontinuous.
	Therefore \eqref{Le9-3} and the Portmanteau theorem give
	$M_q(\nu)\leq R_{P,q}^q+1$. Consequently, for every $M>0$,
	\begin{align}\label{Le9-6}
		\sup_{n\geq N_0}
		\int_{\{\|y\|>M\}}\|y\|^2\,\nu_n(dy)
		+
		\int_{\{\|y\|>M\}}\|y\|^2\,\nu(dy)
		\leq
		2M^{2-q}\big(R_{P,q}^q+1\big)
		\longrightarrow0
		\quad\text{as }M\to\infty.
	\end{align}
	For $M>0$, the function
	$y\mapsto\min\{\|y\|^2,M^2\}$ is bounded and continuous on $H$.
	Weak convergence and \eqref{Le9-6} therefore imply
	\begin{align*}
		\int_H\|y\|^2\,\nu_n(dy)
		\longrightarrow
		\int_H\|y\|^2\,\nu(dy).
	\end{align*}
	The characterization of $\mathcal W_2$ convergence by weak convergence
	and convergence of the second moments now yields
	\begin{align}\label{Le9-7}
		u_n&\longrightarrow u\quad\text{in }H,
		&
		\nu_n&\rightharpoonup\nu,
		&
		\int_H\|y\|^2\,\nu_n(dy)
		&\longrightarrow
		\int_H\|y\|^2\,\nu(dy),
		&
		\mathcal W_2(\nu_n,\nu)&\longrightarrow0.
	\end{align}
	In particular, $\nu\in P$. By \eqref{Le9-2}, \eqref{Le9-7} and the
	definition of $d_{\mathbb X}$, relation \eqref{Le9-1} follows. The
	argument allows arbitrary sequences $\epsilon_n\in[0,\epsilon_0]$;
	taking $\epsilon_n\equiv\epsilon$ proves the last assertion.
\end{proof}

\subsection{Existence of the pullback random attractor}

\begin{theorem}\label{The2}
	Assume \textnormal{(A1)-(A4)}. For every $\epsilon\in[0,\epsilon_0]$, the cocycle $\Phi^\epsilon$ possesses a unique $\mathfrak D$-pullback random attractor
	$\mathcal A^\epsilon=\{\mathcal A^\epsilon(\omega)\}_{\omega\in\Omega_0}$ in $H\times P$. It is given by
	\begin{align}\label{The2-1}
		\mathcal A^\epsilon(\omega)
		=
		\bigcap_{\tau\geq0}
		\overline{
			\bigcup_{t\geq\tau}
			\Phi^\epsilon(t,\theta_{-t}\omega)
			K(\theta_{-t}\omega)
		}^{H\times P},
		\qquad \omega\in\Omega_0,
	\end{align}
	and $\mathcal A^\epsilon(\omega)\subset K(\omega)$ for every $\omega\in\Omega_0$.
\end{theorem}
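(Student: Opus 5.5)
This is a direct application of Theorem~\ref{theorem1} with $\mathbb X=H\times P$, $X=H$, and the universe $\mathfrak D$ induced by $\mathfrak D_H$ and $\mathfrak D_P$. We fix $\epsilon\in[0,\epsilon_0]$ and check the three hypotheses of Theorem~\ref{theorem1} in turn.

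\emph{Continuity.} By Theorem~\ref{The1}, $\Phi^\epsilon$ is a continuous random dynamical system on $H\times P$ over $(\Omega_0,\mathcal F,\mathbb P,\{\theta_t\})$. Concretely, (P1)--(P4) follow from Lemmas~\ref{Le3}--\ref{Le4} and the cocycle identity \eqref{The1-1}, and Lemma~\ref{lemma1} applies.

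\emph{Absorbing set.} Lemma~\ref{Le6} provides $K(\omega)=K_H(\omega)\times K_P$. This is a closed measurable random set in $H\times P$ that belongs to $\mathfrak D$, and by \eqref{Le6-4} it pullback absorbs every $D\in\mathfrak D$. For the measurability of $K$ we use the distance splitting recorded in that proof, $d_{\mathbb X}((x,\mu),K(\omega))=d_H(x,K_H(\omega))+d_P(\mu,K_P)$. The first term is measurable in $\omega$ because $K_H$ is a closed ball with measurable radius $R_H(\omega)$, by Lemma~\ref{Le5}. The second term does not depend on $\omega$.

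\emph{Asymptotic compactness.} Lemma~\ref{Le9} with $\epsilon_n\equiv\epsilon$ gives $\mathfrak D$-pullback asymptotic compactness: for $D\in\mathfrak D$, $t_n\to\infty$ and $z_n\in D(\theta_{-t_n}\omega)$, the sequence $\Phi^\epsilon(t_n,\theta_{-t_n}\omega)z_n$ has a subsequence converging in $d_{\mathbb X}$. That lemma is the substantive input. It combines the $q$-moment bound (Lemma~\ref{Le6}), the local $H^1$ regularity (Lemma~\ref{Le7}) and the uniform far-field tails (Lemma~\ref{Le8}) through the compactness mechanism of Lemma~\ref{lemma3}.

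With these hypotheses in place, Theorem~\ref{theorem1} yields a unique $\mathfrak D$-pullback random attractor. It is given by the omega-limit formula \eqref{theorem1-1}, which is exactly \eqref{The2-1}, and it satisfies $\mathcal A^\epsilon(\omega)\subset K(\omega)$.

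The only point needing care is that the abstract framework of Section~\ref{sec2} assumed a fixed full-measure $\theta$-invariant base. Here that role is played by $\Omega_0$, and every estimate above holds for each $\omega\in\Omega_0$. The measurability of $\mathcal A^\epsilon$ as a random compact set is then inherited, as in the proof of Theorem~\ref{theorem1}, from the joint measurability of $\Phi^\epsilon$ (Lemma~\ref{Le4}), its continuity in $(x,\mu)$, and the measurability of $K$. I do not expect any real obstacle: all the analytic difficulty was handled in Lemmas~\ref{Le5}--\ref{Le9}.
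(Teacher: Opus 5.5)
Your proposal is correct and matches the paper's proof: it invokes Theorem~\ref{The1} for the continuous cocycle, Lemma~\ref{Le6} for the closed measurable absorbing set $K\in\mathfrak D$, and Lemma~\ref{Le9} (with $\epsilon_n\equiv\epsilon$) for pullback asymptotic compactness, and then applies Theorem~\ref{theorem1}. Your extra remarks on the distance splitting for the measurability of $K$ and on the role of $\Omega_0$ are consistent with what the paper records in those lemmas.
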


\begin{proof}
	By Theorem~\ref{The1}, $\Phi^\epsilon$ is a continuous random dynamical system on $H\times P$. Lemma~\ref{Le6} provides the closed measurable $\mathfrak D$-pullback absorbing family $K\in\mathfrak D$, and Lemma~\ref{Le9} gives $\mathfrak D$-pullback asymptotic compactness. Theorem~\ref{theorem1} therefore yields existence, uniqueness, representation \eqref{The2-1} and the inclusion $\mathcal A^\epsilon(\omega)\subset K(\omega)$.
\end{proof}

\begin{corollary}[Law attractor and pathwise fibers]\label{CorLaw}
	Assume \textnormal{(A1)-(A4)}. For every $\epsilon\in[0,\epsilon_0]$, the law semiflow $\varphi_P^\epsilon$ possesses a unique $\mathfrak D_P$-global attractor
	\begin{align}\label{CorLaw-1}
		\mathcal A_P^\epsilon
		=
		\bigcap_{\tau\geq0}
		\overline{
			\bigcup_{t\geq\tau}\varphi_P^\epsilon(t)K_P
		}^{P}.
	\end{align}
	Moreover, $\mathcal A_P^\epsilon\subset K_P$, so $\mathcal A_P^\epsilon\in\mathfrak D_P$, and
	\begin{align}\label{CorLaw-2}
		\Pi_P\mathcal A^\epsilon(\omega)=\mathcal A_P^\epsilon,
		\qquad \omega\in\Omega_0.
	\end{align}
	For $\mu\in\mathcal A_P^\epsilon$, define
	\begin{align*}
		\mathcal A_H^\epsilon(\omega,\mu)
		=\big\{x\in H:(x,\mu)\in\mathcal A^\epsilon(\omega)\big\}
	\end{align*}
	and
	\begin{align*}
		\Theta_{-t}^\epsilon\mu
		=\big\{\nu\in\mathcal A_P^\epsilon:
		\varphi_P^\epsilon(t)\nu=\mu\big\}.
	\end{align*}
	Then, for every $t\geq0$,
	\begin{align}\label{CorLaw-3}
		\mathcal A_H^\epsilon(\omega,\mu)
		=
		\bigcup_{\nu\in\Theta_{-t}^\epsilon\mu}
		\varphi_H^\epsilon\big(
		t,\theta_{-t}\omega,
		\mathcal A_H^\epsilon(\theta_{-t}\omega,\nu),\nu
		\big),
	\end{align}
	and, for every $B_H\in\mathfrak D_H$,
	\begin{align}\label{CorLaw-4}
		\lim_{t\to\infty}
		\sup_{\nu\in\Theta_{-t}^\epsilon\mu}
		\operatorname{dist}_H\left(
		\varphi_H^\epsilon\big(
		t,\theta_{-t}\omega,B_H(\theta_{-t}\omega),\nu
		\big),
		\mathcal A_H^\epsilon(\omega,\mu)
		\right)=0.
	\end{align}
\end{corollary}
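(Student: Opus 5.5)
The plan is to obtain the law attractor from the deterministic analogue of Theorem~\ref{theorem1}, and then to read off the fiber statements from Proposition~\ref{PrLawProjection} and Theorem~\ref{theorem2}, applied with $X=H$, $\mathfrak D_X=\mathfrak D_H$ and $\mathfrak D_P$ as in Section~\ref{sec4}.

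\emph{Step 1: absorption by $K_P$.} Lemma~\ref{Le3} shows that $\varphi_P^\epsilon$ is a continuous semiflow on the Polish space $P$. Lemma~\ref{Le6} shows that $K_P$ is closed, belongs to $\mathfrak D_P$, and absorbs every $B_P\in\mathfrak D_P$ after the time $T_P(B_P)$. In particular $K_P$ absorbs itself, so $\varphi_P^\epsilon(t)K_P\subset K_P$ for all large $t$.

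\emph{Step 2: asymptotic compactness.} Let $t_n\to\infty$ and $\mu_n\in B_P$. I want $\varphi_P^\epsilon(t_n)\mu_n$ to have a $\mathcal W_2$-convergent subsequence. To get this, I apply Lemma~\ref{Le9} with the family $D(\omega)=\{0\}\times B_P$. This family lies in $\mathfrak D$, because the constant singleton $\{0\}$ trivially belongs to $\mathfrak D_H$. The law coordinate of the convergent subsequence produced by Lemma~\ref{Le9} is exactly the sequence I need. So the compactness is inherited from the product result, and no separate tightness argument is required.

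\emph{Step 3: the attractor.} With Steps 1 and 2 in hand, the standard deterministic omega-limit argument applies. It is the same as the proof of Theorem~\ref{theorem1}, only without $\omega$. It shows that the set in \eqref{CorLaw-1} is nonempty, compact and invariant, and that it attracts every $B_P\in\mathfrak D_P$. The attraction step goes by contradiction: take a limit point of late images and use absorption into $K_P$ at time $t_n-r$.
\begin{itemize}
\item Since $K_P$ is closed and absorbs itself, $\mathcal A_P^\epsilon\subset K_P$, and hence $\mathcal A_P^\epsilon\in\mathfrak D_P$.
\item Uniqueness follows by mutual attraction within $\mathfrak D_P$: two invariant compact attractors in $\mathfrak D_P$ attract each other, so each is contained in the other.
\end{itemize}

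\emph{Step 4: projection and fibers.} Theorem~\ref{The2} supplies the pullback attractor $\mathcal A^\epsilon\in\mathfrak D$, and $\mathfrak D_H$ is nonempty. Together with Step 3, the hypotheses of Proposition~\ref{PrLawProjection} are satisfied, so \eqref{CorLaw-2} holds. Theorem~\ref{theorem2} then gives \eqref{CorLaw-3} and \eqref{CorLaw-4} directly, with $A(\omega,\mu)=\mathcal A_H^\epsilon(\omega,\mu)$ and $\Theta_{-t}=\Theta_{-t}^\epsilon$.

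The main point needing care is Step 2. One must check that $\{0\}\times B_P$ really belongs to the product universe, so that Lemma~\ref{Le9} can be invoked. Once that is confirmed, the rest is a formal application of the abstract results of Section~\ref{sec2}.
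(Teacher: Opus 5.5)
Your proposal is correct and follows the paper's proof essentially step for step. The paper also obtains absorption by $K_P$ from Lemma~\ref{Le6}, and extracts asymptotic compactness of the law semiflow from Lemma~\ref{Le9} applied to $D(\omega')=\{0\}\times B_P$ with $\epsilon_n\equiv\epsilon$. It then uses the deterministic omega-limit construction (Theorem~\ref{theorem1} over the trivial base flow) to get \eqref{CorLaw-1} and $\mathcal A_P^\epsilon\subset K_P$, with uniqueness understood within $\mathfrak D_P$, and finally invokes Proposition~\ref{PrLawProjection} and Theorem~\ref{theorem2} for \eqref{CorLaw-2}--\eqref{CorLaw-4}.
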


\begin{proof}
	The closed set $K_P\in\mathfrak D_P$ absorbs every $B_P\in\mathfrak D_P$ by Lemma~\ref{Le6}. We verify asymptotic compactness of the law semiflow. Let $t_n\to\infty$ and $\mu_n\in B_P$. Fix $\omega\in\Omega_0$, let $B_0(\omega')=\{0\}$, and apply Lemma~\ref{Le9} to $D(\omega')=B_0(\omega')\times B_P$, with $\epsilon_n\equiv\epsilon$. The law coordinates $\varphi_P^\epsilon(t_n)\mu_n$ then have a convergent subsequence in $P$. The omega-limit construction, equivalently Theorem~\ref{theorem1} over the trivial base flow, gives the unique compact global attractor and \eqref{CorLaw-1}. Since $K_P$ absorbs itself and is closed, its omega-limit set is contained in $K_P$; hence $\mathcal A_P^\epsilon\in\mathfrak D_P$. Proposition~\ref{PrLawProjection} yields \eqref{CorLaw-2}, while Theorem~\ref{theorem2} gives \eqref{CorLaw-3}-\eqref{CorLaw-4}.
\end{proof}

\subsection{Invariant-law reduction and the random-equilibrium fiber}

Corollary~\ref{CorLaw} describes the law projection and pathwise fibers without a contraction assumption. Under \textnormal{(A5)}, the law attractor and its corresponding state fiber reduce to singletons.

\begin{corollary}\label{Cor2}
	Assume \textnormal{(A1)-(A5)}. For every $\epsilon\in[0,\epsilon_0]$, the invariant law $\mu_\epsilon^*$ from Corollary~\ref{Cor1} belongs to $\mathcal P_q(H)$, and $\{\mu_\epsilon^*\}$ is the $\mathfrak D_P$-global attractor of $\varphi_P^\epsilon$. Define
	\begin{align}\label{Cor2-1}
		\Psi^\epsilon(t,\omega)x
		=\varphi_H^\epsilon(t,\omega,x,\mu_\epsilon^*),
	\end{align}
	let $\Pi_H(x,\mu)=x$, and set $\mathcal A_H^\epsilon=\Pi_H\mathcal A^\epsilon$. Then $\Psi^\epsilon$ is a continuous random dynamical system on $H$, $\mathcal A_H^\epsilon$ is its unique $\mathfrak D_H$-pullback random attractor, and there exists a measurable random equilibrium $\xi_\epsilon:\Omega_0\to H$ such that
	\begin{align}\label{Cor2-2}
		\mathcal A_H^\epsilon(\omega)
		&=\{\xi_\epsilon(\omega)\},\\
		\mathcal A^\epsilon(\omega)
		&=\mathcal A_H^\epsilon(\omega)
		\times\{\mu_\epsilon^*\}
		=\big\{(\xi_\epsilon(\omega),\mu_\epsilon^*)\big\}.
	\end{align}
	Moreover,
	\begin{align}\label{Cor2-3}
		\Psi^\epsilon(t,\omega)\xi_\epsilon(\omega)
		&=\xi_\epsilon(\theta_t\omega),
		\qquad t\geq0,\\
		\mathbb P\circ\xi_\epsilon^{-1}
		&=\mu_\epsilon^*.
	\end{align}
	Consequently, $u_\epsilon(t,\omega)=\xi_\epsilon(\theta_t\omega)$ is a stationary variational solution of \eqref{Eq3} with one-time law $\mu_\epsilon^*$.
\end{corollary}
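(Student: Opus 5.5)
The plan is to reduce everything to the abstract results of Section~\ref{sec2}: Corollary~\ref{corollary1} for the factorization and Corollary~\ref{corollary2} for the singleton fiber. The only genuinely new ingredient is the identification $\mathbb P\circ\xi_\epsilon^{-1}=\mu_\epsilon^*$.

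\emph{Step 1: the invariant law lies in $\mathcal P_q(H)$ and is the law attractor.} Start from $\delta_{0_H}\in\mathcal P_q(H)$. By Lemma~\ref{Le6}, the orbit $\varphi_P^\epsilon(t)\delta_{0_H}$ stays in $K_P$. By Corollary~\ref{Cor1}, it converges to $\mu_\epsilon^*$ in $\mathcal W_2$. Weak convergence and the Portmanteau inequality for the lower semicontinuous function $\|y\|^q$ then give $M_q(\mu_\epsilon^*)\le R_{P,q}^q+1$, so $\mu_\epsilon^*\in K_P$. Every $B_P\in\mathfrak D_P$ is $\mathcal W_2$-bounded, so \eqref{Cor1-1} shows that $\{\mu_\epsilon^*\}$ attracts $B_P$. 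Since $\{\mu_\epsilon^*\}$ is also invariant, compact and belongs to $\mathfrak D_P$, uniqueness in Corollary~\ref{CorLaw} gives $\mathcal A_P^\epsilon=\{\mu_\epsilon^*\}$.

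\emph{Step 2: invoke the abstract corollaries.} Theorem~\ref{The2} supplies the attractor, so Corollary~\ref{corollary1} applies and gives that $\Psi^\epsilon$ is a continuous random dynamical system, that $\mathcal A^\epsilon=\mathcal A_H^\epsilon\times\{\mu_\epsilon^*\}$, and that $\mathcal A_H^\epsilon$ is the unique $\mathfrak D_H$-attractor of $\Psi^\epsilon$. For Corollary~\ref{corollary2} I take $\nu=\mu=\mu_\epsilon^*$ in \eqref{Le4-2}, which gives
\[
\|\Psi^\epsilon(t,\omega)x-\Psi^\epsilon(t,\omega)y\|^2\le e^{-\bar\gamma t}\|x-y\|^2,
\]
so \eqref{corollary2-1} holds with $\alpha=\bar\gamma$. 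The elements of $\mathfrak D_H$ are bounded and tempered with every exponent $\delta>0$, so \eqref{corollary2-2} holds. Corollary~\ref{corollary2} then yields a measurable $\xi_\epsilon$ with $\mathcal A_H^\epsilon(\omega)=\{\xi_\epsilon(\omega)\}$ and the equilibrium identity \eqref{Cor2-3}.

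\emph{Step 3: identify the law of $\xi_\epsilon$ (main obstacle).} I would use a pullback Wasserstein argument. Take $x=0$. By \eqref{corollary2-1} and invariance,
\[
\|\Psi^\epsilon(t,\theta_{-t}\omega)0-\xi_\epsilon(\omega)\|^2\le e^{-\bar\gamma t}\|\xi_\epsilon(\theta_{-t}\omega)\|^2,
\]
and the right-hand side is at most $e^{-\bar\gamma t}R_H^2(\theta_{-t}\omega)$, since $\xi_\epsilon\in K_H$. The map $\omega\mapsto\Psi^\epsilon(t,\theta_{-t}\omega)0$ has the same law as $\omega\mapsto\Psi^\epsilon(t,\omega)0$, because $\theta_{-t}$ preserves $\mathbb P$. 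The latter is the decoupled solution started at $0$ with prescribed law curve $\ell\equiv\mu_\epsilon^*$. The decoupled solution started from a variable of law $\mu_\epsilon^*$ has law $\mu_\epsilon^*$ for all $t$, since $\mu_\epsilon^*$ is a fixed point of $\Gamma_{\mu_\epsilon^*}$ (Lemma~\ref{Le2}). By \eqref{Le1-1} with $\ell_1=\ell_2$, the law of $\Psi^\epsilon(t,\cdot)0$ therefore lies within $\mathcal W_2$-distance $e^{-\bar\gamma t/2}M_2(\mu_\epsilon^*)^{1/2}$ of $\mu_\epsilon^*$.

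What is missing is the convergence in $L^2(\Omega)$, i.e. control of $\mathbb E\, e^{-\bar\gamma t}R_H^2(\theta_{-t}\omega)$. Here $\mathbb E R_H^2<\infty$ by stationarity of the Ornstein-Uhlenbeck moments, and $\theta_{-t}$ preserves $\mathbb P$, so this expectation is $e^{-\bar\gamma t}\mathbb E R_H^2\to0$. If integrability turns out to be awkward, convergence in probability already suffices for weak convergence of the laws. One then combines this with the uniform $q$-moment bounds on the laws $\mathcal L(\Psi^\epsilon(t,\cdot)0)$ to upgrade to convergence in $\mathcal W_2$ and conclude $\mathcal L\xi_\epsilon=\mu_\epsilon^*$.

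\emph{Step 4: stationarity.} Finally, $u_\epsilon(t)=\xi_\epsilon(\theta_t\omega)=\Psi^\epsilon(t,\omega)\xi_\epsilon(\omega)$ is the decoupled solution with law curve $\mu_\epsilon^*$. Its one-time law is $\mathcal L\xi_\epsilon=\mu_\epsilon^*$, by stationarity of $\theta_t$. Hence $\mathcal L u_\epsilon(t)=\mu_\epsilon^*$, the prescribed curve is self-consistent, and $u_\epsilon$ solves \eqref{Eq3}.

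A residual point is $\mathcal F_0$-measurability of $\xi_\epsilon$. It is a pullback limit, hence measurable with respect to the past $\sigma$-field, which is what the adapted variational framework requires.
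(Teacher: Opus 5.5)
Your proposal is correct and follows the paper's proof essentially step for step: the $M_q$ bound for $\mu_\epsilon^*$ via Portmanteau along $\varphi_P^\epsilon(t)\delta_{0_H}$, the reduction through Corollaries~\ref{corollary1} and~\ref{corollary2} with $\alpha=\bar\gamma$ taken from \eqref{Le4-2}, the identification of $\mathbb P\circ\xi_\epsilon^{-1}$ by comparing the law of $\Psi^\epsilon(t,\theta_{-t}\cdot)0$ (equal to that of $\Psi^\epsilon(t,\cdot)0$, which is $\mathcal W_2$-close to $\mu_\epsilon^*$) with the pathwise pullback limit $\xi_\epsilon$, and $\mathcal F_0$-measurability of that limit for the stationarity claim. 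The only difference is in Step 3: neither your $L^2$ bound via $\mathbb E R_H^2<\infty$ (which is valid) nor the proposed $q$-moment upgrade to $\mathcal W_2$ convergence is needed, because pathwise convergence already gives weak convergence to $\mathbb P\circ\xi_\epsilon^{-1}$, and uniqueness of weak limits then finishes the argument, exactly as in the paper.
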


\begin{proof}
	\emph{Step 1: identification of the global law attractor.}
	Corollary~\ref{Cor1} gives
	\begin{align*}
		\varphi_P^\epsilon(k)\delta_{0_H}
		\longrightarrow\mu_\epsilon^*
		\quad\text{in }\mathcal W_2
		\quad\text{as }k\to\infty.
	\end{align*}
	On the other hand, \eqref{Le6-2} yields
	\begin{align*}
		M_q\big(
		\varphi_P^\epsilon(k)\delta_{0_H}
		\big)
		\leq R_{P,q}^q,
		\qquad k\geq0.
	\end{align*}
	Convergence in $\mathcal W_2$ implies weak convergence, and $v\mapsto\|v\|^q$ is nonnegative and lower semicontinuous. The Portmanteau theorem therefore gives
	\begin{align*}
		M_q(\mu_\epsilon^*)
		\leq R_{P,q}^q.
	\end{align*}
	Thus $\mu_\epsilon^*\in\mathcal P_q(H)$ and $\{\mu_\epsilon^*\}\in\mathfrak D_P$. If $B_P\in\mathfrak D_P$, then Lyapunov's inequality and the triangle inequality give
	\begin{align*}
		\sup_{\mu\in B_P}\mathcal W_2(\mu,\mu_\epsilon^*)
		\leq
		\sup_{\mu\in B_P}M_2(\mu)^{1/2}
		+M_2(\mu_\epsilon^*)^{1/2}<\infty.
	\end{align*}
	Estimate \eqref{Cor1-1} is therefore uniform over $\mu\in B_P$. Together with the invariance of $\mu_\epsilon^*$, this proves that $\{\mu_\epsilon^*\}$ is the $\mathfrak D_P$-global attractor. By the uniqueness in Corollary~\ref{CorLaw},
	\begin{align*}
		\mathcal A_P^\epsilon=\{\mu_\epsilon^*\}.
	\end{align*}
	
	\emph{Step 2: reduction to the invariant-law state cocycle.}
	The product attractor $\mathcal A^\epsilon$ belongs to $\mathfrak D$, and $\mathfrak D_H$ contains the constant singleton family $B_0=\{B_0(\omega)\}_{\omega\in\Omega_0}$ defined by $B_0(\omega)=\{0\}$. Proposition~\ref{PrLawProjection} and Corollary~\ref{corollary1}, applied with $X=H$ and $\mathcal A_P=\{\mu_\epsilon^*\}$, show that \eqref{Cor2-1} defines a continuous random dynamical system, that
	\begin{align*}
		\mathcal A^\epsilon(\omega)
		=\mathcal A_H^\epsilon(\omega)\times\{\mu_\epsilon^*\},
	\end{align*}
	and that $\mathcal A_H^\epsilon$ is its unique $\mathfrak D_H$-pullback random attractor. Setting $\mu=\nu=\mu_\epsilon^*$ in \eqref{Le4-2} gives
	\begin{align*}
		\|\Psi^\epsilon(t,\omega)x
		-\Psi^\epsilon(t,\omega)y\|^2
		\leq e^{-\bar\gamma t}\|x-y\|^2.
	\end{align*}
	For $B\in\mathfrak D_H$, write
	$\operatorname{diam}_H B(\omega)=\sup\{\|x-y\|:x,y\in B(\omega)\}$. Then
	\begin{align*}
		e^{-\bar\gamma t}
		\operatorname{diam}_H^2 B(\theta_{-t}\omega)
		\leq
		4e^{-\bar\gamma t}
		\|B(\theta_{-t}\omega)\|_H^2
		\longrightarrow0.
	\end{align*}
	Hence the hypotheses of Corollary~\ref{corollary2} hold with $\alpha=\bar\gamma$. That corollary yields the singleton representation in \eqref{Cor2-2} and the random-equilibrium identity in \eqref{Cor2-3}.
	
	\emph{Step 3: identification of the equilibrium law.}
	For $\nu\in P$, let $Q_t^{\epsilon,*}\nu$ be the law of $\Psi^\epsilon(t,\cdot)Y$ on an independent extension of the Wiener space, where $Y$ has law $\nu$ and is independent of the Wiener process. Given any coupling $(Y,\widetilde Y)$ of $(\nu,\widetilde\nu)$, independent of the same Wiener process, the preceding pathwise contraction gives
	\begin{align*}
		\mathbb E\|\Psi^\epsilon(t,\cdot)Y
		-\Psi^\epsilon(t,\cdot)\widetilde Y\|^2
		\leq
		e^{-\bar\gamma t}\mathbb E\|Y-\widetilde Y\|^2.
	\end{align*}
	Taking the infimum over all couplings proves
	\begin{align}\label{Cor2-4}
		\mathcal W_2\big(
		Q_t^{\epsilon,*}\nu,
		Q_t^{\epsilon,*}\widetilde\nu
		\big)
		\leq
		e^{-\bar\gamma t/2}
		\mathcal W_2(\nu,\widetilde\nu).
	\end{align}
	In particular, $Q_t^{\epsilon,*}$ is well defined independently of the realization of the initial variable.
	
	Since $\varphi_P^\epsilon(t)\mu_\epsilon^*=\mu_\epsilon^*$, a self-consistent solution with initial law $\mu_\epsilon^*$ evolves along the constant law curve. Uniqueness for the decoupled equation therefore gives
	\begin{align*}
		Q_t^{\epsilon,*}\mu_\epsilon^*=\mu_\epsilon^*,
		\qquad t\geq0.
	\end{align*}
	Applying \eqref{Cor2-4} with $\nu=\delta_{0_H}$ and $\widetilde\nu=\mu_\epsilon^*$ yields
	\begin{align*}
		\mathcal W_2\big(
		Q_t^{\epsilon,*}\delta_{0_H},\mu_\epsilon^*
		\big)
		\leq
		e^{-\bar\gamma t/2}
		\mathcal W_2(\delta_{0_H},\mu_\epsilon^*)
		\longrightarrow0.
	\end{align*}
	On the other hand, pullback attraction of $\{\xi_\epsilon(\omega)\}$ gives
	\begin{align*}
		\Psi^\epsilon(t,\theta_{-t}\omega)0
		\longrightarrow\xi_\epsilon(\omega)
		\quad\text{in }H.
	\end{align*}
	Because $\theta_{-t}$ preserves $\mathbb P$, the law of the left-hand side is $Q_t^{\epsilon,*}\delta_{0_H}$. The pathwise convergence implies weak convergence to $\mathbb P\circ\xi_\epsilon^{-1}$, whereas the preceding Wasserstein convergence implies weak convergence to $\mu_\epsilon^*$. Uniqueness of the weak limit proves the second identity in \eqref{Cor2-3}.
	
	\emph{Step 4: stationarity and self-consistency.}
	For each integer $k\geq1$, the pullback approximation
	$\Psi^\epsilon(k,\theta_{-k}\cdot)0$ depends only on the Wiener
	increments on $[-k,0]$ and is therefore $\mathcal F_0$-measurable.
	Since $\mathcal F_0$ is complete, its almost-sure limit $\xi_\epsilon$
	is $\mathcal F_0$-measurable.
	On the canonical Wiener space,
	$\xi_\epsilon\circ\theta_t$ is $\mathcal F_t$-measurable. Together with
	the random-equilibrium identity, this shows that
	$t\mapsto\xi_\epsilon(\theta_t\omega)$ is an adapted stationary
	solution with frozen law $\mu_\epsilon^*$. By Step~3, its one-time law is
	$\mu_\epsilon^*$; thus the solution is self-consistent and solves
	\eqref{Eq1}.
\end{proof}

\section{Zero-noise stability of the state-law attractors}\label{sec5}

For each $\epsilon\in[0,\epsilon_0]$, let $\Phi^\epsilon$ be the product cocycle constructed in Theorem~\ref{The1}, and let $\mathcal A^\epsilon$ be its unique pullback random attractor from Theorem~\ref{The2}. The symbols $\Phi^0$ and $\mathcal A^0$ denote the corresponding zero-noise objects.

We prove upper semicontinuity at $\epsilon=0$ under \textnormal{(A1)-(A4)}, without assuming \textnormal{(A5)}. By Theorem~\ref{theorem3}, it suffices to prove finite-time convergence of the product cocycles, uniformly on compact subsets, and precompactness of the small-noise attractors. Section~\ref{sec4} provides the required precompactness. Synchronous coupling controls the law coordinate in the finite-time comparison, while the Ornstein-Uhlenbeck transformation handles the state coordinate. This is the standard finite-time convergence and precompactness strategy for random attractors, here applied in the product space; see \cite{Caraballo98cpde,Wang09jde}. Under \textnormal{(A5)}, the singleton representation further yields quantitative convergence of the invariant laws and random equilibria.

We first compare the law semiflows.

For $\kappa\in\mathbb R$ and $t\geq0$, set
\begin{align}\label{Jkappa}
	\mathfrak J_\kappa(t)
	=\int_0^t e^{-\kappa(t-r)}dr
	=
	\begin{cases}
		\displaystyle\frac{1-e^{-\kappa t}}{\kappa},&\kappa\neq0,\\[2mm]
		t,&\kappa=0.
	\end{cases}
\end{align}

\begin{lemma}\label{Le10}
	Assume \textnormal{(A1)-(A4)}. For every $\epsilon\in[0,\epsilon_0]$, $t\geq0$ and $\mu,\nu\in P$,
	\begin{align}\label{Le10-1}
		\mathcal W_2^2\big(
		\varphi_P^\epsilon(t)\mu,
		\varphi_P^0(t)\nu
		\big)
		\leq
		e^{(L_G-\bar\gamma)t}\mathcal W_2^2(\mu,\nu)
		+\epsilon^2\mathfrak J_{\bar\gamma-L_G}(t)
		\sum_{j=1}^{m}\|h_j\|^2.
	\end{align}
	Consequently, for every $T>0$,
	\begin{align}\label{Le10-2}
		\sup_{0\leq t\leq T}\sup_{\mu\in P}
		\mathcal W_2\big(
		\varphi_P^\epsilon(t)\mu,
		\varphi_P^0(t)\mu
		\big)
		\leq
		\epsilon
		\Big(
		\mathfrak J_{\bar\gamma-L_G}(T)
		\sum_{j=1}^{m}\|h_j\|^2
		\Big)^{1/2}.
	\end{align}
	If \textnormal{(A5)} also holds, then
	\begin{align}\label{Le10-3}
		\sup_{t\geq0}\sup_{\mu\in P}
		\mathcal W_2\big(
		\varphi_P^\epsilon(t)\mu,
		\varphi_P^0(t)\mu
		\big)
		\leq
		\epsilon
		\Big(
		\frac1{\kappa_0}\sum_{j=1}^{m}\|h_j\|^2
		\Big)^{1/2}.
	\end{align}
\end{lemma}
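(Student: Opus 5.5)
Fix $\mu,\nu\in P$ and an arbitrary coupling $(u_{0,1},u_{0,2})$ of $(\mu,\nu)$, realized as an $\mathcal F_0$-measurable pair on a product extension, independent of a common Brownian motion $W$. Let $u^\epsilon$ solve \eqref{Eq3} with noise amplitude $\epsilon$ and initial datum $u_{0,1}$, and let $u^0$ solve the zero-noise equation with initial datum $u_{0,2}$; both are given by Lemma~\ref{Le2}. By definition of the law semiflows, $\mathcal L u^\epsilon(t)=\varphi_P^\epsilon(t)\mu$ and $\mathcal L u^0(t)=\varphi_P^0(t)\nu$. Hence $(u^\epsilon(t),u^0(t))$ is a coupling of these two laws, and
\begin{align*}
\mathcal W_2^2\big(\varphi_P^\epsilon(t)\mu,\varphi_P^0(t)\nu\big)\leq\mathbb E\|u^\epsilon(t)-u^0(t)\|^2 .
\end{align*}
This is the synchronous coupling mentioned at the start of Section~\ref{sec5}.

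The difference $w=u^\epsilon-u^0$ satisfies a variational equation. Its drift is the difference of the two McKean-Vlasov drifts, and its additive noise is $\epsilon\sum_j h_j\,dW_j$. The It\^o formula in the mixed space (as in Lemma~\ref{LeLocalized}, localized by stopping times, with the martingale term having zero expectation) gives, for a.e. $t$,
\begin{align*}
\frac{d}{dt}\mathbb E\|w\|^2+2\mathbb E\|\nabla w\|^2
\leq -2\lambda\mathbb E\|w\|^2-2\mathbb E\big(f(\cdot,u^\epsilon)-f(\cdot,u^0),w\big)
+2\mathbb E\big\langle\mathcal G(u^\epsilon,\mathcal Lu^\epsilon)-\mathcal G(u^0,\mathcal Lu^0),w\big\rangle
+\epsilon^2\sum_{j=1}^m\|h_j\|^2 .
\end{align*}
Apply \eqref{f-3} and \eqref{G-3}, then $\mathcal W_2^2(\mathcal Lu^\epsilon(t),\mathcal Lu^0(t))\leq\mathbb E\|w(t)\|^2$. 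This yields
\begin{align*}
\frac{d}{dt}\mathbb E\|w(t)\|^2\leq (L_G-\bar\gamma)\,\mathbb E\|w(t)\|^2+\epsilon^2\sum_{j=1}^m\|h_j\|^2 .
\end{align*}
Gronwall's inequality, via the variation-of-constants formula with the kernel $e^{(L_G-\bar\gamma)(t-r)}$, gives
\begin{align*}
\mathbb E\|w(t)\|^2\leq e^{(L_G-\bar\gamma)t}\,\mathbb E\|u_{0,1}-u_{0,2}\|^2+\epsilon^2\,\mathfrak J_{\bar\gamma-L_G}(t)\sum_j\|h_j\|^2 .
\end{align*}
Taking the infimum over couplings proves \eqref{Le10-1}.

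For \eqref{Le10-2}, set $\nu=\mu$ and use that $\mathfrak J_\kappa$ is nondecreasing in $t$ for every real $\kappa$, since its integrand is positive. For \eqref{Le10-3}, under \textnormal{(A5)} we have $\kappa=\kappa_0>0$, so $\mathfrak J_{\kappa_0}(t)\leq 1/\kappa_0$ uniformly in $t$.

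The only delicate step is justifying the expected energy inequality for the difference process. One needs the It\^o formula for $w$ in the Gelfand triple, with drift in $L^2(V^*)+L^{p'}(L^{p'})$, and removal of the localization. The integrability of $u^\epsilon$ and $u^0$ from Lemma~\ref{Le2}, together with \eqref{G-2} and \eqref{f-2}, supplies exactly the hypotheses of Lemma~\ref{LeLocalized} with $\zeta\equiv1$. The stopped martingale term then has zero expectation, and monotone convergence handles the stopping, exactly as in the proof of \eqref{Le2-2}.
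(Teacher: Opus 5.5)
Your proposal is correct and follows essentially the same synchronous-coupling argument as the paper. Both proofs realize an arbitrary coupling independently of a common Brownian motion, apply the mixed-space It\^o formula to $u^\epsilon-u^0$ together with \eqref{f-3}, \eqref{G-3} and $\mathcal W_2^2(\mathcal Lu^\epsilon(t),\mathcal Lu^0(t))\leq\mathbb E\|u^\epsilon(t)-u^0(t)\|^2$, use variation of constants, and minimize over couplings.

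One small wording fix. Since $u^\epsilon-u^0\in L^2(\Omega;C([0,T];H))$, the martingale term is already a true martingale, so no localization is needed. If you do localize, the stopping is removed by dominated convergence, not monotone convergence, because the drift integrand lies in $L^1(\Omega\times(0,T))$ but is not sign-definite.
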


\begin{proof}
	Let $(\xi,\zeta)$ be an arbitrary coupling of $(\mu,\nu)$. Realize this coupling on a probability space independent of the canonical Wiener space and pass to the product stochastic basis. On this basis, let $u^\epsilon$ and $u^0$ solve \eqref{Eq3} with noise amplitudes $\epsilon$ and $0$, and with initial data $\xi$ and $\zeta$, respectively. The intersection-space It\^o formula used in Lemma~\ref{Le1}, together with \eqref{f-3} and \eqref{G-3}, yields, for a.e.\ $t>0$,
	\begin{align*}
		\frac{d}{dt}\mathbb E\|u^\epsilon(t)-u^0(t)\|^2
		&+2\mathbb E\|\nabla(u^\epsilon(t)-u^0(t))\|^2
		+\bar\gamma\mathbb E\|u^\epsilon(t)-u^0(t)\|^2\\
		&\leq
		L_G\mathcal W_2^2\big(
		\mathcal Lu^\epsilon(t),\mathcal Lu^0(t)
		\big)
		+\epsilon^2\sum_{j=1}^{m}\|h_j\|^2.
	\end{align*}
	Since $(u^\epsilon(t),u^0(t))$ is a coupling of its marginal laws, we obtain
	\begin{align*}
		\frac{d}{dt}\mathbb E\|u^\epsilon(t)-u^0(t)\|^2
		+(\bar\gamma-L_G)\mathbb E\|u^\epsilon(t)-u^0(t)\|^2
		\leq\epsilon^2\sum_{j=1}^{m}\|h_j\|^2.
	\end{align*}
	Variation of constants, with no sign restriction on $\bar\gamma-L_G$, yields
	\begin{align*}
		\mathbb E\|u^\epsilon(t)-u^0(t)\|^2
		\leq
		e^{(L_G-\bar\gamma)t}\mathbb E\|\xi-\zeta\|^2
		+\epsilon^2\mathfrak J_{\bar\gamma-L_G}(t)
		\sum_{j=1}^{m}\|h_j\|^2.
	\end{align*}
	Taking the infimum over all couplings proves \eqref{Le10-1}. Relation \eqref{Le10-2} follows by taking $\nu=\mu$ and using the monotonicity of $\mathfrak J_{\bar\gamma-L_G}$. Under \textnormal{(A5)}, $\bar\gamma-L_G=\kappa_0$ and $\mathfrak J_{\kappa_0}(t)\leq1/\kappa_0$, which gives \eqref{Le10-3}.
\end{proof}

\begin{corollary}\label{Cor3}
	Assume \textnormal{(A1)-(A5)}. The invariant laws from Corollary~\ref{Cor1} satisfy
	\begin{align}\label{Cor3-1}
		\mathcal W_2(\mu_\epsilon^*,\mu_0^*)
		\leq
		\epsilon
		\Big(
		\frac1{\kappa_0}
		\sum_{j=1}^{m}\|h_j\|^2
		\Big)^{1/2},
		\qquad \epsilon\in[0,\epsilon_0].
	\end{align}
\end{corollary}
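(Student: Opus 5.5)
The estimate \eqref{Cor3-1} follows from the uniform-in-time comparison \eqref{Le10-3} together with the invariance of both laws. We use that $\mu_\epsilon^*$ and $\mu_0^*$ are fixed points of their semiflows (Corollary~\ref{Cor1}), and that the contraction \eqref{Cor1-1} drives every initial law to the invariant one. The plan is a triangle inequality at a large time $t$, followed by the limit $t\to\infty$.

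Fix $\epsilon\in[0,\epsilon_0]$ and $t\geq0$. Since $\varphi_P^\epsilon(t)\mu_\epsilon^*=\mu_\epsilon^*$, we can write
\begin{align*}
	\mathcal W_2(\mu_\epsilon^*,\mu_0^*)
	\leq
	\mathcal W_2\big(\varphi_P^\epsilon(t)\mu_\epsilon^*,\varphi_P^0(t)\mu_\epsilon^*\big)
	+\mathcal W_2\big(\varphi_P^0(t)\mu_\epsilon^*,\mu_0^*\big).
\end{align*}
We bound the first term by \eqref{Le10-3} with $\mu=\mu_\epsilon^*$, which gives $\epsilon\big(\kappa_0^{-1}\sum_j\|h_j\|^2\big)^{1/2}$ uniformly in $t$. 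We bound the second term by \eqref{Cor1-1} for the zero-noise semiflow, which gives $e^{-\kappa_0t/2}\mathcal W_2(\mu_\epsilon^*,\mu_0^*)$. This quantity is finite because both laws lie in $P$.

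Letting $t\to\infty$ kills the second term and yields \eqref{Cor3-1}. As an alternative, one can take $\mu=\nu=\mu_\epsilon^*$ directly in \eqref{Le10-1}. Another route is to keep $t$ fixed and absorb the term: $(1-e^{-\kappa_0t/2})\mathcal W_2\le\epsilon C$, then let $t\to\infty$. Both routes give the same bound.

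There is no real obstacle here. The only point to check is that \eqref{Le10-3} is stated with the same initial law in both semiflows, which is exactly the form used in the first term. The case $\epsilon=0$ is trivial.
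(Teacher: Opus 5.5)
Your argument is correct, but it is organized differently from the paper's. You pass through the intermediate law $\varphi_P^0(t)\mu_\epsilon^*$ by the triangle inequality. You then control the perturbation part by the uniform-in-time bound \eqref{Le10-3}, in which $\mathfrak J_{\kappa_0}\le 1/\kappa_0$ has already been used, and the return-to-equilibrium part by the zero-noise contraction \eqref{Cor1-1}. Because of that, you need $t\to\infty$ (or your absorption step followed by $t\to\infty$) to reach the stated constant.

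The paper instead applies the single synchronous-coupling estimate \eqref{Le10-1} with two different initial laws, $\mu=\mu_\epsilon^*$ and $\nu=\mu_0^*$. Invariance of both laws turns it into $\mathcal W_2^2(\mu_\epsilon^*,\mu_0^*)\le e^{-\kappa_0t}\mathcal W_2^2(\mu_\epsilon^*,\mu_0^*)+\epsilon^2\kappa_0^{-1}(1-e^{-\kappa_0t})\sum_j\|h_j\|^2$. The factor $1-e^{-\kappa_0t}$ then cancels exactly, so the bound holds at any fixed $t>0$, with no limit and no separate appeal to \eqref{Cor1-1}.

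Your route is more modular: it needs only a uniform perturbation bound plus contraction of the limiting flow. The paper's is shorter because \eqref{Le10-1} packages both effects in one coupling; both give the same constant.

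One small correction: your first ``alternative'', taking $\mu=\nu=\mu_\epsilon^*$ in \eqref{Le10-1}, only reproduces the bound for your first term and is not a standalone proof. It is the choice $\nu=\mu_0^*$ that makes the triangle inequality unnecessary.
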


\begin{proof}
	Apply \eqref{Le10-1} with $\mu=\mu_\epsilon^*$ and $\nu=\mu_0^*$. Invariance gives, for every $t>0$,
	\begin{align*}
		\mathcal W_2^2(\mu_\epsilon^*,\mu_0^*)
		\leq
		e^{-\kappa_0t}\mathcal W_2^2(\mu_\epsilon^*,\mu_0^*)
		+\epsilon^2\mathfrak J_{\kappa_0}(t)
		\sum_{j=1}^{m}\|h_j\|^2.
	\end{align*}
	Since $\mathfrak J_{\kappa_0}(t)=(1-e^{-\kappa_0t})/\kappa_0$ under \textnormal{(A5)}, cancellation of $1-e^{-\kappa_0t}$ proves \eqref{Cor3-1}.
\end{proof}

We next compare the pathwise state components. By \eqref{Le4-1}, each
$\varphi_H^\epsilon$ is reconstructed from the transformed decoupled
solution along its law trajectory. We compare those transformed
solutions first and add the Ornstein-Uhlenbeck shift back at the end;
no identification with the self-consistent process $u^\epsilon$ is used.

\begin{lemma}\label{Le11}
	Assume \textnormal{(A1)-(A4)}. Let $T>0$, $R>0$ and $\omega\in\Omega_0$. There exists $C_{T,\omega,R}>0$ such that, for every $\epsilon\in(0,\epsilon_0]$, $x_\epsilon,x_0\in H$ and $\mu_\epsilon,\mu_0\in P$ satisfying
	\begin{align*}
		\|x_\epsilon\|^2+\|x_0\|^2
		+M_2(\mu_\epsilon)+M_2(\mu_0)\leq R,
	\end{align*}
	one has
	\begin{align}\label{Le11-1}
		\sup_{0\leq t\leq T}
		\left\|
		\varphi_H^\epsilon(t,\omega,x_\epsilon,\mu_\epsilon)
		-\varphi_H^0(t,\omega,x_0,\mu_0)
		\right\|^2
		\leq
		C_{T,\omega,R}
		\left(
		\|x_\epsilon-x_0\|^2
		+\mathcal W_2^2(\mu_\epsilon,\mu_0)
		+\epsilon
		\right).
	\end{align}
	Consequently, for every compact set $\mathcal C\subset\mathbb X$,
	\begin{align}\label{Le11-2}
		\lim_{\epsilon\to0}
		\sup_{(x,\mu)\in \mathcal C}\sup_{0\leq t\leq T}
		d_{\mathbb X}\big(
		\Phi^\epsilon(t,\omega)(x,\mu),
		\Phi^0(t,\omega)(x,\mu)
		\big)=0.
	\end{align}
\end{lemma}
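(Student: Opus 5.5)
Write $\ell_\epsilon(t)=\varphi_P^\epsilon(t)\mu_\epsilon$ and $\ell_0(t)=\varphi_P^0(t)\mu_0$. Let
\[
v_\epsilon(t)=\varphi_H^\epsilon(t,\omega,x_\epsilon,\mu_\epsilon)-\epsilon z(\theta_t\omega)
\]
be the transformed solution of \eqref{Eq4-trans} along $\ell_\epsilon$, and let $v_0(t)=\varphi_H^0(t,\omega,x_0,\mu_0)$, which has no shift. Then
\[
\varphi_H^\epsilon-\varphi_H^0=(v_\epsilon-v_0)+\epsilon z(\theta_t\omega),
\]
and the last term is bounded by $\epsilon\sup_{[0,T]}\|z(\theta_t\omega)\|$. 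It therefore suffices to estimate $w=v_\epsilon-v_0$.

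The difference $w$ satisfies a deterministic variational equation with drift in $L^2(0,T;V^*)+L^{p'}(0,T;L^{p'})$. Applying \eqref{LeLocalized-1} with $\zeta\equiv1$ gives
\begin{align*}
\frac{d}{dt}\|w\|^2+2\|\nabla w\|^2+2\lambda\|w\|^2
={}&-2\big(f(\cdot,v_\epsilon+\epsilon z_t)-f(\cdot,v_0),w\big)\\
&+2\big\langle\mathcal G(v_\epsilon+\epsilon z_t,\ell_\epsilon)-\mathcal G(v_0,\ell_0),w\big\rangle
+2\epsilon(\Delta z_t,w).
\end{align*}
In the two nonlinear terms, $w$ is not the difference of the arguments; the difference of the arguments is $w+\epsilon z_t$. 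I will write $w=(w+\epsilon z_t)-\epsilon z_t$. The part paired with $w+\epsilon z_t$ is handled by \eqref{f-3} and \eqref{G-3}, and gives at most
\[
(2\alpha_4+\beta_4)\|w+\epsilon z_t\|^2+L_G\mathcal W_2^2(\ell_\epsilon,\ell_0).
\]
The remainder is $2\epsilon\big(f(\cdot,u_\epsilon)-f(\cdot,u_0)-\mathcal G(u_\epsilon,\ell_\epsilon)+\mathcal G(u_0,\ell_0),z_t\big)$ with $u_\epsilon=v_\epsilon+\epsilon z_t$ and $u_0=v_0$. By \eqref{f-2}, \eqref{G-2} and H\"older's inequality it is bounded by
\[
C\epsilon\big(1+\|u_\epsilon\|_p^{p}+\|u_0\|_p^{p}+M_2(\ell_\epsilon)^{p/2}+M_2(\ell_0)^{p/2}\big)^{1/p'}\|z_t\|_p.
\]
This term is only linear in $\epsilon$, which is why \eqref{Le11-1} contains $\epsilon$ rather than $\epsilon^2$. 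The forcing term obeys $2\epsilon|(\Delta z_t,w)|\le\|w\|^2+\epsilon^2\|\Delta z_t\|^2$.

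Gronwall on $[0,T]$ then yields
\[
\sup_{[0,T]}\|w\|^2\le C_T\Big(\|w(0)\|^2+L_G\int_0^T\mathcal W_2^2(\ell_\epsilon,\ell_0)\,dr+\epsilon\,\mathcal I\Big),
\]
where $\mathcal I$ collects $\int_0^T(1+\|u_\epsilon\|_p^p+\|u_0\|_p^p)\,dr$, $\sup_{[0,T]}M_2$ of the two law curves, and $\sup_{[0,T]}(\|z_t\|_{H^2}^2+\|z_t\|_p^p)$. Each ingredient is then bounded separately.
\begin{itemize}
\item The initial term satisfies $\|w(0)\|^2\le2\|x_\epsilon-x_0\|^2+2\epsilon_0^2\|z(\omega)\|^2\epsilon$, using $\epsilon^2\le\epsilon_0\epsilon$.
\item The law term is controlled by \eqref{Le10-1}: $\mathcal W_2^2(\ell_\epsilon(r),\ell_0(r))\le C_T\big(\mathcal W_2^2(\mu_\epsilon,\mu_0)+\epsilon^2\big)$.
\item The law moments satisfy $M_2(\ell_\epsilon),M_2(\ell_0)\le R+R_P^2$ by \eqref{Le5-2}.
\item The pathwise integral $\int_0^T\|u\|_p^p\,dr$ is bounded by integrating \eqref{Le5-6} forward from time $0$ at base point $\omega$ (not in pullback form). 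This gives a bound depending on $R$, $T$ and $\sup_{[0,T]}\mathfrak z(\theta_t\omega)$, uniform in $\epsilon\in[0,\epsilon_0]$.
\end{itemize}
Together these give \eqref{Le11-1} with $C_{T,\omega,R}$ independent of $\epsilon$.

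For \eqref{Le11-2}, a compact set $\mathcal C\subset\mathbb X$ has bounded $\|x\|^2+M_2(\mu)$, so \eqref{Le11-1} applies with $x_\epsilon=x_0=x$ and $\mu_\epsilon=\mu_0=\mu$. It gives $\sup_t\|\varphi_H^\epsilon-\varphi_H^0\|^2\le C\epsilon$ uniformly on $\mathcal C$. The law coordinate is controlled uniformly by \eqref{Le10-2}. Adding the two proves \eqref{Le11-2}.

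The main obstacle I expect is the cross term coming from the mismatch between $w$ and the difference of the arguments of the nonlinearities. It requires the uniform $L^p$ space-time bound on both pathwise solutions, and that bound must be derived forward in time with constants uniform in $\epsilon$. I would obtain it by reusing the coercivity estimate \eqref{Le5-6} exactly as it stands, with the law moments bounded by \eqref{Le5-2}.
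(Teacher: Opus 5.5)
Your proof is correct, but it treats the mismatch term differently from the paper.

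\textbf{How the paper handles it.} The paper inserts the intermediate terms $f(\cdot,v^0+\epsilon z_t)$ and $\mathcal G(v^0+\epsilon z_t,\ell_0)$. The monotone pair then has state difference exactly $w$, and \eqref{f-3}, \eqref{G-3} apply to it directly. The leftover increments $f(\cdot,v^0+\epsilon z_t)-f(\cdot,v^0)$ and the analogous $G$-increment are estimated by the integral mean-value theorem. This uses the derivative bounds \eqref{f-5}--\eqref{f-6} and \eqref{G-6}, and gives a remainder $C\epsilon(\|w\|^2+\|w\|_p^p+\|v^0\|_p^p+\|z_t\|^2+\|z_t\|_p^p)$.

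\textbf{How you handle it.} You split the test function as $w=(u_\epsilon-u_0)-\epsilon z_t$. You apply one-sided monotonicity to the true argument difference $w+\epsilon z_t$. The price is a remainder $2\epsilon(\cdots,z_t)$, which you control using only the growth bounds \eqref{f-2} and \eqref{G-2}.

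\textbf{What each route buys.} Your route is slightly more economical. It needs neither the $C^1$-in-$s$ structure nor the pointwise Nemytskii form $G(x,s,\mu)$, so it works for any operator satisfying \eqref{G-2}--\eqref{G-3}. The cost is an extra $O(\epsilon^2\|z_t\|^2)$ inside the quadratic term, which is harmless on a finite interval. Both routes produce forcing linear in $\epsilon$ and close by the same Gronwall argument. Both also use the forward-in-time $L^p$ bounds from \eqref{Le5-6} (the paper's Step 1) and Lemma~\ref{Le10} for the law term.

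\textbf{Minor slip.} The inequality $\epsilon^2\leq\epsilon_0\epsilon$ gives $2\epsilon_0\|z(\omega)\|^2\epsilon$, not $2\epsilon_0^2\|z(\omega)\|^2\epsilon$. This does not affect the argument.
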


\begin{proof}
	Let
	\begin{align*}
		v^\epsilon(t)
		=v^\epsilon_{\ell^\epsilon_{\mu_\epsilon}}
		\big(t,\omega;x_\epsilon-\epsilon z(\omega)\big)\quad\text{and}\quad
		v^0(t)
		=v^0_{\ell^0_{\mu_0}}(t,\omega;x_0),
	\end{align*}
	where
	$\ell^\epsilon_{\mu_\epsilon}(t)=\varphi_P^\epsilon(t)\mu_\epsilon$
	and $\ell^0_{\mu_0}(t)=\varphi_P^0(t)\mu_0$. By \eqref{Le4-1}, we have
	\begin{align}\label{Le11-2a}
		\varphi_H^\epsilon(t,\omega,x_\epsilon,\mu_\epsilon)
		=v^\epsilon(t)+\epsilon z(\theta_t\omega)\quad\text{and}\quad
		\varphi_H^0(t,\omega,x_0,\mu_0)
		=v^0(t).
	\end{align}
	
	\emph{Step 1. Uniform finite-time estimates.}
	Applying the estimate leading to \eqref{Le5-6} to the two prescribed law curves $\ell^\epsilon_{\mu_\epsilon}$ and $\ell^0_{\mu_0}$, and then adding the resulting inequalities, gives, for a.e.\ $t\in(0,T)$,
	\begin{align*}
		&\frac{d}{dt}
		\big(\|v^\epsilon(t)\|^2+\|v^0(t)\|^2\big)
		+c_1\big(\|\nabla v^\epsilon(t)\|^2
		+\|\nabla v^0(t)\|^2\big)
		\notag\\
		&\quad
		+c_2\big(\|v^\epsilon(t)\|_p^p
		+\|v^0(t)\|_p^p\big)
		+c_3\big(\|v^\epsilon(t)\|^2
		+\|v^0(t)\|^2\big)
		\notag\\
		&\qquad\leq
		L_G\left[
		M_2\big(\varphi_P^\epsilon(t)\mu_\epsilon\big)
		+M_2\big(\varphi_P^0(t)\mu_0\big)
		\right]
		+2C_H\mathfrak z(\theta_t\omega).
	\end{align*}
	By \eqref{Le5-2}, we have
	\begin{align*}
		M_2\big(\varphi_P^\epsilon(t)\mu_\epsilon\big)
		+M_2\big(\varphi_P^0(t)\mu_0\big)
		\leq
		e^{-\lambda_0t/2}
		\big(M_2(\mu_\epsilon)+M_2(\mu_0)\big)
		+2R_P^2
		\leq R+2R_P^2,
	\end{align*}
	and
	\begin{align*}
		\|v^\epsilon(0)\|^2+\|v^0(0)\|^2
		\leq
		2R+2\epsilon_0^2\|z(\omega)\|^2.
	\end{align*}
	Since $\int_0^T\mathfrak z(\theta_t\omega)dt<\infty$, integration
	yields
	\begin{align}\label{Le11-3}
		\sup_{0\leq t\leq T}
		\big(\|v^\epsilon(t)\|^2+\|v^0(t)\|^2\big)
		+
		\int_0^T
		\left[
		\|v^\epsilon(t)\|_V^2+\|v^0(t)\|_V^2
		+\|v^\epsilon(t)\|_p^p+\|v^0(t)\|_p^p
		\right]dt
		\leq C_{T,\omega,R}.
	\end{align}
	
	\emph{Step 2. Difference estimate.}
	Set $w(t)=v^\epsilon(t)-v^0(t)$. Subtracting the two transformed
	equations gives
	\begin{align}\label{Le11-4}
		&\frac{dw}{dt}-\Delta w+\lambda w
		+f\big(\cdot,v^\epsilon+\epsilon z(\theta_t\omega)\big)
		-f(\cdot,v^0)\nonumber\\
		&\qquad=
		\mathcal G\big(
		v^\epsilon+\epsilon z(\theta_t\omega),
		\varphi_P^\epsilon(t)\mu_\epsilon
		\big)-\mathcal G\big(v^0,\varphi_P^0(t)\mu_0\big)
		+\epsilon\Delta z(\theta_t\omega),
	\end{align}
	with $w(0)=x_\epsilon-x_0-\epsilon z(\omega)$. Insert
	$f(\cdot,v^0+\epsilon z(\theta_t\omega))$ in the $f$-difference
	and
	$\mathcal G(v^0+\epsilon z(\theta_t\omega),
	\varphi_P^0(t)\mu_0)$ in the $\mathcal G$-difference. Since the
	state difference in the first pair of terms is $w$, \eqref{f-3}
	and \eqref{G-3} give
	\begin{align*}
		&2\big(
		f\big(\cdot,v^\epsilon+\epsilon z(\theta_t\omega)\big)
		-f\big(\cdot,v^0+\epsilon z(\theta_t\omega)\big),w
		\big)
		\geq-2\alpha_4\|w\|^2,
		\\
		&2\left\langle
		\mathcal G\big(
		v^\epsilon+\epsilon z(\theta_t\omega),
		\varphi_P^\epsilon(t)\mu_\epsilon
		\big)
		-\mathcal G\big(
		v^0+\epsilon z(\theta_t\omega),
		\varphi_P^0(t)\mu_0
		\big),w
		\right\rangle_{\mathbb V^*,\mathbb V}
		\\
		&\qquad\leq
		\beta_4\|w\|^2
		+L_G\mathcal W_2^2\big(
		\varphi_P^\epsilon(t)\mu_\epsilon,
		\varphi_P^0(t)\mu_0
		\big).
	\end{align*}
	The intersection-space integration-by-parts formula used in Lemma~\ref{Le1}, or equivalently the corresponding Galerkin identity followed by passage to the limit, justifies testing \eqref{Le11-4} by $2w$; see \cite{Gyongy17spde}. Thus, for a.e.\ $t\in(0,T)$,
	\begin{align}\label{Le11-5}
		&\frac{d}{dt}\|w\|^2
		+2\|\nabla w\|^2+\bar\gamma\|w\|^2
		\leq
		L_G\mathcal W_2^2\big(
		\varphi_P^\epsilon(t)\mu_\epsilon,
		\varphi_P^0(t)\mu_0
		\big)
		+2\left|\big(
		f\big(\cdot,v^0+\epsilon z(\theta_t\omega)\big)
		-f(\cdot,v^0),w
		\big)\right|
		\notag\\
		&\qquad
		+2\left|\left\langle
		\mathcal G\big(
		v^0+\epsilon z(\theta_t\omega),
		\varphi_P^0(t)\mu_0
		\big)
		-\mathcal G\big(v^0,\varphi_P^0(t)\mu_0\big),w
		\right\rangle_{\mathbb V^*,\mathbb V}\right|
		+2\epsilon\left|\big(\Delta z(\theta_t\omega),w\big)\right|.
	\end{align}
	By \eqref{f-5}-\eqref{f-6},
	$|\partial_sf(x,s)|\leq(\alpha_4+c_f)(1+|s|^{p-2})$, while
	\eqref{G-6}
	gives the analogous bound for $\partial_sG$. Thus, for a.e.\ $x\in\mathbb R^n$, the integral form of the mean value theorem gives
	\begin{align*}
		f\big(x,v^0+\epsilon z(\theta_t\omega)\big)-f(x,v^0)
		&=\epsilon z(\theta_t\omega)
		\int_0^1\partial_sf\big(x,v^0+r\epsilon z(\theta_t\omega)\big)dr,
		\\
		G\big(x,v^0+\epsilon z(\theta_t\omega),\varphi_P^0(t)\mu_0\big)
		-G\big(x,v^0,\varphi_P^0(t)\mu_0\big)
		&=\epsilon z(\theta_t\omega)
		\int_0^1\partial_sG\big(x,v^0+r\epsilon z(\theta_t\omega),
		\varphi_P^0(t)\mu_0\big)dr.
	\end{align*}
	Since $|a+b|^{p-2}\leq c_p(|a|^{p-2}+|b|^{p-2})$ and
	$r\epsilon\leq\epsilon_0$ for $p>2$ (the case $p=2$ being direct),
	we obtain
	\begin{align*}
		&\left|f\big(x,v^0+\epsilon z(\theta_t\omega)\big)-f(x,v^0)\right|
		+\left|G\big(x,v^0+\epsilon z(\theta_t\omega),\varphi_P^0(t)\mu_0\big)
		-G\big(x,v^0,\varphi_P^0(t)\mu_0\big)\right|
		\\
		&\quad\leq C\epsilon|z(\theta_t\omega)|
		\int_0^1\left[1+|v^0+r\epsilon z(\theta_t\omega)|^{p-2}\right]dr
		\leq C\epsilon|z(\theta_t\omega)|
		\left(1+|v^0|^{p-2}+|z(\theta_t\omega)|^{p-2}\right),
	\end{align*}
	where $C$ depends only on the structural constants and $\epsilon_0$.
	For $p>2$, H\"older's and Young's inequalities give
	\begin{gather*}
		\int_{\mathbb R^n}|v^0|^{p-2}|z(\theta_t\omega)||w|dx
		\leq
		\|v^0\|_p^{p-2}\|z(\theta_t\omega)\|_p\|w\|_p
		\leq
		C\big(\|v^0\|_p^p+\|z(\theta_t\omega)\|_p^p+\|w\|_p^p\big),
		\\
		\int_{\mathbb R^n}|z(\theta_t\omega)|^{p-1}|w|dx
		\leq
		\|z(\theta_t\omega)\|_p^{p-1}\|w\|_p
		\leq
		C\big(\|z(\theta_t\omega)\|_p^p+\|w\|_p^p\big),
	\end{gather*}
	while $\int|z(\theta_t\omega)||w|dx$ is controlled by the
	$L^2$ norms. The case $p=2$ follows from the same $L^2$ estimate.
	Consequently,
	\begin{align*}
		&2\left|\big(
		f\big(\cdot,v^0+\epsilon z(\theta_t\omega)\big)
		-f(\cdot,v^0),w
		\big)\right|
		+2\left|\left\langle
		\mathcal G\big(
		v^0+\epsilon z(\theta_t\omega),
		\varphi_P^0(t)\mu_0
		\big)
		-\mathcal G\big(v^0,\varphi_P^0(t)\mu_0\big),w
		\right\rangle_{\mathbb V^*,\mathbb V}\right|
		\notag\\
		&\qquad\leq
		C\epsilon\left(
		\|w\|^2+\|w\|_p^p+\|v^0\|_p^p
		+\|z(\theta_t\omega)\|^2
		+\|z(\theta_t\omega)\|_p^p
		\right).
	\end{align*}
	Moreover,
	\begin{align*}
		\|w\|_p^p
		\leq
		2^{p-1}\big(\|v^\epsilon\|_p^p+\|v^0\|_p^p\big)\quad\text{and}\quad
		2\epsilon\left|\big(\Delta z(\theta_t\omega),w\big)\right|
		\leq
		\epsilon\|w\|^2
		+\epsilon\|\Delta z(\theta_t\omega)\|^2.
	\end{align*}
	Substituting these estimates into \eqref{Le11-5} and using
	\eqref{s4.1-1}, which controls the displayed norms of
	$z(\theta_t\omega)$, we obtain
	\begin{align}\label{Le11-6}
		\frac{d}{dt}\|w(t)\|^2
		\leq
		C\|w(t)\|^2
		+L_G\mathcal W_2^2\big(
		\varphi_P^\epsilon(t)\mu_\epsilon,
		\varphi_P^0(t)\mu_0
		\big)
		+C\epsilon\left(
		\|v^\epsilon(t)\|_p^p
		+\|v^0(t)\|_p^p
		+\mathfrak z(\theta_t\omega)
		\right),
	\end{align}
	where $C>0$ is independent of $\epsilon$ and the initial data.
	Gronwall's inequality gives
	\begin{align}\label{Le11-7}
		&\sup_{0\leq t\leq T}\|w(t)\|^2
		\leq
		C_T\Big[
		\|w(0)\|^2
		+\int_0^T
		\mathcal W_2^2\big(
		\varphi_P^\epsilon(t)\mu_\epsilon,
		\varphi_P^0(t)\mu_0
		\big)dt
		\notag\\
		&\qquad\qquad
		+\epsilon\int_0^T
		\left(
		\|v^\epsilon(t)\|_p^p
		+\|v^0(t)\|_p^p
		+\mathfrak z(\theta_t\omega)
		\right)dt
		\Big].
	\end{align}
	Lemma~\ref{Le10} implies
	\begin{align*}
		\int_0^T
		\mathcal W_2^2\big(
		\varphi_P^\epsilon(t)\mu_\epsilon,
		\varphi_P^0(t)\mu_0
		\big)dt
		\leq
		C_T^{\rm law}
		\Big(
		\mathcal W_2^2(\mu_\epsilon,\mu_0)
		+\epsilon^2\sum_{j=1}^{m}\|h_j\|^2
		\Big),
	\end{align*}
	where
	\begin{align*}
		C_T^{\rm law}
		=\int_0^T\big(e^{(L_G-\bar\gamma)t}
		+\mathfrak J_{\bar\gamma-L_G}(t)\big)dt<\infty.
	\end{align*}
	Since
	\begin{align*}
		\|w(0)\|^2
		\leq
		2\|x_\epsilon-x_0\|^2
		+2\epsilon^2\|z(\omega)\|^2,
	\end{align*}
	combining \eqref{Le11-3} and \eqref{Le11-7}, and using
	$\epsilon^2\leq\epsilon_0\epsilon$, yields
	\begin{align*}
		\sup_{0\leq t\leq T}\|w(t)\|^2
		\leq
		C_{T,\omega,R}
		\left(
		\|x_\epsilon-x_0\|^2
		+\mathcal W_2^2(\mu_\epsilon,\mu_0)
		+\epsilon
		\right).
	\end{align*}
	Finally, \eqref{Le11-2a} gives
	\begin{align*}
		\left\|
		\varphi_H^\epsilon(t,\omega,x_\epsilon,\mu_\epsilon)
		-\varphi_H^0(t,\omega,x_0,\mu_0)
		\right\|^2
		\leq
		2\|w(t)\|^2
		+2\epsilon^2\|z(\theta_t\omega)\|^2.
	\end{align*}
	The continuity of $t\mapsto z(\theta_t\omega)$ in $H$ on $[0,T]$
	therefore proves \eqref{Le11-1}.
	
	Let $\mathcal C\subset\mathbb X$ be compact. Since
	$M_2(\mu)^{1/2}=\mathcal W_2(\mu,\delta_{0_H})$, the function
	$(x,\mu)\mapsto\|x\|^2+M_2(\mu)$ is continuous and hence bounded
	on $\mathcal C$. Taking
	$(x_\epsilon,\mu_\epsilon)=(x_0,\mu_0)=(x,\mu)$ in
	\eqref{Le11-1}, and using \eqref{Le10-2} together with the product
	metric on $\mathbb X$, we obtain
	\begin{align*}
		\sup_{(x,\mu)\in \mathcal C}\sup_{0\leq t\leq T}
		d_{\mathbb X}\big(
		\Phi^\epsilon(t,\omega)(x,\mu),
		\Phi^0(t,\omega)(x,\mu)
		\big)
		\leq C_{T,\omega,\mathcal C}\big(\epsilon^{1/2}+\epsilon\big),
	\end{align*}
	which proves \eqref{Le11-2}.
\end{proof}

For common initial data, the preceding estimate is $O(\epsilon^{1/2})$ in the product metric on bounded time intervals. The linear $O(\epsilon)$ rate in Corollary~\ref{Cor4} for invariant laws and random equilibria is a separate consequence of \textnormal{(A5)}.

The abstract perturbation criterion also requires a common compact family containing the perturbed attractors; Lemma~\ref{Le9} provides it through uniform pullback asymptotic compactness.

\begin{lemma}\label{Le12}
	Assume \textnormal{(A1)-(A4)} and define
	\begin{align}\label{Le12-1}
		\mathcal C(\omega)
		=
		\overline{
			\bigcup_{\epsilon\in[0,\epsilon_0]}
			\mathcal A^\epsilon(\omega)
		}^{H\times P},
		\qquad \omega\in\Omega_0.
	\end{align}
	Then $\mathcal C(\omega)$ is compact in $H\times P$ for every $\omega\in\Omega_0$, and the family $\mathcal C=\{\mathcal C(\omega)\}_{\omega\in\Omega_0}$ belongs to $\mathfrak D$.
\end{lemma}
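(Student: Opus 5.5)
The plan is to obtain both conclusions from facts already established uniformly in $\epsilon$: the absorbing family $K$ of Lemma~\ref{Le6} and the uniform pullback asymptotic compactness of Lemma~\ref{Le9}.

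\emph{Membership in $\mathfrak D$.} By Theorem~\ref{The2}, $\mathcal A^\epsilon(\omega)\subset K(\omega)=K_H(\omega)\times K_P$ for every $\epsilon\in[0,\epsilon_0]$. Here $K$ is the same family for all $\epsilon$, since $R_H$ and $R_{P,q}$ were built from $\epsilon_0$ and not from $\epsilon$. The set $K(\omega)$ is closed, so $\mathcal C(\omega)\subset K(\omega)\subset K_H(\omega)\times K_P$. We have $K_H\in\mathfrak D_H$ and $K_P\in\mathfrak D_P$, and $\mathcal C(\omega)$ is nonempty. By the definition of the product universe, $\mathcal C\in\mathfrak D$.

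\emph{Compactness.} Since $\mathcal C(\omega)$ is closed in the complete metric space $H\times P$, it suffices to show that $\bigcup_\epsilon\mathcal A^\epsilon(\omega)$ is relatively sequentially compact. Take any sequence $a_n\in\mathcal A^{\epsilon_n}(\omega)$ with $\epsilon_n\in[0,\epsilon_0]$. Pick $t_n\to\infty$, say $t_n=n$. Strict invariance of $\mathcal A^{\epsilon_n}$ gives
\begin{align*}
	a_n\in\Phi^{\epsilon_n}(t_n,\theta_{-t_n}\omega)\,\mathcal A^{\epsilon_n}(\theta_{-t_n}\omega),
\end{align*}
so $a_n=\Phi^{\epsilon_n}(t_n,\theta_{-t_n}\omega)z_n$ for some $z_n\in\mathcal A^{\epsilon_n}(\theta_{-t_n}\omega)\subset K(\theta_{-t_n}\omega)$. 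Thus $z_n\in K(\theta_{-t_n}\omega)$ with $K\in\mathfrak D$. Lemma~\ref{Le9}, applied with $D=K$ and the arbitrary parameter sequence $\epsilon_n$, yields a subsequence of $a_n$ converging in $H\times P$. Finally, a sequence in the closure is approximated within $1/n$ by points of the union, and a diagonal argument finishes the proof.

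\emph{Main point to check.} The only delicate step is that Lemma~\ref{Le9} really applies uniformly along a varying sequence $\epsilon_n$ and a varying pullback time. This requires that the absorbing time $T(D,\omega)$, the local constants $C_R(\omega), C_R^P$ and the tail radii in Lemmas~\ref{Le6}--\ref{Le8} are all uniform in $\epsilon$. Those lemmas were stated uniformly in $\epsilon\in[0,\epsilon_0]$, and Lemma~\ref{Le9} is formulated exactly for arbitrary $\epsilon_n$. So the obstacle reduces to quoting Lemma~\ref{Le9} correctly, together with the $\epsilon$-independence of $K$ noted in the membership step.
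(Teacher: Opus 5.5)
Your proposal is correct and follows essentially the same route as the paper. Strict invariance writes each $a_n\in\mathcal A^{\epsilon_n}(\omega)$ as $\Phi^{\epsilon_n}(n,\theta_{-n}\omega)b_n$ with $b_n\in\mathcal A^{\epsilon_n}(\theta_{-n}\omega)\subset K(\theta_{-n}\omega)$, Lemma~\ref{Le9} with $D=K$ and varying $\epsilon_n$ then gives relative compactness of the union, and $\mathcal C(\omega)\subset K(\omega)$ together with inclusion closure of $\mathfrak D$ gives $\mathcal C\in\mathfrak D$; your remark that $K$ is independent of $\epsilon$ makes explicit what the paper uses implicitly when it applies Theorem~\ref{The2} to all $\epsilon$ at once.
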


\begin{proof}
	Fix $\omega\in\Omega_0$ and take an arbitrary sequence
	\begin{align*}
		a_n\in\mathcal A^{\epsilon_n}(\omega),
		\qquad \epsilon_n\in[0,\epsilon_0].
	\end{align*}
	By strict invariance, for $t_n=n$ there exists
	$b_n\in\mathcal A^{\epsilon_n}(\theta_{-t_n}\omega)$ such that
	\begin{align*}
		a_n
		=\Phi^{\epsilon_n}(t_n,\theta_{-t_n}\omega)b_n.
	\end{align*}
	Theorem~\ref{The2} gives
	\begin{align*}
		b_n\in K(\theta_{-t_n}\omega).
	\end{align*}
	Since $K\in\mathfrak D$, Lemma~\ref{Le9}, applied with $D=K$, yields a convergent subsequence of $\{a_n\}$. Hence the union in \eqref{Le12-1} is relatively compact in $H\times P$, and its closure $\mathcal C(\omega)$ is compact.
	
	For every $\epsilon\in[0,\epsilon_0]$, Theorem~\ref{The2} also gives
	$\mathcal A^\epsilon(\omega)\subset K(\omega)$. Since $K(\omega)$ is closed,
	\begin{align*}
		\mathcal C(\omega)\subset K(\omega).
	\end{align*}
	The universe $\mathfrak D$ is inclusion closed and $K\in\mathfrak D$; therefore $\mathcal C\in\mathfrak D$.
\end{proof}

Lemmas~\ref{Le10}-\ref{Le12} now yield zero-noise upper semicontinuity of the attractors.

\begin{theorem}\label{The3}
	Assume \textnormal{(A1)-(A4)}. Then the pullback random attractors obtained in Theorem~\ref{The2} are upper semicontinuous at $\epsilon=0$: for every $\omega\in\Omega_0$,
	\begin{align}\label{The3-1}
		\lim_{\epsilon\to0}
		\operatorname{dist}_{\mathbb X}\big(
		\mathcal A^\epsilon(\omega),
		\mathcal A^0(\omega)
		\big)=0.
	\end{align}
\end{theorem}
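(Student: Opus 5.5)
The plan is to apply the abstract perturbation result, Theorem~\ref{theorem3}, with $\epsilon_1=\epsilon_0$, $\mathbb X=H\times P$ and the product universe $\mathfrak D$ of Section~\ref{sec4}. All the analytic work is already in Sections~\ref{sec3}--\ref{sec5}. The proof therefore reduces to checking hypotheses \textnormal{(U1)}--\textnormal{(U3)} on the full-measure invariant set $\Omega_0$, which plays the role of $\Omega$ in Section~\ref{sec2}.

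\textbf{Step 1: \textnormal{(U1)}.} Theorem~\ref{The2} gives, for every $\epsilon\in[0,\epsilon_0]$, a unique $\mathfrak D$-pullback random attractor $\mathcal A^\epsilon$ for $\Phi^\epsilon$. In particular this includes $\epsilon=0$, whose attractor $\mathcal A^0$ is the target set in \eqref{The3-1}.

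\textbf{Step 2: \textnormal{(U2)}.} Lemma~\ref{Le12} shows that $\mathcal C(\omega)=\overline{\bigcup_{\epsilon\in[0,\epsilon_0]}\mathcal A^\epsilon(\omega)}$ is compact for every $\omega\in\Omega_0$ and that $\mathcal C\in\mathfrak D$, since $\mathcal C(\omega)\subset K(\omega)$. This is exactly \textnormal{(U2)}. The only point to record is that its proof used the uniform-in-$\epsilon$ asymptotic compactness of Lemma~\ref{Le9} with varying $\epsilon_n$. That uniformity is what prevents mass escaping either to spatial infinity or to large moments as $\epsilon$ varies.

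\textbf{Step 3: \textnormal{(U3)}.} Fix $t\ge0$, $\omega\in\Omega_0$ and a compact $C\subset\mathbb X$. Estimate \eqref{Le11-2} of Lemma~\ref{Le11}, taken with $T=t$, gives
\[
\sup_{z\in C}d_{\mathbb X}\big(\Phi^\epsilon(t,\omega)z,\Phi^0(t,\omega)z\big)\le C_{t,\omega,C}\big(\epsilon^{1/2}+\epsilon\big)\to0 .
\]
This works because the law part is controlled uniformly in $\mu$ by \eqref{Le10-2}, and the state part by \eqref{Le11-1} with $R=\sup_C(\|x\|^2+M_2(\mu))$, which is finite by compactness.

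\textbf{Step 4: conclusion.} The pullback step in Theorem~\ref{theorem3} applies \textnormal{(U3)} at the base point $\theta_{-T}\omega$ with the compact set $\mathcal C(\theta_{-T}\omega)$. This is legitimate because $\theta_{-T}\omega\in\Omega_0$ by invariance of $\Omega_0$, and Lemma~\ref{Le11} holds for every $\omega\in\Omega_0$. Theorem~\ref{theorem3} then gives \eqref{The3-1}.

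The main obstacle has already been absorbed into Lemma~\ref{Le12}: we need compactness of the union over all noise levels, not of each attractor separately. Here I rely entirely on the uniform estimates of Lemmas~\ref{Le5}--\ref{Le9}. The only thing to double-check is that the constants $C_R(\omega)$, $C_R^P$ and the tail radii in those lemmas are independent of $\epsilon$, which they are as stated.
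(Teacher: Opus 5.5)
Your proposal is correct and follows the paper's proof exactly. It obtains (U1) from Theorem~\ref{The2}, (U2) from Lemma~\ref{Le12} with $\epsilon_1=\epsilon_0$, and (U3) from \eqref{Le11-2}, and then Theorem~\ref{theorem3} yields \eqref{The3-1}; your remarks on the invariance of $\Omega_0$ and the $\epsilon$-independence of the constants in Lemmas~\ref{Le7}--\ref{Le9} are consistent with how those lemmas are stated.
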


\begin{proof}
	Condition \textnormal{(U1)} of Theorem~\ref{theorem3} follows from Theorem~\ref{The2}. Lemma~\ref{Le12} verifies \textnormal{(U2)} with $\epsilon_1=\epsilon_0$, and \eqref{Le11-2} is precisely \textnormal{(U3)}. The conclusion therefore follows from Theorem~\ref{theorem3}.
\end{proof}

\begin{corollary}\label{Cor4}
	Assume \textnormal{(A1)-(A5)}, and let $\mu_\epsilon^*$ and $\xi_\epsilon$ be given by Corollary~\ref{Cor2}. Then there exists a unique $\bar u\in H$ such that the constant trajectory $u(t)\equiv\bar u$ is a self-consistent stationary variational solution of \eqref{Eq3} with $\epsilon=0$. If $\delta_{\bar u}$ denotes the Dirac measure at $\bar u$, then, for every $\omega\in\Omega_0$,
	\begin{align}\label{Cor4-1}
		\mathcal A^0(\omega)
		=\big\{(\bar u,\delta_{\bar u})\big\},
		\qquad
		\xi_0(\omega)=\bar u,
		\qquad
		\mu_0^*=\delta_{\bar u}.
	\end{align}
	Moreover,
	\begin{align}\label{Cor4-2}
		\lim_{\epsilon\to0}
		\|\xi_\epsilon(\omega)-\bar u\|=0,
		\qquad \omega\in\Omega_0,
	\end{align}
	and, for every $\epsilon\in[0,\epsilon_0]$,
	\begin{align}\label{Cor4-3}
		\left(
		\mathbb E\|\xi_\epsilon-\bar u\|^2
		\right)^{1/2}
		=
		\mathcal W_2(\mu_\epsilon^*,\delta_{\bar u})
		\leq
		\epsilon
		\Big(
		\frac1{\kappa_0}
		\sum_{j=1}^{m}\|h_j\|^2
		\Big)^{1/2}.
	\end{align}
\end{corollary}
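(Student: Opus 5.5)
The plan is to work first at $\epsilon=0$, where the noise vanishes, and then transfer the $O(\epsilon)$ bound from Corollary~\ref{Cor3} to the random equilibria through the law identity in Corollary~\ref{Cor2}.

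\emph{Identifying the zero-noise objects.} At $\epsilon=0$ the Ornstein--Uhlenbeck shift disappears, so $\varphi_H^0(t,\omega,x,\mu)$ does not depend on $\omega$. Equation \eqref{Eq3} then starts from the deterministic initial value $\delta_x$ and has no stochastic forcing. By the variational uniqueness in Lemma~\ref{Le1}, the solution is deterministic, so $\varphi_P^0(t)\delta_x=\delta_{y(t)}$ for some curve $y(t)$. Hence Dirac masses are invariant under $\varphi_P^0$. The fixed point $\mu_0^*$ from Corollary~\ref{Cor1} is the limit $\lim_k\varphi_P^0(k)\delta_{0_H}$, and the set of Dirac measures is $\mathcal W_2$-closed (it is isometric to $H$). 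Therefore $\mu_0^*=\delta_{\bar u}$ for some $\bar u\in H$.

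\emph{The deterministic equilibrium.} Invariance gives $\varphi_P^0(t)\delta_{\bar u}=\delta_{\bar u}$, so the self-consistent solution started at $\bar u$ has law $\delta_{\bar u}$ for all time. It therefore equals $\bar u$ almost surely, and $u\equiv\bar u$ is a stationary self-consistent solution. Uniqueness follows from Corollary~\ref{Cor1}: any other constant solution $\tilde u$ yields an invariant law $\delta_{\tilde u}$, which must coincide with $\mu_0^*$. Corollary~\ref{Cor2} gives $\mathbb P\circ\xi_0^{-1}=\delta_{\bar u}$, so $\xi_0=\bar u$ almost surely. To get this for every $\omega\in\Omega_0$, note that $\xi_0(\omega)$ is the pullback limit of $\Psi^0(t,\theta_{-t}\omega)0$, and this quantity is independent of $\omega$ at $\epsilon=0$. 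Thus $\xi_0$ is constant on $\Omega_0$, and the constant must be $\bar u$. This proves \eqref{Cor4-1}.

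\emph{Convergence and the rate.} Statement \eqref{Cor4-2} follows from Corollary~\ref{corollary3} combined with Theorem~\ref{The3}, because both attractors are singletons. For \eqref{Cor4-3}, Corollary~\ref{Cor2} gives $\mathcal L\xi_\epsilon=\mu_\epsilon^*$. The coupling between a measure and a Dirac mass is unique, so
\[
\mathcal W_2^2(\mu_\epsilon^*,\delta_{\bar u})=\int\|y-\bar u\|^2\,\mu_\epsilon^*(dy)=\mathbb E\|\xi_\epsilon-\bar u\|^2 .
\]
The $O(\epsilon)$ bound then follows from Corollary~\ref{Cor3} with $\mu_0^*=\delta_{\bar u}$.

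\emph{Main obstacle.} The delicate step is the claim that $\varphi_P^0$ preserves Dirac masses. It requires the $\epsilon=0$ solution from a deterministic datum to be deterministic even though the filtration is the Wiener one. I would handle this by solving the deterministic PDE with prescribed law curve $\delta_{y(t)}$, using the pathwise Lemma~\ref{Le4} with $\mu=\delta_x$. I would then check that this gives a fixed point of $\Gamma_\mu$ in Lemma~\ref{Le2} and conclude by the uniqueness of that fixed point.
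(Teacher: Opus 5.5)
Your argument is correct, but it identifies the zero-noise objects in the opposite order from the paper.

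\textbf{The paper's route (state first).} At $\epsilon=0$ the frozen-law cocycle $\Psi^0(t,\omega)=S(t)$ is an $\omega$-independent semigroup. By \eqref{Le4-2}, $S(1)$ is a strict contraction on $H$, and its unique fixed point $\bar u$ gives a singleton $\{\bar u\}$ that pullback attracts $\mathfrak D_H$. Uniqueness of the attractor in Corollary~\ref{Cor2} then forces $\mathcal A_H^0(\omega)=\{\bar u\}$ for every $\omega\in\Omega_0$. Only after this does the law identity $\mathbb P\circ\xi_0^{-1}=\mu_0^*$ give $\mu_0^*=\delta_{\bar u}$.

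\textbf{Your route (law first).} You show that $\varphi_P^0$ preserves Dirac masses. You then use that the Dirac measures form a closed subset of $P$, because $x\mapsto\delta_x$ is an isometry from the complete space $H$. Together with the contraction of Corollary~\ref{Cor1}, this gives $\mu_0^*=\delta_{\bar u}$. Finally, you upgrade $\xi_0=\bar u$ a.s.\ to every $\omega$ using the $\omega$-independence of the pullback approximants $\Psi^0(t,\theta_{-t}\omega)0$.

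\textbf{Comparison.} The paper's route needs no analysis of $\varphi_P^0$ on Dirac masses. It also gets the every-$\omega$ statement for free from attractor uniqueness. It proves a stronger uniqueness as well: every self-consistent stationary variational solution, not just every constant one, is indistinguishable from $\bar u$. Your uniqueness among constant trajectories is, however, exactly what the statement asks for. Your route buys the structural fact that the zero-noise McKean-Vlasov dynamics started from deterministic data is a deterministic PDE, which makes $\mu_0^*=\delta_{\bar u}$ transparent.

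\textbf{Your flagged obstacle.} No fixed-point check for $\Gamma_\mu$ is needed. The McKean-Vlasov solution from $x$ is the decoupled solution along the already-determined deterministic curve $\ell^0_{\delta_x}$. The path $\varphi_H^0(\cdot,\omega,x,\delta_x)$ does not depend on $\omega$, so it is trivially progressively measurable, and it solves the same decoupled problem. Pathwise uniqueness in Lemma~\ref{Le1} therefore identifies the two directly.
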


\begin{proof}
	At $\epsilon=0$, the transformed equation \eqref{Eq4-trans} contains no random term. Hence the invariant-law state cocycle is independent of $\omega$; write
	\begin{align*}
		S(t)x=\Psi^0(t,\omega)x,
		\qquad t\geq0.
	\end{align*}
	The cocycle identity shows that $S$ is a continuous semigroup on $H$, and \eqref{Le4-2} gives
	\begin{align*}
		\|S(t)x-S(t)y\|^2
		\leq e^{-\bar\gamma t}\|x-y\|^2.
	\end{align*}
	Thus $S(1)$ is a strict contraction on the complete space $H$ and has a unique fixed point $\bar u$. For every $s\geq0$, the semigroup identity shows that $S(s)\bar u$ is also fixed by $S(1)$; hence
	\begin{align*}
		S(s)\bar u=\bar u,
		\qquad s\geq0.
	\end{align*}
	Furthermore, for $B\in\mathfrak D_H$,
	\begin{align*}
		\operatorname{dist}_H\big(
		S(t)B(\theta_{-t}\omega),\{\bar u\}
		\big)
		\leq
		e^{-\bar\gamma t/2}
		\big(
		\|B(\theta_{-t}\omega)\|_H+\|\bar u\|
		\big)
		\longrightarrow0.
	\end{align*}
	Therefore $\{\bar u\}$ is the $\mathfrak D_H$-pullback attractor of $S$. By the uniqueness asserted in Corollary~\ref{Cor2},
	\begin{align*}
		\mathcal A_H^0(\omega)=\{\bar u\},
		\qquad \omega\in\Omega_0.
	\end{align*}
	The law identity in \eqref{Cor2-3} now gives $\mu_0^*=\delta_{\bar u}$, and the product representation \eqref{Cor2-2} proves \eqref{Cor4-1}. Since the frozen law $\mu_0^*$ equals the law of the constant state $\bar u$, this stationary solution is self-consistent. If $\widetilde u$ is any self-consistent stationary variational solution of the zero-noise equation, its one-time law is invariant under $\varphi_P^0$ and hence equals $\mu_0^*=\delta_{\bar u}$ by Corollary~\ref{Cor1}. Thus $\widetilde u(t)=\bar u$ a.s.\ for every rational $t\geq0$; path continuity then shows that $\widetilde u$ is indistinguishable from the constant trajectory, which proves uniqueness.
	
	Using \eqref{Cor2-2} and \eqref{Cor4-1}, we have
	\begin{align*}
		\operatorname{dist}_{\mathbb X}\big(
		\mathcal A^\epsilon(\omega),
		\mathcal A^0(\omega)
		\big)
		=
		\|\xi_\epsilon(\omega)-\bar u\|
		+\mathcal W_2(\mu_\epsilon^*,\delta_{\bar u}).
	\end{align*}
	Theorem~\ref{The3} therefore yields \eqref{Cor4-2}. Finally, the second identity in \eqref{Cor2-3} and the uniqueness of a coupling with a Dirac measure give
	\begin{align*}
		\mathbb E\|\xi_\epsilon-\bar u\|^2
		=
		\int_H\|x-\bar u\|^2\,\mu_\epsilon^*(dx)
		=
		\mathcal W_2^2(\mu_\epsilon^*,\delta_{\bar u}).
	\end{align*}
	Estimate \eqref{Cor3-1}, together with $\mu_0^*=\delta_{\bar u}$, proves \eqref{Cor4-3}.
\end{proof}


\begin{thebibliography}{99}
	
	\bibitem{Ahmed95spa}
	N.U.Ahmed and X.Ding,
	A semilinear McKean-Vlasov stochastic evolution equation in Hilbert space.
	Stochastic Process. Appl. 60 (1995), no. 1, 65-85.
	\href{https://doi.org/10.1016/0304-4149(95)00050-X}{doi}.
	
	\bibitem{Ambrosio08lmzb}
	L.Ambrosio, N.Gigli and G.Savar\'e,
	\emph{Gradient Flows in Metric Spaces and in the Space of Probability Measures}.
	Second edition, Lectures in Mathematics ETH Z\"urich, Birkh\"auser, Basel, 2008.
	\href{https://doi.org/10.1007/978-3-7643-8722-8}{doi}.
	
	\bibitem{Arendt18sm}
	W.Arendt and M.Kreuter,
	Mapping theorems for Sobolev spaces of vector-valued functions.
	Studia Math. 240 (2018), no. 3, 275-299.
	\href{https://doi.org/10.4064/sm8757-4-2017}{doi}.
	
	\bibitem{Arnold98}
	L.Arnold,
	\emph{Random Dynamical Systems}.
	Springer Monographs in Mathematics, Springer, Berlin, 1998.
	\href{https://doi.org/10.1007/978-3-662-12878-7}{doi}.
	
	\bibitem{Bates09jde}
	P.W.Bates, K.Lu and B.Wang,
	Random attractors for stochastic reaction-diffusion equations on unbounded domains.
	J. Differential Equations 246 (2009), no. 2, 845-869.
	\href{https://doi.org/10.1016/j.jde.2008.05.017}{doi}.
	
	\bibitem{Brzez93ptrf}
	Z.Brze\'zniak, M.Capi\'nski and F.Flandoli,
	Pathwise global attractors for stationary random dynamical systems.
	Probab. Theory Related Fields 95 (1993), no. 1, 87-102.
	\href{https://doi.org/10.1007/BF01197339}{doi}.
	
	\bibitem{Brzez06tams}
	Z.Brze\'zniak and Y.Li,
	Asymptotic compactness and absorbing sets for 2D stochastic Navier-Stokes equations on some unbounded domains.
	Trans. Amer. Math. Soc. 358 (2006), no. 12, 5587-5629.
	\href{https://doi.org/10.1090/S0002-9947-06-03923-7}{doi}.
	
	\bibitem{Buckdahn17aop}
	R.Buckdahn, J.Li, S.Peng and C.Rainer,
	Mean-field stochastic differential equations and associated PDEs.
	Ann. Probab. 45 (2017), no. 2, 824-878.
	\href{https://doi.org/10.1214/15-AOP1076}{doi}.
	
	\bibitem{Caraballo98cpde}
	T.Caraballo, J.A.Langa and J.C.Robinson,
	Upper semicontinuity of attractors for small random perturbations of dynamical systems.
	Comm. Partial Differential Equations 23 (1998), no. 9-10, 1557-1581.
	\href{https://doi.org/10.1080/03605309808821394}{doi}.
	
	\bibitem{Carrillo20arma}
	J.A.Carrillo, R.S.Gvalani, G.A.Pavliotis and A.Schlichting,
	Long-time behaviour and phase transitions for the McKean-Vlasov equation on the torus.
	Arch. Ration. Mech. Anal. 235 (2020), 635-690.
	\href{https://doi.org/10.1007/s00205-019-01430-4}{doi}.
	
	\bibitem{Chaudru22jmpa}
	P.-E.Chaudru de Raynal and N.Frikha,
	Well-posedness for some non-linear SDEs and related PDE on the Wasserstein space.
	J. Math. Pures Appl. 159 (2022), 1-167.
	\href{https://doi.org/10.1016/j.matpur.2021.12.001}{doi}.
	
	\bibitem{Chen26avg}
	H.Chen, M.Cheng and Z.Liu,
	Averaging principle and pullback attractor convergence for McKean-Vlasov stochastic reaction-diffusion equations.
	arXiv:2608.09319, 2026.
	\href{https://arxiv.org/abs/2608.09319}{link}.
	
	\bibitem{Chen26amo}
	Z.Chen and B.Wang,
	Well-posedness and large deviations of fractional McKean-Vlasov stochastic reaction-diffusion equations on unbounded domains.
	Appl. Math. Optim. 94 (2026), Paper No. 40.
	\href{https://doi.org/10.1007/s00245-026-10426-y}{doi}.
	
	\bibitem{Chen25spde}
	Z.Chen and B.Wang,
	Long-term dynamics of fractional stochastic delay reaction-diffusion equations on unbounded domains.
	Stoch. Partial Differ. Equ. Anal. Comput. 13 (2025), 180-242.
	\href{https://doi.org/10.1007/s40072-024-00334-z}{doi}.
	
	\bibitem{Cheng25}
	M.Cheng, X.Cheng and Z.Liu,
	Random attractors for McKean-Vlasov SDEs.
	arXiv:2511.16190, 2025.
	\href{https://arxiv.org/abs/2511.16190}{link}.
	
	\bibitem{Coghi20aap}
	M.~Coghi, J.-D.~Deuschel, P.~K.~Friz and M.~Maurelli,
	Pathwise McKean-Vlasov theory with additive noise.
	Ann. Appl. Probab. 30 (2020), no. 5, 2355-2392.
	\href{https://doi.org/10.1214/20-AAP1560}{doi}.
	
	\bibitem{Crauel94}
	H.Crauel and F.Flandoli,
	Attractors for random dynamical systems.
	Probab. Theory Related Fields 100 (1994), no. 3, 365-393.
	\href{https://doi.org/10.1007/BF01193705}{doi}.
	
	\bibitem{Crauel97jdde}
	H.Crauel, A.Debussche and F.Flandoli,
	Random attractors.
	J. Dynam. Differential Equations 9 (1997), no. 2, 307-341.
	\href{https://doi.org/10.1007/BF02219225}{doi}.
	
	\bibitem{Eberle19tams}
	A.Eberle, A.Guillin and R.Zimmer,
	Quantitative Harris-type theorems for diffusions and McKean-Vlasov processes.
	Trans. Amer. Math. Soc. 371 (2019), no. 10, 7135-7173.
	\href{https://doi.org/10.1090/tran/7576}{doi}.
	
	\bibitem{Evans10}
	L.C.Evans,
	\emph{Partial Differential Equations}.
	Second edition, Graduate Studies in Mathematics, vol. 19,
	American Mathematical Society, Providence, RI, 2010.
	\href{https://bookstore.ams.org/gsm-19-r}{link}.
	
	\bibitem{Gess25}
	B.Gess, R.S.Gvalani and S.Hu,
	Random dynamical systems for McKean-Vlasov SDEs via rough path theory.
	arXiv:2507.02449, 2025.
	\href{https://arxiv.org/abs/2507.02449}{link}.
	
	\bibitem{Gess11jde}
	B.Gess, W.Liu and M.R{\"o}ckner,
	Random attractors for a class of stochastic partial differential equations driven by general additive noise.
	J. Differential Equations 251 (2011), no. 4-5, 1225-1253.
	\href{https://doi.org/10.1016/j.jde.2011.02.013}{doi}.
	
	\bibitem{Gess20jde}
	B.Gess, W.Liu and A.Schenke,
	Random attractors for locally monotone stochastic partial differential equations.
	J. Differential Equations 269 (2020), no. 4, 3414-3455.
	\href{https://doi.org/10.1016/j.jde.2020.03.002}{doi}.
	
	\bibitem{Gyongy17spde}
	I.Gy\"ongy and D.\v{S}i\v{s}ka,
	It\^o formula for processes taking values in intersection of finitely many Banach spaces.
	Stoch. Partial Differ. Equ. Anal. Comput. 5 (2017), no. 3, 428-455.
	\href{https://doi.org/10.1007/s40072-017-0093-6}{doi}.
	
	\bibitem{Hong24aap}
	W.Hong, S.Hu and W.Liu,
	McKean-Vlasov SDE and SPDE with locally monotone coefficients.
	Ann. Appl. Probab. 34 (2024), no. 2, 2136-2189.
	\href{https://doi.org/10.1214/23-AAP2016}{doi}.
	
	\bibitem{Kotelenez10ptrf}
	P.M.Kotelenez and T.G.Kurtz,
	Macroscopic limits for stochastic partial differential equations of McKean-Vlasov type.
	Probab. Theory Related Fields 146 (2010), no. 1-2, 189-222.
	\href{https://doi.org/10.1007/s00440-008-0188-0}{doi}.
	
	\bibitem{Lions69}
	J.-L.Lions,
	\emph{Quelques m\'ethodes de r\'esolution des probl\`emes aux limites non lin\'eaires}.
	Dunod, Gauthier-Villars, Paris, 1969.
	
	\bibitem{Liu10jfa}
	W.Liu and M.R{\"o}ckner,
	SPDE in Hilbert space with locally monotone coefficients.
	J. Funct. Anal. 259 (2010), no. 11, 2902-2922.
	\href{https://doi.org/10.1016/j.jfa.2010.05.012}{doi}.
	
	\bibitem{McKean66pnas}
	H.P.McKean, Jr.,
	A class of Markov processes associated with nonlinear parabolic equations.
	Proc. Natl. Acad. Sci. USA 56 (1966), no. 6, 1907-1911.
	\href{https://doi.org/10.1073/pnas.56.6.1907}{doi}.
	
	\bibitem{Roeckner08cmp}
	M.R{\"o}ckner, B.Schmuland and X.Zhang,
	Yamada-Watanabe theorem for stochastic evolution equations in infinite dimensions.
	Condens. Matter Phys. 11 (2008), no. 2, 247-259.
	\href{https://doi.org/10.5488/CMP.11.2.247}{doi}.
	
	\bibitem{Shi24jde}
	L.Shi, J.Shen and B.Wang,
	Long-time dynamics of McKean-Vlasov stochastic reaction-diffusion equations on $\mathbb R^n$.
	J. Differential Equations 453 (2026), 113855.
	\href{https://doi.org/10.1016/j.jde.2025.113855}{doi}.
	
	\bibitem{Sznitman91lnm}
	A.-S.Sznitman,
	Topics in propagation of chaos.
	In: \emph{\'Ecole d'\'Et\'e de Probabilit\'es de Saint-Flour XIX-1989},
	Lecture Notes in Mathematics, vol. 1464, Springer, Berlin, 1991, 165-251.
	\href{https://doi.org/10.1007/BFb0085169}{doi}.
	
	\bibitem{Tang16sd}
	B.Q.Tang,
	Regularity of random attractors for stochastic reaction-diffusion equations on unbounded domains.
	Stoch. Dyn. 16 (2016), no. 1, 1650006.
	\href{https://doi.org/10.1142/S0219493716500064}{doi}.
	
	\bibitem{Villani09}
	C.Villani,
	\emph{Optimal Transport: Old and New}.
	Grundlehren der mathematischen Wissenschaften, vol. 338, Springer, Berlin, 2009.
	\href{https://doi.org/10.1007/978-3-540-71050-9}{doi}.
	
	\bibitem{Wang99pd}
	B.Wang,
	Attractors for reaction-diffusion equations in unbounded domains.
	Phys. D 128 (1999), no. 1, 41-52.
	\href{https://doi.org/10.1016/S0167-2789(98)00304-2}{doi}.
	
	\bibitem{Wang19jde}
	B.Wang,
	Dynamics of fractional stochastic reaction-diffusion equations on unbounded domains driven by nonlinear noise.
	J. Differential Equations 268 (2019), no. 1, 1-59.
	\href{https://doi.org/10.1016/j.jde.2019.08.007}{doi}.
	
	\bibitem{Wang09jde}
	B.Wang,
	Upper semicontinuity of random attractors for non-compact random dynamical systems.
	Electron. J. Differential Equations 2009 (2009), no. 139, 1-18.
	\href{https://ejde.math.txstate.edu/Volumes/2009/139/wang.pdf}{link}.
	
	\bibitem{Wang12jde}
	B.Wang,
	Sufficient and necessary criteria for existence of pullback attractors for non-compact random dynamical systems.
	J. Differential Equations 253 (2012), no. 5, 1544-1583.
	\href{https://doi.org/10.1016/j.jde.2012.05.015}{doi}.
	
	\bibitem{Wang18jde}
	X.Wang, K.Lu and B.Wang,
	Wong-Zakai approximations and attractors for stochastic reaction-diffusion equations on unbounded domains.
	J. Differential Equations 264 (2018), no. 1, 378-424.
	\href{https://doi.org/10.1016/j.jde.2017.09.006}{doi}.
	
	\bibitem{Zeng24}
	T.Zeng, R.Li and D.Li,
	Uniform measure attractors of McKean-Vlasov stochastic reaction-diffusion equations on unbounded thin domain.
	Math. Methods Appl. Sci. 49 (2026), no. 10, 11105-11137.
	\href{https://doi.org/10.1002/mma.70643}{doi}.
\end{thebibliography}
\end{document}